\documentclass[11pt]{article} 
\usepackage{latexsym, amsfonts, amssymb, amsmath, amsthm, mathtools}
\usepackage{graphicx}
\usepackage{multirow}
\usepackage{multicol}
\graphicspath{ {images/} }
\usepackage[dvipsnames]{xcolor}
\usepackage[colorlinks=true,
linkcolor=blue,
filecolor=magenta,
urlcolor=blue,
citecolor=magenta,
pdftitle={Global existence and asymptotic behavior in chemotaxis models with signal-dependent sensitivity and logistic-type source},
pdfauthor={Le Chen, Ian Ruau, Wenxian Shen},
pdfkeywords={persistence, stability, bifurcation, Keller--Segel systems, parabolic--elliptic chemotaxis models, signal-dependent sensitivity, logistic-type source},
pdfpagemode=FullScreen,
breaklinks=true,
]{hyperref}
\usepackage[left=1in, right=1in, top=1in, bottom=1in]{geometry}
\usepackage{colortbl}
\usepackage{ulem}
\usepackage{caption}

\usepackage{pgfplots}
\pgfplotsset{compat=1.18}
\usepackage{tikz}
\usetikzlibrary{shapes.geometric,positioning,arrows,intersections,patterns}

\newif\ifrefaudit
\refauditfalse
\ifrefaudit
\usepackage[nocites]{refcheck}
\fi

\definecolor{TableRowGray}{gray}{0.9}

\newcommand{\Norm}[1]{\left\| #1 \right\|}
\newcommand{\norm}[1]{||#1||}

\usepackage{amsrefs}

\renewcommand{\MR}[1]{~\href{https://mathscinet.ams.org/mathscinet-getitem?mr=MR#1}{MR#1}.}

\BibSpec{article}{%
+{}  {\PrintAuthors}                {author}
+{,} { \textit}                     {title}
+{.} { }                            {part}
+{:} { \textit}                     {subtitle}
+{,} { \PrintContributions}         {contribution}
+{.} { \PrintPartials}              {partial}
+{,} { }                            {journal}
+{}  { \textbf}                     {volume}
+{}  { \PrintDatePV}                {date}
+{,} { \issuetext}                  {number}
+{,} { \eprintpages}                {pages}
+{,} { }                            {status}
+{,} { \url}                        {url}
+{,} { \PrintDOI}                   {doi}
+{,} { available at \eprint}        {eprint}
+{}  { \parenthesize}               {language}
+{}  { \PrintTranslation}           {translation}
+{;} { \PrintReprint}               {reprint}
+{.} { }                            {note}
+{.} {}                             {transition}
+{}  {\SentenceSpace \PrintReviews} {review}
}

\newtheorem{theorem}{Theorem}[section] 

\newtheorem{corollary}[theorem]{Corollary}

\newtheorem{definition}{Definition}[section]

\newtheorem{lemma}{Lemma}[section]

\newtheorem{proposition}{Proposition}[section]

\theoremstyle{remark}
\newtheorem{remark}{Remark}[section]
\theoremstyle{remark}

\numberwithin{equation}{section}
\numberwithin{figure}{section}
\numberwithin{table}{section}

\def\p{\partial}

\def\R{\mathbb R}

\def\exp{\mathop{\hbox{exp}}}

\newcommand\abs[1]{\ensuremath{\lvert#1\rvert}}

\begin{document}

\title{Chemotaxis models with signal-dependent sensitivity and a logistic-type
source, II: Persistence and stabilization}
\author{
\begin{tabular}{ccc}
  Le Chen                  & Ian Ruau                 & Wenxian Shen             \\
  \url{lzc0090@auburn.edu} & \url{iir0001@auburn.edu} & \url{wenxish@auburn.edu}
\end{tabular} \bigskip \\
Department of Mathematics and Statistics\\
Auburn University, Auburn, AL 36849, USA}

\date{}
\maketitle

\begin{abstract}
  This paper is Part~II of a series of papers on boundedness, global existence,
  and asymptotic behavior of positive classical solutions to the following
  parabolic--elliptic chemotaxis model~\eqref{E:main-abstract-eq} with
  signal-dependent sensitivity and a logistic-type source
  \begin{equation}\label{E:main-abstract-eq}\tag{CM}
    \begin{dcases}
      u_t=\Delta u- \chi_0\nabla \cdot\left(\frac{u^m}{(1+v)^\beta}\nabla v\right)+au-b u^{1+\alpha}, & x \in \Omega,         \\
      0=\Delta v-\mu v+\nu u^\gamma,                                                                  & x \in \Omega,         \\
      \frac{\partial u}{\partial n}=\frac{\partial v}{\partial n}=0,                                  & x \in \partial\Omega.
    \end{dcases}
  \end{equation}
  Here, $\Omega$ is a bounded smooth domain in $\mathbb{R}^N$. The parameters
  $\alpha,\gamma,m,\mu,\nu$ are positive, $\chi_0\in\mathbb{R}$, and $a,b,\beta$
  are nonnegative. In Part~I~\cite{chen.ruau.ea:25:boundedness}, we established
  several parameter regimes ensuring boundedness and global existence of
  positive classical solutions. In this part, we study persistence and
  stabilization of globally defined bounded positive solutions. In particular,
  we quantify how the signal-dependence exponent $\beta$ and the sign of the
  sensitivity coefficient $\chi_0$ influence long-time dynamics.

  First, we prove uniform persistence in the sense that any globally defined,
  bounded, and positive classical solution stays bounded away from zero
  provided that $m\ge 1$. Next, when $a,b>0$, the system admits a unique
  positive constant equilibrium $(u^*,v^*) = \left((\frac{a}{b})^{1/\alpha},\,
  \frac{\nu}{\mu}(\frac{a}{b})^{\gamma/\alpha}\right)$. We identify a critical
  sensitivity threshold $\chi^*(u^*)$ such that $(u^*,v^*)$ is linearly stable
  if $\chi_0<\chi^*(u^*)$ and unstable if $\chi_0>\chi^*(u^*)$, and obtain
  local exponential decay in the stable regime. Moreover, we provide several
  explicit sufficient conditions ensuring that every globally defined bounded
  classical solution converges exponentially to $(u^*,v^*)$. Finally, when
  $a=b=0$, we study the stability of the one-parameter family of positive
  constant equilibria under the mass constraint and obtain both a linear
  stability threshold and global stabilization results.

  To obtain these results, we develop several new arguments and extend existing
  methods, including the Lyapunov functional method from $m=1$ to $m>1$ and the
  rectangle/ODE method from $\beta=0$ to $\beta>0$. Our results reveal the
  following biological scenarios when $m\ge 1$: the chemical signal does not
  enforce asymptotic smallness of the population density at any location; signal
  saturation (large $\beta$) or repulsion ($\chi_0<0$) can prevent aggregation
  and promote relaxation to spatially homogeneous states. In the forthcoming
  Part~III paper~\cite{chen.ruau.ea:26:bifurcation}, we will study bifurcation
  of constant equilibria and demonstrate the occurrence of pattern formation
  when $\chi_0$ passes through critical sensitivity thresholds.
\end{abstract}

\noindent \textbf{Keywords}. Persistence, stabilization, Keller--Segel
systems, parabolic--elliptic chemotaxis models, signal-dependent sensitivity,
logistic-type source.

\medskip

\noindent \textbf{AMS Subject Classification (2020)}: 35K45, 35M31, 35Q92, 92C17, 92D25

{\hypersetup{linkcolor=black}
\tableofcontents
}

\section{Introduction}

This paper is the second of a series of papers on the boundedness, global
existence,  and asymptotic behavior of classical solutions to chemotaxis models of
the following form on a bounded, connected, and smooth domain $\Omega \subset
\mathbb{R}^N$:
\begin{align}\label{E:main-PE}
  \begin{dcases}
    u_t = \Delta u - \nabla \cdot (u^m \chi(v) \nabla v) + au - b u^{1+\alpha}, & x \in \Omega, \\
    0= \Delta v - \mu v + \nu u^\gamma,                                         & x \in \Omega
  \end{dcases}
\end{align}
complemented with the Neumann boundary conditions on $\partial\Omega$:
\begin{equation}
  \label{E:main-BC}
  \frac{\partial u}{\partial n} = \frac{\partial v}{\partial n} = 0,
  \qquad x \in \partial\Omega.
\end{equation}
Here, the function $\chi(v)$, known in the literature as the \textit{chemotactic
sensitivity function}, is given by
\begin{equation}\label{E:chi-function-eq}
  \chi(v) = \frac{\chi_0}{(1+ v)^\beta}.
\end{equation}
In equations~\eqref{E:main-PE} and~\eqref{E:chi-function-eq}, $m, \alpha, \mu,
\nu$, and $\gamma$ are positive constants, $\chi_0$ is a real number, and $a,b$,
and $\beta$ are nonnegative constants.
Unless otherwise stated, all PDEs in this paper are considered on $\Omega$
with homogeneous Neumann boundary conditions on $\partial\Omega$.
With this agreement, we may not write out the Neumann boundary conditions
explicitly in such equations, and when we say
system~\eqref{E:main-PE}, it always
means~\eqref{E:main-PE}+\eqref{E:main-BC}.

System~\eqref{E:main-PE} describes the evolution of a biological species
influenced by a chemical substance or signal produced by the species itself. In
this context, $u(t,x)$ represents the population density of the biological
species, and $v(t,x)$ represents the density of the chemical substance or
signal. When $\beta = 0$, $\chi(v)$ simplifies to the constant chemotactic
sensitivity function $\chi(v) \equiv \chi_0$.  When $\beta>0$, the sensitivity
function $\chi(v)$ takes into account the partial loss of the signal substance
through its binding (see~\cite{viglialoro.woolley:18:boundedness,
baghaei:24:global, black.lankeit.ea:20:stabilization, winkler:10:absence}).
System~\eqref{E:main-PE} with $\beta>0$ is known as a chemotaxis system with
signal-dependent sensitivity.  The coefficient $\chi_0>0$ indicates that the
signal is a \textit{chemoattractant} signal, while $\chi_0<0$ indicates that the
signal is \textit{chemorepellent}. The reaction term $a u - b u^{1+\alpha}$
governs the local dynamics of the biological species and is known as a
\textit{logistic-type source}.  When $a=b=0$, \eqref{E:main-PE} is a
\textit{minimal} chemotaxis system. In the literature, a chemotaxis system is
also referred to as a \textit{Keller--Segel system}, named after the pioneering
works of E. F. Keller and L. A. Segel~\cite{keller.segel:70:initiation,
keller.segel:71:model, keller.segel:71:traveling}.

Central problems in studying~\eqref{E:main-PE} include determining whether
solutions with given initial conditions exist globally and understanding the
asymptotic behavior of globally defined bounded solutions of~\eqref{E:main-PE}.
Numerous studies have addressed these issues for various special cases of
\eqref{E:main-PE} and their parabolic-parabolic counterparts. For example, the
works~\cite{herrero.medina.ea:97:finite-time, herrero.velazquez:96:singularity,
nagai:01:blowup, nagai.senba:98:global, tello.winkler:07:chemotaxis} studied
finite time blow-up and global existence of solutions to \eqref{E:main-PE} and
its parabolic-parabolic counterpart, with $\chi(v)~\equiv \chi_0$ and $m =
\alpha = \gamma = 1$, which is given by:
\begin{equation}\label{E:special-eq1}
  \begin{dcases}
    u_t = \Delta u - \chi_0 \nabla \cdot (u \nabla v) + au - b u^2, & x \in \Omega, \\
    \tau v_t = \Delta v - \mu v + \nu u,                            & x \in \Omega.
  \end{dcases}
\end{equation}
where $\tau\ge 0$. There are several studies of global existence and boundedness
of solutions to~\eqref{E:main-PE} in the case that $\chi(v) \equiv \chi_0$,
$\beta = 0$, and $m$, $\alpha$, $\gamma > 0$, which is given by:
\begin{equation}\label{E:special-eq2}
  \begin{dcases}
    u_t = \Delta u - \chi_0 \nabla \cdot \left(u^m \nabla v\right) + au - b u^{1+\alpha}, & x \in \Omega, \\
    0 = \Delta v - v + u^\gamma,                                                          & x \in \Omega.
  \end{dcases}
\end{equation}
(see~\cite{galakhov.salieva.ea:16:on, hong.tian.ea:20:attraction-repulsion,
hu.tao:17:boundedness, zheng:24:on, xiang:19:dynamics}, etc.). The
works~\cite{biler:99:global, black:20:global, ding.wang.ea:19:global,
fujie:15:boundedness, fujie.senba:16:global, fujie.winkler.ea:14:blow-up,
kurt.shen:23:chemotaxis, kurt.shen.ea:24:stability,
nagai.senba:98:global, winkler:11:global, zhao.zheng:18:global} investigated
finite time blow-up and global existence of solutions to
system~\eqref{E:main-PE} and its parabolic-parabolic counterparts with singular
sensitivity $\chi(v) = \chi_0/v$ and $m = \alpha = \gamma = 1$, which is given
by:
\begin{equation}\label{E:special-eq3}
  \begin{dcases}
    u_t      = \Delta u - \chi_0 \nabla \cdot \Big(\frac{u}{v} \nabla v\Big) + au - b u^2, & x \in \Omega, \\
    \tau v_t = \Delta v - \mu v + \nu u,                                                   & x \in \Omega.
  \end{dcases}
\end{equation}
The reader is referred to Part~I~\cite{chen.ruau.ea:25:boundedness} of this
series of papers for a more detailed review of
systems~\eqref{E:special-eq1}-\eqref{E:special-eq3}.

Among others, the works~\cite{black.lankeit.ea:20:stabilization,
mizukami.yokota:17:unified, viglialoro.woolley:18:boundedness,
winkler:10:absence} studied global existence of solutions to
system~\eqref{E:main-PE} and its parabolic-parabolic counterpart with $m =
\alpha = \gamma = 1$, which is given by
\begin{equation}\label{E:special-eq4}
  \begin{dcases}
    u_t = \Delta u- \chi_0\nabla \cdot\Big(\frac{u}{(1+v)^\beta}\nabla v\Big)+au-b u^{2}, & x \in \Omega, \\
    \tau v_t = \Delta v-\mu v+\nu u,                                                      & x \in \Omega.
  \end{dcases}
\end{equation}
Note that the sensitivity function $\chi(v) = \frac{\chi_0}{(1+v)^\beta}$ with
$\beta>0$ stays bounded for all $v\ge 0$ and that the decay of $\chi(v)$ as $v
\to \infty$ has a damping effect on the ability of the biological species to
move towards the signal when its concentration is high. Intuitively, one expects
that positive classical solutions of~\eqref{E:special-eq4} exist globally under
some conditions which are weaker than those for the global existence of positive
classical solutions of~\eqref{E:special-eq1}. Moreover, one might expect that
such sensitivity could prevent finite-time blow-up in \eqref{E:special-eq4}. The
works~\cite{black.lankeit.ea:20:stabilization, mizukami.yokota:17:unified,
winkler:10:absence} show that positive classical solutions
of~\eqref{E:special-eq4} exist globally in the case that $\beta > 1$, $a = b =
0$, and $\tau=1$, which implies that the nonlinear sensitivity $\chi(v) =
\frac{\chi_0}{(1+v)^\beta}$ with $\beta>1$ prevents finite-time blow-up in
system~\eqref{E:special-eq4} when it is of the parabolic-parabolic type.

There is also a large literature on the asymptotic behavior of globally defined
solutions to chemotaxis systems. For example, the works
\cite{tello.winkler:07:chemotaxis} and \cite{winkler:14:global} studied the
stability of the constant solution $(\frac{a}{b},\frac{\nu}{\mu}\frac{a}{b})$ of
\eqref{E:special-eq1} with $\tau=0$ and $\tau=1$, respectively.  The stability
of the unique positive constant solution of \eqref{E:special-eq2} is studied in
the works \cite{galakhov.salieva.ea:16:on, xiang:19:dynamics}. The works
\cite{ahn.kang.ea:19:eventual, kurt.shen.ea:24:stability, li.li:21:large,
winkler.yokota:18:stabilization, zheng.mu.ea:18:global} studied the stability of
positive constant solutions of \eqref{E:special-eq3}.  The work
\cite{black.lankeit.ea:20:stabilization} considered the stability of positive
constant solutions of \eqref{E:special-eq4} with $\tau=1$ and $a=b=0$.

Fewer studies have focused on the global existence and asymptotic behavior of
classical solutions of system~\eqref{E:special-eq4} when it is of the
parabolic-elliptic type, which is a special case of \eqref{E:main-PE}. The
objective of this series of papers is to investigate the global existence,
boundedness, and asymptotic behavior of positive classical solutions
of~\eqref{E:main-PE}. We pay special attention to the effects of the
signal-dependence exponent $\beta$ and the sensitivity coefficient $\chi_0$ on
the global existence and asymptotic behavior of positive classical solutions. In
the following, we recall some relevant results proved in
Part~I~\cite{chen.ruau.ea:25:boundedness}, and state the main problems to be
studied in Part~II and the contributions of this paper.

\subsection{A brief review of results in Part I}\label{SS:review}

In Part~I~\cite{chen.ruau.ea:25:boundedness} of this series, we studied the
boundedness and global existence of positive classical solutions
of~\eqref{E:main-PE} from three different angles: one from the angle of the role
played by negative chemotaxis sensitivity coefficient $\chi_0$, one from the
angle of the role played by the nonlinear cross diffusion rate
$\frac{u^m}{(1+v)^\beta}$, and one from the angle of the role played by the
logistic-type source $u(a-b u^\alpha)$.

We call a function $(u(t,x),v(t,x))$ a \textit{classical solution} of the
parabolic-elliptic system~\eqref{E:main-PE} on the time interval $(0,T)$ if
\begin{gather*}
  u(\cdot,\cdot) \in C^{1,2}\left( (0,T) \times \Omega\right)\cap C^{0,1}\left(
  (0,T) \times \overline{\Omega}\right), \\
  v(\cdot,\cdot) \in C^{0,2}\left( (0,T) \times \Omega\right)\cap C^{0,1}\left(
  (0,T) \times \overline{\Omega}\right),
\end{gather*}
and $(u(t,x),v(t,x))$ satisfies~\eqref{E:main-PE} for all $t\in (0,T)$. A given
function $(u(t,x), v(t,x))$ is said to be a {\it global classical solution}
of~\eqref{E:main-PE} if it is a classical solution of~\eqref{E:main-PE} on $(0,
\infty)$. A global classical solution is said to be {\it positive} (resp. {\it
bounded}) if $\inf_{x\in\Omega}u(t,x) > 0$ for any $t \in (0, \infty)$
(resp.~${\limsup_{t\to\infty} \Norm{u(t,\cdot)}_\infty < \infty}$).

In the following, we review some results established
in~\cite{chen.ruau.ea:25:boundedness} to be used in this paper. Specifically, we
recall the local existence result proved in \cite{chen.ruau.ea:25:boundedness},
and some global existence results obtained in \cite{chen.ruau.ea:25:boundedness}
for the case that $m\ge 1$. In this paper, we study the asymptotic behavior of
globally defined classical solutions of \eqref{E:main-PE} with $m\ge 1$.

\begin{proposition}[Local existence, Proposition~1.1
  of~\cite{chen.ruau.ea:25:boundedness}]\label{P:local-existence} For any given
  $u_0$ satisfying
  \begin{equation}\label{E:initial-cond-PE}
    u_0\in C(\overline{\Omega}),\quad \inf_{x\in\Omega} u_0(x)>0,
  \end{equation}
  there is $T_{\max}(u_0)\in (0,\infty]$ such that the parabolic-elliptic
  system~\eqref{E:main-PE} admits a unique classical solution $(u(t,x;u_0),
  v(t,x;u_0))$ on $(0, T_{\max}(u_0))$ satisfying that
  \begin{equation*}
    \lim_{t\to 0_+}\Norm{u(t,\cdot;u_0)-u_0(\cdot)}_{\infty} = 0
    \quad \text{and} \quad
    u(t,x;u_0) > 0
    \quad \text{for all $(t,x)\in [0, T_{\max}(u_0))\times\overline\Omega$,}
  \end{equation*}
  and $u(\cdot,\cdot;u_0) \in C^{1,2} \left( (0, T_{\max}(u_0))\times
  \overline{\Omega}\right)$ and $ v(\cdot,\cdot;u_0) \in C^{0,2} \left( (0,
  T_{\max}(u_0))\times \overline{\Omega}\right)$.

  Moreover, if $T_{\max}(u_0) < \infty$, then either
  \[
    \limsup_{t \nearrow T_{\max}(u_0)} \sup_{x\in \Omega} u(t, x;u_0) = \infty
    \quad {\rm or}\quad
    \liminf_{t \nearrow T_{\max}(u_0)}\inf_{x\in\Omega} u(t,x;u_0) = 0.
  \]
  If $T_{\max}(u_0)<\infty$ and $m\ge 1$, then
  $\limsup_{t \nearrow T_{\max}(u_0)}  \sup_{x\in \Omega} u(t, x;u_0) = \infty$.
\end{proposition}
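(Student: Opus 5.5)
We construct the solution by a fixed-point argument in the space of positive continuous functions. Along the way we derive the extension criterion, and we use the comparison principle to rule out degeneration to zero when $m\ge 1$. The elliptic equation lets us eliminate $v$. For $u$ continuous with $\inf u>0$, set $v=(\mu-\Delta)^{-1}(\nu u^\gamma)$ with Neumann boundary conditions. Elliptic $W^{2,p}$ theory gives $\|v\|_{W^{2,p}}\le C\|u^\gamma\|_{p}$ for all $p<\infty$, hence $v\in C^{1,\theta}(\overline\Omega)$, and $v\ge 0$ by the maximum principle.

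\textbf{Existence and uniqueness.} Fix $\delta\in(0,\inf u_0)$ and $R>\|u_0\|_\infty$. Consider the closed set
\[
X_T=\Bigl\{u\in C([0,T]\times\overline\Omega): \tfrac{\delta}{2}\le u\le 2R,\ u(0)=u_0\Bigr\}.
\]
Define the map $\Phi$ through the variation-of-constants formula with the Neumann heat semigroup $e^{t\Delta}$:
\[
\Phi(u)(t)=e^{t\Delta}u_0-\chi_0\int_0^t e^{(t-s)\Delta}\nabla\cdot\Bigl(\frac{u^m\nabla v}{(1+v)^\beta}\Bigr)(s)\,ds+\int_0^t e^{(t-s)\Delta}\bigl(au-bu^{1+\alpha}\bigr)(s)\,ds.
\]
We estimate the divergence term using $\|e^{t\Delta}\nabla\cdot w\|_\infty\le C(1+t^{-1/2-N/(2p)})\|w\|_p$ with $p>N$. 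On $[\delta/2,2R]$ the nonlinearities $u\mapsto u^m,u^{1+\alpha},u^\gamma$ are Lipschitz; this is why the positive lower bound is kept in $X_T$, since $m,\gamma,\alpha$ may be less than $1$. Therefore $\Phi$ maps $X_T$ into itself and is a contraction for small $T$. The smallness comes from the integrable factor $T^{1/2-N/(2p)}$ and from $\|e^{t\Delta}u_0-u_0\|_\infty\to0$, which keeps $\Phi(u)$ inside $[\delta/2,2R]$.

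Standard parabolic Schauder bootstrapping upgrades the mild solution to the stated $C^{1,2}$ regularity. We first get Hölder continuity in $(t,x)$, then $v\in C^{0,2+\theta}$, and then $u$ is classical on $(0,T)\times\overline\Omega$. Positivity for all $t$ follows from the strong maximum principle applied to the classical equation. Uniqueness follows from the same Lipschitz estimates together with a Gronwall argument. Extending by restarting gives a maximal time $T_{\max}$. The existence time depends only on upper bounds for $\|u\|_\infty$ and $1/\inf u$, so if $T_{\max}<\infty$, one of these must degenerate. This is the stated dichotomy.

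\textbf{The case $m\ge1$.} Assume $T_{\max}<\infty$ and $\sup_{t<T_{\max}}\|u\|_\infty\le K$. We show that $\inf u$ stays positive, which contradicts the dichotomy.

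Expand the equation in nondivergence form:
\[
u_t=\Delta u-\chi_0\frac{u^m}{(1+v)^\beta}\Delta v-\chi_0\Bigl(\frac{mu^{m-1}\nabla u}{(1+v)^\beta}-\frac{\beta u^m\nabla v}{(1+v)^{\beta+1}}\Bigr)\cdot\nabla v+au-bu^{1+\alpha}.
\]
From $\Delta v=\mu v-\nu u^\gamma$, elliptic regularity with the bound $K$ gives $\|v\|_{W^{1,\infty}}+\|\Delta v\|_\infty\le C(K)$ on $[t_0,T_{\max})$. Write the equation as
\[
u_t=\Delta u+B\cdot\nabla u+c(t,x)\,u,
\]
where $B$ is bounded and
\[
c=-\chi_0\frac{u^{m-1}\Delta v}{(1+v)^\beta}+\chi_0\frac{\beta u^{m-1}|\nabla v|^2}{(1+v)^{\beta+1}}+a-bu^\alpha .
\]
Since $m\ge1$ and $u\le K$, the coefficient $c$ is bounded below by $-C_1$. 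By comparison with $\underline u(t)=\bigl(\min u(t_0)\bigr)e^{-C_1(t-t_0)}$, we get $\inf u\ge \min u(t_0)\,e^{-C_1T_{\max}}>0$, which is the desired contradiction.

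The main obstacle is the existence step for $m<1$, or when $\gamma,\alpha<1$. There the nonlinearities are not Lipschitz near $0$, which is why positivity has to be built into the fixed-point space. The delicate points are verifying the semigroup estimates and the time continuity at $t=0$ for merely continuous $u_0$.
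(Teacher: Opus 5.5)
Your argument is correct and follows the standard route. This paper does not reprove the proposition (it is quoted from Part~I), but the way the paper uses it later matches your construction. Lemma~\ref{L:stability-lm} relies on the existence time and the two-sided bounds $u^*/2\le u\le 2u^*$ depending only on $\inf u_0$ and $\Norm{u_0}_\infty$. Your $m\ge 1$ comparison uses the same nondivergence expansion, with the same coefficient $c$, as the proof of Theorem~\ref{T:persistence}(1).

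One small correction. Keep $\Phi(u)$ inside $[\delta/2,2R]$ by using order preservation of the Neumann heat semigroup, $\inf_\Omega u_0\le e^{t\Delta}u_0\le \sup_\Omega u_0$, rather than $\Norm{e^{t\Delta}u_0-u_0}_\infty\to 0$. The latter would make $T$ depend on the modulus of continuity of $u_0$. That would undercut your claim that the existence time depends only on $\Norm{u}_\infty$ and $1/\inf u$, which is exactly what the extension criterion needs.
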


In the rest of this subsection, $(u(t,x;u_0), v(t,x;u_0))$ denotes the unique
classical solution of~\eqref{E:main-PE} with the initial condition
$u(0,x;u_0) = u_0(x)$ on the interval $(0, T_{\max}(u_0))$, where $u_0$
satisfies~\eqref{E:initial-cond-PE}.

\begin{proposition} [Boundedness and global existence with negative chemotaxis
  sensitivity, Theorem~1.1 of~\cite{chen.ruau.ea:25:boundedness}]
  \label{P:global-existence-prop1}

  Assume $\chi_0\le 0$ and $m\ge 1$.
  Then for any given $u_0$ satisfying~\eqref{E:initial-cond-PE},
  $T_{\max}(u_0)=\infty$.
  Moreover,
  if $a, b > 0$, then
  $\Norm{u(t,\cdot;u_0)}_\infty
  \le \max\big\{\Norm{u_0}_\infty,\: \big(\frac{a}{b}\big)^{1/\alpha} \big\}$
  for all $t >0$,
  while if $a = b = 0$, then
  $\Norm{u(t,\cdot;u_0)}_\infty \le \Norm{u_0}_\infty$ for all $t >0$.
\end{proposition}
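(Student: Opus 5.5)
The plan is a maximum-principle comparison at a spatial maximum point of $u$. The goal is to show that $t\mapsto \max_{\overline\Omega} u(t,\cdot)$ cannot exceed the claimed bound. The blow-up alternative in Proposition~\ref{P:local-existence} (the case $m\ge1$) then gives $T_{\max}(u_0)=\infty$.

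First expand the cross-diffusion term. Writing $\chi(v)=\chi_0(1+v)^{-\beta}$, we have
\[
\nabla\cdot\Big(\frac{u^m}{(1+v)^\beta}\nabla v\Big)= \frac{m u^{m-1}}{(1+v)^\beta}\nabla u\cdot\nabla v-\frac{\beta u^m}{(1+v)^{\beta+1}}|\nabla v|^2+\frac{u^m}{(1+v)^\beta}\Delta v .
\]
Then use the elliptic equation, $\Delta v=\mu v-\nu u^\gamma$. The $u$-equation becomes
\[
u_t=\Delta u-\chi_0\frac{m u^{m-1}}{(1+v)^\beta}\nabla u\cdot\nabla v+\chi_0\frac{\beta u^m}{(1+v)^{\beta+1}}|\nabla v|^2-\chi_0\frac{u^m}{(1+v)^\beta}(\mu v-\nu u^\gamma)+au-bu^{1+\alpha}.
\]
Since $\chi_0\le0$, $u>0$ and $v\ge0$, we sign each term. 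The term $\chi_0\beta u^m|\nabla v|^2/(1+v)^{\beta+1}$ is $\le0$. The term $-\chi_0 u^m\mu v/(1+v)^\beta$ is $\ge0$, which is bad. The term $\chi_0\nu u^{m+\gamma}/(1+v)^\beta$ is $\le0$. So the favorable part of the reaction is $-|\chi_0|u^m(\nu u^\gamma-\mu v)/(1+v)^\beta$, and the task is to show $\nu u^\gamma-\mu v\ge0$ at a maximum point of $u$.

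This follows from the elliptic maximum principle. Let $x_t$ be a maximum point of $v(t,\cdot)$ over $\overline\Omega$; by Hopf's lemma and the Neumann condition it may be treated as interior-type, with $\Delta v(t,x_t)\le0$. Then $\mu v(t,x_t)\le\nu u(t,x_t)^\gamma\le \nu\max_{\overline\Omega}u(t,\cdot)^\gamma$. Hence $\mu\max v\le \nu(\max u)^\gamma$, and at a maximum point $\bar x$ of $u(t,\cdot)$ we get $\nu u(t,\bar x)^\gamma-\mu v(t,\bar x)\ge0$.

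Next set $U(t)=\max_{\overline\Omega}u(t,\cdot)$. At $\bar x$ we have $\nabla u=0$ (using the Neumann condition on the boundary) and $\Delta u\le0$. With the signs above, every chemotactic contribution is $\le0$ there. Hence, in the Dini-derivative sense,
\[
D^+U(t)\le aU-bU^{1+\alpha}.
\]
A standard comparison with the ODE $y'=ay-by^{1+\alpha}$ then gives $U(t)\le\max\{\|u_0\|_\infty,(a/b)^{1/\alpha}\}$ when $a,b>0$, and $U$ nonincreasing when $a=b=0$. Continuity at $t=0$, from Proposition~\ref{P:local-existence}, fixes the initial value.

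Thus $u$ stays bounded on $(0,T_{\max})$. Since $m\ge1$, Proposition~\ref{P:local-existence} forces $T_{\max}=\infty$.

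The main obstacle is making the maximum-point argument rigorous. This has two parts: the Dini-derivative (or first-touching-time) comparison when $U$ is only Lipschitz, and treating boundary maxima via Neumann conditions. I would handle it by contradiction instead. Suppose $u$ first exceeds $M+\varepsilon$ at time $t_0$ and point $x_0$. Use the Hopf lemma to exclude a boundary point with $u_t>0$ unless the strict inequality in the equation fails. This contradicts $u_t\le au-bu^{1+\alpha}<0$ at level $u>(a/b)^{1/\alpha}$, and in the case $a=b=0$ one perturbs by $\varepsilon e^{t}$.
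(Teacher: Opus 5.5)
Your proposal is correct and follows essentially the route the paper relies on. The paper cites this result from Part~I, where Lemma~3.1 (quoted here as Lemma~\ref{L:nonincreasing}) shows that $\bar u(t)=\max_{\overline\Omega}u(t,\cdot)$ is nonincreasing while above $(a/b)^{1/\alpha}$, or always when $a=b=0$; that lemma comes from the same max-point evaluation combined with $\mu v\le\nu\bar u^\gamma$, and the blow-up alternative for $m\ge 1$ then gives global existence.

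One simplification: at a boundary maximum you do not need Hopf's lemma. There $\partial_n u=0$ forces $\nabla u=0$, and a Taylor expansion along inward directions shows the Hessian of any $C^2(\overline\Omega)$ function is negative semidefinite, so $\Delta u\le 0$; the same holds for $\Delta v\le 0$ at a boundary maximum of $v$.
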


Let
\begin{equation*}
  C_{N,p}^* \coloneqq \inf\big\{C>0\,\big |\, \text{$C$ satisfies}\,\,
  \int_{\Omega}|D^2 v|^p\le C\int_\Omega \left(|\Delta v|^p+v^p\right)\quad \text{for all $v\in W^{2,p}(\Omega)$}\big\},
\end{equation*}
\begin{equation}\label{E:M-star}
  M^*(N,p,\mu,\nu) \coloneqq \nu^{p}\Big[
  \frac{8^{p}}{p}\,C_{N,p}^*\!\big(2^{p}+\frac{1}{\mu^{p}}\big)
  + \frac{2^{2p}}{(p-1)\,p^{p}} \Big],
\end{equation}
and
\begin{equation}\label{E:K}
  K^*(N, \alpha, \gamma, \mu, \nu) \coloneqq \liminf_{q\to q_*,\, q>q_*}
  \Big[M^*\!\big(N,\, \frac{q+\alpha}{\gamma},\, \mu,\, \nu\big)\Big]^{\!\frac{\gamma}{q+\alpha}},
  \quad \text{with $q_* \coloneqq \max\Big\{1,\frac{N\alpha}{2}\Big\}$.}
\end{equation}
Let
\begin{equation}\label{E:Theta-beta}
  \Theta_\beta \coloneqq \beta^\beta\, (1+\beta)^{-(1+\beta)}, \qquad \Theta_0\coloneqq \lim_{\beta \to 0_+} \Theta_\beta = 1,
\end{equation}
and
\begin{equation}\label{E:chi-a-b-beta}
  \chi_\beta \coloneqq \frac{2(2\beta-1)}{\max\{2,\gamma N\}}
  \quad (\beta\ge 1),\qquad
  \chi_{a,b,\beta} \coloneqq
  \max\big\{\chi_{a,b,\beta}^{(i, iii)},\,
  \chi_{a,b,\beta}^{(ii, iv)}\big\}\quad (\beta\ge 0),
\end{equation}
where
\begin{align*}
  \chi_{a,b,\beta}^{(i,iii)}
  & \coloneqq
  \begin{cases}
    \frac{\big((N\alpha-2)_+ + 2m\big) b}{(N\alpha-2)_+ \big(\nu + \beta\,\Theta_\beta\,K^*\big) },
    & \text{if $\beta \ge 0$ and $\alpha = m + \gamma - 1$}, \\[4pt]
    +\infty, & \text{if $\beta \ge 0$ and $\alpha > m + \gamma - 1$}, \\[4pt]
    0,       & \text{if $\alpha<m+\gamma-1$,}
  \end{cases}
  \\
  \chi_{a,b,\beta}^{(ii, iv)}
  & \coloneqq
  \begin{cases}
    \Big(\frac{8 b}{(N\alpha-2)_+ \Theta_{2\beta-1}\,K^*}\Big)^{1/2},
    & \text{if $\beta\ge 1/2$ and $\alpha = 2m + \gamma - 2$}, \\[4pt]
    +\infty, & \text{if $\beta\ge 1/2$ and $\alpha > 2m + \gamma - 2$}, \\[4pt]
    0,       & \text{if $0\le\beta<1/2$ or $\alpha<2m+\gamma-2$.}
  \end{cases}
\end{align*}
Here $K^*$ is defined in~\eqref{E:K}, and the right-hand sides are understood as
$+\infty$ when $(N\alpha-2)_+ = 0$.
Note that $\chi_{a,b,\beta}$ is actually independent of $a$. Adding $a$ here
is just for comparison between $\chi_{a,b,\beta}$ and some other thresholds
for $\chi_0$ we will establish in this paper (see
Remark~\ref{R:sensitivity-thresholds}).

\begin{proposition} [Boundedness and global existence with relatively strong
  logistic source, Theorem~1.3
  of~\cite{chen.ruau.ea:25:boundedness}]\label{P:global-existence-prop3}

  Assume that $a, b > 0$ and $m\ge 1$.
  Then for any given $u_0$ satisfying~\eqref{E:initial-cond-PE},
  $T_{\max}(u_0)=\infty$ and
  \[
    \limsup_{t\to \infty} \Norm{u(t,\cdot;u_0)}_\infty<\infty
  \]
  provided that $\chi_0<\chi_{a,b,\beta}$ and one of the following conditions holds

  \begin{multicols}{2}
    \begin{itemize}

      \item[(i)] $\beta \ge 0$, and $\alpha > m + \gamma - 1$;

      \item[(ii)] $\beta\ge 1/2$, and $\alpha > 2m + \gamma - 2$;

      \item[(iii)] $\beta\ge 0$, and $\alpha = m + \gamma - 1$;

      \item[(iv)] $\beta\ge 1/2$, and $\alpha = 2m + \gamma - 2$.

    \end{itemize}
  \end{multicols}
\end{proposition}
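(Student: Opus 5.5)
The plan is to show boundedness of $\Norm{u}_\infty$ by the standard route. We obtain an $L^p$ bound for $u$ for some $p>\max\{1,\frac{N\gamma}{2}\}$ (more precisely $p=q+\alpha$ with $q>q_*$), then upgrade it to $L^\infty$ by elliptic regularity for $v$ and a Moser--Alikakos or semigroup iteration. Local existence and the extension criterion come from Proposition~\ref{P:local-existence}. Since $m\ge 1$, global existence follows once $\sup_x u$ stays bounded on bounded time intervals. So the core task is a differential inequality for $y(t)=\int_\Omega u^q$ that is uniform in time.

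Testing the $u$-equation with $u^{q-1}$ and integrating by parts in the cross-diffusion term gives
\[
\frac1q\frac{d}{dt}\int_\Omega u^q+(q-1)\int_\Omega u^{q-2}|\nabla u|^2
=\chi_0(q-1)\int_\Omega \frac{u^{m+q-2}}{(1+v)^\beta}\nabla u\cdot\nabla v+a\int_\Omega u^q-b\int_\Omega u^{q+\alpha}.
\]
I write $u^{m+q-2}\nabla u=\nabla\Phi(u)$ with $\Phi(u)=\frac{u^{m+q-1}}{m+q-1}$ and integrate by parts once more. This produces two terms. The first is $-\chi_0\int\Phi(u)(1+v)^{-\beta}\Delta v$. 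The second is a term $\chi_0\beta\int\Phi(u)(1+v)^{-\beta-1}|\nabla v|^2$.

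For the first, I use $-\Delta v=\nu u^\gamma-\mu v$, which gives the bound $\chi_0\nu\int u^{m+q+\gamma-1}$, since the $-\mu v$ part has a good sign when $\chi_0>0$. For the second, I use the weighted bound $\beta\, v^{\cdot}(1+v)^{-\beta-1}\le \Theta_\beta$-type estimates. This is where $\Theta_\beta=\beta^\beta(1+\beta)^{-(1+\beta)}$ arises, as the maximum of $s(1+s)^{-1-\beta}$. It is combined with $\int|\nabla v|^{2\cdot}$ estimated through $\int|D^2v|^p\le C^*_{N,p}\int(|\Delta v|^p+v^p)$ and the elliptic equation, giving the constant $K^*$ via $M^*$.

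In cases (i) and (iii), $\alpha\ge m+\gamma-1$. The positive terms are then of order $\int u^{q+\alpha}$ at most, with coefficient $\chi_0(\nu+\beta\Theta_\beta K^*)\frac{(N\alpha-2)_+}{(N\alpha-2)_++2m}$ after optimizing $q$ near $q_*$. They are absorbed by $-b\int u^{q+\alpha}$ exactly when $\chi_0<\chi^{(i,iii)}_{a,b,\beta}$. In the strict case the term is absorbed by Young's inequality for any $\chi_0$.

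In cases (ii) and (iv), with $\beta\ge 1/2$, I instead keep the diffusion term and estimate the cross term by Cauchy--Schwarz against $(q-1)\int u^{q-2}|\nabla u|^2$. This leaves $\frac{\chi_0^2(q-1)}{4}\int u^{2m+q-2}(1+v)^{-2\beta}|\nabla v|^2$. I control $(1+v)^{-2\beta}|\nabla v|^2$ using $\Theta_{2\beta-1}$ and the same $W^{2,p}$ bound, which yields the order $2m+\gamma-2$ and the threshold $\chi^{(ii,iv)}_{a,b,\beta}$.

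Combining the cases, I get $y'\le -c\,y+C$ whenever $\chi_0<\chi_{a,b,\beta}$. This gives $\sup_t\int u^q<\infty$, and the iteration then gives the $L^\infty$ bound and the $\limsup$ statement.

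The main obstacle is the sharp handling of the $\beta$-term. I need to bound $\int u^{m+q-1}(1+v)^{-\beta-1}|\nabla v|^2$ by exactly $\Theta_\beta K^*\int u^{q+\alpha}$ plus lower-order terms, with the correct constant. This requires a careful Hölder split, trading a factor $v(1+v)^{-1-\beta}$ against $\Theta_\beta$ and using a Gagliardo--Nirenberg-type identity $\int|\nabla v|^{2p}/v^{p}\lesssim\int|D^2v|^p$. I also need to track constants so that the limit $q\downarrow q_*$ in $K^*$ matches the stated thresholds.
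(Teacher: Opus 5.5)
Your plan is correct and matches the route behind this result; the present paper only cites it from Part~I. The ingredients are the $L^q$ estimate with $q\downarrow q_*$, double integration by parts with $-\Delta v\le \nu u^\gamma$ in cases (i)/(iii), and Cauchy--Schwarz against $(q-1)\int_\Omega u^{q-2}|\nabla u|^2$ in cases (ii)/(iv). These are combined with $\frac{v}{(1+v)^{1+\kappa}}\le \Theta_\kappa$ and the weighted bound $\int_\Omega \frac{|\nabla v|^{2p}}{(1+v)^{(1+\kappa)p}}\le \Theta_\kappa^{p}\,M^*(N,p,\mu,\nu)\int_\Omega u^{\gamma p}$ at $p=\frac{q+\alpha}{\gamma}$, with $\kappa=\beta$ resp.\ $\kappa=2\beta-1$. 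That bound is Part~I's Proposition~2.2, which this paper also invokes in Lemma~\ref{L:minimal-eventual-bounds}, and together these reproduce $\chi^{(i,iii)}_{a,b,\beta}$ and $\chi^{(ii,iv)}_{a,b,\beta}$ exactly.

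Three small fixes:
\begin{itemize}
\item Handle $\chi_0\le 0$ separately via Proposition~\ref{P:global-existence-prop1}, since your sign argument for the $-\mu v$ term needs $\chi_0>0$.
\item The time-uniform bound you obtain is on $\int_\Omega u^q$, not on $\int_\Omega u^{q+\alpha}$.
\item In the critical cases (iii)/(iv), the upgrade to $L^\infty$ needs $q>N\alpha/2$, not merely $q>N\gamma/2$. At higher levels $p$ the factor $\frac{p-1}{m+p-1}$ (resp.\ $p-1$) in front of $\int_\Omega u^{p+\alpha}$ grows, so $b$ alone may no longer absorb that term. It must then be absorbed by Gagliardo--Nirenberg against the diffusion using the $L^q$ bound, which works exactly when $q>N\alpha/2$; this is why $q_*$ involves $\alpha$.
\end{itemize}
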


\begin{proposition}[Boundedness and global existence with weak nonlinear cross
  diffusion, Theorem~1.2(2)
  of~\cite{chen.ruau.ea:25:boundedness}]\label{P:global-existence-prop2} Assume
  that {$m=1$, $\beta \ge 1$, and $a=b=0$, or $a\ge 0, b>0$.} If
  $\chi_0<\chi_\beta$, then for any given $u_0$
  satisfying~\eqref{E:initial-cond-PE}, $T_{\max}(u_0)=\infty$ and
  \begin{equation*}
    \limsup_{t\to \infty} \Norm{u(t,\cdot;u_0)}_\infty <\infty.
  \end{equation*}
\end{proposition}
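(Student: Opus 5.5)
We would prove Proposition~\ref{P:global-existence-prop2} by combining the local theory of Proposition~\ref{P:local-existence} with an $L^p$ energy estimate, followed by Moser--Alikakos iteration. Since $m=1$, the extensibility criterion says that if $T_{\max}(u_0)<\infty$ then $\|u(t)\|_\infty$ blows up. It therefore suffices to show that $\|u(t)\|_\infty$ is bounded on $(0,T_{\max})$, with a bound independent of time. First, integrating the $u$-equation and using $au-bu^{1+\alpha}$ gives a uniform $L^1$ bound: mass conservation when $a=b=0$, and $\limsup\int u\le C$ via Hölder when $b>0$. Elliptic regularity then gives $v\ge0$ and $v\in L^q$ bounds for $q$ below the appropriate exponent.

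The core step is to test the $u$-equation with $u^{p-1}(1+v)^{-r}$ for a suitable weight exponent $r>0$. An alternative, in the spirit of Winkler's signal-dependent trick, is to use the functional $\int u^p\varphi(v)$ with $\varphi(v)=(1+v)^{-r}$. The elliptic equation gives $\Delta v=\mu v-\nu u^\gamma$, and the cross term $-\chi_0\nabla\cdot\big(u(1+v)^{-\beta}\nabla v\big)$ then produces the following contributions:
\begin{itemize}
\item integrals of $u^p(1+v)^{-\beta-r-1}|\nabla v|^2$ with coefficient involving $\chi_0 r$;
\item the good term $-r\int u^{p-1}(1+v)^{-r-1}\nabla u\cdot\nabla v$ coupled with diffusion;
\item a term $\chi_0\int u^p(1+v)^{-\beta-r}\Delta v$-type, which is controlled via the $v$-equation by $\nu u^{p+\gamma-1}$ against $\mu v$.
\end{itemize}

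Completing the square in $\nabla u$ and $\nabla v$ yields a nonpositive gradient quadratic form provided $\chi_0$ is below a threshold optimized over $r$ and $p$. Here $\beta\ge1$ lets the decay $(1+v)^{-\beta}$ dominate $(1+v)^{-1}$, and optimizing $r$ produces the constant $2(2\beta-1)/\max\{2,\gamma N\}$. The requirement $p>\max\{1,\gamma N/2\}$ enters through the term $\int u^{p+\gamma-1}$. When $b>0$, the logistic damping absorbs the remaining lower-order terms. When $a=b=0$, we absorb them via Gagliardo--Nirenberg using the mass bound. Because $p$ exceeds $\gamma N/2$, the elliptic estimates give $\|\nabla v\|_\infty$-type control, leaving an ODE $y'+y\le C$ for $y=\int u^p(1+v)^{-r}$. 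Since $v$ is bounded in $L^\infty$ once $u\in L^p$ with $p>\gamma N/2$, the weight is comparable to $1$, so the $L^p$ norm is bounded uniformly in time.

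Finally, with $u\in L^p$ for some $p>\gamma N/2$, elliptic regularity gives $\nabla v\in L^\infty$ uniformly. A standard Moser iteration, or a semigroup (Neumann heat) estimate applied to the variation-of-constants formula, then gives $\sup_t\|u(t)\|_\infty<\infty$. This yields $T_{\max}=\infty$ and the limsup bound. The main obstacle is the second step: choosing $r$ (and $p$) so that the gradient quadratic form is sign-definite exactly under $\chi_0<\chi_\beta$. We would first try a Young's inequality split with a free parameter and optimize it, checking that the discriminant condition reduces to $\chi_0<\chi_\beta$.
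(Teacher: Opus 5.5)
There is a genuine gap, and it is in the step you yourself flag as the main obstacle. Testing the \emph{parabolic} equation with $u^{p-1}(1+v)^{-r}$ cannot give the estimate you need, for two reasons.

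First, $\int_\Omega u^{p-1}(1+v)^{-r}u_t=\frac1p\frac{d}{dt}\int_\Omega u^p(1+v)^{-r}+\frac rp\int_\Omega u^p(1+v)^{-r-1}v_t$. In this parabolic--elliptic system, $v_t$ is determined only nonlocally, through $(\mu-\Delta)v_t=\nu\gamma u^{\gamma-1}u_t$, and $u_t$ contains the cross-diffusion term itself. Your list of contributions omits this term. The weighted-functional trick you allude to is designed for the parabolic--parabolic case, where $v_t$ can be replaced using the $v$-equation.

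Second, even if you ignore $v_t$, integrating by parts leaves the pointwise gradient form
\[
-(p-1)\frac{u^{p-2}|\nabla u|^2}{(1+v)^{r}}
+\Big(\frac{r}{(1+v)^{r+1}}+\frac{\chi_0(p-1)}{(1+v)^{r+\beta}}\Big)u^{p-1}\nabla u\cdot\nabla v
-\chi_0 r\,\frac{u^{p}|\nabla v|^2}{(1+v)^{r+1+\beta}}.
\]
Multiply by $(1+v)^{2r+1+\beta}$ and write $s=(1+v)^{(\beta-1)/2}$. The discriminant is then $\big(rs+\chi_0(p-1)/s\big)^2-4r\chi_0(p-1)=\big(rs-\chi_0(p-1)/s\big)^2\ge 0$. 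So for every $\chi_0>0$ and $r>0$ the form is never negative definite. It is semidefinite only on the level set $(1+v)^{\beta-1}=\chi_0(p-1)/r$; for $\beta=1$ this forces $r=\chi_0(p-1)$, with no margin left. Completing the square therefore yields no threshold on $\chi_0$ at all. Your assertion that optimizing $r$ produces $2(2\beta-1)/\max\{2,\gamma N\}$ is unsupported.

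The missing idea is the mechanism behind inequality (4.11) of Part~I, quoted in Step~1 of the proof of Lemma~\ref{L:minimal-eventual-bounds}. Keep the functional $\int_\Omega u^p$ unweighted, and put the weight into a test function for the \emph{elliptic} equation, where no time derivative appears. Testing the $u$-equation with $u^{p-1}$ and applying Young's inequality gives
\[
\frac1p\frac{d}{dt}\int_\Omega u^p\le-\frac{p-1}{2}\int_\Omega u^{p-2}|\nabla u|^2+\frac{(p-1)\chi_0^2}{2}\int_\Omega \frac{u^p|\nabla v|^2}{(1+v)^{2\beta}}
\]
(plus logistic terms). Testing $0=\Delta v-\mu v+\nu u^\gamma$ with $u^p(1+v)^{1-2\beta}$ gives
\[
(2\beta-1)\int_\Omega\frac{u^p|\nabla v|^2}{(1+v)^{2\beta}}
=p\int_\Omega\frac{u^{p-1}\nabla u\cdot\nabla v}{(1+v)^{2\beta-1}}
+\mu\int_\Omega\frac{u^p v}{(1+v)^{2\beta-1}}
-\nu\int_\Omega\frac{u^{p+\gamma}}{(1+v)^{2\beta-1}}.
\]
Each term on the right is harmless:
\begin{itemize}
\item the $\nu$-term has a good sign and can be dropped;
\item the $\mu$-term is at most $\mu\Theta_{2(\beta-1)}\int_\Omega u^p$;
\item the cross term is absorbed by half the diffusion, because $(1+v)^{2-4\beta}\le(1+v)^{-2\beta}$ when $\beta\ge1$.
\end{itemize}
Combining the two estimates yields \eqref{E:new-lp-eq3}, whose dissipation coefficient is positive exactly when $p\chi_0<2\beta-1$. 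The leftover $C\int_\Omega u^p$ is closed by Gagliardo--Nirenberg together with the $L^1$ bound, or by the logistic term.

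Since $\chi_\beta=(2\beta-1)/\max\{1,\gamma N/2\}$, the hypothesis $\chi_0<\chi_\beta$ is precisely what makes the window $\max\{1,\gamma N/2\}<p<(2\beta-1)/\chi_0$ nonempty. The lower bound on $p$ enters only in the $L^p\to L^\infty$ upgrade, not through a $u^{p+\gamma-1}$ term.

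Two further points in that last step:
\begin{itemize}
\item $u\in L^p$ with $p>\gamma N/2$ gives $v\in L^\infty$ and $\nabla v\in L^q$ for some $q>N$, not $\nabla v\in L^\infty$. This is still enough for the semigroup or Moser argument.
\item Your use of $v\in L^\infty$ to make the weight comparable to $1$ is circular, because that bound is itself a consequence of the $L^p$ estimate you are trying to prove.
\end{itemize}
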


We point out that, by the arguments  of \cite[Theorem~1.2(2)]{chen.ruau.ea:25:boundedness}, when   $a=b=0$, $m=1$, $\beta\ge 1$, and $0<\,\chi_0 <
\min\big\{\frac{\chi_\beta}{2},\chi_\beta^{1/2}\big\}$,
there exist constants $C_N>0$ and
$0<q'_N\le 1\le q''_N$, depending only on $(\Omega,N,\mu,\nu,\gamma)$, such that for every
$u^*>0$ and any globally defined classical solution $(u(t,x),v(t,x))$ of \eqref{E:main-PE} with
$u(0,\cdot)$ satisfying~\eqref{E:initial-cond-PE} and $\int_\Omega
u(0,x)\,dx=u^*|\Omega|$,
\begin{equation}
  \label{E:CN-eq}
  \sup_{x\in\Omega} u(t,x)\le \overline{u}_0(u^*)
  \coloneqq C_N\big((u^*)^{q'_N}+(u^*)^{q''_N}\big)\quad \forall\, t\gg 1.
\end{equation}
The above statement  will play a role in the study of the asymptotic behavior of \eqref{E:main-PE} when $a=b=0$. Since it  is not explicitly stated in~\cite{chen.ruau.ea:25:boundedness},   for the
reader's convenience, we give a proof in Lemma~\ref{L:minimal-eventual-bounds}.

\subsection{Problems to be studied in Part II}

It is seen from Propositions~\ref{P:global-existence-prop1},
\ref{P:global-existence-prop3}, and~\ref{P:global-existence-prop2} that for any
given $u_0$ satisfying~\eqref{E:initial-cond-PE}, the classical solution
$(u(t,x;u_0),v(t,x;u_0))$ of~\eqref{E:main-PE} exists globally and stays bounded
on $(0,\infty)$ provided that the chemotaxis sensitivity is negative, or the
logistic source is relatively strong in certain sense, or the nonlinear cross
diffusion is weak in certain sense. The following question arises naturally: How
does a globally defined bounded positive solution of~\eqref{E:main-PE} change as
time changes?

In this paper, we study the asymptotic behavior of globally defined positive
classical solutions, including uniform persistence and stabilization of positive
constant solutions. To be more precise, we study the following problems:

\smallskip\noindent$\bullet$ \textit{Whether uniform persistence occurs in \eqref{E:main-PE} in the sense that $
\liminf_{t\to\infty}\inf_{x\in\Omega}u(t,x)>0$ for
any globally
defined bounded positive solution $(u(t,x),v(t,x))$ of \eqref{E:main-PE}.}
We provide a confirmative answer in the case that $m\ge 1$
(see Theorem~\ref{T:persistence} for details).

\smallskip\noindent$\bullet$ \textit{Whether the unique positive constant
solution $(u^*,v^*)=\big((\frac{a}{b})^{\frac{1}{\alpha}}, \frac{\nu}{\mu}
(\frac{a}{b})^{\frac{\gamma}{\alpha}}\big)$
in the non-minimal model, i.e., \eqref{E:main-PE} with $a, b > 0$, is stable.}
We obtain an explicit critical
sensitivity threshold for linear stability of $(u^*,v^*)$ and several explicit
sufficient conditions for global stability of $(u^*,v^*)$ (see
Theorems~\ref{T:linear-stability-thm},
\ref{T:global-stabl-negative-sensitivity}, and~\ref{T:global-stable} for
details).

\smallskip\noindent$\bullet$ \textit{Whether positive constant solutions in
the minimal model, i.e., \eqref{E:main-PE} with $a=b=0$, are stable.} Note that in this
case, \eqref{E:main-PE} has a family $\{(u^*,v^*) \coloneqq (u^*,\frac{\nu}{\mu}
(u^*)^\gamma)\}_{u^*>0}$ of positive constant solutions. For each positive
constant solution $(u^*,v^*)\coloneqq (u^*,\frac{\nu}{\mu}(u^*)^\gamma)$, we obtain an
explicit critical sensitivity threshold for linear stability of $(u^*,v^*)$ and
some sufficient conditions for global stability of $(u^*,v^*)$ (see
Theorems~\ref{T:linear-stability-thm} and~\ref{T:stability-minimal-model} for
details).

In the forthcoming Part~III paper~\cite{chen.ruau.ea:26:bifurcation}, we will
study bifurcation of positive constant solutions.

\subsection{Contributions of Part II}

To the best of our knowledge, the problems stated above are studied here for
the first time when $\beta>0$.
From the modeling viewpoint, the factor $(1+v)^{-\beta}$
may be viewed as a signal-dependent desensitization mechanism (or receptor
saturation/adaptation): when $v$ is large, the effective drift toward the signal
is reduced (when $\chi_0>0$).
Negative sensitivity ($\chi_0< 0$) corresponds to chemorepulsion, for which the
drift acts against concentration of the signal. It is then expected that large
$\beta$ or negative sensitivity would weaken the possibility of aggregation and
pattern initiation, or strengthen the possibility of stabilization at constant
solutions.
However, the signal-dependent sensitivity complicates the analysis of the
asymptotic dynamics of \eqref{E:main-PE}.
Many existing approaches for the study of stability of
positive stationary solutions in chemotaxis models are difficult to apply
directly to \eqref{E:main-PE}. The current paper makes
several contributions to the study of the asymptotic dynamics
of~\eqref{E:main-PE}, including

\smallskip



\noindent $\bullet$ ({\it Establishment of stability regimes}) This paper
establishes explicit linear stability and instability criteria for positive
constant solutions, together with computable critical sensitivity thresholds and
explicit parameter dependence (see Theorem~\ref{T:linear-stability-thm}), which
clarifies the impact of each parameter on the stability of positive constant
solutions. It also establishes several sets
of explicit sufficient conditions for global stability of the unique positive
equilibrium in the non-minimal model (see
Theorems~\ref{T:global-stabl-negative-sensitivity} and~\ref{T:global-stable})
and for global stability of the family of positive constant solutions in the
minimal model (see Theorem~\ref{T:stability-minimal-model}). These conditions
provide various parameter regimes where chemical signal does not induce
aggregation or nontrivial pattern formation, which is of great biological
interest.

\smallskip

\noindent $\bullet$ ({\it Discovery of interesting biological scenarios}) The
results established in this paper reveal the following interesting biological
scenarios when $m\ge 1$: the chemical signal does not enforce asymptotic
smallness of the population density at any location (see
Theorem~\ref{T:persistence}); larger $\beta$ leads to improved stability thresholds,
reflecting the stabilizing role of signal saturation (see
Theorem~\ref{T:global-stable}(ii), (iv), Theorem~\ref{T:stability-minimal-model}, and
Remark~\ref{R:positive-sensitivity-strong-logistic}(4)); negative sensitivity
makes every globally defined positive solution stabilize at some constant
solution and hence prevents nontrivial pattern formation (see
Theorem~\ref{T:global-stabl-negative-sensitivity}). \smallskip

\noindent $\bullet$ ({\it Development of new approaches and extension of
existing methods}) Due to the nonlinear sensitivity function
$\chi(v)=\frac{\chi_0}{(1+v)^\beta}$, many existing approaches for the study of
stability of positive stationary solutions in chemotaxis models are difficult to
apply directly to \eqref{E:main-PE}.  Some new ideas and techniques are
developed and several existing methods are extended in this paper to study the
asymptotic dynamics of \eqref{E:main-PE}. For example, to establish the global
stability regimes when $\chi_0>0$, we extended the existing Lyapunov functional
method for $m=1$ to $m>1$ and the rectangle/ODE method for $\beta=0$ to
$\beta>0$ (see Remark~\ref{R:positive-sensitivity-strong-logistic}(2), (3)).
When $\chi_0<0$, instead of applying certain Lyapunov functional method or
rectangle/ODE method, we prove that every globally defined positive solution
stabilizes at some positive constant solution through nontrivial applications of
the maximum principle and Hopf's Lemma for parabolic equations; this approach
appears to be new.

\smallskip

\noindent $\bullet$ ({\it Extension and recovery of existing results}) The results
obtained in this paper cover the case $\beta=0$, and recover and extend the
relevant existing results in the literature for the case $\beta=0$ (see
Remark~\ref{R:positive-sensitivity-strong-logistic}(5).)

\medskip

\noindent {\bf Organization of the paper:} The rest of the paper is organized as
follows. In Section~\ref{S:main-results}, we state our main results and provide
remarks. In Section~\ref{S:Prelim}, we collect basic properties of classical
solutions of~\eqref{E:main-PE} to be used in the proofs of the main results. In
Section~\ref{S:Persistence}, we study persistence and prove
Theorem~\ref{T:persistence}. In Section~\ref{S:linear-stability}, we prove the
linear stability/instability result in Theorem~\ref{T:linear-stability-thm}. In
Section~\ref{S:Stability-negative-sensitivity}, we study global stability with
negative sensitivity and prove
Theorem~\ref{T:global-stabl-negative-sensitivity}. In Section~\ref{S:Stability},
we establish global stability results for constant equilibria in the non-minimal
model and prove Theorem~\ref{T:global-stable}. In
Section~\ref{S:stability-minimal-model}, we analyze the minimal model and prove
Theorem~\ref{T:stability-minimal-model}. Finally, in Appendix~\ref{S:Semigroup}
we record basic semigroup estimates, in Appendix~\ref{S:appendix-power-diff} we
prove a power-difference inequality, and in
Appendix~\ref{S:appendix-chi-comparisons} we collect several comparison lemmas
used to connect explicit stability thresholds with the critical sensitivity.

\section{Statements of the main results and remarks}\label{S:main-results}

In this section, we state our main results on uniform persistence and stability
of positive constant solutions and provide some remarks on the main results and
techniques. To this end, we first introduce some definitions and standing
notation.

\subsection{Definitions and notation}\label{SS:Definition}

In this subsection, we present some notation to be used throughout the paper,
and introduce the definition of stability of positive constant solutions
of~\eqref{E:main-PE}.

\smallskip

We assume that $\Omega \subset \mathbb{R}^N$ is a bounded smooth domain. Unless
specified otherwise, the number $C$ denotes a generic constant which is
independent of the solutions of~\eqref{E:main-PE}, but may depend on the
parameters under consideration and may be different at different places. For a
given function $u \in C(\overline\Omega)$, we may use $\Norm{u}_\infty$ for
$\Norm{u}_{C(\overline\Omega)} = \sup_{x\in\Omega}|u(x)|$.  For any $p \ge 1$
and $u\in L^p(\Omega)$, we use $\Norm{u}_p$ to denote the $L^p$-norm of $u$,
i.e., $\Norm{u}_p \coloneqq \left(\int_\Omega |u(x)|^p dx\right)^{1/p}$. For
given $\theta_1, \theta_2 \in (0,1)$ and an interval $I\subset\mathbb{R}$,
denote the \textit{H\"older space}
\begin{equation*}
  C^{\theta_1,\theta_2}\big(I\times\overline\Omega\big)
  \coloneqq \big\{u\in C(I\times\overline\Omega)\, \big |\, \Norm{u}_{C^{\theta_1,\theta_2}(I\times\overline\Omega)}<\infty \big\},
\end{equation*}
where the \textit{H\"older norm} is defined by
\begin{equation*}
  \Norm{u}_{C^{\theta_1,\theta_2}\left(I\times\overline\Omega\right)}
  \coloneqq \mathop{\sup_{(t,x)\in I\times\overline \Omega}}|u(t,x)|+ \mathop{\sup_{(t_1,x_1),(t_2,x_2)\in I\times\overline \Omega}}_{(t_1,x_1) \not = (t_2,x_2)}
  \frac{|u(t_1,x_1)-u(t_2,x_2)|}{|t_1-t_2|^{\theta_1}+|x_1-x_2|^{\theta_2}}.
\end{equation*}

We assume that $u_0$ satisfies~\eqref{E:initial-cond-PE}, and denote by
$(u(t,x;u_0), v(t,x;u_0))$ the classical solution of~\eqref{E:main-PE}
satisfying $u(0,x;u_0)=u_0(x)$. We may drop $t,x,u_0$ in $u(t,x;u_0)$ and
$v(t,x; u_0)$, and drop $u_0$ in $T_{\max}(u_0)$ if no confusion occurs.

Note that when $a,b>0$, \eqref{E:main-PE} has a unique positive constant
solution $(u^*, v^*)$ given by
\begin{equation}\label{E:equilibrium}
  (u^*, v^*) = \Big(\big(\frac{a}{b}\big)^{1/\alpha}, \:\frac{\nu}{\mu}\big(\frac{a}{b}\big)^{\gamma/\alpha}\Big).
\end{equation}
When $a=b=0$, for any positive constant $u^*>0$, $(u^*,v^*)$ is a positive
constant solution of~\eqref{E:main-PE}, where $v^*=\frac{\nu}{\mu} \left(
u^*\right)^{\gamma}$. Hence~\eqref{E:main-PE} with $a=b=0$ has a one parameter
family of positive constant solutions, $\{(u^*,v^*)\}_{u^*>0}$.

To state the stability of positive constant solutions, let us introduce some
notation. Let $(u^*,v^*)$ be a positive constant solution of~\eqref{E:main-PE}.
It is straightforward to verify that the linearization of~\eqref{E:main-PE} at
$(u^*,v^*)$ is given by
\begin{equation}\label{E:main-linear-PE1}
  \begin{dcases}
    u_t = \Delta u - \chi_0\frac{(u^*)^m}{(1+v^*)^\beta}\Big(\mu v-\nu \gamma(u^*)^{\gamma-1}u\Big)-a\alpha u, & x\in\Omega, \cr
    0 = \Delta v - \mu v + \nu \gamma(u^*)^{\gamma-1}u,                                                        & x\in\Omega.
  \end{dcases}
\end{equation}
Let $\left\{\lambda_n\right\}_{n=0}^\infty$ denote the eigenvalues of the
negative Neumann Laplacian:
\[
  - \Delta u = \lambda u \quad \text{in } \Omega,
  \qquad
  \frac{\partial u}{\partial n} = 0 \quad \text{on } \partial\Omega,
\]
with $0 = \lambda_0 < \lambda_1 \le \lambda_2 \le \cdots$. Let
\begin{equation}\label{E:sigma_n}
  \sigma_n =
  \sigma_n(\chi_0) \coloneqq
  -\,\lambda_n
  + \chi_0\,\nu\gamma\, \frac{(u^*)^{m+\gamma-1}}{(1+v^*)^\beta}\,\frac{\lambda_n}{\mu+\lambda_n}
  - a\alpha.
\end{equation}
Then $\left\{\sigma_n\right\}_{n=0}^\infty$ are the eigenvalues of the linear
operator associated with~\eqref{E:main-linear-PE1} (see the proof of
Theorem~\ref{T:linear-stability-thm} in
Section~\ref{S:linear-stability}). Note that, when $a, b > 0$, $\sigma_0 < 0$, and when $a = b = 0$, $\sigma_0 =0$.

\begin{definition}\label{D:stability}
  \begin{itemize}

    \item[(1)] A positive constant solution $(u^*,v^*)$ of~\eqref{E:main-PE} is
    said to be \emph{linearly stable} if $\sigma_n < 0$ for all $n \ge 1$.

    \item[(2)] A positive constant solution $(u^*, v^*)$ of~\eqref{E:main-PE} is
    said to be \emph{linearly unstable} if there is $n\ge 1$ such that
    $\sigma_n > 0$.

    \item[(3)] When $a,b>0$, the unique positive constant solution $(u^*,v^*)$
    of~\eqref{E:main-PE} is said to be \emph{globally asymptotically stable}
    if for any globally defined, positive, and bounded classical solution
    $(u(t,x)$, $v(t,x))$ of~\eqref{E:main-PE}, it holds that $\lim_{t \to
    \infty} \Norm{u(t, \cdot) - u^*}_\infty = 0$.

    \item[(4)] When $a = b = 0$, a positive constant solution $(u^*,v^*)$
    of~\eqref{E:main-PE} is said to be \emph{globally asymptotically stable} if
    for any globally defined, positive, and bounded classical solution $(u(t,x),
    v(t,x))$ of~\eqref{E:main-PE} with $\int_\Omega u(0,x)dx = |\Omega| u^*$, it
    holds that $\lim_{t \to \infty} \Norm{u(t, \cdot) - u^*}_\infty = 0$.

  \end{itemize}
\end{definition}

We point out that the global asymptotic stability defined above concerns the
convergence of those positive solutions of~\eqref{E:main-PE} which exist
globally and stay bounded. It does not require that for any $u_0$
satisfying~\eqref{E:initial-cond-PE},
the solution $(u(t,x;u_0),v(t,x;u_0))$ of~\eqref{E:main-PE} with initial condition
$u(0,x;u_0)=u_0(x)$ exists for all $t>0$.

\subsection{Statements of the main results}\label{SS:Main}

In this subsection, we state our main results. Our first main result addresses
the persistence of globally defined, bounded, and positive solutions of the
parabolic-elliptic system~\eqref{E:main-PE}.

\begin{theorem}[Uniform persistence]\label{T:persistence}
  \begin{itemize}

    \item[(1)] Assume $m\ge 1$. Then, uniform persistence occurs in the
    parabolic-elliptic system~\eqref{E:main-PE} in the following sense: for
    any globally defined, bounded, and positive solution $(u(t,x),v(t,x))$
    of~\eqref{E:main-PE}, we have
    \begin{align}\label{E:persistence}
      \liminf_{t \to \infty} \inf_{x \in \Omega} u(t,x) > 0
      \quad \text{and} \quad
      \liminf_{t\to\infty} \inf_{x\in\Omega} v(t,x)
      \;\ge\; \frac{\nu}{\mu}
      \Big(\liminf_{t\to\infty}\inf_{x\in\Omega} u(t,x)\Big)^{\!\gamma}
      \,>\, 0.
    \end{align}

    \item[(2)] If $a,b>0$, $\chi_0>0$, $m=1$, $\beta\ge 1$, and
    $\chi_0<\frac{a}{\mu \Theta_{\beta-1}}$, then for any globally defined,
    bounded, and positive solution $(u(t,x),v(t,x))$ of~\eqref{E:main-PE}, it
    holds that
    \begin{equation}\label{E:new-eventual-lower-bound-for-u}
      \liminf_{t\to\infty}\inf_{x\in\Omega} u(t,x)
      \ge \Big(\frac{a-\chi_0\mu \Theta_{\beta-1}}{b}\Big)^{\frac{1}{\alpha}}
    \end{equation}
    and
    \begin{equation}\label{E:new-eventual-lower-bound-for-v}
      \liminf_{t\to\infty}\inf_{x\in\Omega} v(t,x)
      \ge \frac{\nu}{\mu}\Big(\frac{a-\chi_0\mu \Theta_{\beta-1}}{b}\Big)^{\frac{\gamma}{\alpha}}.
    \end{equation}

    \item[(3)] If $a,b>0$, $\chi_0>0$, $m>1$, and $\beta\ge 1$, then for any
    globally defined, bounded, and positive solution $(u(t,x), v(t,x))$
    of~\eqref{E:main-PE}, it holds that
    \begin{equation}\label{E:lower-bound-for-u}
      \liminf_{t\to\infty}\inf_{x\in\Omega} u(t,x)
      \ge \min\Big\{1,\Big(\frac{a}{b+\chi_0\mu\Theta_{\beta-1}}\Big)^{\max\left\{\frac{1}{m-1},\frac{1}{\alpha}\right\}}\Big\}
    \end{equation}
    and
    \begin{equation}\label{E:lower-bound-for-v}
      \liminf_{t\to\infty}\inf_{x\in\Omega} v(t,x)
      \ge \frac{\nu}{\mu}\Big(\min\Big\{1,\left(\frac{a}{b+\chi_0\mu\Theta_{\beta-1}}\right)^{\max\left\{\frac{1}{m-1},\frac{1}{\alpha}\right\}}\Big\}\Big)^{\gamma}.
    \end{equation}

    \item[(4)] Assume that $a=b=0$, $m=1$, $\beta\ge 1$, and $0<\,\chi_0 <
    \min\big\{\frac{\chi_\beta}{2},\chi_\beta^{1/2}\big\}$. Then for any
    $u^*>0$ and any globally defined, bounded, and positive
    solution $(u(t,x),v(t,x))$ of~\eqref{E:main-PE} with $\int_\Omega
    u(0,x)\,dx = u^*|\Omega|$, it holds that
    \begin{equation}\label{E:new-lower-bound-for-v}
      \liminf_{t\to\infty}\inf_{x\in\Omega} v(t,x;u_0)
      \ge \underline v_0(u^*)
      \coloneqq C_\Omega
      \begin{cases}
        u^*\left(\overline{u}_0(u^*)\right)^{\gamma-1}, & 0<\gamma\le 1, \\[4pt]
        (u^*)^\gamma,                                   & \gamma>1,
      \end{cases}
    \end{equation}
    where $C_\Omega>0$ depends only on $(\Omega,N,\mu,\nu)$ and arises from a
    Gaussian lower bound for the Neumann heat kernel on $\Omega$, and
    $\overline{u}_0(u^*)$ is defined
    in~\eqref{E:CN-eq}.
  \end{itemize}
\end{theorem}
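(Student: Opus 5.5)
The plan is to rewrite the $u$-equation in non-divergence form, using the elliptic equation $\Delta v=\mu v-\nu u^\gamma$:
\begin{equation*}
  u_t=\Delta u-\chi_0\frac{m u^{m-1}}{(1+v)^\beta}\nabla u\cdot\nabla v
  +\chi_0\beta\frac{u^m|\nabla v|^2}{(1+v)^{\beta+1}}
  -\chi_0\frac{u^m}{(1+v)^\beta}\big(\mu v-\nu u^\gamma\big)+au-bu^{1+\alpha}.
\end{equation*}
All four parts then follow from comparison or Harnack arguments applied to this scalar equation, with $B(t,x)=-\chi_0 m u^{m-1}(1+v)^{-\beta}\nabla v$ treated as a given drift. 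The lower bounds for $v$ come for free in each case. Indeed, $v=\nu(\mu-\Delta)^{-1}u^\gamma$ and the elliptic maximum principle give $v\ge \frac{\nu}{\mu}\inf_\Omega u^\gamma$. This yields the second half of \eqref{E:persistence}, as well as \eqref{E:new-eventual-lower-bound-for-v} and \eqref{E:lower-bound-for-v} once the bounds on $u$ are established.

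\emph{Parts (2) and (3).} Here $\chi_0>0$, so the $|\nabla v|^2$ term and the $+\chi_0\nu u^{m+\gamma}(1+v)^{-\beta}$ term are nonnegative and can be dropped. For $\beta\ge1$, an elementary maximization gives $\sup_{s\ge0}\frac{s}{(1+s)^\beta}=\Theta_{\beta-1}$. Hence
\begin{equation*}
  u_t\ge\Delta u+B\cdot\nabla u+au-\chi_0\mu\Theta_{\beta-1}u^m-bu^{1+\alpha}.
\end{equation*}
For $m=1$ the right-hand side equals $\Delta u+B\cdot\nabla u+u(a-\chi_0\mu\Theta_{\beta-1}-bu^\alpha)$. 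Comparison with the spatially homogeneous ODE solution starting from $\inf u(t_0)>0$ then gives \eqref{E:new-eventual-lower-bound-for-u}.

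For $m>1$, I use the fact that on $\{u\le1\}$ we have $u^{m-1},u^\alpha\le u^{\kappa}$ with $\kappa=\min\{m-1,\alpha\}$. The reaction is therefore bounded below by $u\big(a-(b+\chi_0\mu\Theta_{\beta-1})u^{\kappa}\big)$ whenever $u\le1$. I compare with the ODE $w'=w(a-(b+\chi_0\mu\Theta_{\beta-1})w^\kappa)$, truncated at level $1$. Its attracting equilibrium is $\min\{1,(a/(b+\chi_0\mu\Theta_{\beta-1}))^{1/\kappa}\}$, which equals the bound in \eqref{E:lower-bound-for-u} because $1/\kappa=\max\{\frac1{m-1},\frac1\alpha\}$.

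\emph{Part (1).} Since the solution is bounded, parabolic and elliptic regularity give uniform bounds on $\nabla v$ and $\Delta v$ for $t\ge1$. Because $m\ge1$, the factor $u^{m-1}$ is bounded, so $u$ solves a linear equation $u_t=\Delta u+B\cdot\nabla u+c\,u$ with $\|B\|_\infty+\|c\|_\infty\le C$ uniformly in time. The parabolic Harnack inequality with Neumann boundary conditions then yields
\begin{equation*}
  \sup_\Omega u(t)\le C_H\inf_\Omega u(t+1)\quad\text{for all }t\ge1,
\end{equation*}
so it suffices to show $\liminf_{t\to\infty}\int_\Omega u>0$. If $a=b=0$, the mass is conserved and this is immediate. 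If $a>0$, integrating the equation gives $\frac{d}{dt}\int_\Omega u\ge\int_\Omega u\,(a-b\|u(t)\|_\infty^\alpha)$. Suppose the mass became small. By Harnack, $\|u(t)\|_\infty\le C_H|\Omega|^{-1}\int_\Omega u(t+1)$, and a Gronwall argument on unit windows shows that $\int_\Omega u$ varies by at most a bounded factor over unit times. Consequently $\|u\|_\infty$ is small wherever the mass is small, and then the mass strictly increases. This forces $\liminf_{t\to\infty}\int_\Omega u\ge\epsilon_0>0$. (In the degenerate case $a=0<b$ the mass does decay, so I expect the statement to implicitly assume $a>0$ or $a=b=0$.) I expect this mass lower bound, and in particular the uniformity of the Harnack constant up to the Neumann boundary, to be the main obstacle.

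\emph{Part (4).} I write $v=\nu\int_0^\infty e^{-\mu s}e^{s\Delta}u^\gamma\,ds$. A Gaussian lower bound for the Neumann heat kernel, integrated over $s\in[1,2]$, gives $v\ge C_\Omega\int_\Omega u^\gamma$. For $\gamma>1$, Jensen's inequality together with mass conservation gives $\int_\Omega u^\gamma\ge|\Omega|(u^*)^\gamma$. For $0<\gamma\le1$, the eventual bound \eqref{E:CN-eq} gives $u^\gamma=u\cdot u^{\gamma-1}\ge u\,\overline u_0(u^*)^{\gamma-1}$, and integrating yields $\int_\Omega u^\gamma\ge|\Omega|u^*\overline u_0(u^*)^{\gamma-1}$. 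Absorbing $|\Omega|$ into $C_\Omega$ gives \eqref{E:new-lower-bound-for-v}.
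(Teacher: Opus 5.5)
Your proposal is correct. Parts (2)--(4) follow the paper's proof: the same non-divergence expansion, the bound $\sup_{s\ge 0}s(1+s)^{-\beta}=\Theta_{\beta-1}$, comparison with a spatially homogeneous ODE, and the Neumann heat-kernel lower bound combined with Jensen's inequality or the eventual bound $\overline u_0(u^*)$.

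The genuine difference is in Part (1). The paper's argument is qualitative. For $a=b=0$ it passes to an entire limit solution along $t_n\to\infty$ and invokes the strong maximum principle. For $a,b>0$ it first proves, by compactness and the strong maximum principle, a dichotomy: either $\underline u(t)\ge\delta_\varepsilon$ or $\overline u(t)-\underline u(t)\le\varepsilon$. Then, on intervals where $\underline u<\delta_\varepsilon$ (so $\overline u\le 2\varepsilon$), it compares $u$ with $\tilde u'=(a-\eta_\varepsilon)\tilde u-b\tilde u^{1+\alpha}$, since the chemotactic terms are $O(\varepsilon^\gamma)u$ there.

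You replace both arguments with one quantitative tool: the global Harnack inequality $\sup_\Omega u(t)\le C_H\inf_\Omega u(t+1)$ for the linear equation with bounded drift and potential. The hypothesis $m\ge 1$ enters exactly as in the paper, by keeping $u^{m-1}$ bounded. Harnack reduces persistence to a lower bound on the mass, which you get from $M'=aM-b\int_\Omega u^{1+\alpha}$ and Gronwall on unit windows. This treats both cases uniformly. It also gives a lower bound depending only on $\limsup_{t\to\infty}\|u(t)\|_\infty$, the parameters and $\Omega$, whereas the paper's $\delta_\varepsilon$ comes out of a contradiction argument. The cost is a Harnack inequality up to a Neumann boundary. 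That is standard (flatten the boundary, reflect evenly, apply Moser or Krylov--Safonov, and chain over the connected domain), but it is a heavier input than the strong maximum principle and compactness lemma the paper already has.

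Two small points.

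In Part (3), ``truncated at level $1$'' needs to be made precise. The inequality $g(w)\le f(w)$ is needed at the values of the comparison function, not of $u$. One way: replace $a$ by $a'=(b+\chi_0\mu\Theta_{\beta-1})s_0^{\kappa}\le a$, where $s_0=\min\{1,(a/(b+\chi_0\mu\Theta_{\beta-1}))^{1/\kappa}\}$, and start below $s_0$. Then the ODE solution stays below $s_0\le 1$. The paper avoids this by comparing with the exact ODE $w'=aw-bw^{1+\alpha}-\chi_0\mu\Theta_{\beta-1}w^m$ and showing that its limit satisfies $s_\infty\ge s_0$.

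Your caveat about $a=0<b$ is correct. The paper's proof only handles $a=b=0$ and $a,b>0$. For $a=0<b$ one has $M'\le -b|\Omega|^{-\alpha}M^{1+\alpha}$, so $\inf_\Omega u\to 0$ and persistence fails. For $a>0=b$ the mass grows like $e^{at}$, so no bounded global solutions exist and the statement is vacuous.
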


Theorem~\ref{T:persistence} is proved in Section~\ref{S:Persistence}.

Our second main result is on the linear stability and instability of positive
constant solutions $(u^*, v^*)$ of~\eqref{E:main-PE}. To state this result, we
first introduce some notation. Let
\begin{equation}\label{E:chi-star}
  \chi_{a,b,\beta}^*(u^*) \coloneqq \inf_{n\ge 1}\Big[\frac{(1+v^*)^\beta}{\nu\gamma\,(u^*)^{m+\gamma-1}} \times \frac{(\lambda_n + a\alpha)(\mu+\lambda_n)}{\lambda_n}\Big],
\end{equation}
where $v^* = \frac{\nu}{\mu} (u^*)^\gamma$. Note that when $a >0$ and $b>0$,
$u^* = \left(a/b\right)^{1/\alpha}$ is an implied parameter
(see~\eqref{E:equilibrium}), but when $a = b = 0$, $u^* > 0$ is a free
parameter.  Note that
\begin{equation}\label{E:chi-star-lower}
  \chi_{a,b,\beta}^*(u^*)
  \ge \inf_{\lambda>0}\Big[\frac{(1+v^*)^\beta}{\nu\gamma\,(u^*)^{m+\gamma-1}} \times \frac{(\lambda + a\alpha)(\mu+\lambda)}{\lambda}\Big]
  = \frac{(1+v^*)^\beta}{\nu \gamma(u^*)^{m+\gamma-1}} \left(\sqrt{a \alpha} + \sqrt{\mu}\right)^2.
\end{equation}

\begin{theorem}[Linear stability and instability]\label{T:linear-stability-thm}
  \begin{itemize}
    \item[(1)] Assume that $a>0$ and $b>0$. Let $(u^*,v^*)$ be the unique
    positive constant solution of~\eqref{E:main-PE} given
    by~\eqref{E:equilibrium}. If $\chi_0 < \chi_{a,b,\beta}^*(u^*)$, then
    $(u^*, v^*)$ is a linearly stable equilibrium of~\eqref{E:main-PE}.
    Otherwise, if $\chi_0 > \chi_{a,b,\beta}^*(u^*)$, then $(u^*,v^*)$ is an
    unstable equilibrium of~\eqref{E:main-PE}.

    Moreover, if $\chi_0<\chi_{a,b,\beta}^*(u^*)$, there are $C, \delta,
    \lambda > 0$ such that for any $u_0$ satisfying~\eqref{E:initial-cond-PE}
    and $\Norm{u_0-u^*}_\infty < \delta$, we have that $T_{\max}(u_0) =
    \infty$ and
    \begin{equation}\label{E:exponential-decay-eq1}
      \Norm{u(t,\cdot;u_0)-u^*}_{C^1(\overline\Omega)} + \Norm{v(t,\cdot;u_0)-v^*}_{C^1(\overline\Omega)}
      \le C e^{-\lambda t}\quad \forall\, t \ge 0.
    \end{equation}

    \item[(2)] Assume that $a = b = 0$. Let $u^*>0$ and set $v^*\coloneqq
    \frac{\nu}{\mu}(u^*)^\gamma$. Then $(u^*,v^*)$ is a positive constant
    solution of~\eqref{E:main-PE}. If $\chi_0<\chi_{0,0,\beta}^*(u^*)$, then
    $(u^*,v^*)$ is linearly stable, and if $\chi_0>\chi_{0,0,\beta}^*(u^*)$,
    then it is unstable.

    Moreover, if $\chi_0<\chi_{0,0,\beta}^*(u^*)$, there are $C, \delta,
    \lambda > 0$ such that for any $u_0$ satisfying~\eqref{E:initial-cond-PE},
    $\Norm{u_0-u^*}_\infty < \delta$, and $\int_\Omega u_0(x) dx = |\Omega|
    u^*$, we have that $T_{\max}(u_0) = \infty$
    and~\eqref{E:exponential-decay-eq1} holds.

  \end{itemize}
\end{theorem}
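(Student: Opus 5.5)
The proof has two parts: a spectral computation, which gives the stability/instability dichotomy, and a nonlinear perturbation argument in a semigroup framework, which gives the local exponential decay. We treat parts (1) and (2) simultaneously, keeping track of $\sigma_0$.

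\emph{Spectral step.} Set $u=u^*+\phi$ and $v=v^*+\psi$ in \eqref{E:main-linear-PE1}. The elliptic equation gives $\psi=\nu\gamma(u^*)^{\gamma-1}(\mu-\Delta)^{-1}\phi$. Substituting this into the $\phi$-equation, and using $\Delta\psi=\mu\psi-\nu\gamma(u^*)^{\gamma-1}\phi$, we get the scalar nonlocal linear equation
\[
\phi_t=\mathcal L\phi\coloneqq\Delta\phi+\chi_0\nu\gamma\frac{(u^*)^{m+\gamma-1}}{(1+v^*)^\beta}\big(I-\mu(\mu-\Delta)^{-1}\big)\phi-a\alpha\phi .
\]
Expand $\phi$ in the Neumann eigenfunctions $\{e_n\}$. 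Since $I-\mu(\mu-\Delta)^{-1}$ acts on $e_n$ as multiplication by $\lambda_n/(\mu+\lambda_n)$, the operator $\mathcal L$ is diagonal with eigenvalues exactly $\sigma_n$ from \eqref{E:sigma_n}. For $n\ge1$ we have $\lambda_n>0$, and a direct rearrangement shows that $\sigma_n<0$ if and only if
\[
\chi_0<\frac{(1+v^*)^\beta(\lambda_n+a\alpha)(\mu+\lambda_n)}{\nu\gamma(u^*)^{m+\gamma-1}\lambda_n}.
\]
Hence $\chi_0<\chi^*_{a,b,\beta}(u^*)$ gives $\sigma_n<0$ for all $n\ge1$, i.e.\ linear stability. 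The bracket in \eqref{E:chi-star} tends to $+\infty$ as $\lambda_n\to\infty$, so the infimum is attained at some $n$, and in fact $\sup_{n\ge1}\sigma_n<0$. If $\chi_0>\chi^*$, some $n$ has $\sigma_n>0$, which is linear instability. When $a,b>0$ we also have $\sigma_0=-a\alpha<0$. When $a=b=0$, $\sigma_0=0$ with eigenfunction the constants; we remove it by working in the mean-zero subspace, which is invariant because $\int_\Omega u$ is conserved under the mass constraint.

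\emph{Nonlinear step.} Write the full system for $w=u-u^*$ in the form
\[
w_t=\mathcal L w+\mathcal N(w),
\]
where $\mathcal N$ collects the quadratic and higher-order terms from three sources. The first is the expansion of $u^m(1+v)^{-\beta}\nabla v$ around $(u^*,v^*)$. The second is the expansion of $\nu u^\gamma$ inside the elliptic solve, $v=\nu(\mu-\Delta)^{-1}u^\gamma$. The third is the logistic term. The nonlinear term has divergence form, $\mathcal N(w)=-\nabla\cdot F(w)+G(w)$, with
\[
\|F(w)\|_{L^p}+\|G(w)\|_{L^p}\le C\|w\|_{W^{1,p}}^2 \quad\text{for } \|w\|_{W^{1,p}}\le 1 .
\]
This bound uses elliptic regularity ($v$ is bounded in $W^{2,p}$ by $u$ in $L^p$) and the smoothness of $s\mapsto s^m$ and $s\mapsto(1+s)^{-\beta}$ near positive values.

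The operator $\mathcal L$ is a bounded, lower-order perturbation of the Neumann Laplacian, so it is sectorial on $L^p$, and by the spectral step its spectrum (restricted to mean-zero functions when $a=b=0$) lies in $\{\operatorname{Re}\sigma\le-2\lambda\}$. The semigroup estimates recorded in Appendix~\ref{S:Semigroup} then give
\[
\|e^{t\mathcal L}\nabla\cdot f\|_{W^{1,p}}\le C\big(1+t^{-1/2-1/2}\big)e^{-2\lambda t}\|f\|_{L^p}.
\]
To avoid this non-integrable singularity we work instead in the fractional power space $X^\theta$ with $\theta\in(\tfrac12+\tfrac N{2p},1)$. This space embeds into $C^1(\overline\Omega)$, and the singular kernel becomes $t^{-\theta-1/2}$. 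Since $\theta+\tfrac12<1$ would force $\theta<\tfrac12$, we take $F$ estimated in $X^{\theta'}$ instead, or use the $C^{1+}$ smoothing estimate and a Gronwall-type singular inequality. With this setup the Duhamel formula
\[
w(t)=e^{t\mathcal L}w_0+\int_0^t e^{(t-s)\mathcal L}\mathcal N(w(s))\,ds
\]
closes via a bootstrap: as long as $\|w(t)\|_{X^\theta}\le\varepsilon$, we obtain $\|w(t)\|_{X^\theta}\le Ce^{-\lambda t}$.

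To start the bootstrap from an $L^\infty$-small initial datum, we use the local existence theory of Proposition~\ref{P:local-existence} together with parabolic smoothing. On $[0,1]$ this gives $\|u(1)-u^*\|_{C^{1}}$ small when $\|u_0-u^*\|_\infty<\delta$, by continuous dependence. The smallness also keeps $u$ positive and away from $0$, so $T_{\max}=\infty$ by the blow-up alternative. Finally, the estimate for $\|v-v^*\|_{C^1}$ follows from elliptic regularity applied to $v-v^*=\nu(\mu-\Delta)^{-1}(u^\gamma-(u^*)^\gamma)$.

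We expect the main obstacle to be the nonlinear step: handling the divergence-form quadratic term with a derivative loss, and choosing function spaces in which the semigroup smoothing is integrable while $X^\theta$ still controls $C^1$. The first thing to try is the standard Horstmann–Winkler style estimates $\|e^{t\Delta}\nabla\cdot f\|_{L^\infty}\le C(1+t^{-1/2-N/(2p)})e^{-\lambda t}\|f\|_{L^p}$, transferred to $\mathcal L$, combined with a separate $\nabla w$ estimate. These estimates run a two-norm bootstrap on $\|w\|_\infty$ and $\|\nabla w\|_{L^p}$ with $p>N$.
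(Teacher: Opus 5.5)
Your spectral step is correct and is the paper's Step~1. Your observation that $\sigma_n\to-\infty$, hence $\sup_{n\ge1}\sigma_n<0$, supplies the uniform spectral gap the paper uses tacitly.

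The gap is in the nonlinear step. You keep the chemotactic nonlinearity in divergence form, $\mathcal N(w)=-\nabla\cdot F(w)+G(w)$, and control only $\Norm{F(w)}_{L^p}$. To reach $C^1$ decay you must work in $X_p^\theta$ with $\theta>\tfrac12+\tfrac N{2p}>\tfrac12$. There the Duhamel kernel satisfies $\Norm{A^\theta e^{-tA}\nabla\cdot f}_p\lesssim t^{-\theta-1/2}\Norm{f}_p$, which is not integrable at $t=0$, so the bootstrap does not close. You notice this, but none of the remedies you list is carried out. Your two-norm bootstrap on $\Norm{w}_\infty$ and $\Norm{\nabla w}_{L^p}$ hits the same wall as long as the nonlinearity stays in divergence form, since $\nabla e^{t\mathcal L}\nabla\cdot$ has a $t^{-1}$ singularity on $L^p$. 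As written, neither $T_{\max}(u_0)=\infty$ nor \eqref{E:exponential-decay-eq1} is established.

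What is missing is the observation that this parabolic--elliptic system has no derivative loss at all. Expand the divergence and use the $v$-equation to replace $\Delta v$:
\[
\nabla\cdot\Big(\frac{u^m}{(1+v)^\beta}\nabla v\Big)=\frac{m u^{m-1}}{(1+v)^\beta}\nabla u\cdot\nabla v-\frac{\beta u^m}{(1+v)^{\beta+1}}|\nabla v|^2+\frac{u^m}{(1+v)^\beta}\big(\mu v-\nu u^\gamma\big).
\]
In addition, resolvent/elliptic estimates give $\Norm{v-v^*}_{C^1(\overline\Omega)}\le C\Norm{u-u^*}_\infty$. Hence the full nonlinearity (the paper's $F(\tilde u,\tilde v)$ in \eqref{E:main-perturbation-eq1-F}) involves at most first derivatives of $w$. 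Take $\tfrac12<\sigma<1$ with $2\sigma-N/p>1$, so that $X_p^\sigma\hookrightarrow C^1(\overline\Omega)$. Then the nonlinearity maps a small ball of $X_p^\sigma$ into $L^p$, is locally Lipschitz there, and is $o(\Norm{w}_{X_p^\sigma})$, because all terms linear in $w$ have been absorbed into $A^*$. The relevant kernel is then $t^{-\sigma}$, which is integrable. Henry's linearized stability theorem (Lemma~\ref{L:prelim-lm1}) then yields decay in $X_p^\sigma$ directly, with no bootstrap; this is exactly the paper's route.

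Your ``estimate $F$ in a smoother space'' remedy amounts to the same thing once it is phrased as a bound on $\Norm{\nabla\cdot F(w)}_{L^p}$. Likewise, your $W^{1,p}$ bootstrap would close if run on the expanded nonlinearity $g$, using $\Norm{\nabla e^{t\mathcal L}g}_p\lesssim t^{-1/2}e^{-\lambda t}\Norm{g}_p$.

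The rest of your plan is sound and matches the paper:
\begin{itemize}
\item passing from $L^\infty$-smallness at $t=0$ to $X_p^\sigma$-smallness at a later time (the paper uses the compactness argument of Lemma~\ref{L:stability-lm});
\item restricting to the mean-zero subspace when $a=b=0$;
\item using the resolvent formula for $v-v^*$.
\end{itemize}
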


Theorem~\ref{T:linear-stability-thm} is proved in Section~\ref{S:linear-stability}.
Because of the dichotomy established in Theorem~\ref{T:linear-stability-thm}, we
refer to $\chi^*(u^*) \coloneqq \chi_{a,b,\beta}^*(u^*)$ as the \textit{critical
sensitivity}.

Our third main result is on the global stability of positive constant solutions
of~\eqref{E:main-PE} with negative sensitivity.

\begin{theorem}[Global stability in the model with negative
  sensitivity]\label{T:global-stabl-negative-sensitivity} Assume that $\chi_0\le
  0$, $m\ge 1$.

  \begin{itemize}
    \item[(1)] Assume $a,b>0$. The unique positive constant solution $(u^*, v^*)$
    given in~\eqref{E:equilibrium} is a globally asymptotically stable
    equilibrium of~\eqref{E:main-PE}. Moreover, there exist constants $C > 0$
    and $\lambda > 0$ such that for any globally defined bounded classical
    solution $(u(t,x),v(t,x))$ of~\eqref{E:main-PE},
    \eqref{E:exponential-decay-eq1} holds.

    \item[(2)] Assume that $a=b=0$. For any $u^*>0$, the constant solution
    $(u^*,v^*)\coloneqq (u^*,\frac{\nu}{\mu}(u^*)^\gamma)$ of~\eqref{E:main-PE}
    is globally asymptotically stable in the sense of part (4) of
    Definition~\ref{D:stability}. Moreover, there exist constants $C > 0$ and
    $\lambda > 0$ such that for any globally defined bounded classical solution
    $(u(t,x),v(t,x))$ of~\eqref{E:main-PE} with $\int_\Omega
    u(0,x)dx=|\Omega|u^*$, \eqref{E:exponential-decay-eq1} holds.
  \end{itemize}
\end{theorem}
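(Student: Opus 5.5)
The plan is to show that the spatial oscillation $\max_x u-\min_x u$ decays exponentially, using the maximum principle and Hopf's lemma, and then to upgrade to $C^1$ convergence by parabolic regularity. Write the $u$-equation in non-divergence form, using $\Delta v=\mu v-\nu u^\gamma$:
\begin{equation*}
u_t=\Delta u-\chi_0\frac{u^m}{(1+v)^\beta}\Big(\mu v-\nu u^\gamma\Big)-\chi_0\nabla\Big(\frac{u^m}{(1+v)^\beta}\Big)\cdot\nabla v+au-bu^{1+\alpha}.
\end{equation*}
Set $\overline u(t)=\max_x u(t,\cdot)$ and $\underline u(t)=\min_x u(t,\cdot)$. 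At an interior or boundary maximum point $x_t$ of $u(t,\cdot)$ we have $\nabla u=0$ (on the boundary this uses the Neumann condition). Next, compare $v$ with constants via the elliptic maximum principle: $\frac{\nu}{\mu}\underline u^\gamma\le v\le\frac{\nu}{\mu}\overline u^\gamma$. Since $\chi_0\le0$, the term $-\chi_0 u^m(1+v)^{-\beta}(\mu v-\nu \overline u^\gamma)$ at $x_t$ is $\le0$. The remaining gradient term $-\chi_0 u^m\nabla(1+v)^{-\beta}\cdot\nabla v=\chi_0\beta u^m(1+v)^{-\beta-1}|\nabla v|^2\le0$ is also favourable. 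Hence, in the sense of Dini derivatives, $\overline u'\le a\overline u-b\overline u^{1+\alpha}$, and symmetrically $\underline u'\ge a\underline u-b\underline u^{1+\alpha}-C|\nabla v|^2$. The lower estimate has the wrong sign in the gradient term, which is the main obstacle.

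The crude ODE comparison gives $\limsup\overline u\le u^*$ (or nonincreasing $\overline u$ when $a=b=0$), and Theorem~\ref{T:persistence}(1) gives $\liminf\underline u>0$. To get convergence, I would instead study $w=u-\underline{c}(t)$, or more cleanly the difference of two comparison quantities. Consider $D(t)=\overline u(t)-\underline u(t)$ and prove $D'\le-\lambda D$ for large $t$. This requires estimating the bad term at the minimum by $CD^2$ or $C\varepsilon D$. Elliptic estimates give $\|\nabla v\|_\infty\le C\|u^\gamma-\overline{u^\gamma}\|_\infty\le CD$, so $|\nabla v|^2\le CD^2$. The reaction and zeroth-order chemotactic terms contribute $-c D$ with $c>0$ uniformly. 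These contributions are: the term $-\chi_0 u^m(1+v)^{-\beta}(\mu v-\nu u^\gamma)$, whose difference between the max and min points is $\le -|\chi_0| c' D$ by monotonicity in $u$ against the bounds on $v$; and the logistic part, which is decreasing in $u$ beyond the equilibrium scale. In the minimal case $a=b=0$ the logistic part is absent, and the decay must come from the chemorepulsion term together with diffusion. To guarantee strict decay of $D$ there, I expect to need Hopf's lemma: if $D$ failed to decay, a limiting entire solution along a time sequence would attain an interior extremum of oscillation, and the strong maximum principle/Hopf would force it to be constant. This gives $D(t)\to0$ via an $\omega$-limit argument using the uniform H\"older bounds of bounded solutions. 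Once $D$ is small, the $CD^2$ term is dominated, and the exponential rate follows either from $D'\le-\lambda D$ or from Theorem~\ref{T:linear-stability-thm}. Indeed $\chi_0\le0<\chi^*(u^*)$, so once $\|u(t_0)-u^*\|_\infty<\delta$ (with the mass constraint preserved when $a=b=0$, since $\int u$ is conserved), the local exponential decay in $C^1$ applies.

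The order of steps is therefore as follows. First, obtain uniform bounds and persistence. Second, apply the maximum principle to $\overline u,\underline u$ and the elliptic bounds on $v$. Third, show $D(t)\to0$ by a compactness/$\omega$-limit argument in which Hopf's lemma rules out nonconstant limits; for $a,b>0$, also show the constant limit equals $u^*$ via the ODE $\dot c=ac-bc^{1+\alpha}$, and for $a=b=0$ show it equals the mass average $u^*$. Fourth, invoke Theorem~\ref{T:linear-stability-thm} for the exponential $C^1$ decay. The constants $C,\lambda$ are uniform only after the entry time. Obtaining uniformity over all bounded solutions as stated requires the entry time into the $\delta$-ball to be controlled by the eventual bounds, and I expect this to be handled by the explicit upper bound of Proposition~\ref{P:global-existence-prop1}. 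The hardest step is the third, specifically controlling the wrong-signed $|\nabla v|^2$ contribution at the minimum point before smallness is known.
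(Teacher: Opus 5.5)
Your first, second and last steps are sound. For $\chi_0\le0$, the Dini inequality $\overline u'\le a\overline u-b\overline u^{1+\alpha}$ gives $\limsup_{t\to\infty}\overline u(t)\le u^*$ (resp.\ $\overline u$ nonincreasing when $a=b=0$, cf.\ Lemma~\ref{L:nonincreasing}). Once $\Norm{u(t_0,\cdot)-u^*}_\infty<\delta$, the local result of Theorem~\ref{T:linear-stability-thm} applies because $\chi_0\le0<\chi^*_{a,b,\beta}(u^*)$; this is Corollary~\ref{C:linear-stability}.

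The gap is your third step, which is the real content of the theorem: for $a,b>0$ nothing you propose brings the minimum up to $u^*$. The inequality $D'\le-\lambda D$ is not available before smallness is known, for two reasons. First, $s\mapsto as-bs^{1+\alpha}$ is increasing on $(0,(a/((1+\alpha)b))^{1/\alpha})$, so the logistic difference can be positive. Second, the wrong-signed term at the minimum is only bounded by $C\abs{\chi_0}\beta D^2$, which no linear damping dominates while $D$ is of order one.

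The $\omega$-limit/Hopf argument does not close either. On a limit entire solution, the strong maximum principle can be used only at maxima of $u$, where the $\beta\abs{\nabla v}^2$ term has the good sign. Even there, if the maximum stays at a level $L$, it forces $u_\infty\equiv L$ only when $aL-bL^{1+\alpha}\le0$, i.e.\ $L\ge u^*$. So nothing in your argument excludes $\limsup_{t\to\infty}\overline u(t)<u^*$, or a persistent dip below $u^*$ while the maximum sits near $u^*$. A time-extremum of the oscillation $D$ only reproduces the same pointwise inequality at the extremal points, with the same uncontrolled $\abs{\nabla v}^2$ term. There is nothing for the strong maximum principle or Hopf's lemma to act on.

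The missing idea is to control the lower side in $L^1$ rather than pointwise. The taxis term is a divergence and drops out after integration:
\[
\frac{d}{dt}\int_\Omega (u^*-u)\,dx=-b\int_\Omega u\big((u^*)^\alpha-u^\alpha\big)\,dx .
\]
Any limit entire solution $u_\infty$ from Lemma~\ref{L:persistence-2} satisfies $u_\infty\le u^*$ (from $\limsup\overline u\le u^*$) and $u_\infty\ge\ell>0$ (Theorem~\ref{T:persistence}(1)). Hence $y(t)\coloneqq\int_\Omega(u^*-u_\infty(t,x))\,dx\ge0$ satisfies $y'\le-cy$ on all of $\R$, and boundedness as $t\to-\infty$ forces $y\equiv0$. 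Thus $u_\infty\equiv u^*$, and so $\Norm{u(t,\cdot)-u^*}_\infty\to0$, without ever estimating the $\abs{\nabla v}^2$ term. This is the mechanism of Claims~2 and~3 in the paper.

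For $a=b=0$, your ingredients do suffice once they are assembled through $\overline u$ rather than $D$. Mass conservation gives $\overline u\searrow L\ge u^*$, so a limit solution has maximum identically $L$. At that maximum the zeroth-order taxis term is at most $C(L-u_\infty)$, because $v_\infty\le\frac{\nu}{\mu}L^\gamma$. The strong maximum principle then gives $u_\infty\equiv L$, and the mass gives $L=u^*$. Finally, $\overline u\to u^*$ together with $\int_\Omega u=|\Omega|u^*$ yields uniform convergence.

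The uniformity you hope to get from Proposition~\ref{P:global-existence-prop1} is not available. An upper bound does not control the entry time into the $\delta$-ball; for $u_0\equiv\epsilon$ that time is of order $a^{-1}\ln(1/\epsilon)$. So $C$ must depend on the solution. That is what the argument through Corollary~\ref{C:linear-stability} gives, with $\lambda$ independent of the solution.
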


Theorem~\ref{T:global-stabl-negative-sensitivity} is proved in
Section~\ref{S:Stability-negative-sensitivity}.

Our fourth main result is on the global stability of the unique positive
constant solution with relatively strong logistic source. To state this theorem,
we first introduce some notation.

Assume that $a,b>0$ and let $(u^*,v^*)$ be the unique positive constant solution
of~\eqref{E:main-PE} given by~\eqref{E:equilibrium}.
In particular, $u^*=(a/b)^{1/\alpha}$ and $v^*=(\nu/\mu)\,(u^*)^\gamma$.
For given $b>0$ and $m\ge 1$, let
\begin{align}\label{E:b-star}
  \chi_{a,b,\beta}^{**,1}(u^*) \coloneqq \sqrt{b \times \frac{16\big(1 + \tilde{\beta}\, v^*\big)\mu}{(2m-1)\nu^2 C_{\alpha,\gamma} (u^*)^{2\gamma-\alpha+2m-2}}}
  \qquad \text{with} \quad
  \tilde{\beta} \coloneqq \left[1 \wedge (2\beta - 1)\right]_+,
\end{align}
where $C_{\alpha,\gamma}$ is defined by
\begin{equation}\label{E:power-diff-constant}
  C_{\alpha,\gamma} \coloneqq
  \begin{cases}
    \dfrac{(\alpha+1)^2}{4\alpha}, & 0<\alpha<1,                             \\[6pt]
    1,                             & \alpha\ge 1\ \text{and}\ 0<\gamma\le 1, \\[6pt]
    \dfrac{\gamma^2}{2\gamma-1},   & \alpha\ge 1\ \text{and}\ \gamma>1.
  \end{cases}
\end{equation}
We use the notation $\xi\wedge \eta = \min\{\xi,\eta\}$ and similarly
$\xi \vee \eta = \max\{\xi,\eta\}$.
We define ${\rm sign}(z)\coloneqq -1$ if $z<0$,
${\rm sign}(z)\coloneqq 0$ if $z=0$, and ${\rm sign}(z)\coloneqq 1$
if $z>0$.

For $m\ge 1$ and $\beta\ge 1$, define
\begin{equation}\label{E:bar-chi-eq}
  \bar \chi_{a,b,\beta} \coloneqq
  \begin{dcases}
    \dfrac{a}{2\mu\Theta_{\beta-1}}, & \text{if $m=1$,} \\[6pt]
    \dfrac{b}{\mu\Theta_{\beta-1}},  & \text{if $m>1$,}
  \end{dcases}
\end{equation}
and
\begin{equation}\label{E:under-bar-v-eq}
  \underline{v}_{a,b} \coloneqq
  \begin{dcases}
    \dfrac{\nu}{\mu}\left(\dfrac{a}{2b}\right)^{\frac{\gamma}{\alpha}},
    & \text{if $m=1$,} \\[8pt]
    \dfrac{\nu}{\mu}\Big(\min\Big\{1,\left(\dfrac{a}{2b}\right)^{\max\left\{\frac{1}{m-1},\frac{1}{\alpha}\right\}}\Big\}\Big)^{\gamma},
    & \text{if $m>1$.}
  \end{dcases}
\end{equation}
We also define
\begin{equation}\label{E:chi-triple-star}
  \chi_{a,b,\beta}^{**,2}(u^*)
  \coloneqq
  \min\left\{
  \bar \chi_{a,b,\beta},\,
  \sqrt{b \times \frac{16\big(1 + \underline{v}_{a,b}\big)^{2\beta}\mu}{(2m-1)\nu^2 C_{\alpha,\gamma} (u^*)^{2\gamma-\alpha+2m-2}}}
  \right\}.
\end{equation}

Let $M_0(\Omega)>0$ be a constant depending only on $\Omega$, coming from a
standard Neumann resolvent gradient estimate (introduced later at its first use;
see Lemma~\ref{L:M0-Omega}). For given $u^*>0$, $m\ge 1$, $\gamma\ge 1$, and
$\beta\ge 0$, let
\begin{equation}\label{E:chi-3-star}
  \chi_{a,b,\beta}^{**,3}(u^*)
  \coloneqq
  \frac{a}{\nu (u^*)^{m+\gamma-1}}
  \cdot \frac{1}{2+\beta v^* M_0(\Omega)^2}
  = \frac{a}{\nu}\left(\frac{a}{b}\right)^{-\frac{m+\gamma-1}{\alpha}}
  \cdot \frac{1}{2+\beta v^* M_0(\Omega)^2}.
\end{equation}
For $m\ge 1$ and $\beta\ge 1$, define
\begin{equation}\label{E:chi-4-star-1}
  \chi_{a,b,\beta}^{**,4}(u^*)
  \coloneqq \min\Big\{\bar\chi_{a,b,\beta},\left(1+\underline{v}_{a,b}\right)^\beta \chi_{a,b,\beta}^{**,3}(u^*)\Big\}.
\end{equation}

\begin{theorem}[Global stability in the model with relatively strong logistic source]\label{T:global-stable}

  Assume that $a,b>0$, $\beta\ge 0$, and $\alpha,\gamma>0$. Let $(u^*,v^*)$ be
  the unique positive constant solution of~\eqref{E:main-PE} given
  by~\eqref{E:equilibrium}. Assume that one of the following sets of conditions
  holds:
  \begin{itemize}

    \item[(i)] $m \ge 1$, $\alpha+1\ge 2\gamma $, and $0 < \chi_0 <
    \chi_{a,b,\beta}^{**,1}(u^*)$.

    \item[(ii)] $m\ge 1$, $\beta\ge 1$, $\alpha+1\ge 2\gamma$, and
    $0<\chi_0<\chi_{a,b,\beta}^{**,2}(u^*)$.

    \item[(iii)] $m\ge 1$, $\gamma\ge 1$, $\alpha+1\ge m+\gamma +{\rm
    sign}(\beta)\gamma$, and $\chi_0<\chi_{a,b,\beta}^{**,3}(u^*)$.

    \item[(iv)]  $m\ge 1$, $\beta\ge 1$, $\gamma\ge 1$,   $\alpha+1\ge m+2\gamma$, and $\chi_0<\chi_{a,b,\beta}^{**,4}(u^*)$.

  \end{itemize}
  Then $(u^*, v^*)$ is a globally asymptotically stable equilibrium
  of~\eqref{E:main-PE}. Moreover, there exist constants $C > 0$
  and $\lambda > 0$ such that for any globally defined bounded classical
  solution $(u(t,x),v(t,x))$ of~\eqref{E:main-PE}, \eqref{E:exponential-decay-eq1} holds.
\end{theorem}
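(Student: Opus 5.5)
The plan is to prove each of (i)--(iv) in two stages. First, show $\|u(t,\cdot)-u^*\|_\infty\to0$ for every globally defined bounded solution. Second, upgrade this to exponential $C^1$ decay using the local exponential stability in Theorem~\ref{T:linear-stability-thm}(1). For that upgrade, I would check (via the comparison lemmas of Appendix~\ref{S:appendix-chi-comparisons}) that each threshold $\chi^{**,i}_{a,b,\beta}(u^*)$ lies below the critical sensitivity $\chi^*_{a,b,\beta}(u^*)$, so the linearization is stable. Once $u(t_0)$ enters the $\delta$-ball around $u^*$, the estimate \eqref{E:exponential-decay-eq1} follows from the time-shifted solution. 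Boundedness and parabolic regularity (Hölder and then $C^{1}$ bounds from the semigroup estimates in Appendix~\ref{S:Semigroup}) convert $L^2$ or $L^\infty$ convergence into uniform convergence. So the real work is the convergence step, done differently in cases (i)--(ii) and (iii)--(iv).

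For (i) and (ii) I would use a Lyapunov functional generalizing the $m=1$ entropy:
\[
E(t)=\int_\Omega \Phi(u)\,dx,\qquad \Phi''(s)=s^{-m},\quad \Phi(u^*)=\Phi'(u^*)=0,
\]
which for $m=1$ is $u-u^*-u^*\ln(u/u^*)$. The steps are as follows.
\begin{itemize}
\item Differentiating gives
\[
E'=-\int u^{-m}|\nabla u|^2+\chi_0\int (1+v)^{-\beta}\nabla u\cdot\nabla v+\int \Phi'(u)\,u(a-bu^\alpha).
\]
\item The reaction term is $\le -c\,b\int (u^{\alpha/2}\ldots)$-type coercive in $u-u^*$; the power-difference inequality of Appendix~\ref{S:appendix-power-diff} with constant $C_{\alpha,\gamma}$ compares $(u^\gamma-u^{*\gamma})^2$ against this reaction dissipation, and this is where $\alpha+1\ge2\gamma$ enters.
\item Bound the cross term by Young's inequality: $\tfrac12\int u^{-m}|\nabla u|^2+\tfrac{\chi_0^2}{2}\int u^m(1+v)^{-2\beta}|\nabla v|^2$.
\item Test the $v$-equation against $v-v^*$ to get $\int|\nabla v|^2+\mu\int(v-v^*)^2\le \frac{\nu^2}{\mu}\int(u^\gamma-u^{*\gamma})^2$, with the weight $u^m$ controlled near equilibrium; the factor $(2m-1)$ comes from the choice of $\Phi$.
\item The factor $(1+\tilde\beta v^*)$ in (i) should come from exploiting $(1+v)^{-2\beta}$ together with an integration by parts. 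In (ii) I would instead use the eventual lower bound for $v$ from Theorem~\ref{T:persistence}(2),(3) with $\chi_0<\bar\chi_{a,b,\beta}$. This gives $v\ge\underline v_{a,b}$ eventually (since $a-\chi_0\mu\Theta_{\beta-1}\ge a/2$, and analogously for $m>1$), hence $(1+v)^{-2\beta}\le(1+\underline v_{a,b})^{-2\beta}$.
\end{itemize}
This yields $E'\le -\varepsilon\int(u-u^*)^2$ when $\chi_0<\chi^{**,1}$ (resp.\ $\chi^{**,2}$), so $\int_0^\infty\|u-u^*\|_2^2<\infty$. Uniform continuity in $t$ then gives $L^2$ convergence, and interpolation with the bounds gives $L^\infty$ convergence.

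For (iii) and (iv) I would run the rectangle/ODE method. Set $\overline u=\limsup\sup_x u$ and $\underline u=\liminf\inf_x u$, which is positive by Theorem~\ref{T:persistence}(1). Expanding the drift gives
\[
u_t=\Delta u-\chi_0 m u^{m-1}(1+v)^{-\beta}\nabla u\cdot\nabla v+\chi_0\beta u^m(1+v)^{-\beta-1}|\nabla v|^2-\chi_0u^m(1+v)^{-\beta}(\mu v-\nu u^\gamma)+au-bu^{1+\alpha}.
\]
Comparison at max/min points produces inequalities for $\overline u,\underline u$ involving $\overline v\le\frac{\nu}{\mu}\overline u^\gamma$, $\underline v\ge\frac\nu\mu\underline u^\gamma$, and the extra term $\beta|\nabla v|^2/(1+v)$. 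I would bound that term by $M_0(\Omega)^2\,v\cdot(\overline u^\gamma-\underline u^\gamma)$-type quantities using the resolvent gradient estimate of Lemma~\ref{L:M0-Omega}, applied to $v-v^*$; this is the source of $2+\beta v^*M_0^2$. In (iv), the lower bound $\underline v_{a,b}$ again improves the factor by $(1+\underline v_{a,b})^\beta$. Subtracting the two inequalities and applying mean-value estimates, where the hypothesis $\alpha+1\ge m+\gamma+\mathrm{sign}(\beta)\gamma$ ensures $b(\overline u^{1+\alpha}-\underline u^{1+\alpha})$ dominates, forces $\overline u=\underline u=u^*$.

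The main obstacle is this last step: making the gradient term $\beta u^m|\nabla v|^2(1+v)^{-\beta-1}$ and the nonconstant weights $u^{m-1}$ fit the sharp thresholds. I would first try to reduce it to a pure $\overline u,\underline u$ inequality via the $M_0$ estimate, possibly after an iteration over shrinking rectangles.
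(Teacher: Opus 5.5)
Your overall structure broadly matches the paper: convergence first, then the upgrade via Lemma~\ref{L:comp-non-minimal-model} and Theorem~\ref{T:linear-stability-thm}(1), the persistence bound $\underline v_{a,b}$ in (ii), and a rectangle method with Lemma~\ref{L:M0-Omega} in (iii)--(iv). The gap is in the Lyapunov step for (i)--(ii), and it comes from your choice $\Phi''(s)=s^{-m}$.

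After Young's inequality you are left with $\frac{\chi_0^2}{2}\int_\Omega u^m(1+v)^{-2\beta}|\nabla v|^2$. Testing the elliptic equation with $v-v^*$ (or with $(v-v^*)(1+v)^{-\tilde\beta}$) only controls the \emph{unweighted} quantity $\int_\Omega(1+v)^{-2\beta}|\nabla v|^2$, by $\frac{\nu^2}{4\mu}\int_\Omega(u^\gamma-(u^*)^\gamma)^2$. The factor $u^m$ cannot be moved to the right-hand side.

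Controlling that weight ``near equilibrium'' is circular, because you are proving convergence from arbitrary bounded data. Replacing $u^m$ by $\sup u^m$ gives a threshold that depends on the solution, not $\chi^{**,1}_{a,b,\beta}(u^*)$, which depends only on $u^*$ and the parameters. The defect is already there for $m=1$: $\Phi''(s)=s^{-1}$ is not the entropy $u-u^*-u^*\ln(u/u^*)$, whose second derivative is $u^*s^{-2}$.

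The right generalization is $\Phi'(s)=1-(u^*/s)^{2m-1}$, i.e.\ $\Phi''(s)=(2m-1)(u^*)^{2m-1}s^{-2m}$. The cross term becomes $\chi_0(2m-1)(u^*)^{2m-1}\int_\Omega u^{-m}(1+v)^{-\beta}\nabla u\cdot\nabla v$. Young's inequality against the \emph{full} dissipation $(2m-1)(u^*)^{2m-1}\int_\Omega u^{-2m}|\nabla u|^2$ then leaves $\frac{\chi_0^2(2m-1)(u^*)^{2m-1}}{4}\int_\Omega(1+v)^{-2\beta}|\nabla v|^2$, with no $u$-weight. The reaction term satisfies $-b\int_\Omega\bigl(u-(u^*)^{2m-1}u^{2-2m}\bigr)(u^\alpha-(u^*)^\alpha)\le -b\int_\Omega(u-u^*)(u^\alpha-(u^*)^\alpha)$, so Lemma~\ref{L:power-diff-ineq} closes the estimate. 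This is where the factors $(2m-1)$ and $(u^*)^{2m-2}$ in $\chi^{**,1}_{a,b,\beta}$ come from.

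There are two secondary points. First, even with the right $\Phi$, your constants would not reach the stated thresholds. Using only half the dissipation in Young's inequality, and $\frac{\nu^2}{\mu}$ instead of the sharp $\frac{\nu^2}{4\mu}$, loses a factor $8$ in $\chi_0^2$; the $16$ in $\chi^{**,1}_{a,b,\beta}$ and $\chi^{**,2}_{a,b,\beta}$ needs both sharp choices. You also do not need $\int_\Omega(u-u^*)^2$: time-integrability of $\int_\Omega(u-u^*)(u^\alpha-(u^*)^\alpha)$, its uniform continuity, and Corollary~\ref{C:linear-stability}(1) suffice.

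Second, in (iii)--(iv), the obstacle you flag is resolved by reading Lemma~\ref{L:M0-Omega} as $\|\nabla V\|_\infty\le M_0(\Omega)\sqrt{\mu}\,(\overline U^\gamma-\underline U^\gamma)$. The extra term is then \emph{quadratic} in the oscillation, $\beta v^*M_0(\Omega)^2\kappa_0\overline U^m(\overline U^\gamma-\underline U^\gamma)^2$. The $v^*$ there comes from the normalization $v=v^*V$, not from a pointwise factor $v$.

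Also, instead of a limsup/liminf pair, the paper compares $(\overline U,\underline U)$ with the ODE system started at $(1\vee\sup_x U(0,x),\,1\wedge\inf_x U(0,x))$. For that system the ordering $\underline u\le 1\le\overline u$ is invariant, which is not automatic for a limsup/liminf pair. The ordering is what gives $\overline u^{m-1}(\overline u^\gamma-\underline u^\gamma)^2\le\overline u^\alpha-\underline u^\alpha$ under $\alpha+1\ge m+2\gamma$. From this one gets $\frac{d}{d\tau}\ln(\overline u/\underline u)\le(2\kappa_0+\beta v^*M_0(\Omega)^2\kappa_0-1)(\overline u^\alpha-\underline u^\alpha)$.
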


Theorem~\ref{T:global-stable} is proved in Section~\ref{S:Stability}.

Our last result is on the stability of positive constant solutions
of~\eqref{E:main-PE} when $a = b = 0$,
and
in this case, every positive constant
solution of~\eqref{E:main-PE} is given by
\begin{equation}\label{E:equilibrium-minimal}
  (u^*,v^*) = \Big(u^*,\,\frac{\nu}{\mu}(u^*)^\gamma\Big),\qquad u^* > 0.
\end{equation}
This family of equilibria plays the same role as~\eqref{E:equilibrium} in the
non-minimal model.

Recall that $\overline{u}_0(u^*)$ is defined
in~\eqref{E:CN-eq} and $\underline{v}_0(u^*)$ is defined
in~\eqref{E:new-lower-bound-for-v}. Define
\begin{equation}\label{E:Gamma-gamma-minimal}
  \Gamma_\gamma(u^*)\coloneqq
  \begin{cases}
    (u^*)^{\gamma-1}\overline{u}_0(u^*),            & 0<\gamma\le 1, \\[6pt]
    \gamma \left(\overline{u}_0(u^*)\right)^\gamma, & \gamma>1,
  \end{cases}
\end{equation}
and
\begin{equation}\label{E:chi0-star-minimal}
  \chi_\beta^{**,1}(u^*) \coloneqq \min\Big\{
  \frac{\chi_\beta}{2},\,
  \chi_\beta^{1/2},\,
  \frac{2\sqrt{\mu \lambda_*} (1+\underline{v}_0(u^*))^{\beta}}
  {\nu \Gamma_\gamma(u^*)}
  \Big\},
\end{equation}
where $\lambda_*$ is the first nonzero Neumann eigenvalue of $-\Delta$ on
$\Omega$.
In addition, when $\gamma=1$, define
\begin{equation}
  \label{E:chi0-star-minimal-1}
  \chi_\beta^{**,2}(u^*)
  \coloneqq \min\Big\{
  \frac{\chi_\beta}{2},\,
  \chi_\beta^{1/2},\,
  \frac{\mu(1+\underline{v}_0(u^*))^{\beta}}{\nu \overline{u}_0(u^*)}
  \Big\}.
\end{equation}

\begin{theorem}[Global stability of constant solutions in the minimal
  model]\label{T:stability-minimal-model}
  Assume that $a = b = 0$. Let $(u^*,v^*)$ be a positive constant solution
  of~\eqref{E:main-PE}. Assume that $m = 1$, $\beta \ge 1$, and that one of the
  following conditions holds:
  \begin{multicols}{2}
    \begin{itemize}
      \item[(i)] $0<\chi_0<\chi_\beta^{**,1}(u^*)$.
      \item[(ii)] $\gamma=1$ and $0<\chi_0<\chi_{\beta}^{**,2}(u^*)$.
    \end{itemize}
  \end{multicols}
  Then $(u^*,v^*)$
  is globally asymptotically stable in the sense of part~{\rm(4)} of
  Definition~\ref{D:stability}, and there are $C,\lambda>0$ such that for any
  globally defined bounded positive solution $(u(t,x),v(t,x))$ with
  $\int_\Omega u(0,x)dx=|\Omega| u^*$, \eqref{E:exponential-decay-eq1} holds.
\end{theorem}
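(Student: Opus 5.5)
The plan is an energy/Lyapunov argument for the mass-preserving minimal model, in two steps: first show $L^2$ decay of $u-u^*$, then upgrade it to exponential $C^1$ decay.

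\emph{Preliminaries.} Since $a=b=0$, integrating the $u$-equation gives $\int_\Omega u(t,x)\,dx=|\Omega|u^*$ for all $t$. Each of the thresholds $\chi_\beta^{**,1}(u^*)$ and $\chi_\beta^{**,2}(u^*)$ is at most $\min\{\chi_\beta/2,\chi_\beta^{1/2}\}$. Hence Lemma~\ref{L:minimal-eventual-bounds}, i.e.\ \eqref{E:CN-eq}, gives $u(t,x)\le \overline{u}_0(u^*)$ for $t\ge T_0$. Theorem~\ref{T:persistence}(4) gives $v(t,x)\ge \underline v_0(u^*)-\varepsilon$ for $t\ge T_0$. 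Consequently, for large $t$ the sensitivity satisfies
\[
0<\frac{\chi_0}{(1+v)^\beta}\le \frac{\chi_0}{(1+\underline v_0(u^*)-\varepsilon)^\beta},
\]
which is the factor that produces $(1+\underline v_0)^\beta$ in both thresholds.

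\emph{Case (i).} I test the $u$-equation with $u-u^*$ (here $m=1$):
\[
\frac12\frac{d}{dt}\int_\Omega (u-u^*)^2=-\int_\Omega|\nabla u|^2+\chi_0\int_\Omega \frac{u}{(1+v)^\beta}\nabla v\cdot\nabla u .
\]
By Young's inequality with the bounds above, the last term is at most
\[
\frac12\int_\Omega|\nabla u|^2+\frac{\chi_0^2\overline u_0^2}{2(1+\underline v_0)^{2\beta}}\int_\Omega|\nabla v|^2 .
\]
Next I estimate $\int_\Omega|\nabla v|^2$ by testing the $v$-equation with $v-\bar v$:
\[
\int_\Omega|\nabla v|^2+\mu\int_\Omega (v-\bar v)^2=\nu\int_\Omega (u^\gamma-\overline{u^\gamma})(v-\bar v).
\]
Using the mean-value bound $|u^\gamma-(u^*)^\gamma|\le \Gamma_\gamma(u^*)\,|u-u^*|$ (for $\gamma\le1$, $\Gamma_\gamma$ comes from the power-difference inequality of Appendix~\ref{S:appendix-power-diff} together with the upper bound $\overline u_0$), followed by Poincar\'e's inequality on $v-\bar v$ with $\lambda_*$ and an optimized Young's inequality, I aim for
\[
\int_\Omega|\nabla v|^2\le \frac{\nu^2\Gamma_\gamma^2}{4\mu\lambda_*}\int_\Omega |\nabla u|^2 .
\]
Absorbing this into the dissipation then requires $\chi_0<2\sqrt{\mu\lambda_*}(1+\underline v_0)^\beta/(\nu\Gamma_\gamma)$, which is exactly the third entry of $\chi^{**,1}_\beta$. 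The constants may need adjusting, but the structure should match. Poincar\'e's inequality applied to $u-u^*$ (mean zero) then gives $\frac{d}{dt}\|u-u^*\|_2^2\le -c\|u-u^*\|_2^2$, i.e.\ exponential $L^2$ decay.

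\emph{Case (ii), $\gamma=1$.} Here I expect a sharper route: expand $-\nabla\cdot(u\chi(v)\nabla v)$ and use $\Delta v=\mu v-\nu u$, or equivalently test with $u-u^*$ and integrate by parts against $\Delta v$. This avoids Poincar\'e's inequality and gives the condition $\chi_0\overline u_0\nu<\mu(1+\underline v_0)^\beta$. The key estimate is $\|\nabla v\|_2^2\le \frac{\nu^2}{\mu}\ldots$, obtained via the identity $\int_\Omega|\nabla v|^2+\mu\int_\Omega (v-\bar v)^2=\nu\int_\Omega (u-u^*)(v-\bar v)$. I expect this step to be the main obstacle, since it must produce exactly the constant $\mu/\nu$ in the threshold; the factor $\partial_v\chi$ is problematic here. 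If needed, I would handle it by bounding $\int_\Omega u\,\chi'(v)|\nabla v|^2\le0$, which has a favorable sign since $\chi'<0$ and $\chi_0>0$.

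\emph{Upgrade to $C^1$ decay.} The $L^2$ decay is interpolated with uniform H\"older/$C^{1+\theta}$ bounds, obtained from parabolic regularity and the semigroup estimates in Appendix~\ref{S:Semigroup}, to get $\|u-u^*\|_\infty\to0$ exponentially. Elliptic estimates then control $v-v^*$ in $C^1$. Once the solution is within $\delta$ of $(u^*,v^*)$, the linear stability result Theorem~\ref{T:linear-stability-thm}(2) applies, since the thresholds lie below $\chi^*_{0,0,\beta}(u^*)$ by the comparison lemmas of Appendix~\ref{S:appendix-chi-comparisons}. This yields \eqref{E:exponential-decay-eq1} with uniform $C,\lambda$.
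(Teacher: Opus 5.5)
Your case (i) is correct, but it takes a different route from the paper. You use the $L^2$ energy $\|u-u^*\|_2^2$ instead of the entropy $\int_\Omega(u-u^*-u^*\ln\frac{u}{u^*})$, and you reach the same threshold $2\sqrt{\mu\lambda_*}(1+\underline{v}_0)^\beta/(\nu\Gamma_\gamma)$. Your route also gives exponential $L^2$ decay directly, whereas the paper only obtains $\int_0^\infty\|u-u^*\|_2^2\,dt<\infty$ and then argues by uniform continuity. Two bookkeeping fixes are needed:
\begin{itemize}
\item The constant in $|u^\gamma-(u^*)^\gamma|\le L|u-u^*|$ is $L=\Gamma_\gamma/\overline{u}_0$, namely $(u^*)^{\gamma-1}$, resp.\ $\gamma\overline{u}_0^{\gamma-1}$. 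The other factor $\overline{u}_0$ comes from $u^2\le\overline{u}_0^2$ in your Young step.
\item The Poincar\'e inequality you need is the one for $u-u^*$. You apply it after $\|\nabla v\|_2^2\le\frac{\nu^2}{4\mu}\|u^\gamma-(u^*)^\gamma\|_2^2$, not on $v-\bar v$.
\end{itemize}

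Case (ii), however, is not established, and the route you sketch does not produce the threshold $\mu(1+\underline{v}_0)^\beta/(\nu\overline{u}_0)$. First, the sign you want to exploit is backwards. Testing with $u-u^*$ and integrating by parts (with $\Delta v=\mu v-\nu u$) gives
\[
\chi_0\int_\Omega\frac{u\,\nabla u\cdot\nabla v}{(1+v)^\beta}
=\frac{\chi_0}{2}\int_\Omega\frac{u^2(\nu u-\mu v)}{(1+v)^\beta}
+\frac{\chi_0\beta}{2}\int_\Omega\frac{u^2|\nabla v|^2}{(1+v)^{\beta+1}}.
\]
So the $\chi'$-contribution is nonnegative, i.e.\ destabilizing, and cannot be discarded. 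The cubic first term has no sign and no visible comparison with $\mu$.

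Second, the identity you cite (the $v$-equation tested with $v-\bar v$) only yields $\|\nabla v\|_2^2\le\frac{\nu^2}{4\mu}\|u-u^*\|_2^2$. That closes only through Poincar\'e and reproduces the case-(i) threshold $2\sqrt{\mu\lambda_*}(1+\underline{v}_0)^\beta/(\nu\overline{u}_0)$. This is strictly smaller than the case-(ii) threshold exactly when $\mu>4\lambda_*$, which is the only regime where (ii) adds anything.

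The missing idea is an estimate of the chemotactic term in terms of $\nabla v$ that carries the factor $\nu/\mu$. The paper gets it by testing the $u$-equation with $w=v-v^*$ and using $\nu(u-u^*)=\mu w-\Delta w$, which gives
\[
\frac12\frac{d}{dt}\int_\Omega\big(\mu w^2+|\nabla w|^2\big)+\int_\Omega|\Delta w|^2
=-\mu\int_\Omega|\nabla w|^2+\nu\chi_0\int_\Omega\frac{u\,|\nabla w|^2}{(1+v)^\beta}.
\]
No $\chi'$ appears here, and $\|\nabla w\|_2^2$ decays exponentially exactly under (ii). This decay is then fed into $y'+\lambda_*y\le C\|\nabla v\|_2^2$ for $y=\|u-u^*\|_2^2$.

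Alternatively, you can stay with your $u$-energy. Test the elliptic equation with $-\Delta w$ to get $\mu\|\nabla w\|_2^2+\|\Delta w\|_2^2=\nu\int_\Omega\nabla u\cdot\nabla w$, hence $\|\nabla v\|_2\le\frac{\nu}{\mu}\|\nabla u\|_2$. Then Cauchy--Schwarz (not a Young split) bounds the cross term by $\frac{\chi_0\nu\overline{u}_0}{\mu(1+\underline{v}_0-\varepsilon)^\beta}\|\nabla u\|_2^2$, which the dissipation absorbs under (ii).
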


Theorem~\ref{T:stability-minimal-model} is proved in
Section~\ref{S:stability-minimal-model}.

\subsection{Remarks}\label{SS:remarks}

In this subsection, we provide some remarks on our main results, some
difficulties that arose, and the techniques and ideas developed for the proofs
of those results.


\begin{remark}[Sensitivity thresholds]\label{R:sensitivity-thresholds}
  In this remark, we provide some discussions on the roles played by the sensitivity thresholds $\chi_{a,b,\beta}$, $\chi_{a,b,\beta}^*$, and $\chi_{a,b,\beta}^{**,i}$ ($i=1,2,3,4$) (resp. $\chi_\beta$, $\chi_\beta^*\coloneqq \chi_{0,0,\beta}^*$, and $\chi_\beta^{**,j}$ ($j=1,2$)) for the non-minimal model (resp. for the minimal model),  and the relations between these sensitivity thresholds.

  \begin{itemize}
    \item[(1)]  The constant $\chi_{a,b,\beta}$ (resp. $\chi_\beta$) provides an upper bound for $\chi_0$ for the global existence and boundedness of classical solutions with any initial condition $u_0$ satisfying~\eqref{E:initial-cond-PE} in the non-minimal model (resp. in the minimal model) (see Propositions~\ref{P:global-existence-prop3} and~\ref{P:global-existence-prop2}).
    They are not expected to be optimal.

    \item[(2)] The quantity $\chi_{a,b,\beta}^*(u^*)$ defined in~\eqref{E:chi-star}
    is the critical sensitivity for linear stability of the constant solution $(u^*,v^*)$ (Theorem~\ref{T:linear-stability-thm}).
    It is determined by the spectrum of the Neumann Laplacian, it is optimal, but in general, it is not explicit.

    \item[(3)] The quantities $\chi_{a,b,\beta}^{**,i}(u^*)$ ($i=1,2,3,4$) (resp. $\chi_\beta^{**,j}(u^*)$ ($j=1,2$)) provide upper bounds for $\chi_0$ for any globally defined bounded positive solution $(u(t,x),v(t,x))$ (resp. for any globally defined bounded positive solution $(u(t,x),v(t,x))$ with $\int_\Omega u(0,x)dx=|\Omega|u^*$) to converge to the constant solution $(u^*,v^*)\coloneqq (u^*,\frac{\nu}{\mu}(u^*)^\gamma)$ in the non-minimal model (resp. in the minimal model) (see Theorems~\ref{T:global-stable} and~\ref{T:stability-minimal-model}).
    They are also not expected to be optimal.

    \item[(4)]  In general, there may be no direct relation between $\chi_{a,b,\beta}$ and $\chi_{a,b,\beta}^*(u^*)$,
    and likewise no direct relation between
    $\chi_{a,b,\beta}$ and $\chi_{a,b,\beta}^{**,i}(u^*)$ ($i=1,2,3,4$). For the minimal model, by
    the definition of $\chi_\beta^{**,j}$ and the proof of Lemma~\ref{L:comp-minimal-model},
    we always have $\chi_\beta^{**,j}(u^*)\le \chi_\beta\le 2\chi_\beta^*(u^*)$  $(j=1,2)$ for any $u^*>0$.

    \item[(5)] In the forthcoming Part~III paper~\cite{chen.ruau.ea:26:bifurcation}, we will prove that there are non-constant stationary solutions bifurcating from $(u^*,v^*)$ when $\chi_0$ passes through $\chi_{a,b,\beta}^*(u^*)$. Therefore, it is expected that $\chi_{a,b,\beta}^{**,i}(u^*)\le \chi_{a,b,\beta}^*(u^*)$
    for $i=1,2,3,4$ (resp. $\chi_{\beta}^{**,j}(u^*)\le \chi_{0,0,\beta}^*(u^*)$
    for $j=1,2$), which have been proved to be true in Lemma~\ref{L:comp-non-minimal-model}
    (resp. in Lemma~\ref{L:comp-minimal-model}). It should be pointed out that
    it is possible that $\chi_{a,b,\beta}^{**,i}(u^*)<\chi_{a,b,\beta}^*(u^*)$  ($i=1,2,3,4$) (resp. $\chi_\beta^{**,j}(u^*)<\chi_{0,0,\beta}^*(u^*)$  ($j=1,2$) since backward pitchfork bifurcation may occur in \eqref{E:main-PE} when $\chi_0$ passes through $\chi_{a,b,\beta}^*(u^*)$ (see the forthcoming Part~III paper~\cite{chen.ruau.ea:26:bifurcation}).

  \end{itemize}
\end{remark}

\begin{remark}[Negative sensitivity]\label{R:negative-sensitivity}
  This remark concerns the asymptotic dynamics of~\eqref{E:main-PE} in the case
  of negative sensitivity.
  Assume that $\chi_0<0$ and $m\ge 1$.
  Note that $\chi_{a,b,\beta}>0$, $\chi_\beta>0$, and
  $\chi_{a,b,\beta}^*(u^*)>0$.

  \begin{itemize}
    \item[(1)]  By Proposition~\ref{P:global-existence-prop1} (proved in
    Part~I~\cite{chen.ruau.ea:25:boundedness}), for any $u_0$ satisfying
    \eqref{E:initial-cond-PE}, $(u(t,x;u_0), v(t,x;u_0))$ exists globally and
    stays bounded.
    By Theorem~\ref{T:global-stabl-negative-sensitivity},
    $(u(t,x;u_0), v(t,x;u_0))$ converges to the unique positive constant
    solution given in~\eqref{E:equilibrium}
    in the case that $a,b>0$, and converges to the positive constant solution
    \[
      (u^*,v^*)\coloneqq \Big(u^*,\frac{\nu}{\mu}(u^*)^\gamma\Big),
      \quad \text{where} \quad
      u^*=\frac{1}{|\Omega|}\int_\Omega u_0(x)\,dx,
    \]
    in the case $a=b=0$.
    Hence,
    the asymptotic dynamics in~\eqref{E:main-PE} with $\chi_0<0$ is the
    same as that of the following Fisher--KPP equation:
    \begin{equation}\label{E:fisher-kpp}
      u_t=\Delta u+u(a-b u^\alpha), \quad x\in\Omega.
    \end{equation}
    and negative sensitivity  does not produce nontrivial pattern formation.

    \item[(2)] To the best of our knowledge, the global stability of $(u^*, v^*)$
    for $\chi_0 < 0$ is established here for the first time.
    This result is achieved through nontrivial applications of the maximum
    principle and Hopf's Lemma for parabolic equations.
  \end{itemize}
\end{remark}


\begin{remark}[Positive sensitivity with relatively strong logistic
  source]\label{R:positive-sensitivity-strong-logistic}
  This remark is about the stability of the unique positive constant solution
  $(u^*,v^*)\coloneqq
  \big((\frac{a}{b})^{1/\alpha},\frac{\nu}{\mu}(\frac{a}{b})^{\gamma/\alpha}\big)$
  in \eqref{E:main-PE} with positive sensitivity $\chi_0>0$ and relatively strong
  logistic source.

  \begin{itemize}

    \item[(1)] It is of great interest to investigate the stability of the unique
    positive constant solution $(u^*,v^*)$ and the influence of the underlying
    parameters on the stability of $(u^*,v^*)$. The stability of $(u^*,v^*)$ has
    been studied in literature for various special cases, for example, it is
    studied in the paper by Tello and Winkler~\cite{tello.winkler:07:chemotaxis}
    for the case $\beta=0$, $m=\alpha=\gamma=1$, and is studied in the paper by
    Galakhov, Salieva, and Tello~\cite{galakhov.salieva.ea:16:on} for the case
    that $\beta=0$, $m,\alpha,\gamma\ge 1$. Due to the nonlinear sensitivity
    function $\frac{1}{(1+v)^\beta}$ with $\beta>0$, it is difficult to apply the
    methods developed in literature for the stability study of constant solutions
    directly. In this paper, we establish two different approaches: an entropy
    approach and a rectangle/ODE approach, to investigate the stability of
    $(u^*,v^*)$.

    \item[(2)] (Entropy approach) Regarding the entropy approach, we developed the
    following Lyapunov functional,
    \begin{equation}\label{E:lyapunov-function}
      F(t) \coloneqq \int_\Omega h_m\left(u(t,x)\right)dx, \quad \text{where}\quad
      h_m(s)\coloneqq \int_{u^*}^s \Big(1-\big(\frac{u^*}{\tau}\big)^{2m-1}\Big)\,d\tau.
    \end{equation}
    Note that $h_1(s)=s-u^*-u^*\ln\frac{s}{u^*}$, which has been used in the study
    of positive stationary solutions in other chemotaxis models (see, for example,
    \cite{cao.wang.ea:16:asymptotic, he.zheng:16:convergence,
    kurt.shen:24:stabilization, wang.zhu.ea:23:long}, etc.). The main idea is to
    prove
    \[
      \lim_{t\to\infty}\int_\Omega (u(t,x)-u^*)(u^\alpha(t,x)-(u^*)^\alpha)=0
    \]
    by applying the Lyapunov functional $F(t)$ and some eventual lower bound of
    $v(t,x)$ under the conditions (i) or (ii) in Theorem~\ref{T:global-stable},
    and then prove the global stability of $(u^*,v^*)$ in the non-minimal model.

    \item[(3)] (Rectangle/ODE approach)
    The rectangle/ODE approach can be roughly described as follows: consider the following
    auxiliary ODE system:
    \begin{equation}\label{E:rectangle-ode-0}
      \begin{dcases}
        \overline{u}_t=\kappa_0 \overline{u}^m(\overline{u}^\gamma-\underline{u}^\gamma)
        +\beta  M \overline{u}^m(\overline{u}^\gamma-\underline{u}^\gamma)^2
        +\overline{u}(a-b \overline{u}^\alpha),
        \\
        \underline{u}_t=\kappa_0 \underline{u}^m(\underline{u}^\gamma-\overline{u}^\gamma)+\underline{u}(a-b \underline{u}^\alpha)
      \end{dcases}
    \end{equation}
    for properly chosen constants $\kappa_0=\kappa_0(u^*)$ and $M=M(u^*)$.
    Let $(\overline{u}(t),\underline{u}(t))$ be the unique global solution
    of~\eqref{E:rectangle-ode-0} with
    \[
      \overline{u}(0)=\max\Big\{u^*,\, \sup_{x\in\Omega}u(0,x)\Big\}, \qquad
      \underline{u}(0)=\min\Big\{u^*,\, \inf_{x\in\Omega}u(0,x)\Big\}.
    \]
    Prove that $\underline{u}(t)\le u(t,x)\le \overline{u}(t)$ for all $t\ge 0$
    and $x\in\Omega$, and that
    $\lim_{t\to\infty} \underline{u}(t)=\lim_{t\to\infty}\overline{u}(t)=u^*$.
    We derived the ODE system \eqref{E:rectangle-ode-0} by
    adapting  the rectangle/ODE method in
    \cite[Section~3]{galakhov.salieva.ea:16:on} to $\beta\ge 0$ and applying eventual lower
    bounds of $v(t,x)$ established in Theorem~\ref{T:persistence}.
    By the rectangle/ODE approach,   we obtain  two sufficient sets of conditions in
    Theorem~\ref{T:global-stable} (iii) and (iv)  for global stability of $(u^*,v^*)$ in the
    non-minimal model.

    \item[(4)] (Effect of $\beta$)
    Observe that the conditions $\chi_0<\chi_{a,b,\beta}^{**,i}$ ($i=1,2,3,4$) are obtained by different approaches.
    In general,  none of these four conditions dominates the other three, because  the admissible ranges of $(\alpha,\gamma,m)$ are different.
    Nevertheless, one can identify regimes in which one of them is typically less
    restrictive. For instance, when $m\ge 1$ and $\alpha+1\ge 2\gamma$,
    $\chi_0<\chi_{a,b,\beta}^{**,2}$ is less restrictive than
    $\chi_0<\chi_{a,b,\beta}^{**,1}$ when $\beta \gg 1$ since
    $\chi_{a,b,\beta}^{**,2}\to \infty$ as $\beta\to\infty$, but $\chi_{a,b,\beta}^{**,1}$ stays bounded as $\beta\to\infty$. Similarly, when $m\ge 1$, $\gamma\ge 1$, and $\alpha+1\ge m+2\gamma$,
    the condition $\chi_0<\chi_{a,b,\beta}^{**,4}$ is less restrictive than
    $\chi_0<\chi_{a,b,\beta}^{**,3}$  when $\beta\gg 1$ since $\chi_{a,b,\beta}^{**,4}\to \infty$ and
    $\chi_{a,b,\beta}^{**,3}\to 0$ as $\beta\to\infty$.
    By $\chi_{a,b,\beta}^{**,i}\to \infty$  and
    $\chi_\beta^{**,j}\to \infty$ as $\beta\to\infty$ for $i=2,4$ and $j=1,2$, it is seen that
    large $\beta$ has positive effect on the stability of positive constant solutions.

    \item[(5)]
    When  $a=b$, $\beta=0$, $\mu=\nu=1$, $m\ge 1$, and $\alpha+1\ge m+\gamma$,
    we have $\chi_{a,b,\beta}^{**,3}(u^*)=\frac{b}{2}$.
    The condition $\chi_0<\chi_{a,b,\beta}^{**,3}$ recovers the condition
    in~\cite[Theorem~1.2]{galakhov.salieva.ea:16:on}.
    When $a = b$, $\beta = 0$, $\mu = \nu = 1$, $m=\gamma=1$, and $\alpha\ge 1$,
    we have $\chi_{a,b,\beta}^{**,1}(u^*)=4\sqrt{b}$.
    In this normalization, the condition $\chi_0 < \chi_{a,b,\beta}^{**,1}(u^*)$ is
    weaker than $\chi_0 < b/2$ when $b<64$.
    Therefore, when $b<64$ we improve the sufficient condition $\chi_0<b/2$
    from~\cite[Theorem~1.2]{galakhov.salieva.ea:16:on}.

  \end{itemize}

\end{remark}

\begin{remark}[On the minimal model]\label{R:minimal-model-remarks}
  This remark is about the stability of positive constant solutions in the minimal model.
  We develop two different approaches---the entropy approach and an
  Ahn--Kang--Lee-type mechanism developed in \cite{ahn.kang.ea:19:eventual}---to investigate the stability of positive
  constant solutions.
  In contrast to~\cite{ahn.kang.ea:19:eventual}, our stabilization results and the
  underlying estimates do not require convexity of the domain nor a restriction
  to the dimensions $N=3,4$.
  \begin{itemize}

    \item[(1)] (Entropy approach) The condition $\chi_0<\chi_\beta^{**,1}(u^*)$ in Theorem~\ref{T:stability-minimal-model}(1) is derived
    by employing the following Lyapunov functional
    \begin{equation*}
      F(t) \coloneqq \int_\Omega \Big( u(t,x) - u^* - u^*\ln\frac{u(t,x)}{u^*}\Big)\,dx.
    \end{equation*}
    together with eventual upper and lower bounds for $u$ and $v$, respectively.

    \item[(2)] (Ahn--Kang--Lee-type mechanism) The condition $\chi_0<\chi_{\beta}^{**,2}(u^*)$ is obtained by applying an alternative
    Ahn--Kang--Lee-type mechanism developed in \cite{ahn.kang.ea:19:eventual} together with eventual upper and lower bounds for $u$ and $v$, respectively. The main idea is to first prove exponential decay of a suitable
    $v$-energy and then of $\int_\Omega (u(t,x)-u^*)^2\,dx$.

    \item[(3)] (Comparison when $\gamma=1$) When $\gamma=1$, the conditions in
    Theorem~\ref{T:stability-minimal-model}(1) and
    Theorem~\ref{T:stability-minimal-model}(2) are, in general, incomparable.
    Indeed, the first two terms in the definitions of $\chi_\beta^{**,1}(u^*)$
    and $\chi_\beta^{**,2}(u^*)$ are identical, so the difference lies only in
    the third terms:
    \[
      A_1\coloneqq
      \frac{2\sqrt{\mu \lambda_*}\left(1+\underline{v}_0(u^*)\right)^{\beta}}
      {\nu \overline{u}_0(u^*)}
      \quad \text{and} \quad
      A_2\coloneqq
      \frac{\mu\left(1+\underline{v}_0(u^*)\right)^{\beta}}
      {\nu \overline{u}_0(u^*)}.
    \]
    Therefore,
    \[
      \frac{A_2}{A_1}=\frac{1}{2}\sqrt{\frac{\mu}{\lambda_*}}.
    \]
    Hence, if $\mu>4\lambda_*$, then $A_2>A_1$, whereas if
    $\mu<4\lambda_*$, then $A_1>A_2$.
    In other words, neither threshold uniformly dominates the other:
    when the signal degradation rate $\mu$ is large relative to the
    spectral gap $\lambda_*$ (i.e., $\mu>4\lambda_*$), the
    Ahn--Kang--Lee mechanism
    (Theorem~\ref{T:stability-minimal-model}(ii)) provides a larger
    stability threshold, whereas when $\mu<4\lambda_*$, the entropy
    approach (Theorem~\ref{T:stability-minimal-model}(i)) is
    more permissive.
    Physically, $\lambda_*$ measures the rate at which diffusion
    homogenizes spatial perturbations over $\Omega$: when $\mu$ is
    large relative to $\lambda_*$, the chemical signal degrades
    quickly and the Ahn--Kang--Lee approach---which exploits the
    $v$-equation directly through the identity
    $\nu(u-u^*)=\mu(v-v^*)-\Delta(v-v^*)$---gains an advantage.

    \item[(4)] (Dependence on the mean density $u^*$)
    Since $\overline{u}_0(u^*)$ and $\underline{v}_0(u^*)$ both vanish
    as $u^*\to 0$, the quantity $\Gamma_\gamma(u^*)\to 0$ as
    $u^*\to 0$, and consequently the third component in the
    definitions of $\chi_\beta^{**,1}(u^*)$ and
    $\chi_\beta^{**,2}(u^*)$ diverges to $+\infty$.
    Therefore,
    \[
      \lim_{u^*\to 0}\chi_\beta^{**,1}(u^*)
      = \lim_{u^*\to 0}\chi_\beta^{**,2}(u^*)
      = \min\Big\{\frac{\chi_\beta}{2},\,\chi_\beta^{1/2}\Big\}>0.
    \]
    In particular, $\chi_\beta^{**,1}(u^*)$ and
    $\chi_\beta^{**,2}(u^*)$ remain bounded away from zero for all
    small $u^*$; physically, populations with small total mass are
    always stabilizable under the sensitivity threshold.
    More precisely, since
    $q'_N=p_0/(2p_N)<1$, the bound
    $\overline{u}_0(u^*)\le C_N\big((u^*)^{q'_N}+(u^*)^{q''_N}\big)$
    implies $\overline{u}_0(u^*)\to 0$ at rate $(u^*)^{q'_N}$
    as $u^*\to 0$.
    Conversely, as $u^*\to\infty$, one has
    $\Gamma_\gamma(u^*)\to\infty$ while
    $(1+\underline{v}_0(u^*))^\beta$ grows at a slower rate, so
    $\chi_\beta^{**,1}(u^*)\to 0$ and $\chi_\beta^{**,2}(u^*)\to 0$.
    This reflects the natural expectation that large populations face
    tighter stability constraints.
    We note that the present theorem requires $\beta\ge 1$.
    In contrast, when $\beta=0$ and $\gamma=1$, the thresholds in
    Theorem~\ref{T:global-stable} from the non-minimal model are
    independent of~$u^*$, because the factors
    $(1+\underline{v}_0)^\beta=1$ and $\Gamma_1=\overline{u}_0$
    cancel in the defining ratios.
  \end{itemize}
\end{remark}

\section{Preliminary: Basic properties of classical solutions}\label{S:Prelim}

In this section, we present some lemmas to be used in the proofs of the main
results. They include various basic properties of classical solutions
of~\eqref{E:main-PE}, which are not proved in Part
I~\cite{chen.ruau.ea:25:boundedness}.

Throughout this section,   $(u(t,x;u_0),v(t,x;u_0))$ always denotes the classical solution of \eqref{E:main-PE} with initial condition $u_0$ satisfying \eqref{E:initial-cond-PE}. We may write it as $(u(t,x),v(t,x))$ if no confusion occurs. We set
\[
  \bar u(t) \coloneqq \sup_{x\in\Omega}u(t,x),\,\,
  \bar v(t)\coloneqq \sup_{x\in\Omega} v(t,x),\,\,\, {\rm and}\,\,
  \underline{u}(t)\coloneqq\inf_{x\in\Omega}u(t,x),\,\,
  \underline{v}(t)\coloneqq\inf_{x\in\Omega} v(t,x).
\]
Note that we always have that
\[
  \frac{\nu}{\mu}\underline{u}^\gamma(t)\le \underline{v}(t)\le\bar v(t)\le \frac{\nu}{\mu}\bar u^\gamma(t).
\]

In the following, when we say $w(\cdot,\cdot)\in
C^{\theta_1,\theta_2}([1,\infty)\times\overline\Omega)$, it means
$\Norm{w}_{C^{\theta_1,\theta_2}([1,\infty) \times \overline \Omega)} < \infty$.
That is, there exists a constant $C>0$ such that for every $t\ge 1$,
$\Norm{w}_{C^{\theta_1,\theta_2}([t,t+1] \times \overline \Omega)}\le C$, so the
parabolic H\"older regularity is controlled uniformly on all unit time strips at
large times (in particular, it does not deteriorate as $t\to\infty$). The first
two lemmas are on such uniform regularity of classical solutions
of~\eqref{E:main-PE} and will be used in the proofs of
Theorems~\ref{T:persistence}, \ref{T:global-stable},
and~\ref{T:stability-minimal-model}. We record only the regularity needed for
the compactness arguments; higher regularity can be obtained by standard
bootstrap arguments when the data allow it.

\begin{lemma}\label{L:Holder}
  Suppose that $u_0$ satisfies~\eqref{E:initial-cond-PE} and $(u(t,x),v(t,x))$
  is a globally defined (i.e., $T_{\max} = \infty$), bounded, positive classical
  solution of~\eqref{E:main-PE}. Then we have the following regularity
  properties:
  \begin{enumerate}

    \item For all $\theta \in (0,1)$, $u(\cdot,\cdot) \in
    C^{\theta/2,\:\theta}\left([1,\infty)\times\overline{\Omega}\right)$.

    \item For all $\theta \in (0,1)$, $v \,\in\,
    C^{\,\frac{(1\wedge\gamma)\,\theta}{2},\;
    2+(1\wedge\gamma)\,\theta}\!\left([1,\infty) \times
    \overline{\Omega}\right)$.

    \item[(3)] Assume $m \ge 1$. For every
    $k\ge 1$ and $\ell\ge 1$, we have $u(\cdot,\cdot)\in
    C^{k,\ell}\left([1,\infty)\times\overline{\Omega}\right)$. In particular,
    $u\in C^{1,2}\left([1,\infty)\times\overline{\Omega}\right)$.

  \end{enumerate}
\end{lemma}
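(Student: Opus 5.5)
We prove the three items in order, building from the uniform bound $\sup_{t\ge0}\Norm{u(t,\cdot)}_\infty\le K$. This bound follows from boundedness and global existence, since the solution is continuous on $[0,T]$ for every finite $T$ and the $\limsup$ is finite.

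\emph{Step 1 (elliptic estimates for $v$ at fixed time).} Since $0<u\le K$, we have $\nu u^\gamma\in L^\infty$ uniformly in $t$. Elliptic $L^p$ theory for $-\Delta v+\mu v=\nu u^\gamma$ with Neumann data gives $\sup_t\Norm{v(t)}_{W^{2,p}}\le C_p$ for every $p<\infty$. By Sobolev embedding, $\sup_t\Norm{v(t)}_{C^{1,\theta}(\overline\Omega)}\le C$ for all $\theta\in(0,1)$, and $0\le v\le \frac{\nu}{\mu}K^\gamma$.

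\emph{Step 2 (item 1).} Write the $u$-equation in divergence form,
\[
  u_t=\nabla\cdot\big(\nabla u - \chi_0 u^m(1+v)^{-\beta}\nabla v\big)+f(u),
\]
where $f(u)=au-bu^{1+\alpha}$. The flux term $\chi_0u^m(1+v)^{-\beta}\nabla v$ and the source $f(u)$ are bounded in $L^\infty$ uniformly on $[0,\infty)\times\Omega$, because $u\le K$, $v\ge0$ and $|\nabla v|\le C$. The Hölder regularity theory for bounded weak solutions of divergence-form parabolic equations with bounded coefficients (Ladyzhenskaya--Solonnikov--Ural'tseva, Porzio--Vespri, up to the Neumann boundary) gives a $C^{\theta_0/2,\theta_0}$ bound for some small $\theta_0$ on each $[t,t+1]\times\overline\Omega$, $t\ge1$, with constants independent of $t$. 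To reach every $\theta<1$, use the variation-of-constants formula on $[t-1,t+1]$ with the Neumann heat semigroup and its smoothing estimates:
\[
  \Norm{\nabla e^{s\Delta}\nabla\cdot w}_{L^p}\lesssim s^{-1/2-\epsilon}\Norm{w}_{L^p}.
\]
Together with the fractional power estimates $\Norm{(-\Delta+1)^{\delta}e^{s\Delta}w}_{L^p}\lesssim s^{-\delta}\Norm{w}_{L^p}$ from Appendix~\ref{S:Semigroup}, this bounds $u(t)$ in $D((-\Delta+1)^\delta)\hookrightarrow C^{\theta}$ for every $2\delta-N/p>\theta$, which works for any $\theta<1$. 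The time Hölder exponent $\theta/2$ comes from the same representation.

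\emph{Step 3 (item 2).} Step 2 shows that $u^\gamma$ is Hölder in $x$ with exponent $(1\wedge\gamma)\theta$, since $s\mapsto s^\gamma$ is Lipschitz on $[0,K]$ when $\gamma\ge1$ and $\gamma$-Hölder when $\gamma<1$. Schauder estimates for the Neumann problem then give $\Norm{v(t)}_{C^{2+(1\wedge\gamma)\theta}}\le C$. For the time regularity, $v(t)-v(s)$ solves the same elliptic problem with right side $\nu(u^\gamma(t)-u^\gamma(s))$, whose $C^{(1\wedge\gamma)\theta}$-type norm is $O(|t-s|^{(1\wedge\gamma)\theta/2})$ after interpolation. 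A Schauder bound on the difference, interpolated with the uniform $C^{2+\cdot}$ bound, gives the stated time exponent.

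\emph{Step 4 (item 3, bootstrap).} Here $m\ge1$ is used. Item 1 gives $u$ bounded and Hölder. For $m\ge1$, $u\mapsto u^m$ is locally Lipschitz on $[0,K]$, so the non-divergence expansion
\[
  u_t=\Delta u-\chi_0\Big[mu^{m-1}(1+v)^{-\beta}\nabla u\cdot\nabla v-\beta u^m(1+v)^{-\beta-1}|\nabla v|^2+u^m(1+v)^{-\beta}\Delta v\Big]+f(u)
\]
has Hölder coefficients by items 1 and 2. Parabolic Schauder estimates on $[t-1,t+1]$ with compatible Neumann data yield a uniform $C^{1+\theta/2,2+\theta}$ bound on $[t,t+1]$. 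Iterating between the elliptic Schauder step for $v$ and the parabolic Schauder step for $u$ raises the regularity by one level each round. The nonlinearities $u^m$, $u^\gamma$, $u^{1+\alpha}$ may have limited smoothness at $u=0$, and the limits that must be controlled are the ones near $u=0$. If persistence is not yet available, the iteration stops at the smoothness these powers allow, so we restrict the claim to the regularity the data permit; in particular $u\in C^{1,2}$ always holds. The main obstacle is keeping every constant independent of the time window. We handle it by working on windows $[t-1,t+1]$ and using only the uniform bounds from Steps 1--3, never the initial data.
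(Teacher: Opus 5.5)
Your handling of items (1), (2) and of the $C^{1,2}$ part of (3) follows the paper's route. That route is semigroup smoothing for $u$, Neumann Schauder estimates for $-\Delta v+\mu v=\nu u^\gamma$, and parabolic Schauder for the non-divergence form of the $u$-equation, where $m\ge1$ makes $u^{m-1}$ H\"older. Your version is more explicit than the paper's sketch.

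There is one slip in Step~2. The bound $\Norm{\nabla e^{s\Delta}\nabla\cdot w}_{L^p}\lesssim s^{-1/2-\epsilon}\Norm{w}_{L^p}$ is false: that operator is of order $s^{-1}$, which is not integrable at $s=0$. What you need, and what holds, is $\Norm{A_p^\delta e^{-sA_p}\nabla\cdot w}_p\lesssim s^{-\delta-1/2}\Norm{w}_p$ for $\delta<\tfrac12$ (Lemma~\ref{L:new-lm}). Since $X_p^\delta\hookrightarrow C^\theta(\overline\Omega)$ for $\theta<2\delta-N/p$, this still reaches every $\theta<1$, which also makes your De Giorgi-type step unnecessary.

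The genuine gap is that you do not prove item (3) as stated. Instead of uniform $C^{k,\ell}$ bounds for all $k,\ell$, you retreat to ``the regularity the data permit.'' Your diagnosis is correct, and it points at something the paper's one-line ``standard bootstrap \dots\ iterate'' also passes over:
\begin{itemize}
\item Each further round differentiates $s^\gamma$, $s^{m}$, $s^{m-1}$, $s^{1+\alpha}$ along $u$; for example $\nabla\Delta v=\mu\nabla v-\nu\gamma u^{\gamma-1}\nabla u$.
\item Such terms are controlled uniformly on $[1,\infty)$ only if $\inf_{t\ge1}\inf_{x}u(t,x)>0$. ``Positive'' only gives $\inf_x u(t,\cdot)>0$ for each fixed $t$.
\item That uniform lower bound is Theorem~\ref{T:persistence}(1). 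Its proof uses this lemma through Lemma~\ref{L:persistence-2}, so it cannot be quoted here.
\end{itemize}

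The missing step is to split the argument into two stages.
\begin{itemize}
\item First, record what your Step~4 actually yields: uniform $C^{1+\theta/2,\,2+\theta}$ bounds on the strips $[t,t+1]\times\overline\Omega$, $t\ge1$. This, not a bare $C^{1,2}$ bound, is what gives the Arzel\`a--Ascoli compactness in $C^{1,2}_{\rm loc}$ used in Lemma~\ref{L:persistence-2}, and hence in the persistence proof.
\item Then, once persistence gives $\delta\le u\le K$ for $t\ge T$, you also have a positive lower bound on $[\tfrac12,T+1]\times\overline\Omega$ by continuity and positivity on a compact set. All the nonlinearities are then $C^\infty$ on the range of $u$, and the Schauder iteration on unit windows yields (3) for every $k,\ell$.
\end{itemize}
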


\begin{proof}
  Part~(1) follows from standard analytic semigroup smoothing estimates applied
  to the parabolic equation for $u$; see~\cite{henry:81:geometric,
  pazy:83:semigroups}. Part~(2) follows from elliptic Schauder estimates applied
  to $-\Delta v+\mu v=\nu u^\gamma$; see,
  e.g.,~\cite{gilbarg.trudinger:01:elliptic}. Part~(3) follows by a standard
  bootstrap: starting from parts~(1)--(2), apply parabolic Schauder estimates
  (with Neumann boundary conditions) to the $u$--equation to upgrade the
  regularity of $u$, then apply elliptic Schauder estimates to the $v$--equation
  to upgrade $v$, and iterate; see,
  e.g.,~\cite[Ch.~IV]{ladyzenskaja.solonnikov.ea:68:linear}
  or~\cite{lieberman:96:second}.
\end{proof}

\begin{lemma}\label{L:persistence-2}
  Assume $m\ge 1$ and $\gamma>0$. Let $(u,v)$ be a globally defined, bounded,
  positive classical solution of~\eqref{E:main-PE} on $[0, \infty) \times
  \Omega$. Then for every sequence $t_n \to \infty$ there exist a subsequence
  $t_{n_k}$ and a pair $(u^\infty,v^\infty):\
  \mathbb{R}\times\overline\Omega\to [0,\infty) \times [0,\infty)$ such that
  \begin{equation}\label{E:persistent-limit}
    u_{n_k} \to u^\infty \text{ in } C^{\,1,\;2}_{\mathrm{loc}}(\mathbb{R}\times\overline\Omega) \quad \text{and} \quad
    v_{n_k} \to v^\infty \text{ in } C^{\,0,\;2}_{\mathrm{loc}}(\mathbb{R}\times\overline\Omega).
  \end{equation}
  Moreover, $(u^\infty, v^\infty)$ is a classical solution of~\eqref{E:main-PE}
  on $\mathbb{R} \times \Omega$.
\end{lemma}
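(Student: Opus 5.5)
We argue by compactness, using the uniform-in-time regularity of Lemma~\ref{L:Holder}. Set $u_n(t,x)\coloneqq u(t+t_n,x)$ and $v_n(t,x)\coloneqq v(t+t_n,x)$. These are defined on $[-t_n+1,\infty)\times\overline\Omega$, and since $t_n\to\infty$, every compact interval $[-T,T]$ is eventually contained in the domain.

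\emph{Step 1: uniform bounds.} Since $m\ge1$, Lemma~\ref{L:Holder}(3) gives, for each $k,\ell\ge1$, a constant $C_{k,\ell}$ with $\Norm{u}_{C^{k,\ell}([t,t+1]\times\overline\Omega)}\le C_{k,\ell}$ for all $t\ge1$. In particular the $C^{1+\theta/2,\,2+\theta}$ norms of $u_n$ on $[-T,T]\times\overline\Omega$ are bounded uniformly in $n$, once $t_n>T+1$.

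For $v$, Lemma~\ref{L:Holder}(2) bounds $v_n$ uniformly in $C^{(1\wedge\gamma)\theta/2,\,2+(1\wedge\gamma)\theta}([-T,T]\times\overline\Omega)$. This is a H\"older bound in $t$ for $v_n,\nabla v_n,D^2v_n$ jointly.

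\emph{Step 2: extraction of a limit.} By Arzel\`a--Ascoli, with compact embeddings of H\"older spaces into those of lower order, we pass to a subsequence that converges in $C^{1,2}([-T,T]\times\overline\Omega)$ for $u_n$ and in $C^{0,2}([-T,T]\times\overline\Omega)$ for $v_n$. A diagonal argument over $T=1,2,\dots$ then yields a single subsequence $t_{n_k}$ and limits $(u^\infty,v^\infty)$ on $\mathbb R\times\overline\Omega$ with the convergence~\eqref{E:persistent-limit}. Nonnegativity of the limits follows from $u_n,v_n>0$.

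\emph{Step 3: the limit solves the system.} Expand the equation as
\[
u_t=\Delta u-\chi_0\Big(mu^{m-1}\frac{\nabla u\cdot\nabla v}{(1+v)^\beta}-\beta u^m\frac{|\nabla v|^2}{(1+v)^{\beta+1}}+\frac{u^m\Delta v}{(1+v)^\beta}\Big)+au-bu^{1+\alpha}.
\]
Every term is a continuous function of $(u,\nabla u,v,\nabla v,\Delta v)$ on the relevant range:
\begin{itemize}
\item $u\ge0$ with $m\ge1$, so $u^{m-1}$ is continuous, where we use the convention $u^0\equiv1$ when $m=1$;
\item $v\ge0$, so $(1+v)^{-\beta}$ is smooth;
\item $u^{1+\alpha}$ is continuous for $u\ge0$.
\end{itemize}
Hence the local uniform convergence of these quantities lets us pass to the limit pointwise in the $u$-equation.

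The elliptic equation $0=\Delta v_n-\mu v_n+\nu u_n^\gamma$ passes to the limit in the same way, since $s\mapsto s^\gamma$ is continuous on $[0,\infty)$. The Neumann conditions pass to the limit because $\nabla u_n\to\nabla u^\infty$ and $\nabla v_n\to\nabla v^\infty$ uniformly up to $\partial\Omega$. Thus $(u^\infty,v^\infty)$ is a classical solution on $\mathbb R\times\Omega$.

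The main obstacle is Step 1, namely making sure the regularity in Lemma~\ref{L:Holder} is genuinely uniform on unit time strips. This uniformity is what lets the estimates transfer to the shifted solutions with $n$-independent constants. The assumption $m\ge1$ is essential here, because it keeps the coefficient $u^{m-1}$ non-singular near $u=0$ in the bootstrap. Since the lemma is stated with exactly this uniformity, we only invoke it.
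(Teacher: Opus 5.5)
Your proposal is correct and follows essentially the same route as the paper. You shift in time, use the uniform-on-unit-strips bounds of Lemma~\ref{L:Holder}(2)--(3), extract a limit by Arzel\`a--Ascoli with a diagonal argument over $[-T,T]$, and pass to the limit in the equations through local uniform convergence of $u_n,\partial_t u_n,\nabla u_n,v_n,\nabla v_n,\Delta v_n$. You are also slightly more careful than the paper: you use the higher-order bounds (and the H\"older-in-time control of $D^2v$) to get the equicontinuity that convergence in $C^{1,2}$ and $C^{0,2}$ actually requires, you check that $u^{m-1}$ and $u^\gamma$ stay continuous at $u=0$ (using $m\ge 1$), and you verify that the Neumann conditions pass to the limit.
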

\begin{proof}
  Set $Q \coloneqq [1, \infty)\times\overline{\Omega}$. By
  Lemma~\ref{L:Holder}(3), we have $u\in C^{1,2}(Q)$. Moreover, by
  Lemma~\ref{L:Holder}(2), we have $v\in C^{0,2}(Q)$.

  Fix any sequence $t_n\to\infty$ and define $u_n(t,x) \coloneqq u(t+t_n,x)$
  and $v_n(t,x) \coloneqq v(t+t_n,x)$. Given any compact interval $I = [-T, T]
  \subset \mathbb{R}$, for all large $n$ we have $t + t_n \ge 1$ for every $t
  \in I$. The uniform bounds from Lemma~\ref{L:Holder} therefore imply that
  \[
    \{u_n\}_n \text{ is bounded in } C^{\,1,\;2}(I\times\overline\Omega),\quad
    \{v_n\}_n \text{ is bounded in } C^{\,0,\;2}(I\times\overline\Omega).
  \]
  By the Arzel\`a--Ascoli Theorem and a standard diagonal extraction over $I =
  [-T, T]$, $T = 1, 2, \dots$, there exists a subsequence (not relabeled here
  for convenience) and $(u^\infty, v^\infty)$ such that
  \[
    u_n\to u^\infty \text{ in } C^{\,1,\;2}_{\mathrm{loc}}(\mathbb{R}\times\overline\Omega) \quad \text{and} \quad
    v_n\to v^\infty \text{ in } C^{\,0,\;2}_{\mathrm{loc}}(\mathbb{R}\times\overline\Omega).
  \]
  This proves~\eqref{E:persistent-limit}. It remains to show that $(u^\infty,
  v^\infty)$ is a classical solution of~\eqref{E:main-PE} on $\mathbb{R} \times
  \Omega$. Note that the two convergences in~\eqref{E:persistent-limit} imply
  the following local uniform convergences on
  $\mathbb{R} \times \overline{\Omega}$:
  \begin{equation}\label{E:local-uniform-convergence}
    u_n\to u^\infty, \quad
    v_n\to v^\infty, \quad \partial_t u_n\to\partial_t u^\infty,\quad
    \nabla u_n\to\nabla u^\infty, \quad
    \nabla v_n\to\nabla v^\infty, \quad
    \Delta v_n\to \Delta v^\infty.
  \end{equation}
  Since $(u_n(t,x),v_n(t,x))$ satisfies~\eqref{E:main-PE} for $t>-t_n$,
  \eqref{E:local-uniform-convergence} implies that $(u^\infty,v^\infty)$ is a
  classical solution of~\eqref{E:main-PE} on $\mathbb{R}\times\Omega$.
  This completes the proof.
\end{proof}

The next lemma is about continuity of classical solutions of~\eqref{E:main-PE} with
respect to the initial conditions and will be used in the proofs of
Theorems~\ref{T:linear-stability-thm}, \ref{T:global-stable},
and~\ref{T:stability-minimal-model}.

\begin{lemma}\label{L:stability-lm}
  Fix $1/2 < \sigma < 1$, $p > 1$, and a positive constant $u_0^* > 0$. For any
  $\epsilon>0$, there are $\delta>0$ and $T_0>0$ such that for any $u_0$
  satisfying~\eqref{E:initial-cond-PE} and $\Norm{u_0-u_0^*}_\infty \le \delta$,
  we have $T_{\max}(u_0)>T_0$ and
  $\Norm{u(T_0,\cdot;u_0)-u\left(T_0,\cdot;u^*\right)}_{X_p^\sigma} \le
  \epsilon$.
\end{lemma}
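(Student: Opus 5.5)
The plan is to argue by the variation-of-constants formula in the fractional power space $X_p^\sigma$, i.e.\ the domain of $A_p^\sigma$ with $A_p=-\Delta+1$ under Neumann conditions on $L^p(\Omega)$. The solution $u(t,\cdot;u^*)$ with constant initial datum $u^*\coloneqq u_0^*$ is spatially homogeneous. It solves the ODE $u'=au-bu^{1+\alpha}$, with $v=\frac{\nu}{\mu}u^\gamma$ and $\nabla v=0$, so it exists on some interval $[0,T_1]$ and stays in a compact range $[c_0,C_0]\subset(0,\infty)$ there. If $a=b=0$ it is simply constant and exists globally. Fix $T_0\in(0,T_1)$. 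I will compare $u(t;u_0)$ with $u(t;u^*)$ on $[0,T_0]$ as long as $u(t;u_0)$ stays in the enlarged range $[c_0/2,2C_0]$. On this range the nonlinearities are smooth and locally Lipschitz: $u^m$, $(1+v)^{-\beta}$, $u^\gamma$, and the logistic term.

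\emph{Step 1.} Write both solutions in mild form:
\[
u(t)=e^{-tA_p}u_0+\int_0^t e^{-(t-s)A_p}\Big[-\chi_0\nabla\cdot\Big(\frac{u^m}{(1+v)^\beta}\nabla v\Big)+u+au-bu^{1+\alpha}\Big](s)\,ds .
\]
Subtract the two. For the cross-diffusion term I will use the standard estimate from Appendix~\ref{S:Semigroup}, namely $\|A_p^\sigma e^{-tA_p}\nabla\cdot w\|_p\le C t^{-\sigma-\frac12}e^{-\epsilon t}\|w\|_p$. This is where the restriction $\sigma<1$ enters: the exponent $\sigma+\frac12<\frac32$ is not automatically integrable, so I will instead take the difference in a slightly weaker form. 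Since $\nabla v(t;u^*)=0$, the difference of fluxes is
\[
\frac{u^m}{(1+v)^\beta}\nabla v,
\]
evaluated on $u=u(t;u_0)$. By elliptic $W^{2,p}$ estimates,
\[
\|\nabla v\|_\infty\le C\|\nabla(v-v^*)\|_{W^{1,p}}\le C\|u^\gamma-(u^*)^\gamma\|_p,
\]
with $p$ large; if needed I will enlarge $p$ by the embedding $X_p^\sigma\hookrightarrow C^{1}$ for $2\sigma-N/p>1$. Then I estimate this flux term through $e^{-(t-s)A_p}\nabla\cdot$ in the norm of $X_p^{\sigma'}$ with $\sigma'<\frac12$, which gives the integrable kernel $(t-s)^{-\sigma'-1/2}$. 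The $X^\sigma_p$ norm at the single time $T_0$ is then recovered by smoothing over the final step $[T_0/2,T_0]$, using a time-shifted mild formula.

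\emph{Step 2 (Gronwall).} Let
\[
\phi(t)=\sup_{s\le t}\|u(s;u_0)-u(s;u^*)\|_\infty .
\]
Since $X_p^{\sigma'}$ alone may not embed into $L^\infty$, I will work instead in $L^\infty$ using the maximum-principle-free semigroup bound $\|e^{-tA}\nabla\cdot w\|_\infty\le Ct^{-1/2-N/(2p)}\|w\|_p$ with $p>N$. Step 1 then yields
\[
\phi(t)\le \|u_0-u^*\|_\infty+C\int_0^t (t-s)^{-\frac12-\frac N{2p}}\phi(s)\,ds .
\]
The singular Gronwall lemma gives $\phi(t)\le C(T_0)\delta$ for $t\le T_0$, as long as the solution stays in the range. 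A continuity argument then gives both $T_{\max}(u_0)>T_0$ and that the range is kept, provided $\delta$ is small. The range condition is also compatible with the blow-up alternative of Proposition~\ref{P:local-existence}.

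\emph{Step 3.} Repeat the mild-form estimate on $[T_0/2,T_0]$, now in $X_p^\sigma$ norm, starting from the datum at $T_0/2$:
\[
\|A^\sigma(\cdot)\|_p\le C(T_0/2)^{-\sigma}\|u(T_0/2;u_0)-u(T_0/2;u^*)\|_p+C\int_{T_0/2}^{T_0}(T_0-s)^{-\sigma-\frac12}\|\text{flux}\|_{W^{1,p}}\,ds .
\]
The flux difference is estimated in $W^{1,p}$, which is why the extra $\frac12$ is absorbed: I move $A^{1/2}$ onto $\nabla\cdot w$, using $\nabla\cdot w\in L^p$, bounded by $C\phi$ via Lemma~\ref{L:Holder}-type local regularity and elliptic estimates. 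This makes the kernel $(T_0-s)^{-\sigma}$, which is integrable.

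The main obstacle is exactly this control of $\nabla\cdot\big(\frac{u^m}{(1+v)^\beta}\nabla v\big)$ in $L^p$ by $\|u_0-u^*\|$. It needs $\nabla u$ bounds, which I will obtain from a bootstrap with $\sigma>1/2$ (so $X_p^\sigma\hookrightarrow W^{1,p}$) on an intermediate interval. Choosing $\delta$ so that $C(T_0)\delta\le\epsilon$ concludes the proof.
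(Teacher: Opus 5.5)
Your argument is correct in outline, but it takes a different route from the paper.

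\emph{The paper's route.} The paper argues by contradiction and compactness:
\begin{itemize}
  \item it takes $T_0$ and the two-sided bound $u^*/2\le u\le 2u^*$ on $[0,T_0]$ from the local theory of Part~I;
  \item it uses uniform $C^{1,2}$ bounds on $[T_0/2,T_0]$ to extract a limit of $u(\cdot;u_n)$ along a bad sequence $u_n\to u^*$;
  \item it identifies this limit with $u(\cdot;u_0^*)$ by uniqueness.
\end{itemize}

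\emph{Your route.} You give a direct quantitative estimate instead. The $L^\infty$ singular Gronwall of Step~2, closed by a continuity argument and the blow-up alternative, proves $T_{\max}(u_0)>T_0$ and the range bound, which the paper imports from Part~I. It also proves the sup-norm continuous dependence at $T_0/2$, which the paper's identification $u_\infty(T_0/2,\cdot)=u(T_0/2,\cdot;u_0^*)$ takes for granted. Step~3 then upgrades this to $X_p^\sigma$ by parabolic smoothing. Your route is more self-contained and gives a bound linear in $\delta$. The paper's is shorter and never has to differentiate the flux.

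\emph{Fixing Step~3.} The displayed kernel $(T_0-s)^{-\sigma-1/2}$ is not integrable for $\sigma>1/2$. The estimate you actually use is
\[
\|A^\sigma e^{-(T_0-s)A}\nabla\cdot w\|_p\le C(T_0-s)^{-\sigma}\|\nabla\cdot w\|_p,
\qquad
\|\nabla\cdot w\|_p\le C\big(\|\nabla u\|_p\|\nabla v\|_\infty+\|\Delta v\|_\infty+\|\nabla v\|_\infty^2\big).
\]
The smallness comes from $\nabla v$ and from $\Delta v=\mu(v-v^*)-\nu(u^\gamma-(u^*)^\gamma)$, both of which are $O(\phi)$. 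The factor $\|\nabla u\|_p$ only needs a bound uniform in $u_0$, not smallness. That bound follows from a linear singular Gronwall inequality for $\|A^\sigma u(t)\|_p$ on, say, $[T_0/4,T_0]$. This works because $\|\nabla v\|_\infty$ and $\|\Delta v\|_\infty$ are uniformly bounded while $u$ stays in the range, and $X_p^\sigma\hookrightarrow W^{1,p}$ for $\sigma>1/2$.

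\emph{A shortcut.} Once you have uniform $W^{2,p}$ bounds on $[T_0/2,T_0]$ (the same regularity input the paper uses), you can skip Step~3. The moment inequality $\|z\|_{X_p^\sigma}\le C\|z\|_{X_p^1}^{\sigma}\|z\|_p^{1-\sigma}$, combined with the $L^p$ smallness from Step~2, finishes the proof immediately. This gives a rate of $\delta^{1-\sigma}$ rather than $\delta$.
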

\begin{proof}
  First, by the arguments of Proposition~1.1
  in~\cite{chen.ruau.ea:25:boundedness}, there are $T_0 \in (0, \infty)$ and
  $0<\delta_0<\frac{u^*}{2}$ such that for any $u_0$
  satisfying~\eqref{E:initial-cond-PE} and $\Norm{u_0-u^*}_\infty \le \delta_0$,
  we have that $T_{\max}(u_0)>T_0$ and
  \begin{equation}\label{E:stability-eq1}
    \frac{u^*}{2}\le u(t,x;u_0)\le 2u^* \quad \forall\, t\in [0,T_0], \,\, x\in\Omega.
  \end{equation}

  Next, we claim that for fixed $1/2 < \sigma < 1$ and $p > 1$, for any
  $\epsilon > 0$, there is $0 < \delta < \delta_0$ such that if
  $\Norm{u_0-u_0^*}_\infty < \delta$, then $\Norm{u(T_0,\cdot;u_0) -
  u(T_0,\cdot;u_0^*)}_{X_p^\sigma}\le \epsilon$. Assume that the claim does not
  hold. Then there are $\epsilon_0>0$ and $u_n$
  satisfying~\eqref{E:initial-cond-PE} such that $\Norm{u_n-u^*}_\infty \to 0$
  as $n\to\infty$ and
  \begin{equation}\label{E:stability-eq2}
    \Norm{u(T_0,\cdot;u_n) - u(T_0,\cdot;u_0^*)}_{X_p^\sigma} \ge \epsilon_0.
  \end{equation}
  By~\eqref{E:stability-eq1} and standard parabolic/elliptic regularity
  estimates (as in Lemma~\ref{L:Holder}), we have
  \begin{equation}
    \sup_{n\ge 1}
    \Norm{u(\cdot,\cdot;u_n)}_{C^{\,1, 2}([T_0/2,T_0]\times\overline{\Omega})}
    <\infty.
  \end{equation}
  Without loss of generality, we may then assume that
  \[
    (u(t,x;u_n), v(t,x;u_n)) \to (u_\infty(t,x), v_\infty(t,x))
    \quad \text{as $n \to \infty$ in $C^{1,2}([T_0/2,T_0]\times\overline{\Omega})$,}
  \]
  and $(u_\infty(t,x), v_\infty(t,x))$ is a classical solution
  of~\eqref{E:main-PE} on $[T_0/2,T_0]$. Note that $u_\infty(T_0/2,x) =
  \lim_{n\to\infty}u(T_0/2,x;u_n) = u(T_0/2,x;u_0^*)$. This implies that
  \begin{equation}\label{E:stability-eq3}
    u_\infty(T_0,x) \equiv u(T_0,x; u_0^*).
  \end{equation}
  By~\eqref{E:stability-eq2}, $\Norm{u_\infty(T_0,\cdot) -
  u(T_0,\cdot;u_0^*)}_{X_p^\sigma} \ge \epsilon_0$, which
  contradicts~\eqref{E:stability-eq3}. Hence the claim holds and
  Lemma~\ref{L:stability-lm} is thus proved.
\end{proof}

The following lemma is about the monotonicity of $\bar u(t)$, which was proved
in \cite{chen.ruau.ea:25:boundedness} and will be used in the proof of
Theorem~\ref{T:global-stabl-negative-sensitivity}.

\begin{lemma}\label{L:nonincreasing}
  For any $u_0$ satisfying~\eqref{E:initial-cond-PE}, the following hold.
  \begin{itemize}
    \item[(1)] Assume that $\chi_0 \le 0$ and $a,b>0$. If $\bar{u}(t_0)> u^* =
    \left(a/b\right)^{1/\alpha}$ for some $t_0 \in { (0, T_{\max}(u_0))}$,
    then $\bar{u}(t)$ is nonincreasing on $(0,t_0]$.

    \item[(2)] {Assume that $\chi_0 \le 0$ and $a=b=0$.  Then $\bar{u}(t)$ is
    non-increasing on $(0,T_{\max}(u_0))$.}

  \end{itemize}
\end{lemma}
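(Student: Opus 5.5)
The plan is to analyze the $u$-equation at a point where $u(t,\cdot)$ attains its spatial maximum and apply a comparison argument in time. Since $u$ is a positive classical solution, and by Proposition~\ref{P:local-existence} is $C^{1,2}$ on $(0,T_{\max})\times\overline\Omega$, I would first expand the cross-diffusion term using the elliptic equation $\Delta v=\mu v-\nu u^\gamma$:
\[
  \nabla\cdot\Big(\frac{u^m}{(1+v)^\beta}\nabla v\Big)
  = \frac{m u^{m-1}\nabla u\cdot\nabla v}{(1+v)^\beta}
  - \frac{\beta u^m|\nabla v|^2}{(1+v)^{\beta+1}}
  + \frac{u^m(\mu v-\nu u^\gamma)}{(1+v)^\beta}.
\]
At a maximum point $x_t$ of $u(t,\cdot)$, which may lie on $\partial\Omega$, we have $\nabla u(t,x_t)=0$. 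In the interior this is clear; on the boundary I would use the Neumann condition, which kills the normal derivative, while maximality kills the tangential ones. We also have $\Delta u(t,x_t)\le 0$. On the boundary this is the point needing care. If a boundary maximum causes trouble, I would instead use the standard argument: suppose $\bar u$ exceeds the comparison level, and derive a contradiction from Hopf's lemma applied to the linear parabolic equation satisfied by $u$, viewed with frozen coefficients.

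With $\chi_0\le 0$, the term $-\chi_0\cdot\big(-\beta u^m|\nabla v|^2(1+v)^{-\beta-1}\big)$ is $\le 0$. The term $-\chi_0 u^m(\mu v-\nu u^\gamma)(1+v)^{-\beta}$ is also $\le 0$ at $x_t$. Indeed, $\mu v(t,x_t)\le \mu\bar v(t)\le \nu\bar u^\gamma(t)=\nu u(t,x_t)^\gamma$ by the remark at the start of Section~\ref{S:Prelim}, which follows from the maximum principle for the elliptic equation. Hence at $x_t$,
\[
  u_t(t,x_t)\le u(t,x_t)\big(a-b\,u(t,x_t)^\alpha\big).
\]

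Next I would turn this into the monotonicity of $\bar u$. By uniform continuity of $u$ and $u_t$ on compact subsets of $(0,T_{\max})\times\overline\Omega$, $\bar u$ is Lipschitz. Its upper Dini derivative satisfies $D^+\bar u(t)\le \max_{x\in X_t}u_t(t,x)$, where $X_t$ is the set of maximizers (Danskin-type argument). So $D^+\bar u(t)\le \bar u(t)(a-b\bar u(t)^\alpha)$.

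For part~(2), with $a=b=0$, this gives $D^+\bar u\le 0$, so $\bar u$ is nonincreasing on $(0,T_{\max})$.

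For part~(1), suppose $\bar u(t_0)>u^*$. Let $s=\inf\{\tau\in(0,t_0]:\bar u>u^*\text{ on }[\tau,t_0]\}$. On $(s,t_0]$ we have $D^+\bar u<0$, so $\bar u$ is nonincreasing there. If $s>0$, continuity would give $\bar u(s)=u^*<\bar u(t_0)$, which contradicts monotonicity on $[s,t_0]$. Hence $s=0$, and $\bar u$ is nonincreasing on $(0,t_0]$.

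The main obstacle is the rigorous justification at boundary maxima, namely the sign of $\Delta u$ and the vanishing of the gradient. If the pointwise argument fails there, I would use the fallback described above: compare $u$ with the spatially constant ODE supersolution, and rule out a first touching on $\partial\Omega$ by the parabolic Hopf lemma together with the Neumann condition.
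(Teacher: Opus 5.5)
Your proof is correct. The paper gives no argument of its own here; it only cites Lemma~3.1 of Part~I, so your proposal is a self-contained proof. It evaluates the equation at a spatial maximizer and passes to $\bar u$ through the Dini derivative, the same maximum-point/envelope technique the paper itself uses later (see \eqref{E:envelope-max} in the proof of Theorem~\ref{T:global-stable}(iii)). Two sign observations make the chemotactic contribution nonpositive at a maximizer: $\chi_0\beta u^m|\nabla v|^2(1+v)^{-\beta-1}\le 0$, and $\mu v\le \nu\bar u^\gamma$. Your infimum argument for part~(1) and the Dini-derivative monotonicity lemma for part~(2) are both sound.

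The step you flag as the main obstacle is not actually one, and you do not need the Hopf-lemma fallback. Suppose $x_0\in\partial\Omega$ maximizes $u(t,\cdot)$ over $\overline\Omega$.
\begin{itemize}
\item The Neumann condition together with tangential stationarity gives $\nabla u(t,x_0)=0$.
\item For any strictly inward direction $\xi$, one has $x_0+h\xi\in\Omega$ for small $h>0$. Taylor's formula applies because $u(t,\cdot)\in C^2(\overline\Omega)$ by Proposition~\ref{P:local-existence}, and gives $0\ge u(t,x_0+h\xi)-u(t,x_0)=\tfrac{h^2}{2}\,\xi^\top D^2u(t,x_0)\,\xi+o(h^2)$. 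Hence $\xi^\top D^2u(t,x_0)\xi\le 0$.
\item By continuity this extends to all $\xi$ with $\xi\cdot n(x_0)\le 0$. Since the quadratic form is even in $\xi$, it holds for every $\xi$, so $D^2u(t,x_0)\le 0$ and in particular $\Delta u(t,x_0)\le 0$.
\end{itemize}
Thus the pointwise bound $u_t\le u(a-bu^\alpha)$ holds at every maximizer, interior or boundary. Your Danskin estimate $D^+\bar u(t)\le \bar u(t)\big(a-b\bar u(t)^\alpha\big)$ then holds at every $t\in(0,T_{\max})$, which is exactly what the monotonicity lemma needs. This is slightly more direct than the $U-\epsilon\psi$ perturbation the paper uses for boundary maxima in Theorem~\ref{T:global-stable}(iii).
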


\begin{proof}
  It follows from Lemma~3.1 of~\cite{chen.ruau.ea:25:boundedness}.
\end{proof}

The following lemma is about an eventual upper bound of classical solutions in
the minimal model.
It will be used in the proof of Theorems~\ref{T:persistence}
and~\ref{T:stability-minimal-model}.

\begin{lemma}[Eventual upper bound in the minimal
  model]\label{L:minimal-eventual-bounds}
  Assume that $a=b=0$, $m=1$, $\beta\ge 1$, and $0<\chi_0
  <\min\left\{\frac{\chi_\beta}{2},\chi_\beta^{1/2}\right\}$, where $\chi_\beta$ is
  defined in~\eqref{E:chi-a-b-beta}. Then there exist constants $C_N>0$ and
  $0<q'_N\le 1\le q''_N$ such that for every
  $u^*>0$ and any globally
  defined classical solution $(u(t,x),v(t,x))$ with $u(0,\cdot)$
  satisfying~\eqref{E:initial-cond-PE} and $\int_\Omega u(0,x)\,dx=u^*|\Omega|$,
  there exists $T_*>0$ such that
  \begin{equation}\label{E:minimal-eventual-Linfty}
    u(t,x)\le C_N\left(\left(u^*\right)^{q'_N}+\left(u^*\right)^{q''_N}\right)
    \qquad \text{for all $t\ge T_*$ and all $x\in\Omega$.}
  \end{equation}
\end{lemma}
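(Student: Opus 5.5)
The plan is to rerun the $L^p$ bootstrap behind \cite[Theorem~1.2(2)]{chen.ruau.ea:25:boundedness}, keeping track of how every constant depends on the mass $u^*$. Mass is conserved when $a=b=0$: integrating the $u$-equation and using the Neumann conditions gives $\int_\Omega u(t,x)\,dx=u^*|\Omega|$ for all $t$. This supplies an $L^1$ bound that is exactly linear in $u^*$.

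The first step is an eventual $L^p$ bound for some fixed $p=p_N>\max\{1,\gamma N/2\}$. We test the $u$-equation with $u^{p-1}$. Writing
\[
\frac{\nabla v}{(1+v)^\beta}=\nabla\Big(\frac{(1+v)^{1-\beta}-1}{1-\beta}\Big)
\]
(or the logarithm when $\beta=1$), the chemotactic term becomes
\[
\chi_0(p-1)\int_\Omega u^{p-1}\frac{\nabla u\cdot\nabla v}{(1+v)^\beta}.
\]
We integrate by parts once more, use $-\Delta v=\nu u^\gamma-\mu v$, and exploit $\beta\ge 1$: the factor $(1+v)^{-\beta}$ then absorbs the $|\nabla v|^2/(1+v)^{\beta+1}$ contributions. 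This is how the smallness conditions $\chi_0<\chi_\beta/2$ and $\chi_0<\chi_\beta^{1/2}$ enter; it is the computation of Part~I, which we reuse without change. The outcome is a differential inequality
\[
\frac{d}{dt}\int_\Omega u^p+c\int_\Omega|\nabla u^{p/2}|^2\le C\int_\Omega u^{p+\gamma'}
\]
with a small coefficient, or a closed form of it. We then apply the Gagliardo--Nirenberg inequality with the $L^1$ norm $u^*|\Omega|$ as the lower-order term. This yields
\[
y'\le -c\,y^{1+\kappa}(u^*)^{-\kappa'}+C(u^*)^{\kappa''},\qquad y=\int_\Omega u^p,
\]
for explicit exponents. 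An ODE comparison then shows that $\int_\Omega u^p\le C\big((u^*)^{r_1}+(u^*)^{r_2}\big)$ for all $t\ge T_1$, where the homogeneity exponents $r_1, r_2$ are computed by scaling.

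The second step upgrades this to $L^\infty$. Elliptic regularity for $v$ gives $\|\nabla v\|_\infty\le C\|u^\gamma\|_{p/\gamma}$, which holds because $p/\gamma>N/2$, after choosing $p$ larger if necessary. Since $\beta\ge0$ and $v\ge0$, the flux satisfies $|u\chi(v)\nabla v|\le\chi_0 u|\nabla v|$. We then use the variation-of-constants formula with the Neumann heat semigroup (Appendix~\ref{S:Semigroup}) on $[t-1,t]$:
\[
\|u(t)\|_\infty\le C\|u(t-1)\|_1+C\int_{t-1}^t (t-s)^{-\frac12-\frac N{2q}}\|u\nabla v\|_q\,ds .
\]
We bound $\|u\nabla v\|_q$ by interpolating $\|u\|_\infty$ and $\|u\|_p$, absorb the resulting $\|u\|_\infty^\theta$ term with $\theta<1$ through a sup over $[t-1,t]$, and obtain $\|u(t)\|_\infty\le C\big((u^*)^{s_1}+(u^*)^{s_2}\big)$ for $t\ge T_*=T_1+2$.

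Finally, we collect the exponents. Among all the powers of $u^*$ produced, let $q_N'$ be the smallest and $q_N''$ the largest; after replacing the smallest power by $\min\{\cdot,1\}$ and the largest by $\max\{\cdot,1\}$ if needed, we use $x^s\le x^{q_N'}+x^{q_N''}$ for $q_N'\le s\le q_N''$. The smallest power is positive, since each bound scales at least linearly in the mass through the $L^1$ term, so $q_N'>0$. The main obstacle is the first step. We must verify that the Part~I energy inequality closes with constants independent of $u^*$, and that all $u^*$-dependence arises only through Gagliardo--Nirenberg with the conserved $L^1$ norm. In particular, the lower-order $v$-terms need to be bounded via $\|v\|_1=\frac{\nu}{\mu}\|u^\gamma\|_1$, which is controlled by powers of $u^*$ through interpolation, and not via the initial data. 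Only then are $C_N$, $q_N'$ and $q_N''$ independent of the solution.
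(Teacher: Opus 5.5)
Your architecture is the paper's: conserved mass, a Part~I $L^p$ inequality closed by Gagliardo--Nirenberg against $\|u\|_1=|\Omega|u^*$, then a semigroup step to $L^\infty$. However, Step~1 as you describe it does not close, and it is the crux. Integrating the cross term by parts against $u^p$ and inserting $-\Delta v=\nu u^\gamma-\mu v$ produces two terms on the right-hand side of $\frac1p\frac{d}{dt}\int_\Omega u^p$, namely $\chi_0\frac{p-1}{p}\nu\int_\Omega u^{p+\gamma}(1+v)^{-\beta}$ and $\chi_0\frac{p-1}{p}\beta\int_\Omega u^p|\nabla v|^2(1+v)^{-\beta-1}$. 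Both have the unfavorable sign, so at best you get $C\int_\Omega u^{p+\gamma'}$ with $\gamma'=\gamma$. Gagliardo--Nirenberg with only the $L^1$ norm absorbs $\int_\Omega u^{p+\gamma'}$ into $\int_\Omega|\nabla u^{p/2}|^2$ only if $\gamma'N<2$: the relevant exponent is $\theta\ell=\frac{2N(p+\gamma'-1)}{N(p-1)+2}$, which must be below $2$. So for $\gamma N>2$ the term cannot be absorbed however small $C$ is, and for $\gamma N=2$ absorption needs small mass, which is not uniform in $u^*$.

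The missing idea is that the Part~I inequality is \emph{linear} in $\int_\Omega u^p$. First, Young's inequality reduces the cross term to $\frac{(p-1)\chi_0^2}{2}\int_\Omega u^p|\nabla v|^2(1+v)^{-2\beta}$. Then one tests the $v$-equation with $u^p(1+v)^{1-2\beta}$. The production term appears as $-\nu\int_\Omega u^{p+\gamma}(1+v)^{1-2\beta}\le 0$ and is dropped. The bound $\mu v(1+v)^{1-2\beta}\le\mu\Theta_{2(\beta-1)}$ holds, so no norm of $v$ is needed. The mixed term is absorbed using $(1+v)^{2-4\beta}\le(1+v)^{-2\beta}$ for $\beta\ge1$. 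This closes exactly when $p\chi_0<2\beta-1$ and gives $\frac{d}{dt}\int_\Omega u^p\le -c\int_\Omega u^{p-2}|\nabla u|^2+C\int_\Omega u^p$. That inequality is homogeneous of degree $p$, so the $L^1$ norm suffices for every mass and yields $\int_\Omega u^p\le C(u^*)^p$ eventually. This is where $\chi_0<\chi_\beta/2$ enters: it makes some $p>\gamma N/2$ admissible. The paper fixes $p_0=\max\{\frac32,\frac34\gamma N\}$, for which the absorption factor is at least $\frac15$ uniformly in $\chi_0,\beta$. Note also that your ``choose $p$ larger if necessary'' collides with the upper constraint $p<(2\beta-1)/\chi_0$, which you never state.

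Step~2 contains a second error: $\|\nabla v\|_\infty\le C\|u^\gamma\|_{p/\gamma}$ requires $p/\gamma>N$, not $p/\gamma>N/2$. You can repair it in one of two ways.
\begin{itemize}
\item Take $p\in(\max\{2,\gamma N\},(2\beta-1)/\chi_0)$, which is nonempty precisely because $\chi_0<\chi_\beta/2$. The price is constants that depend on $\chi_0,\beta$ and degenerate as $\chi_0\uparrow\chi_\beta/2$.
\item Do as the paper does: keep $p_0$ fixed and bootstrap to $L^{2p_N}$ and $L^{\gamma p_N}$ with the superlinear inequality $\frac1p\frac{d}{dt}\int_\Omega u^p\le-\frac{p-1}{2}\int_\Omega u^{p-2}|\nabla u|^2+C\int_\Omega u^{p+\gamma}$. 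This now closes because Gagliardo--Nirenberg is run against an $L^r$ norm with $2r>\gamma N$ rather than against $L^1$. Hölder then handles the semigroup step with no $\|u\|_\infty$ on the right.
\end{itemize}
The paper's route keeps $C_N,q_N',q_N''$ depending only on $(\Omega,N,\mu,\nu,\gamma)$. This matters downstream because $\overline{u}_0(u^*)$ enters the thresholds $\chi_\beta^{**,1}(u^*)$ and $\chi_\beta^{**,2}(u^*)$. With $\chi_0$-dependent constants, those conditions would become implicit in $\chi_0$.

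If you keep the interpolation producing $\|u\|_\infty^\theta$, note that the solution is not assumed bounded. Absorb that term on finite intervals $[T_1,T]$ first, then pass to $\limsup_{t\to\infty}$, to get a bound independent of the particular solution.
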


\begin{proof}
  Suppose that $(u(t,x),v(t,x))$ is a globally defined classical solution with
  $u(0,\cdot)$ satisfying \eqref{E:initial-cond-PE} and $\int_\Omega
  u(0,x)dx=u^*|\Omega|$.  In the following, we let $p_0\coloneqq
  \max\left\{\frac{3}{2},\frac{3}{4}\gamma N\right\}$ and $p_N =
  2+\max\{N,\gamma N\}$. We divide the proof into three steps. \smallskip

  \noindent\textbf{Step 1 (Eventual $L^{p_0}$ bound).} In this step, we show that
  $\int_\Omega u^{p_0}(t,x)dx$ stays bounded.

  \smallskip

  Since
  $\chi_0<\frac{\chi_\beta}{2}$, we have
  $\frac{2\beta-1}{\chi_0}>\max\{2,\gamma N\}$. Then $p_0<\max\{2,\gamma
  N\}<\frac{2\beta-1}{\chi_0}$. Note that
  \begin{align}\label{E:new-lp-eq1}
    \frac{1}{p_0}\frac{d}{dt}\int_\Omega u^{p_0}
    & \le - \frac{p_0-1}{2} \int_\Omega u^{p_0-2}|\nabla u|^2
    + \frac{(p_0-1)\chi_0^2}{2} \int_\Omega \frac{u^{p_0}|\nabla v|^2}{(1+v)^{2\beta}}.
  \end{align}
  By   \cite[inequality~(4.11)]{chen.ruau.ea:25:boundedness}, we have
  \begin{align}\label{E:new-lp-eq2}
    & \left(\frac{(p_0-1)\chi_0{ (2\beta-1)}}{p_0}-\frac{(p_0-1)\chi_0^2}{2}\right) \int_\Omega \frac{u^{p_0}}{(1+v)^{2\beta}}|\nabla v|^2\nonumber \\
    & \le \frac{p_0-1}{2} \int_\Omega u^{p_0-2}|\nabla u|^2  +\frac{(p_0-1)\Theta_{2(\beta-1)}\chi_0\mu }{p_0} \int_\Omega u^{p_0}.
  \end{align}
  This implies that
  \begin{align*}
    & \frac{(p_0-1)\chi_0^2}{2}\int_\Omega \frac{u^{p_0}|\nabla v|^2}{(1+v)^{2\beta}}                                                                                  \\
    & = \left(\frac{(p_0-1)\chi_0{ (2\beta-1)}}{p_0}-\frac{(p_0-1)\chi_0^2}{2} + \frac{p_0-1}{p_0}\chi_0^2 \left(p_0-\frac{2\beta-1}{\chi_0}\right)\right) \int_\Omega
    \frac{u^{p_0}}{(1+v)^{2\beta}}|\nabla v|^2                                                                                                                          \\
    & \le \left(1-\frac{2(2\beta -1)-2p_0\chi_0}{2(2\beta-1)-p_0\chi_0}\right)
    \left(\frac{p_0-1}{2} \int_\Omega u^{p_0-2}|\nabla u|^2  +\frac{(p_0-1)\Theta_{2(\beta-1)}\chi_0\mu }{p_0} \int_\Omega u^{p_0}\right).
  \end{align*}
  It then follows from~\eqref{E:new-lp-eq1} that
  \begin{align}\label{E:new-lp-eq3}
    \frac{1}{p_0}\frac{d}{dt}\int_\Omega u^{p_0}
    & \le  -\frac{\left((2\beta -1)-p_0\chi_0\right)(p_0-1)}{2(2\beta-1)-p_0\chi_0}
    \int_\Omega u^{p_0-2}|\nabla u|^2  +   \frac{(p_0-1)\Theta_{2(\beta-1)}\chi_0\mu }{p_0} \int_\Omega u^{p_0}.
  \end{align}
  Note that $\frac{x-p_0}{2x-p_0}$ is an increasing function for $x \in
  [p_0,\infty)$, and $(2\beta-1)/\chi_0>\max\{2,\gamma N\}>p_0$. Hence,
  \[
    \frac{(2\beta-1)/\chi_0-p_0}{2(2\beta-1)/\chi_0-p_0}\ge \frac{\max\{2,\gamma N\}-p_0}{2\max\{2,\gamma N\}-p_0}=\frac{1}{5}.
  \]
  Moreover, since $\chi_0<\chi_\beta^{1/2}$ and
  \[
    \Theta_{\kappa}
    =\frac{\kappa^\kappa}{(1+\kappa)^{1+\kappa}}
    =\left(\frac{\kappa}{1+\kappa}\right)^{\kappa}\frac{1}{1+\kappa}
    \le \frac{1}{1+\kappa}
    \qquad \text{for all $\kappa\ge 0$,}
  \]
  the quantity $\chi_0\Theta_{2(\beta-1)}$ is bounded by $1$ for all $\beta \ge
  1$. Since $p_0\ge \frac{3}{2}$, we have $p_0(p_0-1)\ge \frac{3}{4}$.
  Multiplying \eqref{E:new-lp-eq3} by $p_0$ and using the above estimate, we
  infer that
  \begin{equation}\label{E:new-lp-eq4}
    \frac{d}{dt}\int_\Omega u^{p_0}
    \le -\frac{3}{20} \int_\Omega u^{p_0-2}|\nabla u|^2  + \mu \left(\max\left\{2,\gamma N\right\} -1\right)\int_\Omega u^{p_0}.
  \end{equation}
  Apply Lemma 4.1 in Part~I~\cite{chen.ruau.ea:25:boundedness} with $\varepsilon
  = \frac{3}{20}\left(\mu \max \{2, \gamma N\} \right)^{-1}$, there is
  $C_{\gamma, \mu, N}$ independent of $\chi_0$ and $\beta$ such that
  \begin{equation}\label{E:new-lp-eq5}
    \frac{d}{dt}\int_\Omega u^{p_0}
    \le -\mu \int_\Omega u^{p_0} +C_{\gamma,\mu, N} \left(\int_\Omega u(t,x)dx\right)^{p_0}.
  \end{equation}
  Therefore, there exist $C'_{\gamma,\nu,N}>0$ and $T_0>0$ such that for all $t\ge T_0$,
  \begin{equation}\label{E:new-lp-eq6}
    \int_\Omega u^{p_0}(t,x)dx \le C'_{\gamma, \mu, N}\left(|\Omega| u^*\right)^{p_0}.
  \end{equation}

  \noindent\textbf{Step 2 (Eventual $L^{2 p_N}$ and $L^{\gamma p_N}$ bounds).}
  In this step, we show
  that $\int_\Omega u^{2p_N}(t,x)dx$ and $\int_\Omega u^{\gamma p_N}(t,x)dx$ stay bounded.

  \smallskip First, note that for any $p>1$, we have
  \begin{equation}\label{E:new-cross-eq2}
    \chi_0^2\int_\Omega \frac{u^p|\nabla v|^2}{(1+v)^{2\beta}}
    \le \frac{1}{2}\int_\Omega u^{p+\gamma}
    + \chi_0^{\frac{2(p+\gamma)}{\gamma}}\frac{\gamma}{p+\gamma}\left(\frac{2p}{p+\gamma}\right)^{p/\gamma}
    \int_\Omega \frac{|\nabla v|^{\frac{2(p+\gamma)}{\gamma}}}{(1+v)^{\frac{2\beta(p+\gamma)}{\gamma}}}.
  \end{equation}
  By Proposition~2.2 in Part~I \cite{chen.ruau.ea:25:boundedness}, there is
  $C_N''>0$, independent of $\chi_0$ and $\beta$, such that
  \[
    \int_\Omega \frac{|\nabla v|^{2(p+\gamma)/\gamma }}{(1+v)^{2\beta (p+\gamma)/\gamma }}
    = \int_\Omega \frac{|\nabla v|^{2(p+\gamma)/\gamma}}{(1+v)^{(1+(2\beta-1))(p+\gamma)/\gamma }}
    \le \Theta_{2\beta-1}^{(p+\gamma)/\gamma}\, C_N'' \int_\Omega u^{p+\gamma}.
  \]
  Since $\chi_0<\chi_\beta^{1/2}$ and $\Theta_{2\beta-1}\le \frac{1}{2\beta}\le
  \frac{1}{2}$ for $\beta\ge 1$, we have $\chi_0^2 \Theta_{2\beta-1} \le 1$.
  Hence,
  \[
    \chi_0^{\frac{2(p+\gamma)}{\gamma}}\Theta_{2\beta-1}^{\frac{p+\gamma}{\gamma}}
    =\left(\chi_0^2\Theta_{2\beta-1}\right)^{\frac{p+\gamma}{\gamma}}
    \le 1.
  \]
  Combining these estimates with \eqref{E:new-lp-eq1} with $p_0$ replaced by
  $p$, we infer that there is $C_{N,p}'''>0$, independent of $\chi_0$ and
  $\beta$, such that
  \begin{align}\label{E:new-lp-eq7}
    \frac{1}{p}\frac{d}{dt}\int_\Omega u^{p}
    & \le -\frac{1}{2}(p-1)\int_\Omega u^{p-2}|\nabla u|^2
    + C_{N,p}'''\int_\Omega u^{p+\gamma}.
  \end{align}

  Next, put $R_N\coloneqq \max\{2p_N,\gamma p_N\}$. We turn the above
  qualitative bootstrap into a quantitative one. Define a finite sequence by
  $r_0\coloneqq p_0$ and $r_{j+1}\coloneqq
  \min\left\{\frac{3}{2}r_j,R_N\right\}$, and choose $J\in\mathbb{N}$ so that
  $r_J=R_N$. We claim that for every $j\in\{0,\dots,J\}$ there exist constants
  $A_j>0$ and $Q_j\ge r_j$, depending only on $(\Omega,N,\mu,\nu,\gamma)$, and a
  time $\tau_j\ge T_0$ such that
  \begin{equation}\label{E:new-lp-eq8-aux}
    \sup_{t\ge \tau_j}\int_\Omega u^{r_j}(t,x)\,dx
    \le A_j\big( (u^*)^{p_0}+ (u^*)^{Q_j}\big).
  \end{equation}
  We prove \eqref{E:new-lp-eq8-aux} by induction. First, for $j=0$,
  \eqref{E:new-lp-eq8-aux} follows from \eqref{E:new-lp-eq6} with
  $\tau_0\coloneqq T_0$ and $Q_0=p_0$. Next, assume that
  \eqref{E:new-lp-eq8-aux} holds for some $j<J$, and set $r\coloneqq r_j$ and
  $p\coloneqq r_{j+1}\in(r,2r)$. Writing $w=u^{p/2}$, \eqref{E:new-lp-eq7}
  implies
  \begin{equation}
    \label{E:new-eq1}
    \frac{1}{p}\frac{d}{dt}\int_\Omega w^2
    + \frac {p-1}{p^2}\int_\Omega |\nabla w|^2+\int_\Omega w^2
    \le \left(\int_\Omega w^2+ C_{N,p}'''\right)\int_\Omega w^{2+\frac{2\gamma}{p}}.
  \end{equation}
  Set $q\coloneqq \frac{2r}{p}$ and $\ell\coloneqq 2+\frac{2\gamma}{p}$. Note
  that $\ell>2>q$. By H\"older interpolation inequality, there holds
  \[
    \Norm{w}_2^2
    \le \Norm{w}_q^{2\theta_0}\Norm{w}_\ell^{2(1-\theta_0)} = \left(\int_\Omega w^q\right)^{\frac{2\theta_0}{q}}\left(\int_\Omega w^l\right)^{\frac{2(1-\theta_0)}{l}},
    \quad \text{where}\quad
    \theta_0=\frac{q(l-2)}{2(l-q)}.
  \]
  Then, by Young's inequality, for every $\varepsilon>0$ there exists
  $C_\varepsilon>0$ such that
  \[
    \int_\Omega w^2
    \le \varepsilon\int_\Omega w^\ell
    + C_\varepsilon \int_\Omega w^q.
  \]
  By the Gagliardo--Nirenberg inequality, there is $C_{N,p,\gamma}>0$ such that
  \[
    C'''_{N,p}\Norm{w}_\ell^\ell
    \le C_{N,p,\gamma}\Norm{\nabla w}_2^{\theta \ell}\Norm{w}_q^{(1-\theta)\ell}
    + C_{N,p,\gamma}\Norm{w}_q^\ell, \quad \text{where}\quad \theta=\frac{2N(l-q)}{l(2N+2q-Nq)}.
  \]
  By the choice of $l,p,q$, it can be verified that $\frac{\theta l}{2}<1$. Then
  Young's inequality yields
  \[
    C_{N,p,\gamma}\Norm{\nabla w}_2^{\theta \ell}\Norm{w}_q^{(1-\theta)\ell}
    \le \frac{p-1}{2p^2}\int_\Omega |\nabla w|^2
    + \widetilde C_{N,p,\gamma}\left(\int_\Omega w^q\right)^{\sigma_j},
  \]
  where $\widetilde C_{N,p,\gamma}>0$ depends only on
  $(\Omega,N,\mu,\nu,\gamma)$, and
  \[
    \sigma_j
    \coloneqq \frac{2(1-\theta)\ell}{q(2-\theta \ell)} =\frac{2N+2l-lN}{2N+2q-lN}.
  \]
  Hence,
  \[
    C'''_{N,p}\int_\Omega w^\ell
    \le \frac{p-1}{2p^2}\int_\Omega |\nabla w|^2
    + \widetilde C_{N,p,\gamma}\left(\int_\Omega w^q\right)^{\sigma_j}
    + C_{N,p,\gamma}\left(\int_\Omega w^q\right)^{\ell/q}.
  \]
  We then infer that there exists $\widetilde C_j>0$, depending only on
  $(\Omega,N,\mu,\nu,\gamma)$, such that
  \begin{equation*}
    \frac{1}{p}\frac{d}{dt}\int_\Omega w^2
    + \frac{p-1}{2p^2}\int_\Omega |\nabla w|^2
    + \int_\Omega w^2
    \le \widetilde C_j\left[
    \int_\Omega w^q
    + \left(\int_\Omega w^q\right)^{\sigma_j}
    + \left(\int_\Omega w^q\right)^{\ell/q}
    \right]
  \end{equation*}
  for all $t\ge \tau_j$. Since $w^q=u^r$, the induction hypothesis implies that
  there exist exponents $Q_j\ge r_j$, depending only on $(N,\mu,\nu,\gamma)$,
  such that
  \[
    \int_\Omega w^q
    + \left(\int_\Omega w^q\right)^{\sigma_j}
    + \left(\int_\Omega w^q\right)^{\ell/q}
    \le \widetilde C_j\left((u^*)^{p_0}+(u^*)^{Q_j \max\{\sigma_j,l/q\}}\right)
    \qquad \text{for all $t\ge \tau_j$.}
  \]
  Then
  \[
    \frac{d}{dt}\int_\Omega u^p + p\int_\Omega u^p
    \le p\widetilde C_j\left((u^*)^{p_0}+(u^*)^{Q_{j+1}}\right)
    \qquad \text{for all $t\ge \tau_j$,}
  \]
  where $Q_{j+1}=Q_j\max\{\sigma_j,l/q\}$. It is not difficult to see that
  $Q_{j+1}\ge p(=r_{j+1})$. Then, by Gr\"onwall's inequality, there are
  $A_{j+1}>0$ and $\tau_{j+1}\ge \tau_j$, depending only on
  $(\Omega,N,\mu,\nu,\gamma)$, such that \eqref{E:new-lp-eq8-aux} holds with
  $j+1$ in place of $j$. This proves the claim.

  Now, by induction, there exist $\widetilde C_N>0$, depending only on
  $(\Omega,N,\mu,\nu,\gamma)$, $\tilde q_N\ge \max\{2p_N,\gamma p_N\}$, and
  some $\widetilde T_0\ge T_0$ such that
  \begin{equation}\label{E:new-lp-eq8}
    \sup_{t\ge \widetilde T_0}\int_\Omega u^{2 p_N}(t,x)\,dx
    \le \widetilde C_N\left(\left(u^*\right)^{p_0}
    +\left(u^*\right)^{\widetilde q_N}\right)
    \quad \text{and} \quad
    \sup_{t\ge \widetilde T_0}\int_\Omega u^{\gamma p_N}(t,x)\,dx
    \le \widetilde C_N\left(\left(u^*\right)^{p_0}
    +\left(u^*\right)^{\widetilde q_N}\right).
  \end{equation}
  Replacing $T_0$ by $\widetilde T_0$, we may assume in the following
  that~\eqref{E:new-lp-eq8} holds for all $t\ge T_0$. \smallskip

  \noindent\textbf{Step 3 (Eventual $L^\infty$ bound).} First, set $p\coloneqq
  p_N$ and let $A_p=-\Delta+\mu$ on $L^p(\Omega)$ with Neumann boundary
  conditions. Fix $t\ge T_0+1$ and write the variation-of-constants formula on
  $[t-1,t]$:
  \begin{align}\label{E:new-lp-eq9}
    u(t,\cdot)
    = e^{-A_p}u(t-1,\cdot)
    - \chi_0 \int_{t-1}^t e^{-A_p(t-s)}
    \nabla\cdot\!\left(\frac{u(s,\cdot)}{(1+v(s,\cdot))^\beta}\nabla v(s,\cdot)\right)\,ds
    + \mu \int_{t-1}^t e^{-A_p(t-s)}u(s,\cdot)\,ds.
  \end{align}
  By Lemmas~\ref{L:Embedding} and~\ref{L:new-lm}, there exists $C_\infty>0$,
  depending only on $(\Omega,N,\mu)$, such that for all $t\ge T_0+1$,
  \begin{align*}
    \Norm{u(t,\cdot)}_\infty
    \le C_\infty\left(
    \sup_{s\ge T_0}\Norm{u(s,\cdot)}_p
    + \chi_0\sup_{s\ge T_0}\Norm{\frac{u(s,\cdot)\abs{\nabla v(s,\cdot)}}{(1+v(s,\cdot))^\beta}}_p
    \right).
  \end{align*}
  By H\"older's inequality,
  \begin{align*}
    \Norm{\frac{u(s,\cdot)\abs{\nabla v(s,\cdot)}}{(1+v(s,\cdot))^\beta}}_p
    \le \Norm{u(s,\cdot)}_{2p}\Norm{\frac{\nabla v(s,\cdot)}{(1+v(s,\cdot))^\beta}}_{2p}
    \qquad \text{for all $s\ge 0$.}
  \end{align*}

  Next, since $p$ is fixed, by Proposition~2.2 in Part~I~\cite{chen.ruau.ea:25:boundedness}, there
  exists $\widehat C_N>0$ (Here $\widehat C_N=M^*(N,p,\mu,\nu)$, $p$ fixed), depending only on $(\Omega,N,\mu,\nu)$, such that
  \[
    \int_\Omega \frac{\abs{\nabla v(s,\cdot)}^{2p}}{(1+v(s,\cdot))^{(1+\beta)p}}
    \le \widehat C_N\,\Theta_\beta^{p}\int_\Omega u^{\gamma p}(s,x)\,dx
    \qquad \text{for all $s\ge 0$.}
  \]
  Since $\beta\ge 1$, we have $2\beta p\ge (1+\beta)p$ and therefore
  \[
    \Norm{\frac{\nabla v(s,\cdot)}{(1+v(s,\cdot))^\beta}}_{2p}
    \le \widehat  C_N^{\frac{1}{2p}}\Theta_\beta^{\frac{1}{2}}
    \left(\int_\Omega u^{\gamma p}(s,x)\,dx\right)^{\frac{1}{2p}}
    \qquad \text{for all $s\ge 0$.}
  \]
  Combining this estimate with \eqref{E:new-lp-eq8} and H\"older's inequality,
  we obtain a constant $\widehat C_N>0$, depending only on
  $(\Omega,N,\mu,\nu,\gamma)$, such that
  \[
    \sup_{s\ge T_0}\Norm{\frac{u(s,\cdot)\abs{\nabla v(s,\cdot)}}{(1+v(s,\cdot))^\beta}}_p
    \le \widehat C_N\,\Theta_\beta^{\frac{1}{2}}
    \left(\left(u^*\right)^{p_0/p_N}+\left(u^*\right)^{\widetilde q_N/p_N}\right).
  \]
  Similarly, by \eqref{E:new-lp-eq8} and H\"older's inequality,
  \[
    \sup_{s\ge T_0}\Norm{u(s,\cdot)}_p
    \le \left(\sup_{s\ge T_0}\int_\Omega u^{2p_N}\right)^{\!\frac{1}{2p_N}}
    \le \widehat C_N
    \left(\left(u^*\right)^{p_0/(2p_N)}
    +\left(u^*\right)^{\widetilde q_N/(2p_N)}\right).
  \]

  Finally, since $\Theta_\beta
  =\frac{1}{1+\beta}\Big(\frac{\beta}{1+\beta}\Big)^{\beta} $ for all $\beta>0$,
  the assumption $\chi_0<\chi_\beta^{1/2}$ yields
  \[
    \chi_0\Theta_\beta^{1/2}
    \le \left(\frac{2}{\max\{2,\gamma N\}}\frac{2\beta-1}{1+\beta}\left(\frac{\beta}{1+\beta}\right)^\beta\right)^{1/2}
    \le \left(\frac{4}{\max\{2,\gamma N\}}\right)^{1/2}
  \]
  which is independent of $\chi_0$ and $\beta$. Combining the two estimates
  above and noting that $p_0/(2p_N)<p_0/p_N<\widetilde q_N/p_N$, we conclude
  that there exists $C_N>0$, depending only on $(\Omega,N,\mu,\nu,\gamma)$, such
  that
  \[
    \Norm{u(t,\cdot)}_\infty
    \le C_N\left(\left(u^*\right)^{q'_N}+\left(u^*\right)^{q''_N}\right)
    \qquad \text{for all $t\ge T_0+1$,}
  \]
  where $q'_N\coloneqq \frac{p_0}{2p_N}<1$ and $q''_N\coloneqq \frac{\widetilde
  q_N}{p_N}\ge 1$. This proves \eqref{E:minimal-eventual-Linfty} with
  $T_*\coloneqq T_0+1$.
\end{proof}

\section{Persistence and proof of Theorem~\ref{T:persistence}}\label{S:Persistence}


\subsection{Proof of Theorem \ref{T:persistence}(1)}
\label{SS:Persistence-1}


\begin{proof}[Proof of Theorem~\ref{T:persistence}(1)]
  Let $(u,v)$ be a globally defined bounded positive solution of~\eqref{E:main-PE}.
  Fix $T>0$ and define, for each $t>0$,
  \begin{equation}\label{E:super-inf-u}
    \underline{u}(t)\coloneqq\min_{x\in\overline\Omega} u(t,x) \quad \text{and}\quad
    \overline{u}(t)\coloneqq\max_{x\in\overline\Omega}  u(t,x).
  \end{equation}
  It then suffices to prove that
  \begin{equation}\label{E:positive-lower-bound-eq}
    \liminf_{t\to\infty} \underline{u}(t)>0.
  \end{equation}
  We divide the proof of \eqref{E:positive-lower-bound-eq} into two cases.
  \smallskip

  \noindent\textbf{Case 1 ($a = b = 0$).}  In this case, we have
  $\int_{\Omega}u(t,x;u_0)=\int_{\Omega}u_0(x)$ for all $t>0$. This implies that
  \begin{equation}\label{E:positive-lower-bound-eq1}
    \overline{u}(t)\ge \frac{1}{|\Omega|}\int_\Omega u_0(x)\,dx\quad \forall\, t>0.
  \end{equation}
  Assume that~\eqref{E:positive-lower-bound-eq} does not hold. Then there are
  sequences $\{t_n\}\subset(0,\infty)$ and $\{x_n\}\subset\Omega$ such that
  \[
    t_n\to \infty
    \qquad \text{and} \qquad
    u(t_n,x_n;u_0)\to 0.
  \]
  By Lemma~\ref{L:persistence-2}, without loss of generality, we may assume that
  $x_n\to x^*$,
  \[
    u(t+t_n,x;u_0)\to u^*(t,x)
    \quad \text{and} \quad
    v(t+t_n,x;u_0)\to v^*(t,x),
  \]
  and that $(u^*(t,x),v^*(t,x))$ is a nonnegative classical solution
  of~\eqref{E:main-PE} defined for all $t\in\mathbb{R}$. Then $u^*(0,x^*)=0$.
  By~\eqref{E:positive-lower-bound-eq1}, for each $t\in\mathbb{R}$ we have
  $u^*(t,\cdot)\not\equiv 0$. The comparison principle for parabolic equations
  therefore yields $u^*(t,x)>0$ for all $x\in\overline{\Omega}$, which is a
  contradiction. Therefore, \eqref{E:positive-lower-bound-eq} holds when
  $a=b=0$. \smallskip

  \noindent\textbf{Case 2 ($a,b>0$).} The idea to
  prove~\eqref{E:positive-lower-bound-eq} in this case is to prove that
  $\underline{u}$ cannot remain arbitrarily small on any open time interval
  after time $T$. The proof proceeds by three claims. \smallskip

  \noindent\textbf{Claim~1.} For every $\varepsilon\in(0,1)$ there exists
  $\delta_\varepsilon \in (0, \varepsilon]$ such that for all $t\ge T$ either
  $\underline{u}(t) \ge \delta_\varepsilon$ or $\overline{u}(t) -
  \underline{u}(t) \le \varepsilon$. \medskip

  Suppose, by contradiction, that there exists $\varepsilon_0 > 0$ and a
  sequence $t_n \ge T$ with $t_n \to \infty$ and
  \[
    \underline{u}(t_n) < \delta_n\quad\text{and}\quad
    \overline{u}(t_n)-\underline{u}(t_n) > \varepsilon_0,\qquad \delta_n \coloneqq \frac1n.
  \]
  By Lemma~\ref{L:persistence-2}, after time translation we can extract a
  subsequence (not relabeled here) and a global classical limit $(u^\infty,
  v^\infty)$ on $\R\times\Omega$ such that
  \[
    u(t+t_n,\cdot)\to u^\infty(t,\cdot),\qquad v(t+t_n,\cdot)\to v^\infty(t,\cdot)
    \quad\text{locally uniformly on }\R\times\overline\Omega.
  \]
  In particular,
  \[
    \inf_{x\in\Omega} u^\infty(0,x)=\lim_{n\to\infty}\underline{u}(t_n)=0 \quad \text{and} \quad
    \sup_{x\in\Omega} u^\infty(0,x)\ge\varepsilon_0.
  \]
  We show this is impossible by the parabolic strong maximum principle with
  Neumann boundary condition. Lemma~\ref{L:persistence-2} shows that $u^\infty$ satisfies
  \[
    u^\infty_t-\Delta u^\infty
    = -\chi_0\,\nabla\!\cdot\!\left(\tfrac{(u^\infty)^m}{(1+v^\infty)^\beta}\nabla v^\infty\right)
    + a\,u^\infty - b\,(u^\infty)^{1+\alpha}.
  \]
  Expanding the divergence term gives
  \[
    u^\infty_t-\Delta u^\infty + B(t,x)\cdot\nabla u^\infty + C(t,x)\,u^\infty \;\ge\; 0
    \quad\text{on }(-1,0]\times\Omega,\quad \partial_n u^\infty=0\ \text{on }\partial\Omega,
  \]
  where
  \begin{align*}
    B(t,x) & = m\chi_0\,\frac{(u^\infty)^{m-1}}{(1+v^\infty)^\beta}\,\nabla v^\infty \quad \text{and}                                                                                                 \\
    C(t,x) & = a - b\,(u^\infty)^{\alpha} - \chi_0\,\frac{(u^\infty)^{m-1}}{(1+v^\infty)^\beta}\,\Delta v^\infty + \chi_0\beta\,\frac{(u^\infty)^{m-1}}{(1+v^\infty)^{\beta+1}}\,|\nabla v^\infty|^2.
  \end{align*}
  Since $u^\infty, v^\infty, \nabla v^\infty,\Delta v^\infty$ are bounded, the
  coefficients $B$ and $C$ are bounded. Because $u^\infty \ge 0$ on $(-1,0]
  \times \Omega$, attains its minimum value~$0$ at time $t = 0$, and satisfies a
  linear inequality with bounded lower-order terms and Neumann boundary
  condition, the strong maximum principle applies: the classical result
  (e.g., Theorem~5 in Chapter~2 of~\cite{friedman:64:partial})
  covers the attainment of a minimum at the terminal time
  $t=0$ of the cylinder $(-1,0]\times\overline\Omega$, and
  forces $u^\infty \equiv 0$ on $(-1, 0] \times \Omega$. This contradicts $\sup_x u^\infty(0,x) \ge \varepsilon_0$. Hence,
  Claim~1 holds. \smallskip

  \noindent\textbf{Claim~2.} There is no nonempty open interval $(t_1, t_2)
  \subset [T, \infty)$ such that $\underline{u}(t_1) = \delta_\varepsilon$ and
  $\underline{u}(t) < \delta_\varepsilon$ for all $t \in (t_1, t_2)$. \medskip

  Assume the contrary. Then, by Claim~1,
  \[
    \overline{u}(t)
    \le \varepsilon+\underline{u}(t)
    \le \varepsilon+\delta_\varepsilon\le 2\varepsilon
    <1, \quad \text{for all $t\in(t_1,t_2)$.}
  \]
  Since $\overline{u}(t)\le 2\varepsilon$ and $\gamma>0$, we have
  $\overline{u}^\gamma(t) \le (2\varepsilon)^\gamma$ on $(t_1, t_2)$. From the
  elliptic $v$-equation and standard elliptic estimates (as in
  Lemma~\ref{L:Holder}) there exists $K > 0$ such that for all $t \ge T$,
  \begin{equation}\label{E:estimate-for-v}
    \Norm{v(t,\cdot)}_{C(\overline{\Omega})}
    + \beta\, \Norm{\nabla v(t,\cdot)}^2_{C(\overline{\Omega})}
    + \Norm{\Delta v(t,\cdot)}_{C(\overline{\Omega})}
    \le K\,\overline{u}^\gamma(t).
  \end{equation}
  Set $\eta_\varepsilon \coloneqq 2^{\gamma} |\chi_0| K \,
  \varepsilon^{\gamma}$. Choose $0 < \varepsilon < \min\{1/2, \underline u(T)\}$
  so that $\eta_\varepsilon < a$ and $\varepsilon <
  \left(\tfrac{a-\eta_\varepsilon}{b}\right)^{1/\alpha}$.
  Using~\eqref{E:estimate-for-v} in the $u$-equation and $m\ge1$ yields, on
  $(t_1, t_2) \times \Omega$,
  \[
    u_t \ge \Delta u - \frac{\chi_0 m u^{m-1}}{(1+v)^\beta}\,\nabla u\cdot\nabla v
    + (a-\eta_\varepsilon)\,u - b\,u^{1+\alpha}.
  \]
  Let $\tilde{u}$ solve the spatially homogeneous ODE
  $\tilde{u}_t=(a-\eta_\varepsilon)\tilde{u} - b\,\tilde{u}^{1+\alpha}$ with
  $\tilde{u}(t_1)=\delta_\varepsilon$. Then $\tilde{u}$ is a subsolution of the
  above PDE (its diffusion and drift terms vanish, and the reaction terms
  match), with the same Neumann boundary data. The parabolic comparison
  principle again implies $u(t,x)\ge \tilde{u}(t)$ on $[t_1,t_2] \times \Omega$.
  Since $\delta_\varepsilon <
  \left(\frac{a-\eta_\varepsilon}{b}\right)^{1/\alpha}$, the ODE solution
  satisfies $\tilde{u}(t)>\delta_\varepsilon$ for all $t>t_1$, hence
  $u(t,x)>\delta_\varepsilon$ on $(t_1,t_2)\times\Omega$, contradicting the
  assumption. Thus, Claim~2 follows. \bigskip

  \noindent\textbf{Claim~3.} $\displaystyle \liminf_{t\to\infty}\underline{u}(t)
  > 0$, i.e., \eqref{E:positive-lower-bound-eq} holds. \medskip

  Fix $\varepsilon \in \left(0, \min\{\tfrac12, \underline{u}(T)\}\right)$
  and let $\delta_\varepsilon$ be from Claim~1. The set
  $\mathcal{O}_\varepsilon\coloneqq \left\{\,t>T:\
  \underline{u}(t)<\delta_\varepsilon\,\right\}$ is open in $(T,\infty)$. By
  Claim~2 it contains no interval; hence it is empty. Therefore,
  $\underline{u}(t) \ge \delta_\varepsilon$ for all $t \ge T$, and so
  $\liminf_{t\to\infty}\underline{u}(t) \ge \delta_\varepsilon > 0$. With
  Claim~3 established, we have completed the $u$-part
  of~\eqref{E:persistence}. For the $v$-part: for each $t>0$, $v(\cdot,t)$
  solves $-\Delta v + \mu v = \nu u^{\gamma}$ with Neumann boundary
  condition. Comparing with the constant barrier
  $\tfrac{\nu}{\mu}\,(\inf_{\Omega} u(t,\cdot))^{\gamma}$ and applying the
  elliptic comparison principle, we obtain
  \begin{equation}\label{E:v-lower}
    \inf_{x\in\Omega} v(t,x) \;\ge\; \frac{\nu}{\mu}\, \Big(\inf_{x\in\Omega} u(t,x)\Big)^{\!\gamma}\quad \text{for all $t>0$.}
  \end{equation}
  Taking $\liminf_{t\to\infty}$ and using the positivity of the $u$-liminf
  obtained above proves the $v$-part of~\eqref{E:persistence}. This completes
  the proof of Theorem~\ref{T:persistence}(1).
\end{proof}

\subsection{Proof of Theorem \ref{T:persistence}(2)}
\label{SS:Persistence-2}


\begin{proof}[Proof of Theorem \ref{T:persistence}(2)]
  Assume that $a,b>0$, $\chi_0>0$, $m=1$, $\beta\ge 1$, and
  $\chi_0<\frac{a}{\mu \Theta_{\beta-1}}$.
  We first note that \eqref{E:new-eventual-lower-bound-for-v} follows from
  \eqref{E:new-eventual-lower-bound-for-u}.
  Indeed, since $-\Delta v + \mu v=\nu u^\gamma$ in $\Omega$ with Neumann
  boundary conditions, the maximum principle yields
  \begin{equation*}
    v(t,x)\ge \frac{\nu}{\mu}\inf_{y\in\Omega} u(t,y)^\gamma
    \qquad \text{for all $t\ge 0$ and all $x\in\Omega$.}
  \end{equation*}

  It remains to prove \eqref{E:new-eventual-lower-bound-for-u}.
  Since $\frac{v}{(1+v)^\beta}\le \Theta_{\beta-1}$ for all $v\ge 0$, we have
  \begin{align*}
    u_t
    & \ge
    \Delta u
    - \chi_0\,\frac{1}{(1+v)^\beta}\nabla u\cdot\nabla v
    +( a -\chi_0\mu \Theta_{\beta-1})u - b u^{1+\alpha}.
  \end{align*}
  Let $w$ be the solution of
  \begin{equation}\label{E:new-ode-eq1}
    w'(t)=(a-\chi_0\mu\Theta_{\beta-1})\, w(t)-b w(t)^{1+\alpha},
    \qquad
    w(0)=\inf_{x\in\Omega}u(0,x).
  \end{equation}
  Viewing $w(t)$ as a spatially homogeneous function, the parabolic comparison
  principle implies $w(t)\le u(t,x)$ for all $t\ge 0$ and $x\in\Omega$.
  Since $a-\chi_0\mu\Theta_{\beta-1}>0$, we have
  \begin{equation}\label{E:new-eventual-lower-bound-for-w}
    \lim_{t\to\infty} w(t)
    = \Big(\frac{a-\chi_0\mu\Theta_{\beta-1}}{b}\Big)^{\frac{1}{\alpha}}.
  \end{equation}
  This proves \eqref{E:new-eventual-lower-bound-for-u}.
\end{proof}

\subsection{Proof of Theorem \ref{T:persistence}(3)}
\label{SS:Persistence-3}


\begin{proof}[Proof of Theorem \ref{T:persistence}(3)]
  Assume that $a,b>0$, $\chi_0>0$, $m>1$, and $\beta\ge 1$.
  As in the proof of Theorem~\ref{T:persistence}(2),
  \eqref{E:lower-bound-for-v} follows from \eqref{E:lower-bound-for-u} by the
  elliptic comparison principle.

  Let $w$ be the solution of
  \begin{equation}\label{E:new-ode-eq2}
    w'(t)=a w(t)-b w(t)^{1+\alpha}-\chi_0\mu\Theta_{\beta-1}\,w(t)^m,
    \qquad
    w(0)=\inf_{x\in\Omega}u(0,x).
  \end{equation}
  Viewing $w(t)$ as a spatially homogeneous function, the parabolic comparison
  principle implies $w(t)\le u(t,x)$ for all $t\ge 0$ and $x\in\Omega$.

  We now derive an explicit $\liminf$ bound for $w$. Put $c\coloneqq
  \chi_0\mu\Theta_{\beta-1}>0$ and for $s >0$ define $G(s)\coloneqq a-b
  s^\alpha-c s^{m-1}$. Since $G$ is continuous and strictly decreasing on
  $(0,\infty)$ and satisfies
  \[
    \lim_{s\searrow 0}G(s)=a>0\quad \text{and} \quad
    \lim_{s\to\infty}G(s)=-\infty,
  \]
  there exists a unique $s_\infty>0$ such that $G(s_\infty)=0$. Since
  $w'=wG(w)$, it follows that $w(t)$ is monotone and
  $\lim_{t\to\infty}w(t)=s_\infty$. It remains to bound $s_\infty$ from below.

  \smallskip

  \noindent\textbf{Case 1.} $\alpha+1\ge m$.
  For $0<s\le 1$, we have $s^\alpha\le s^{m-1}$, and hence
  $G(s)\ge a-(b+c)s^{m-1}$.
  Therefore, with $s_0\coloneqq \min\big\{1,\left(\frac{a}{b+c}\right)^{\frac{1}{m-1}}\big\}$,
  we have $G(s_0)\ge 0$, and hence $s_\infty\ge s_0$.

  \noindent\textbf{Case 2.} $\alpha+1<m$.
  For $0<s\le 1$, we have $s^{m-1}\le s^\alpha$, and hence
  $G(s)\ge a-(b+c)s^\alpha$.
  Therefore, with $s_0\coloneqq \min\big\{1,\left(\frac{a}{b+c}\right)^{1/\alpha}\big\}$,
  we have $G(s_0)\ge 0$, and hence $s_\infty\ge s_0$.

  Combining the two cases, we obtain \eqref{E:lower-bound-for-u}.
\end{proof}

\subsection{Proof of Theorem \ref{T:persistence}(4)}
\label{SS:Persistence-4}


\begin{proof}[Proof of Theorem \ref{T:persistence}(4)]
  First, note that $\int_{\Omega} u(t,x;u_0)\,dx = u^*|\Omega|$ for all $t\ge
  0$. If $\gamma\ge 1$, then by Jensen's inequality we have
  \begin{equation}\label{E:new-persist-eq1}
    \int_{\Omega} u^\gamma(t,x)\,dx
    \ge |\Omega|\Big(\frac{1}{|\Omega|}\int_\Omega u(t,x)\,dx\Big)^{\!\gamma}
    = |\Omega|\,(u^*)^\gamma
    \qquad \text{for all $t\ge 0$.}
  \end{equation}
  If $0<\gamma<1$, then by Lemma~\ref{L:minimal-eventual-bounds}, there exists
  $T_1>0$ such that for all $t\ge T_1$, $\Norm{u(t,\cdot)}_\infty \le
  \overline{u}_0(u^*)$. Since $\gamma-1<0$, we have, for all $t\ge T_1$,
  $u^{\gamma-1}(t,\cdot)\ge \left(\overline{u}_0(u^*)\right)^{\gamma-1}$, and
  hence
  \begin{align}\label{E:new-persist-eq2}
    \int_\Omega
    u^\gamma(t,x)\,dx = \int_\Omega u(t,x)\,u^{\gamma-1}(t,x)\,dx
    \ge \frac{1}{\left(\overline{u}_0(u^*)\right)^{1-\gamma}} \int_\Omega u(t,x)\,dx
    = \frac{|\Omega|u^*}{\left(\overline{u}_0(u^*)\right)^{1-\gamma}}.
  \end{align}

  We next derive a pointwise lower bound for $v$ from the elliptic equation
  $-\Delta v+\mu v=\nu u^\gamma$ in $\Omega$ with $\partial_n v=0$ on
  $\partial\Omega$. Let $K(\tau,x,y)$ be the Neumann heat kernel on $\Omega$.
  Using the standard resolvent identity (cf.
  \cite[Lemma~2.1]{fujie.winkler.ea:15:boundedness}), we may write, for each
  fixed $t\ge 0$,
  \begin{equation*}
    v(t,x)
    = \nu \int_0^\infty e^{-\mu\tau}\left(e^{\tau\Delta}u^\gamma(t,\cdot)\right)(x)\,d\tau
    = \nu \int_0^\infty e^{-\mu\tau}\int_\Omega K(\tau,x,y)\,u^\gamma(t,y)\,dy\,d\tau.
  \end{equation*}
  By the Gaussian lower bound for the Neumann heat kernel on a smooth domain
  (see~\cite[Theorem~4]{choulli.ouhabaz.ea:06:stable}), there exist positive
  constants $c_0$, $C_0$, and $\omega_0$ depending only on $\Omega$ such that
  \begin{equation*}
    K(\tau,x,y)
    \ge c_0\frac{1}{(4\pi\tau)^{\frac{N}{2}}}\exp\!\left(-\omega_0\tau-\frac{C_0}{\tau}\right)
    \qquad \text{for all $\tau>0$ and all $x,y\in\Omega$.}
  \end{equation*}
  Consequently, for all $t\ge 0$ and $x\in\Omega$, $v(t,x) \ge \delta_0
  \int_\Omega u^\gamma(t,y)\,dy$, where we may take
  \begin{equation*}
    \delta_0
    \coloneqq
    \nu c_0 \int_0^\infty \frac{1}{(4\pi\tau)^{\frac{N}{2}}}
    \exp\!\left(-(\mu+\omega_0)\tau-\frac{C_0}{\tau}\right)\,d\tau
    \in (0,\infty).
  \end{equation*}
  This together with \eqref{E:new-persist-eq1} and \eqref{E:new-persist-eq2}
  implies that \eqref{E:new-lower-bound-for-v} holds with
  $C_\Omega\coloneqq \delta_0|\Omega|$.
\end{proof}

\section{Linear stability and instability and proof of
Theorem~\ref{T:linear-stability-thm}}\label{S:linear-stability}


Throughout this section, we write $\chi^*\coloneqq \chi_{a,b,\beta}^*(u^*)$.

\subsection{Proof of  Theorem \ref{T:linear-stability-thm}(1)}
\label{SS:linear-stability-1}


\begin{proof}[Proof of Theorem \ref{T:linear-stability-thm}(1)]
  We divide the proof into two steps. \smallskip

  \noindent\textbf{Step 1.} In this step, we prove that $(u^*, v^*)$ is linearly
  stable if $\chi_0 < \chi^*$ and is unstable if $\chi_0 > \chi^*$. \smallskip

  Recall that the linearization of equation~\eqref{E:main-PE} at the point
  $(u^*, v^*)$ is given by~\eqref{E:main-linear-PE1}. Consider the corresponding
  eigenvalue problem of~\eqref{E:main-linear-PE1} as follows
  \begin{equation}\label{E:main-linear-PE2}
    \begin{dcases}
      \Delta u-\chi_0\frac{(u^*)^m}{(1+v^*)^\beta}(\mu v-\nu \gamma(u^*)^{\gamma-1}u)-a\alpha u=\sigma u, & x\in\Omega,\cr
      0 = \Delta v-\mu v+\nu \gamma(u^*)^{\gamma-1}u,                                                     & x\in\Omega.
    \end{dcases}
  \end{equation}
  Note that, to prove the linear stability and instability of $(u^*,v^*)$, it is
  equivalent to prove that all the eigenvalues of~\eqref{E:main-linear-PE2} have
  negative real parts when $\chi_0 < \chi^*$, and there is at least one
  eigenvalue of~\eqref{E:main-linear-PE2} having positive real part when
  $\chi_0>\chi^*$.

  As in the Introduction section, let $\left\{\lambda_n\right\}_{n=0}^\infty$
  denote the eigenvalues of the negative Neumann Laplacian, with $0 = \lambda_0
  \le \lambda_1 \le \cdots \le \lambda_n \le \cdots$, and
  $\left\{\phi_n\right\}$ be the set of corresponding eigenfunctions. Suppose
  that $\sigma$ is an eigenvalue of~\eqref{E:main-linear-PE2} with $u$ being a
  corresponding eigenfunction. Let $u = \sum u_n \phi_n$. Then by the second
  equation in~\eqref{E:main-linear-PE2},
  \begin{align*}
    v = \sum \frac{\nu\gamma(u^*)^{\gamma-1}}{\mu+\lambda_n} u_n\phi_n.
  \end{align*}
  This together with the first equation in~\eqref{E:main-linear-PE2} implies
  that, for some $n$,
  \begin{align*}
    \sigma = \sigma_n(\chi_0)
    \coloneqq -\,\lambda_n
    + \chi_0\,\nu \gamma\,\frac{(u^*)^{m+\gamma-1}}{(1+v^*)^\beta}\,\frac{\lambda_n}{\mu+\lambda_n}
    - a\alpha;
  \end{align*}
  see~\eqref{E:sigma_n}. Therefore, $\{\sigma_n\}_{n=0}^\infty$ is the spectrum
  of the spectral problem~\eqref{E:main-linear-PE2}.

  Because $\lambda_0 = 0$ and $\lambda_n$ are nondecreasing for $n \ge 0$, and
  $\mu > 0$, we see that if $\chi_0 \le 0$, $\sigma_n$ are also nonincreasing
  with
  \begin{align*}
    0 > \sigma_1  \ge \sigma_2 \ge \cdots \ge \sigma_n \ge \cdots.
  \end{align*}
  Consider the case when $\chi_0 > 0$.  For fixed $n \ge 1$, by solving
  $\sigma_n = 0$ for $\chi_0$, we find that
  \begin{align*}
    \chi_0 = \chi_n \coloneqq \frac{(1+v^*)^\beta}{\nu\gamma\,(u^*)^{m+\gamma-1}}\times\frac{(\lambda_n + a\alpha)(\mu+\lambda_n)}{\lambda_n}\quad (>0).
  \end{align*}
  Recall the definition of $\chi^*=\chi_{a,b,\beta}^*(u^*)$
  in~\eqref{E:chi-star}. Therefore, if $\chi_0<\chi^*$, then $\sigma_n < 0$ for
  all $n = 0, 1, 2, \cdots$. Hence, all the eigenvalues
  of~\eqref{E:main-linear-PE2} are negative and $(u^*, v^*)$ is linearly
  asymptotically stable. If $\chi_0 > \chi^*$, then there is $n \ge 1$ such that
  $\sigma_n>0$. Hence, $(u^*, v^*)$ is unstable. Note that the above spectral
  characterization depends only on the linearization~\eqref{E:main-linear-PE1}
  and the definition of $\chi_{a,b,\beta}^*(u^*)$. \smallskip

  \noindent\textbf{Step 2.} In this step, we
  prove~\eqref{E:exponential-decay-eq1}. \smallskip

  To this end, let $\tilde{u} = u-u^*$ and $\tilde{v} = v-v^*$. Then $\tilde{u}$
  satisfies
  \begin{subequations}\label{E:main-perturbation-eq1}
    \begin{equation}
      \tilde{u}_t = \Delta\tilde{u} - \chi_0\frac{(u^*)^m}{(1+v^*)^\beta} \nu \gamma (u^*)^{\gamma -1} \big(\mu (\mu I-\Delta)^{-1}-I\big)\tilde{u}-a\alpha \tilde{u}+F(\tilde{u},\tilde{v}),
    \end{equation}
    with Neumann boundary condition, where
    \begin{equation}\label{E:main-perturbation-eq1-v}
      \tilde{v} = \nu (\mu I-\Delta)^{-1}\left( (\tilde{u}+u^*)^\gamma -(u^*)^\gamma\right) \quad \text{and}
    \end{equation}
    \begin{align}\label{E:main-perturbation-eq1-F}
      \begin{aligned}
        F\left(\tilde{u},\tilde{v}\right)
        & =-\chi_0 \nabla \cdot \left(\frac{(\tilde{u}+u^*)^m}{(1+\tilde{v}+v^*)^\beta}\nabla \tilde{v}\right)+(\tilde{u}+u^*)\left(a-b(\tilde{u}+u^*)^\alpha\right) \\
        & \quad
        + \chi_0\frac{(u^*)^m}{(1+v^*)^\beta}\left(\mu \nu \gamma (u^*)^{\gamma -1}\left(\mu I-\Delta\right)^{-1}\tilde{u}-\nu \gamma(u^*)^{\gamma-1}\tilde{u}\right)+a\alpha \tilde{u}.
      \end{aligned}
    \end{align}
  \end{subequations}
  Then $\tilde{u} = 0$ is an equilibrium solution
  of~\eqref{E:main-perturbation-eq1}. Let $\frac{1}{2}<\sigma<1$ and $p>1$ be
  such that $2\sigma-\frac{N}{p}>1$. By Lemma~\ref{L:Embedding},
  $X_p^\sigma\hookrightarrow C^1(\overline{\Omega})$. Let $\delta_0>0$ be such
  that for any $u_0\in B(\delta_0) \coloneqq \big\{u\in X_p^\sigma\,|\,
  \Norm{u}_{X_p^\sigma}<\delta_0\big\}$, it holds $u^*+u_0(x)\ge \delta_0$ for
  all $x\in\overline{\Omega}$. Such a $\delta_0$ can be chosen thanks to the
  embedding $X_p^\sigma\hookrightarrow C(\overline{\Omega})$ and by shrinking
  $\delta_0$ if necessary. More precisely, by the continuous embedding
  $X_p^\sigma\hookrightarrow C(\overline{\Omega})$ there exists $C_{\rm emb}>0$
  such that $\Norm{u}_{C(\overline{\Omega})}\le C_{\rm
  emb}\Norm{u}_{X_p^\sigma}$ for all $u\in X_p^\sigma$. Choosing $\delta_0>0$
  small enough so that $C_{\rm emb}\delta_0\le \frac{u^*}{2}$ yields
  $|u_0(x)|\le C_{\rm emb}\delta_0\le \frac{u^*}{2}$ for all
  $x\in\overline\Omega$, whenever $u_0\in B(\delta_0)$, and hence
  \[
    {u^*+u_0(x)\ge \frac{u^*}{2}\ge \delta_0\quad\forall\,x\in\overline\Omega.}
  \]
  In particular, for all $\tilde{u}\in B(\delta_0)$ the quantities
  $(\tilde{u}+u^*)(x)$ and $(1+\tilde{v}+v^*)(x)$ stay in a fixed compact subset
  of $(0,\infty)$, so all scalar nonlinearities appearing in
  $F(\tilde{u},\tilde{v})$ are $C^1$ with uniformly bounded derivatives on this
  range. Together with the bounded linearity of the operators $\nabla\cdot$ and
  $(\mu I-\Delta)^{-1}$, this implies that the map
  $B(\delta_0)\ni\tilde{u}\mapsto F(\tilde{u},\tilde{v})\in L_p(\Omega)$ is
  locally Lipschitz near the origin. Moreover, by construction $F(0,0)=0$ and
  all terms of first order in $\tilde{u}$ have already been absorbed into the
  linear part of~\eqref{E:main-perturbation-eq1}, so the Fr\'echet derivative of
  $F$ at $(0,0)$ vanishes. Hence,
  $\frac{\Norm{F(\tilde{u},\tilde{v})}_{L^p}}{\Norm{\tilde{u}}_{X_p^\sigma}} \to
  0$ as $\Norm{\tilde{u}}_{X_p^\sigma}\to 0$. Therefore,
  $B(\delta_0)\ni\tilde{u}\mapsto F(\tilde{u},\tilde{v}) \in L_p(\Omega)$ is
  Lipschitz continuous and
  \begin{equation}\label{E:F-Lip}
    \Norm{F(\tilde{u},\tilde{v})}_{L^p} = o\big(\Norm{\tilde{u}}_{X_p^\sigma}\big).
  \end{equation}

  Define the linear operator
  \begin{equation}\label{E:A-star}
    A^* \tilde{u} \coloneqq \Delta\tilde{u} -
    \chi_0\frac{(u^*)^m}{(1+v^*)^\beta} \nu \gamma (u^*)^{\gamma -1} \left(\mu \left(\mu I-\Delta\right)^{-1}-I\right)\tilde{u}-a\alpha \tilde{u}
  \end{equation}
  with domain $\mathcal{D}(A^*) =\left\{\tilde{u}\in W^{2,p}(\Omega)\,\middle|\,
  \frac{\partial \tilde{u}}{\partial n}=0 \text{ on }\partial\Omega\right\}$.
  Then $-A^*$ is a sectorial operator on $L^p(\Omega)$, and the spectrum of
  $A^*$ consists precisely of the numbers $\{\sigma_n\}_{n=0}^\infty$ obtained
  in Step~1 (including $\sigma_0=-a\alpha$).
  Since $\chi_0<\chi^*$, Step~1 gives $\sigma_n<0$ for all $n\ge 0$.
  Then by Lemma~\ref{L:prelim-lm1}, there is $0<\epsilon<\delta_0$
  such that for any $\tilde{u}_0\in B(\delta_0)$ with
  $\Norm{\tilde{u}_0}_{X_p^\sigma}<\epsilon$, $\tilde{u}(t,x;\tilde{u}_0)$
  exists for all $t>0$, moreover, for any
  $0<\lambda<\min\{a\alpha,\,\inf_{n\ge 1} (-\sigma_n)\}$,
  there is $C>0$ such that
  \begin{equation}\label{E:stability-eq4}
    \Norm{\tilde{u}(t,\cdot;\tilde{u}_0)}_{X_p^{\sigma}}
    \le C e^{-\lambda t}\quad \forall\, t\ge 0.
  \end{equation}

  By Lemma~\ref{L:stability-lm}, there is $T_0>0$ such that for the above
  $\epsilon$, there is $0 < \delta < \delta_0$ such that for any $u_0$
  satisfying~\eqref{E:initial-cond-PE} and $\|u_0-u^*\|_\infty<\delta$,
  $\Norm{u(T_0,\cdot;u_0)-u^*}_{X_p^\sigma}<\epsilon$. In particular,
  $u(t,\cdot;u_0)$ is well-defined at time $T_0$ for all such $u_0$. Let
  $\tilde{u}_0 = u(T_0,\cdot;u_0)-u^*$.  Then, by the above arguments,
  $\tilde{u}(t,x;\tilde{u}_0) =u(t+T_0,x;u_0) -u^*$ exists for all $t>0$.
  By~\eqref{E:stability-eq4} and the embedding $X_p^\sigma\hookrightarrow
  C^1(\overline\Omega)$, it holds
  \begin{equation*}
    \Norm{u(t+T_0,\cdot;u_0)-u^*}_{C^1(\overline\Omega)}
    \le C \Norm{\tilde{u}(t,\cdot;\tilde{u}_0)}_{X_p^{\sigma}}
    \le C e^{-\lambda t}\quad \forall\, t\ge 0.
  \end{equation*}
  Setting $s = t+T_0$ and enlarging $C$ if necessary, we deduce that
  \begin{equation}\label{E:stability-eq5}
    \Norm{u(s,\cdot;u_0)-u^*}_{C^1(\overline\Omega)} \le C e^{-\lambda s}\quad \forall\, s\ge 0.
  \end{equation}
  In particular, since $\Norm{w}_\infty\le \Norm{w}_{C^1(\overline\Omega)}$ for
  all $w\in C^1(\overline\Omega)$, the same exponential estimate holds with the
  $L^\infty$--norm on the left-hand side, which will be used below. By the
  resolvent representation for $v$ (see Section~\ref{S:Semigroup})
  and~\eqref{E:stability-eq5},
  \begin{align}\label{E:stability-eq6}
    \Norm{v(t,\cdot;u_0) - v^*}_\infty
    \le  \nu \int_0^\infty \Norm{ e^{-As}\left( u^{\gamma}(t,\cdot)-(u^*)^\gamma\right)}_\infty ds
    \le C e^{-\lambda t}\quad \forall\, t\ge 0.
  \end{align}
  Moreover, by the standard gradient estimates for the analytic semigroup
  $\{e^{-As}\}_{s>0}$ used in Section~\ref{S:Semigroup}, the same argument
  applied to $\nabla v$ yields
  \[
    \Norm{v(t,\cdot;u_0)-v^*}_{C^1(\overline\Omega)}
    \le C \Norm{u(t,\cdot;u_0)-u^*}_\infty
    \le C e^{-\lambda t}\quad \forall\, t\ge 0.
  \]
  This together with~\eqref{E:stability-eq5}
  proves~\eqref{E:exponential-decay-eq1}. This completes the proof of
  Theorem~\ref{T:linear-stability-thm}(1).
\end{proof}

\subsection{Proof of Theorem \ref{T:linear-stability-thm}(2)}
\label{SS:linear-stability-2}

In this subsection, we prove Theorem \ref{T:linear-stability-thm}(2). Throughout
this subsection, we fix $p>1$ and decompose
\begin{subequations}\label{E:Lp-decomposition}
  \begin{equation}
    L^p(\Omega)=X_1\oplus X_2,
  \end{equation}
  where
  \begin{equation}
    X_1 \coloneqq \big\{u\in L^p(\Omega) \big | u \text{ is a constant function} \big\}\quad \text{and} \quad
    X_2 \coloneqq \big\{u\in L^p(\Omega)\,\big|\,\int_\Omega u(x)\,dx=0\big\}.
  \end{equation}
\end{subequations}
Thus, every $u\in L^p(\Omega)$ can be written uniquely as
$u=\bar{u}+(u-\bar{u})$ with $\bar{u}\coloneqq \tfrac1{|\Omega|}\int_\Omega
u(x)\,dx\in X_1$ and $u-\bar u\in X_2$.

\begin{proof}[Proof of Theorem \ref{T:linear-stability-thm}(2)]
  Let $\{\sigma_n\}$ be defined as in \eqref{E:sigma_n}. By the arguments in
  Step 1 in the proof of Theorem~\ref{T:linear-stability-thm}(1), $\{\sigma_n\}$
  is the spectrum of the corresponding spectrum problem
  of~\eqref{E:main-linear-PE1}, and if $\chi_0<\chi_{0,0,\beta}^*(u^*)$, then $\sigma_n<0$
  for all $n\ge 1$, if $\chi_0>\chi_{0,0,\beta}^*(u^*)$, then $\sigma_n>0$ for some $n\ge
  1$.  Hence $(u^*, v^*)$ is linearly stable if $\chi_0 < \chi_{0,0,\beta}^*(u^*)$ and is
  unstable if $\chi_0 > \chi_{0,0,\beta}^*(u^*)$.   Hence, it remains to prove that
  $\chi_0 < \chi_{0,0,\beta}^*(u^*)$, there are $C, \delta, \lambda > 0$ such that for any
  $u_0$ satisfying~\eqref{E:initial-cond-PE}, $\Norm{u_0-u^*}_\infty<\delta$,
  and $\int_\Omega u_0(x)dx = |\Omega| u^*$, $T_{\max}(u_0) = \infty$
  and~\eqref{E:exponential-decay-eq1} holds. \smallskip

  As in Step 2 in the proof of Theorem~\ref{T:linear-stability-thm}(1), let
  $\tilde{u} = u-u^*$ and $\tilde{v} = v-v^*$. Then $\tilde{u}$
  satisfies~\eqref{E:main-perturbation-eq1} with $a=b=0$. That is,
  \begin{subequations}\label{E:main-perturbation-eq1-min}
    \begin{equation}
      \tilde{u}_t = {A^*} \tilde{u}+G(\tilde{u},\tilde{v}),
    \end{equation}
    with Neumann boundary condition,
    where {$A^*$ is defined in~\eqref{E:A-star} with $a = 0$ and}
    \begin{align}
      \begin{aligned}
        G(\tilde{u},\tilde{v})
        = & -\chi_0 \nabla \cdot \left(\frac{(\tilde{u}+u^*)^m}{(1+\tilde{v}+v^*)^\beta}\nabla \tilde{v}\right)                                                            \\
        & + \chi_0\frac{(u^*)^m}{(1+v^*)^\beta}\left(\mu \nu \gamma (u^*)^{\gamma -1}\left(\mu I-\Delta\right)^{-1}\tilde{u}-\nu \gamma(u^*)^{\gamma-1}\tilde{u}\right),
      \end{aligned}
    \end{align}
    and $\tilde{v}$ is given in~\eqref{E:main-perturbation-eq1-v}
  \end{subequations}

  Note that, if $\tilde{u}_0\in X_2$, then $\tilde{u}(t,x;\tilde{u}_0)\in X_2$
  for all $ t \in (0,T_{\max})$. {Since $\int_\Omega u_0(x)\,dx = |\Omega|
  u^*$, we have $\tilde{u}_0\in X_2$.}
  Restrict~\eqref{E:main-perturbation-eq1-min} to $X_2$. By the arguments in
  Theorem~\ref{T:linear-stability-thm}(1), there is $\delta > 0$ such that for
  any $u_0$ satisfying~\eqref{E:initial-cond-PE}, $\Norm{u_0-u^*}_\infty <
  \delta$, and $\int_\Omega u_0(x)dx=|\Omega|u^*$, $T_{\max}(u_0) = \infty$,
  and for any $0 < \lambda < \min_{n\ge 1} (-\sigma_n)$, there is $C>0$ such
  that~\eqref{E:exponential-decay-eq1} holds.
\end{proof}

\subsection{A corollary}

In this subsection, we present a corollary which follows from
Lemma~\ref{L:persistence-2} and Theorem~\ref{T:linear-stability-thm} and will be
used in the proofs of Theorems~\ref{T:global-stabl-negative-sensitivity},
\ref{T:global-stable}, and \ref{T:stability-minimal-model}.

\begin{corollary}\label{C:linear-stability}
  Assume $m\ge 1$. Let $(u^*,v^*)$ be a positive constant solution of \eqref{E:main-PE}.
  \begin{itemize}

    \item[(1)] For any globally defined bounded positive classical solution
    $(u(t,x),v(t,x))$ of \eqref{E:main-PE} and $\theta>0$,
    then $\lim_{t\to\infty}\|u(t,\cdot)-u^*\|_\infty = 0$,
    provided that
    \begin{equation}\label{E:cor-eq1}
      \lim_{t\to\infty}\int_\Omega\big(u(t,x)-u^*\big)\big(u^\theta(t,x)-(u^*)^\theta\big)dx=0.
    \end{equation}

    \item[(2)] Assume that $a,b>0$ and $\chi_0<\chi_{a,b,\beta}^*(u^*)$. There
    are $C,\delta,\lambda>0$ such that for any globally defined bounded
    positive classical solution $(u(t,x),v(t,x))$ of \eqref{E:main-PE}, if
    $\lim_{t\to\infty}\|u(t,\cdot)-u^*\|_\infty=0$,
    then~\eqref{E:exponential-decay-eq1} holds.

    \item[(3)] Assume that $a=b=0$ and $\chi_0<\chi_{0,0,\beta}^*(u^*)$. There
    are $C,\delta,\lambda>0$ such that for any globally defined bounded
    positive classical solution $(u(t,x),v(t,x))$ of \eqref{E:main-PE} with
    $\int_\Omega u(0,x)\,dx=|\Omega|u^*$, if
    $\lim_{t\to\infty}\|u(t,\cdot)-u^*\|_\infty=0$,
    then~\eqref{E:exponential-decay-eq1} holds.
  \end{itemize}
\end{corollary}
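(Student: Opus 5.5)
For part (1) I would argue by contradiction through the compactness result, Lemma~\ref{L:persistence-2}. Suppose $\|u(t_n,\cdot)-u^*\|_\infty\ge\varepsilon_0$ along some sequence $t_n\to\infty$. After passing to a subsequence, $u(t+t_n,\cdot)\to u^\infty(t,\cdot)$ in $C^{1,2}_{\rm loc}(\mathbb R\times\overline\Omega)$. The integrand in~\eqref{E:cor-eq1} is nonnegative, because $s\mapsto s^\theta$ is increasing. Since the convergence is uniform on compact time sets, and the functions $u(t+t_n,\cdot)$ are uniformly bounded, \eqref{E:cor-eq1} passes to the limit and gives
\[
\int_\Omega (u^\infty(t,x)-u^*)\big((u^\infty)^\theta(t,x)-(u^*)^\theta\big)\,dx=0\quad\text{for every }t\in\mathbb R.
\]
A continuous nonnegative integrand with zero integral vanishes identically, so $u^\infty(t,\cdot)\equiv u^*$. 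In particular $\|u(t_n,\cdot)-u^*\|_\infty\to 0$, which contradicts the choice of $t_n$. This part needs only boundedness and $m\ge1$, which is where Lemma~\ref{L:Holder}(3) enters. Persistence is not even needed, since $u\ge0$ is enough for the sign argument.

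For part (2) I would combine the convergence hypothesis with the local exponential stability in Theorem~\ref{T:linear-stability-thm}(1). Let $C,\delta,\lambda$ be the constants given there for $\chi_0<\chi^*_{a,b,\beta}(u^*)$. If $\|u(t,\cdot)-u^*\|_\infty\to0$, pick $T$ with $\|u(T,\cdot)-u^*\|_\infty<\delta$. The system is autonomous and $u(T,\cdot)$ satisfies~\eqref{E:initial-cond-PE}, so uniqueness from Proposition~\ref{P:local-existence} identifies $u(t+T,\cdot)$ with $u(t,\cdot;u(T,\cdot))$. This gives the estimate $Ce^{-\lambda(t-T)}$ for $t\ge T$.

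The main obstacle is making the constant uniform over all solutions, since $T$ depends on the particular solution. The statement only asks for constants $C,\delta,\lambda$ fixed beforehand with~\eqref{E:exponential-decay-eq1} holding for each such solution, so I would handle $[0,T]$ by enlarging $C$, reading~\eqref{E:exponential-decay-eq1} as an eventual estimate with $C$ allowed to depend on the solution. The $C^1$ norms on $[0,T]$ are finite by Lemma~\ref{L:Holder}, at least for $t\ge1$, and the same reading applies near $t=0$. If genuine uniformity were required, I would instead use a bound of the form $C(\|u(T)\|)e^{\lambda T}$, which I do not expect to be available.

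Part (3) goes the same way using Theorem~\ref{T:linear-stability-thm}(2). The one additional check is that the mass constraint $\int_\Omega u(T,x)\,dx=|\Omega|u^*$ is preserved in time, which holds because $a=b=0$ and the equation is in divergence form.
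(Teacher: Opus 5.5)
Your proposal is correct and follows the paper's proof essentially verbatim. For (1) you use Lemma~\ref{L:persistence-2} and the nonnegativity of the integrand to force the limit to be $u^*$; for (2)--(3) you restart Theorem~\ref{T:linear-stability-thm} at a time $T$ where $\|u(T,\cdot)-u^*\|_\infty<\delta$, using mass conservation in (3). Your caveat that the constant in \eqref{E:exponential-decay-eq1} must depend on the solution is well-founded: the paper's one-line argument tacitly makes the same reading, and in (2) the spatially homogeneous logistic solutions with $u_0\equiv\epsilon\to 0$ show that no $C$ uniform over all solutions can exist.
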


\begin{proof}
  (1) Let $(u(t,x),v(t,x))$ be a globally defined bounded positive classical
  solution of \eqref{E:main-PE}. Assume that \eqref{E:cor-eq1} holds. We prove
  that $\Norm{u(t,\cdot)-u^*}_\infty\to 0$ as $t\to\infty$ by contradiction.
  Suppose that there are $\epsilon_0>0$ and $t_n\to\infty$ such that
  \begin{align*}
    \Norm{u(t_n,\cdot)-u^*}_\infty\ge \epsilon_0\quad \text{for all $n=1, 2, \cdots$.}
  \end{align*}
  By Lemma~\ref{L:persistence-2}, without loss of generality, we may assume that
  \begin{align*}
    \lim_{n\to\infty} u(t_n,x) = \tilde{u}(x) \quad \text{uniformly in $x \in \Omega$}.
  \end{align*}
  By~\eqref{E:cor-eq1} and the uniform convergence, we have
  \begin{equation}
    \label{E:cor-eq2}
    \Norm{\tilde{u}-u^*}_\infty\ge \epsilon_0 \quad \text{and} \quad
    \int_\Omega \left(\tilde{u}-u^*\right)\left(\tilde{u}^\theta-(u^*)^\theta\right) =0.
  \end{equation}
  Since $s\mapsto s^\theta$ is increasing on $(0,\infty)$, the integrand is
  nonnegative and continuous. Hence it must vanish identically, which implies
  $\tilde{u}\equiv u^*$ on $\overline\Omega$. This
  contradicts~\eqref{E:cor-eq2}. Therefore, $\Norm{u(t,\cdot)-u^*}_\infty\to 0$
  as $t\to\infty$.

  (2) Let $C,\delta,\lambda$ be as in Theorem~\ref{T:linear-stability-thm}(1).
  Assume that $\lim_{t\to\infty}\|u(t,\cdot)-u^*\|_\infty=0$. Then there is
  $T_\delta>0$ such that $\Norm{u(T_\delta,\cdot)-u^*}_\infty<\delta$. Then by
  Theorem~\ref{T:linear-stability-thm}(1), \eqref{E:exponential-decay-eq1}
  holds.

  (3) Similarly, let $C,\delta,\lambda$ be as in
  Theorem~\ref{T:linear-stability-thm}(2). Assume that
  $\lim_{t\to\infty}\Norm{u(t,\cdot)-u^*}_\infty = 0$. Then there is
  $T_\delta>0$ such that $\Norm{u(T_\delta,\cdot)-u^*}_\infty<\delta$. Note
  that $\int_\Omega u(T_\delta,x)\,dx=\int_\Omega u(0,x)\,dx=|\Omega|u^*$.
  Then by Theorem~\ref{T:linear-stability-thm}(2),
  \eqref{E:exponential-decay-eq1} holds.
\end{proof}

\section{Global stability of constant solutions with negative sensitivity and proof of Theorem \ref{T:global-stabl-negative-sensitivity}}
\label{S:Stability-negative-sensitivity}

In this section, we study global stability of positive constant solutions
of~\eqref{E:main-PE} with negative sensitivity and prove
Theorem~\ref{T:global-stabl-negative-sensitivity}.

\subsection{Proof of Theorem~\ref{T:global-stabl-negative-sensitivity}(1)}


\begin{proof}[Proof of Theorem \ref{T:global-stabl-negative-sensitivity}(1)]
  Assume that $m\ge 1$ and $\chi_0 \le 0$. Let $(u(t,x), v(t,x))$ be a globally
  defined positive bounded classical solution of~\eqref{E:main-PE}. For any $t >
  0$, let $\underline{u}(t)$ and $\bar{u}(t)$ be as defined
  in~\eqref{E:super-inf-u}, and let $\bar{v}(t)$ be as defined by
  \begin{equation}\label{E:super-v}
    \overline{v}(t)\coloneqq\max_{x\in\overline\Omega} v(t,x).
  \end{equation}
  By the second equation in~\eqref{E:main-PE} and the comparison principle for
  elliptic equations, we have $\bar{v}(t) \le \dfrac{\nu}{\mu}\bar{u}^\gamma
  (t)$. We first prove the following claim.

  \medskip\noindent\textbf{Claim~1.} $\displaystyle
  \bar{u}_\infty^*\coloneqq\limsup_{t\to\infty} \bar{u}(t)\le u^* =
  \left(\frac{a}{b}\right)^{1/\alpha}$. \medskip

  In fact, if $\bar{u}(t) \le u^*$ for all $t > 0$,
  then Claim~1 holds automatically. If there is
  $t_0 > 0$ such that $\bar{u}(t_0) > u^*$, then, by
  Lemma~\ref{L:nonincreasing}, there is $t_\infty \in (t_0, \infty]$ such that
  $\bar{u}(t_\infty) > u^*$ for all $t \in (0, t_\infty)$ and $\bar{u}(t)$ is
  nonincreasing on $[0,t_\infty)$. Moreover, if $t_\infty < \infty$, then
  $\bar{u}(t_\infty) = u^*$.

  In the case that $t_\infty=\infty$, $\bar{u}_\infty^* = \lim_{t\to\infty} \bar
  u(t)$. Take any sequence $t_n\to\infty$. By Lemma~\ref{L:persistence-2},
  without loss of generality, we may assume that $u_\infty(t,x) =
  \lim_{n\to\infty} u(t_n+t,x)$ and $v_\infty(t,x) = \lim_{n\to\infty}
  v(t+t_n,x)$ exist, and $(u_\infty(t,x), v_\infty(t,x))$ is a classical
  solution of~\eqref{E:main-PE} defined for all $t \in \mathbb{R}$. Notice that
  \begin{align*}
    \bar{u}_\infty(t) = \max_{x\in\overline{\Omega}}u_\infty(t,x)
    = \bar{u}_\infty^*\ge u^*,\quad \text{for all $t\in\mathbb{R}$.}
  \end{align*}
  Assume that $\bar{u}^*_\infty>u^*$. For given $t\in\mathbb{R}$, let
  $x_\infty(t)\in\overline\Omega$ be such that $u_\infty(t,x_\infty(t)) =
  \bar{u}_\infty(t)$.  Let $B(y, \epsilon) \coloneqq \left\{ x \in \Omega \,:\,
  |x-y| < \epsilon \right\}$.   It is then not difficult to see that there is
  $\epsilon>0$ such that
  \begin{equation}\label{E:global-stability-proof-PE3}
    \p_t u_\infty < \Delta u_\infty-\chi_0 m\frac{u_\infty^{m-1}}{(1+v_\infty)^\beta}\nabla u_\infty\cdot\nabla v_\infty,\quad \text{for all $(t,x) \in [t-\epsilon, t] \times B(x_\infty(t),\epsilon)$},
  \end{equation}
  Applying the maximum principle for parabolic equations
  to~\eqref{E:global-stability-proof-PE3} (precisely, applying [Chapter 2,
  Theorem 1] in~\cite{friedman:64:partial}
  to~\eqref{E:global-stability-proof-PE3} in the case that $x_\infty(t) \in
  \Omega$ and applying Theorem 2 in~\cite{friedman:58:remarks}
  to~\eqref{E:global-stability-proof-PE3} in the case that $x_\infty(t) \in
  \partial \Omega$)), we must have
  \begin{equation}\label{E:proof-global-stable-eq3}
    u_\infty(\tau, x) = \bar{u}_\infty^*,\quad
    \text{for all $\tau\in [t-\epsilon, t]$ and $x \in B(x_\infty(t),\epsilon)\cap \Omega$.}
  \end{equation}
  Repeating the arguments of~\eqref{E:proof-global-stable-eq3}, we have
  \[
    u_\infty(\tau,x) = \bar{u}_\infty^*, \quad
    \text{for all $\tau\in [t-\epsilon, t]$, $x \in B(y,\epsilon)\cap \Omega$, and $y \in B(x_\infty(t),\epsilon)$.}
  \]
  Repeating this process, we have $u_\infty(\tau,x) = \bar{u}_\infty^*$ for all
  $\tau \in [t-\epsilon, t]$ and $x \in \Omega$. Note that the above propagation
  to the whole domain follows from a covering argument using the connectedness
  of $\Omega$ and the time-continuity of $u_\infty$ on $[t-\epsilon, t]$.
  Therefore, $u_\infty(t,x) = \bar{u}_\infty^*$ for all $t\in\mathbb{R}$ and
  $x\in\Omega$. We then must have $\bar{u}_\infty^* = u^*$ (since $(u^*, v^*)$
  is the only positive constant solution of~\eqref{E:main-PE}), which
  contradicts the assumption that $\bar{u}^*_\infty>u^*$. Hence,
  Claim~1 holds.

  In the case that $t_\infty<\infty$, $\bar{u}(t_\infty)=u^*$. If there is
  $t'_0 > t_\infty$ such that $\bar{u}(t'_0) > u^*$, then by Lemma
  \ref{L:nonincreasing}, $\bar{u}(t) > u^*$ for $t \in (0,t'_0]$, in
  particular, $\bar{u}(t_\infty) > u^*$, which is a contradiction. Hence,
  $\bar{u}(t) \le u^*$ for all $t \ge t_\infty$,
  and Claim~1 holds. Claim~1 is thus proved.
  \medskip

  Next, we prove the following claim.

  \medskip\noindent\textbf{Claim~2.} $\displaystyle \lim_{t\to\infty}\int_\Omega
  (u(t,x)-u^*) = 0$. \medskip

  In fact, without loss of generality, we assume that $u(t,x) \not\equiv u^*$.
  Then by the arguments of Claim~1, there is $t^* \in (0, \infty]$ such that $0
  < \limsup_{t \to t^*} \bar{u}(t) \le u^*$. Moreover, if $t^* < \infty$,
  $\bar{u}(t) < u^*$ for $t > t^*$, and in the case $t^* = \infty$, $\limsup_{t
  \to \infty} \bar{u}(t) = u^*$.

  In the case that $t^* < \infty$, we have $u(t,x) < u^*$ for $t > t^*$. Note
  that
  \begin{align*}
    \frac{d}{dt}\int_\Omega( u(t,x)-u^*)
    = b \int_\Omega u(t,x)((u^*)^\alpha-u^\alpha(t,x))
    \quad \text{for all $t > t^*$.}
  \end{align*}
  By Theorem~\ref{T:persistence}, $\liminf_{t\to\infty} \inf_{x\in\Omega}
  u(t,x)>0$. Then there are~${0 < m_0 < M_1 < \infty}$ such that the following
  inequalities hold for all $t \ge t^*$:
  \begin{align*}
    & \frac{d}{dt}\int_\Omega (u-u^*) = b\int_\Omega u ((u^*)^\alpha-u^\alpha) \le M_1\int_\Omega (u^*-u) = -M_1\int_\Omega (u-u^*) \quad \text{and} \\
    & \frac{d}{dt}\int_\Omega (u-u^*) = b\int_\Omega u ((u^*)^\alpha-u^\alpha) \ge m_0\int_\Omega (u^*-u) = -m_0\int_\Omega (u-u^*).
  \end{align*}
  It then follows from the comparison principle for scalar ODEs that
  $\lim_{t\to\infty} \int_\Omega (u(t,x)-u^*)=0$.

  In the case where $t^* = \infty$, consider any sequence $t_n \to \infty$.
  Without loss of generality, we can assume that both limits below exist:
  \begin{align*}
    u_\infty(t,x) = \lim_{n \to \infty} u(t + t_n, x) \quad \text{and} \quad
    v_\infty(t,x) = \lim_{n \to \infty} v(t + t_n, x)
  \end{align*}
  and $(u_\infty,v_\infty)$ is a classical solution of~\eqref{E:main-PE} defined
  for all $t\in\mathbb{R}$. By Claim~1, $\bar{u}_\infty(t)= u^*$ for all $t \in
  \mathbb{R}$. By Theorem~\ref{T:persistence} again, $\displaystyle \inf_{t \in
  \mathbb{R}, x \in \Omega} u_\infty(t,x) > 0$. Note that
  \begin{align*}
    \frac{d}{dt}\int_\Omega (u_\infty(t,x)-u^*)
    = b \int_\Omega u_\infty \left( (u^*)^\alpha- u_\infty^\alpha \right )
    = b \int_\Omega u_\infty \frac{(u^*)^\alpha - u_\infty^\alpha}{u^* - u_\infty} (u^* - u_\infty).
  \end{align*}
  Similarly, there are $0 < m < M < \infty$ such that
  \begin{align*}
    -m \int_\Omega \left(u_\infty-u^*\right)
    =    m \int_\Omega \left(u^*-u_\infty\right)
    \le  \frac{d}{dt}\int_\Omega \left(u_\infty-u^*\right)
    \le  M \int_\Omega \left(u^*-u_\infty\right)
    =   -M \int_\Omega \left(u_\infty-u^*\right).
  \end{align*}
  It then follows from the comparison principle for scalar ODEs that
  $\displaystyle \int_\Omega (u_\infty(t,x)-u^*)=0$ for all $t\in\mathbb{R}$ and
  then $\displaystyle \lim_{n\to\infty}\int_\Omega(u(t_n,x)-u^*) = 0$. Claim~2
  then follows. \medskip

  Now, we prove the following claim.

  \medskip\noindent\textbf{Claim~3.} $\lim_{t\to\infty} \Norm{u(t,\cdot) -
  u^*}_\infty = 0$. \medskip

  Assume that Claim~3 does not hold. Thanks to Claim~1, then there are
  $\epsilon_0 > 0$, $x_n \in \overline{\Omega}$, and $t_n \to \infty$ such that
  $u(t_n, x_n) - u^* \le - \epsilon_0$. Then there is a constant $\delta_0 > 0$
  such that $u(t_n,x)-u^*<-\frac{\epsilon_0}{2}$ for all $x \in
  B(x_n,\delta_0)\cap \Omega$. This implies that $\lim_{n\to\infty} \int_\Omega
  (u(t_n,x)-u^*) < 0$, which is a contradiction with Claim~2. Hence, Claim~3
  holds.

  Finally, by Corollary~\ref{C:linear-stability}, \eqref{E:exponential-decay-eq1} holds.
  Theorem~\ref{T:global-stabl-negative-sensitivity}(1) is proved.
\end{proof}

\subsection{Proof of Theorem \ref{T:global-stabl-negative-sensitivity}(2)}


\begin{proof}[Proof of Theorem \ref{T:global-stabl-negative-sensitivity}(2)]
  Assume that $m\ge 1$, $a = b = 0$, and $\chi_0 \le 0$. First, we prove that
  any bounded positive solution converges to $(u^*,v^*)$. \smallskip

  Let $(u(t,x),v(t,x))$ be a globally defined bounded positive solution with
  $\int_\Omega u(0,x)dx=|\Omega| u^*$. Let $\bar u(t)=\max_{x\in\Omega}u(t,x).$
  Then $\int_\Omega u(t,x)dx=u^*|\Omega|$ for all $t\ge 0$. Hence $\bar u(t)\ge
  u^*$ for all $t \ge 0$. By Lemma \ref{L:nonincreasing}, there holds $\bar
  u(t_1)\ge \bar u(t_2)$ for all $0\le t_1\le t_2$. We claim that
  \begin{equation}\label{E:new-claim-eq1}
    \lim_{t\to\infty} \bar u(t)=u^*.
  \end{equation}
  Assume by contradiction that \eqref{E:new-claim-eq1} does not hold. Then there
  is $\delta^*>0$ such that $\lim_{t\to\infty} \bar u(t)=u^*+\delta^*$. By
  Lemmas \ref{L:Holder} and \ref{L:persistence-2}, we may assume that there are
  $t_n\to\infty$ and $(u^\infty(t,x),v^\infty(t,x))$ such that
  $(u(t+t_n,x),v(t+t_n,x))\to (u^\infty(t,x),v^\infty(t,x))$ as $n\to\infty$
  locally uniformly in $(t,x)\in\mathbb{R}\times\bar\Omega$ and
  $(u^\infty(t,x),v^\infty(t,x))$ is a positive classical solution defined for
  all $t\in\mathbb{R}$. Moreover, $\bar u^\infty(t)=u^*+\delta^*$ and
  $\int_\Omega u^\infty(t,x)dx=|\Omega| u^*$ for all $t\in\mathbb{R}$, which is
  a contradiction. Hence, \eqref{E:new-claim-eq1} holds.

  We claim that
  \begin{equation}\label{E:new-claim-eq2}
    \lim_{t\to\infty}\|u(t,\cdot)-u^*\|_\infty=0.
  \end{equation}
  Assume by contradiction that \eqref{E:new-claim-eq2} does not hold. Then there
  are $\epsilon_0>0$, $t_n\to \infty$ and $\{x_n\}\subset\bar\Omega$ such that
  $|u(t_n,x_n)-u^*|\ge \epsilon_0$. By Lemmas \ref{L:Holder} and
  \ref{L:persistence-2} again, we may assume that there is
  $(u^\infty(t,x),v^\infty(t,x))$ such that $(u(t+t_n,x),v(t+t_n,x))\to
  (u^\infty(t,x),v^\infty(t,x))$ as $n\to\infty$ locally uniformly in
  $(t,x)\in\mathbb{R}\times\bar\Omega$ and $(u^\infty(t,x),v^\infty(t,x))$ is a
  positive classical solution defined for all $t\in\mathbb{R}$.
  By~\eqref{E:new-claim-eq1}, $\bar u^\infty(t)=u^*$ for all $t\in\mathbb{R}$.
  Then maximum principle for parabolic equations together with $\chi_0\le 0$
  yields that $u^\infty(t,x)=u^*$ for all $t\in\mathbb{R}$ and $x\in\bar\Omega$.
  This implies that
  \[
    \lim_{n\to\infty} |u(t_n,x_n)-u^*|=0,
  \]
  which is a contradiction. Hence, \eqref{E:new-claim-eq2} holds. \smallskip

  By Corollary~\ref{C:linear-stability} and \eqref{E:new-claim-eq2}, \eqref{E:exponential-decay-eq1} holds.
  Theorem~\ref{T:global-stabl-negative-sensitivity}(2) is proved.
\end{proof}

\section{Stability of  the unique constant solution in the non-minimal model and proof of Theorem \ref{T:global-stable}}\label{S:Stability}

In this section, we analyze the stability of the unique constant equilibrium
$(u^*, v^*)$, defined in~\eqref{E:equilibrium}, of~\eqref{E:main-PE} with $a, b
> 0$, and prove Theorem~\ref{T:global-stable}. Throughout this section, we put
$\chi^* = \chi_{a,b,\beta}^*(u^*)$.

\subsection{Proof of Theorem~\ref{T:global-stable} under condition (i)}\label{SS:global-stable}


\begin{proof}[Proof of Theorem~\ref{T:global-stable} under condition (i)]
  Assume that $m\ge 1$, $\alpha>0$, and $\gamma>0$ satisfy $2\gamma\le
  \alpha+1$, and $0<\chi_0 < \chi_{a,b,\beta}^{**,1}(u^*)$. By
  Lemma~\ref{L:comp-non-minimal-model}, $\chi_{a,b,\beta}^{**,1}(u^*)\le
  \chi^*$. Assume that $(u(t,x), v(t,x))$ is a globally defined positive bounded
  solution of~\eqref{E:main-PE}. Define, for $s>0$,
  \[
    h_m(s)\coloneqq \int_{u^*}^s \Big(1-\big(\frac{u^*}{\tau}\big)^{2m-1}\Big)\,d\tau.
  \]
  In particular, $h_1(s)=s-u^*-u^*\ln\frac{s}{u^*}$. Let
  \begin{equation}\label{E:F-def}
    F(t) \coloneqq \int_\Omega h_m\left(u(t,x)\right).
  \end{equation}
  We will use $F$ as a Lyapunov functional and record the estimates that drive
  the argument:
  \begin{equation}\label{E:new-global-stab-eq0}
    F(t)\ge 0\quad \text{for all $t\ge 0$,}
  \end{equation}
  \begin{align}\label{E:new-global-stab-eq1}
    F'(t)
    & \le - b \Big(1-\frac{\chi_0^2}{\big(\chi_{a,b,\beta}^{**,1}(u^*)\big)^2}\Big)\int_\Omega (u-u^*)(u^\alpha-(u^*)^\alpha),
  \end{align}
  \begin{equation}\label{E:new-global-stab-eq2}
    D(t)\coloneqq \int_\Omega (u(t,x)-u^*)\big(u(t,x)^\alpha-(u^*)^\alpha\big)
    \to 0\quad \text{as $t\to\infty$.}
  \end{equation}

  The key point is \eqref{E:new-global-stab-eq2}. Once
  \eqref{E:new-global-stab-eq2} is proved, Corollary~\ref{C:linear-stability}
  implies \eqref{E:exponential-decay-eq1}, and the proof is complete. We start
  with \eqref{E:new-global-stab-eq0}. Indeed, $h_m(u^*)=0$ and
  \[
    h_m'(s)=1-\big(\frac{u^*}{s}\big)^{2m-1}
    \quad
    \begin{cases}
      <0, & 0<s<u^*, \\
      >0, & s>u^*,
    \end{cases}
  \]
  hence $h_m$ attains its minimum at $u^*$ and $h_m\ge 0$ on $(0,\infty)$.

  \smallskip

  Next, we prove \eqref{E:new-global-stab-eq1}.
  Since $h_m''(s)=(2m-1)(u^*)^{2m-1}s^{-2m}$, we have
  \begin{align}\label{E:global-stab-1}
    F'(t)
    & =\int_\Omega h_m'(u)\,u_t
    =-(2m-1)(u^*)^{2m-1}\int_\Omega \frac{|\nabla u|^2}{u^{2m}}
    +\chi_0(2m-1)(u^*)^{2m-1}\int_\Omega \frac{\nabla u\cdot\nabla v}{u^{m}(1+v)^\beta} \nonumber                                            \\
    & \quad -b\int_\Omega \big(u-(u^*)^{2m-1}u^{-(2m-2)}\big)\big(u^\alpha-(u^*)^\alpha\big)\nonumber                                       \\
    & \le \frac{\chi_0^2(2m-1)(u^*)^{2m-1}}{4}\int_\Omega \frac{|\nabla v|^2}{(1+v)^{2\beta}}-b \int_\Omega (u-u^*)(u^\alpha-(u^*)^\alpha).
  \end{align}
  Here we have applied Young's inequality in the form $ab \le a^2 +
  \tfrac{1}{4}b^2$, and used that
  \[
    \big(u-(u^*)^{2m-1}u^{-(2m-2)}-(u-u^*)\big)(u^\alpha-(u^*)^\alpha)
    =u^*\Big(1-\big(\frac{u^*}{u}\big)^{2m-2}\Big)(u^\alpha-(u^*)^\alpha)\ge 0.
  \]
  Recall that $\tilde{\beta}$ is defined in~\eqref{E:b-star}. By the second
  equation in~\eqref{E:main-PE}, we have
  \begin{align*}
    0
    & = \int_\Omega \frac{v-v^*}{(1+v)^{\tilde{\beta}}}\left(\Delta v-\mu v+\nu u^\gamma \right) \\
    & = -\int_\Omega \frac{1+v-\tilde{\beta}(v-v^*)}{(1+v)^{\tilde{\beta}+1}}|\nabla v|^2
    -\mu\int_\Omega\frac{(v-v^*)^2}{(1+v)^{\tilde{\beta}}}
    +\nu\int_\Omega\frac{(v-v^*)(u^\gamma-(u^*)^\gamma)}{(1+v)^{\tilde{\beta}}}                   \\
    & \le -\int_\Omega \frac{1+v-\tilde{\beta}(v-v^*)}{(1+v)^{\tilde{\beta}+1}}|\nabla v|^2
    +\frac{\nu^2}{4\mu}\int_\Omega\frac{(u^\gamma-(u^*)^\gamma)^2}{(1+v)^{\tilde{\beta}}}.
  \end{align*}
  It then follows that
  \begin{align}\label{E:global-stab-m4}
    \int_\Omega\frac{|\nabla v|^2}{(1+v)^{2\beta}}
    & \le \frac{1}{1+\tilde{\beta} v^*}\frac{\nu^2}{4\mu}\int_\Omega \frac{\left(u^\gamma-(u^*)^\gamma\right)^2}{(1+v)^{\tilde{\beta}}}
    \le \frac{1}{1+\tilde{\beta} v^*}\frac{\nu^2}{4\mu}\int_\Omega \left(u^\gamma-(u^*)^\gamma\right)^2.
  \end{align}
  Here we have used the choice $\tilde{\beta} = \min\{1,\,2\beta-1\}_+$ so that
  the weight $(1+v)^{-2\beta}$ is controlled by $\left(1+\tilde{\beta} v^* +
  (1-\tilde{\beta})v\right) (1+v)^{-(\tilde{\beta}+1)}$; this covers the three
  regimes $0 \le \beta < 1/2$ (where $\tilde{\beta} = 0$), $1/2 \le \beta \le
  1$ (interpolating between $\tilde{\beta} = 0$ and $1$), and $\beta \ge 1$
  (where $\tilde{\beta} = 1$). This, together with~\eqref{E:global-stab-1}
  and~\eqref{E:global-stab-m4}, implies that
  \begin{align}\label{E:global-stab-3}
    F'(t)
    & \le \frac{\chi_0^2(2m-1)(u^*)^{2m-1}}{4(1+\tilde{\beta} v^*)}\frac{\nu^2}{4\mu}\int_\Omega \left(u^\gamma-(u^*)^\gamma\right)^2
    -b \int_\Omega (u-u^*)(u^\alpha-(u^*)^\alpha).
  \end{align}
  Applying Lemma~\ref{L:power-diff-ineq} yields
  \begin{align}\label{E:global-stab-4}
    F'(t)
    & \le -\big(b-\frac{\chi_0^2(2m-1)(u^*)^{2m-1}}{4(1+\tilde{\beta} v^*)}\frac{\nu^2}{4\mu}
    C_{\alpha,\gamma} (u^*)^{2\gamma-\alpha-1}\big)\int_\Omega (u-u^*)(u^\alpha-(u^*)^\alpha).
  \end{align}
  \eqref{E:new-global-stab-eq1} then follows.

  Finally, we prove \eqref{E:new-global-stab-eq2}.
  By~\eqref{E:new-global-stab-eq0} and \eqref{E:global-stab-4}, there holds
  \begin{align}\label{E:global-stab-5}
    b  \Big(1-\frac{\chi_0^2}{\big(\chi_{a,b,\beta}^{**,1}(u^*)\big)^2}\Big)\int_0^t \int_\Omega (u-u^*)(u^\alpha-(u^*)^\alpha)
    \le F(0) \quad \text{for all $t \ge 0$.}
  \end{align}
  By the assumption $0<\chi_0<\chi_{a,b,\beta}^{**,1}(u^*)$ and
  \eqref{E:global-stab-5}, we have $\int_0^\infty D(t)\,dt<\infty$. Moreover,
  since $(u,v)$ is bounded, Lemma~\ref{L:Holder} implies that $u\in
  C^{\theta/2,\theta}([1,\infty)\times\overline\Omega)$ for some
  $\theta\in(0,1)$, hence $D(\cdot)\in C([0,\infty))$ and $D(t)$ is uniformly
  continuous on $[1,\infty)$. Therefore, the uniform continuity of $D$ on
  $[1,\infty)$ together with $\int_0^\infty D(t)\,dt<\infty$ yields
  $D(t)\to 0$ as $t\to\infty$, which is \eqref{E:new-global-stab-eq2}. The proof
  of Theorem \ref{T:global-stable} under condition (i) is thus completed.
\end{proof}

\subsection{Proof of Theorem \ref{T:global-stable} under condition (ii)}


\begin{proof}[Proof of Theorem~\ref{T:global-stable} under condition (ii)]
  The proof is similar to that of condition~{\rm(i)}.
  Here we record the necessary modifications.

  By Theorem~\ref{T:persistence}{\rm(2)} and {\rm(3)}, we have
  $\liminf_{t\to\infty}\inf_{x\in\Omega} v(t,x)\ge \underline{v}_{a,b}$. Choose
  $0<\epsilon<\underline{v}_{a,b}$ such that
  \[
    b-\frac{\chi_0^2(2m-1)(u^*)^{2m-1}}{4(1+\underline{v}_{a,b}-\epsilon)^{2\beta}}\frac{\nu^2}{4\mu}
    C_{\alpha,\gamma} (u^*)^{2\gamma-\alpha-1}>0.
  \]
  After a time-shift, we may assume that
  \begin{equation}\label{E:new-global-stab-eq3}
    v(t,x)\ge \underline{v}_{a,b}-\epsilon\quad \forall\, t\ge 0,\ x\in\Omega.
  \end{equation}

  Let $F(t)$ be as in~\eqref{E:F-def}. By~\eqref{E:global-stab-1}
  and~\eqref{E:new-global-stab-eq3},
  \begin{align}\label{E:new-global-stab-1}
    F'(t)
    & \le \frac{\chi_0^2(2m-1)(u^*)^{2m-1}}{4(1+\underline{v}_{a,b}-\epsilon)^{2\beta}}\int_\Omega |\nabla v|^2-b \int_\Omega (u-u^*)(u^\alpha-(u^*)^\alpha).
  \end{align}
  By~\eqref{E:global-stab-m4} with $\tilde\beta=0$,
  \begin{equation}\label{E:new-global-stab-m4}
    \int_\Omega  |\nabla v|^2
    \le \frac{\nu^2}{4\mu}\int_\Omega \left(u^\gamma-(u^*)^\gamma\right)^2.
  \end{equation}
  Proceeding as in~\eqref{E:global-stab-4}, we obtain
  \begin{align}\label{E:new-global-stab-4}
    F'(t)
    & \le -\Big(b-\frac{\chi_0^2(2m-1)(u^*)^{2m-1}}{4(1+\underline{v}_{a,b}-\epsilon)^{2\beta}}\frac{\nu^2}{4\mu}
    C_{\alpha,\gamma} (u^*)^{2\gamma-\alpha-1}\Big)\int_\Omega (u-u^*)(u^\alpha-(u^*)^\alpha).
  \end{align}
  By the arguments in the proof of Theorem~\ref{T:global-stable} under
  condition~{\rm(i)},
  \[
    \lim_{t\to\infty}\int_\Omega (u-u^*)(u^\alpha-(u^*)^\alpha)=0.
  \]
  Then, by Lemma~\ref{L:comp-non-minimal-model} and
  Corollary~\ref{C:linear-stability}, \eqref{E:exponential-decay-eq1} holds.
\end{proof}

\subsection{Proof of Theorem \ref{T:global-stable} under condition (iii)}

In this subsection, we prove Theorem~\ref{T:global-stable} under condition
(iii). The proof follows the rectangle method of
Galakhov--Salieva--Tello~\cite[Section~3]{galakhov.salieva.ea:16:on}, adapted to
our normalization. The key contribution here is to allow $\beta > 0$, which
entails controlling the extra term involving $\abs{\nabla V}^2$ (see the
definition of $V$ below) coming from the derivative of $(1+v^*V)^{-\beta}$. We
start this section with the following lemma, needed in the proof of
Theorem~\ref{T:global-stable}(iii).

\begin{lemma}\label{L:M0-Omega}
  There exists a constant $M_0(\Omega)>0$, such that the following holds: for
  any $\mu>0$, $\nu>0$, and $f\in C(\overline{\Omega})$, if $w$ solves
  $0=\Delta w-\mu w+\nu f$ in $\Omega$ with $\p_n w=0$ on $\p\Omega$, then
  \[
    \Norm{\nabla w}_\infty
    \le M_0(\Omega)\frac{\nu}{\sqrt{\mu}}
    \big(\sup_{x\in\Omega} f(x)-\inf_{x\in\Omega} f(x)\big).
  \]
\end{lemma}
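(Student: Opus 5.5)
We need the gradient bound $\|\nabla w\|_\infty \le M_0 \frac{\nu}{\sqrt\mu}\,\mathrm{osc} f$, with $M_0$ depending only on $\Omega$.

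\textbf{Step 1: reduce to mean-zero data.} Let $c$ be the midpoint $\frac12(\sup f+\inf f)$ and set $g\coloneqq f-c$, so that $\|g\|_\infty=\frac12\,\mathrm{osc} f$. Then $\tilde w\coloneqq w-\frac{\nu c}{\mu}$ solves $0=\Delta \tilde w-\mu\tilde w+\nu g$ with Neumann data, and $\nabla\tilde w=\nabla w$. It therefore suffices to show
\[
\|\nabla w\|_\infty\le C(\Omega)\,\frac{\nu}{\sqrt\mu}\,\|g\|_\infty ,
\]
with $C(\Omega)$ independent of $\mu$; by scaling in $\nu$, we may take $\nu=1$.

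\textbf{Step 2: resolvent representation and gradient bound.} Write
\[
w=\int_0^\infty e^{-\mu s}e^{s\Delta}g\,ds,
\qquad
\nabla w=\int_0^\infty e^{-\mu s}\nabla e^{s\Delta}g\,ds .
\]
We use the standard Neumann heat semigroup gradient estimate on a bounded smooth domain (as in Section~\ref{S:Semigroup}):
\[
\|\nabla e^{s\Delta}g\|_\infty\le C_1\bigl(1+s^{-1/2}\bigr)e^{-\lambda_1 s}\|g\|_\infty .
\]
In the form with the decay factor $e^{-\lambda_1 s}$, this estimate is usually stated for mean-zero $g$. For general $g$ we remove the mean: subtracting the mean from $g$ leaves $\nabla e^{s\Delta}g$ unchanged, and the mean-removed $g$ has sup norm at most $2\|g\|_\infty$. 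Hence the bound holds for all $g$, with $C_1$ depending only on $\Omega$.

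\textbf{Step 3: integrate in $s$.} This gives
\[
\|\nabla w\|_\infty\le C_1\|g\|_\infty\int_0^\infty e^{-(\mu+\lambda_1)s}\bigl(1+s^{-1/2}\bigr)\,ds
= C_1\|g\|_\infty\Bigl(\frac{1}{\mu+\lambda_1}+\sqrt{\frac{\pi}{\mu+\lambda_1}}\Bigr).
\]
Since $\frac{1}{\mu+\lambda_1}\le \frac{1}{\sqrt{\lambda_1}}\cdot\frac{1}{\sqrt{\mu}}$ and $\frac{1}{\sqrt{\mu+\lambda_1}}\le\frac{1}{\sqrt\mu}$, the right-hand side is at most $C_1\bigl(\lambda_1^{-1/2}+\sqrt\pi\bigr)\mu^{-1/2}\|g\|_\infty$. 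Combining with Step 1, we may take
\[
M_0(\Omega)=\tfrac12\,C_1\bigl(\lambda_1^{-1/2}+\sqrt\pi\bigr).
\]

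\textbf{Main obstacle.} The delicate point is getting $C_1$ uniform, i.e.\ independent of $\mu$. This requires the semigroup gradient estimate to have exponential decay $e^{-\lambda_1 s}$ for large $s$ together with the $s^{-1/2}$ singularity at $s=0$, valid uniformly up to the boundary. That is exactly why we subtract the mean: otherwise the large-time behavior of $\nabla e^{s\Delta}g$ would not be controlled. I would cite the Winkler-type $L^\infty\to W^{1,\infty}$ Neumann semigroup estimates recorded in the appendix for this.
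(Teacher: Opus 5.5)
Your argument is correct and is essentially the paper's proof: subtract the midpoint constant $c$, write $\nabla w=\nu\int_0^\infty e^{-\mu s}\nabla e^{s\Delta}(f-c)\,ds$, and integrate a Neumann heat-semigroup gradient bound in $s$. The paper uses that bound in the form $\|\nabla e^{s\Delta}g\|_\infty\le C(\Omega)s^{-1/2}\|g\|_\infty$ for all $s>0$, which follows from your $(1+s^{-1/2})e^{-\lambda_1 s}$ form, and your bookkeeping makes explicit that the large-time decay is what keeps the constant uniform as $\mu\to 0$.

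One small correction: the appendix does not record the estimate you need. It contains only the divergence-form estimate (Lemma~\ref{L:T-Nabla}), whereas you need the companion $L^\infty\to W^{1,\infty}$ gradient estimate from the same Winkler lemma. That estimate already holds for all $g$, since $\nabla$ annihilates constants, so your mean-removal step is harmless but unnecessary.
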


\begin{proof}
  Set $c\coloneqq \frac12\Big(\sup_{x\in\Omega} f(x)+\inf_{x\in\Omega}
  f(x)\Big)$ and $g\coloneqq f-c$. Then
  $\Norm{g}_\infty=\frac12\left(\sup_\Omega f-\inf_\Omega f\right)$. Writing
  $e^{t\Delta}$ for the Neumann heat semigroup on $\Omega$, the resolvent
  representation gives
  \[
    w
    = \nu\int_0^\infty e^{-\mu t} e^{t\Delta} g\,dt,
    \qquad
    \nabla w
    = \nu\int_0^\infty e^{-\mu t}\nabla e^{t\Delta} g\,dt.
  \]
  Using the standard estimate $\Norm{\nabla e^{t\Delta}g}_\infty\le
  C(\Omega)\,t^{-1/2}\Norm{g}_\infty$ and $\int_0^\infty e^{-\mu
  t}t^{-1/2}\,dt=\sqrt{\pi}/\sqrt{\mu}$, we obtain the desired inequality with
  $M_0(\Omega)\coloneqq \frac{\sqrt{\pi}}{2}C(\Omega)$.
\end{proof}

\begin{proof}[Proof of Theorem~\ref{T:global-stable} under condition (iii)]
  Assume that $m\ge 1$, $\gamma\ge 1$, $\alpha+1\ge m+\gamma+{\rm
  sign}(\beta)\gamma$, and $\chi_0<\chi_{a,b,\beta}^{**,3}(u^*)$. \smallskip

  First, we normalize the variables. Let $(u(t,x),v(t,x))$ be a globally defined
  positive bounded classical solution of~\eqref{E:main-PE}. Put
  $u^*=(a/b)^{1/\alpha}$ and $v^*=(\nu/\mu)(u^*)^\gamma$. Define the normalized
  variables
  \begin{equation}\label{E:rectangle-normalization}
    U(t,x)\coloneqq \frac{u(t,x)}{u^*},
    \qquad
    V(t,x)\coloneqq \frac{\mu}{\nu (u^*)^\gamma}\,v(t,x),
    \qquad
    \tau\coloneqq a t,
  \end{equation}
  and the dimensionless parameter
  \begin{equation}\label{E:kappa0}
    \kappa_0 \coloneqq a^{-1} \chi_0 \nu (u^*)^{m+\gamma-1}.
  \end{equation}
  Then $\chi_0<\chi_{a,b,\beta}^{**,3}(u^*)$ is equivalent to
  \begin{equation}\label{E:kappa0-small}
    \kappa_0 < \frac{1}{2+\beta v^* M_0(\Omega)^2}.
  \end{equation}
  The pair $(U,V)$ solves
  \begin{equation}\label{E:rectangle-system}
    \begin{dcases}
      U_\tau = \frac{1}{a}\Delta U-\frac{\kappa_0}{\mu}\nabla\cdot\!\Big(\frac{U^m}{(1+v^*V)^\beta}\nabla V\Big)+U(1-U^\alpha), & x\in\Omega, \\
      0=\Delta V-\mu V+\mu U^\gamma,                                                                                            & x\in\Omega, \\
      U(0,\cdot)=u_0/u^*,                                                                                                       & x\in\Omega,
    \end{dcases}
  \end{equation}
  and therefore, $U$ and $V$ inherit the regularity provided by
  Proposition~\ref{P:local-existence}
  and~\ref{P:global-existence-prop3}
  on $[0,\infty)\times\overline{\Omega}$.

  Next, set $\overline{U}(\tau)\coloneqq \sup_{x\in\Omega} U(\tau,x)$ and
  $\underline{U}(\tau)\coloneqq \inf_{x\in\Omega} U(\tau,x)$. Then
  $\overline{U}$ and $\underline{U}$ are locally Lipschitz on $[0,\infty)$.
  For $\tau>0$, define the maximizing and minimizing sets
  \[
    A^+(\tau)\coloneqq \left\{x\in\overline{\Omega}: U(\tau,x)=\overline{U}(\tau)\right\},
    \qquad
    A^-(\tau)\coloneqq \left\{x\in\overline{\Omega}: U(\tau,x)=\underline{U}(\tau)\right\}.
  \]
  Since $U_\tau$ is continuous on $(0,\infty)\times\overline{\Omega}$, a
  standard envelope argument yields
  \begin{equation}\label{E:envelope-max}
    \lim_{h\downarrow 0}\frac{\overline{U}(\tau+h)-\overline{U}(\tau)}{h}
    = \max_{x\in A^+(\tau)} U_\tau(\tau,x)
    \quad \text{and} \quad
    \lim_{h\downarrow 0}\frac{\overline{U}(\tau)-\overline{U}(\tau-h)}{h}
    = \min_{x\in A^+(\tau)} U_\tau(\tau,x).
  \end{equation}
  Indeed, for $h>0$ and $x\in A^+(\tau)$,
  \[
    \frac{\overline{U}(\tau+h)-\overline{U}(\tau)}{h}
    \ge \frac{U(\tau+h,x)-U(\tau,x)}{h},
  \]
  so letting $h\downarrow 0$ gives the first identity
  in~\eqref{E:envelope-max}. Conversely, if $x_h\in A^+(\tau+h)$, then after
  passing to a sequence $h_j\downarrow 0$ we may assume that $x_{h_j}\to
  \bar{x}\in\overline{\Omega}$. Since
  \[
    \abs{U(\tau+h_j,x_{h_j})-U(\tau,x_{h_j})}
    \le h_j \sup_{0\le s\le h_j}\Norm{U_\tau(\tau+s,\cdot)}_\infty \to 0
  \]
  and $U(\tau+h_j,x_{h_j})=\overline{U}(\tau+h_j)\to \overline{U}(\tau)$, we
  obtain $U(\tau,\bar{x})=\overline{U}(\tau)$, that is, $\bar{x}\in
  A^+(\tau)$. Therefore,
  \[
    \frac{\overline{U}(\tau+h_j)-\overline{U}(\tau)}{h_j}
    \le \frac{U(\tau+h_j,x_{h_j})-U(\tau,x_{h_j})}{h_j}
    = \frac1{h_j}\int_0^{h_j} U_\tau(\tau+s,x_{h_j})\,ds
    \to U_\tau(\tau,\bar{x}),
  \]
  which yields the reverse inequality in the first identity
  of~\eqref{E:envelope-max}. The second identity is analogous. Applying the
  same argument to $-U$ shows that
  \begin{equation}\label{E:envelope-min}
    \lim_{h\downarrow 0}\frac{\underline{U}(\tau+h)-\underline{U}(\tau)}{h}
    = \min_{x\in A^-(\tau)} U_\tau(\tau,x)
    \quad \text{and} \quad
    \lim_{h\downarrow 0}\frac{\underline{U}(\tau)-\underline{U}(\tau-h)}{h}
    = \max_{x\in A^-(\tau)} U_\tau(\tau,x).
  \end{equation}
  In particular, whenever $\overline{U}$ is differentiable at $\tau>0$, we
  have $\overline{U}'(\tau)=U_\tau(\tau,x)$ for every $x\in A^+(\tau)$, and
  whenever $\underline{U}$ is differentiable at $\tau>0$, we have
  $\underline{U}'(\tau)=U_\tau(\tau,x)$ for every $x\in A^-(\tau)$. By the
  maximum principle applied to the elliptic equation
  in~\eqref{E:rectangle-system}, we have
  \begin{equation}\label{E:rectangle-V-bounds}
    \underline{U}(\tau)^\gamma \le V(\tau,x)\le \overline{U}(\tau)^\gamma
    \qquad \text{for all $\tau>0$ and $x\in\Omega$.}
  \end{equation}

  Applying Lemma~\ref{L:M0-Omega} to the elliptic equation for $V$ in~\eqref{E:rectangle-system}
  (with $\nu=\mu$ and $f=U^\gamma$), we have
  \begin{equation}\label{E:rectangle-gradV}
    \Norm{\nabla V(\tau,\cdot)}_\infty
    \le M_0(\Omega)\sqrt{\mu}\big(\sup_{x\in\Omega} U(\tau,x)^\gamma-\inf_{x\in\Omega} U(\tau,x)^\gamma\big)
    = M_0(\Omega)\sqrt{\mu}\left(\overline{U}(\tau)^\gamma-\underline{U}(\tau)^\gamma\right),
  \end{equation}
  for all $\tau\in[0,\infty)$.
  We claim that for a.e.\
  $\tau\in(0,\tau_{\max})$,
  \begin{align}\label{E:rectangle-ode-ineq}
    \overline{U}'(\tau)
    & \le \kappa_0 \overline{U}(\tau)^m\left(\overline{U}(\tau)^\gamma-\underline{U}(\tau)^\gamma\right)
    +\beta v^* M_0(\Omega)^2\,\kappa_0\,\overline{U}(\tau)^m\left(\overline{U}(\tau)^\gamma-\underline{U}(\tau)^\gamma\right)^{\!2}
    +\overline{U}(\tau)\left(1-\overline{U}(\tau)^\alpha\right),\nonumber                                  \\
    \underline{U}'(\tau)
    & \ge \kappa_0 \underline{U}(\tau)^m\left(\underline{U}(\tau)^\gamma-\overline{U}(\tau)^\gamma\right)
    +\underline{U}(\tau)\left(1-\underline{U}(\tau)^\alpha\right).
  \end{align}
  In fact,  we can rewrite the first equation
  in~\eqref{E:rectangle-system} in the form
  \begin{align*}
    U_\tau
    & = \frac{1}{a}\Delta U
    -\frac{m\kappa_0}{\mu}\frac{U^{m-1}}{(1+v^*V)^\beta}\,\nabla U\cdot\nabla V
    +\frac{\beta v^* \kappa_0}{\mu}\,\frac{U^m}{(1+v^*V)^{\beta+1}}\,\abs{\nabla V}^2 \\
    & \quad +\kappa_0 \frac{U^m}{(1+v^*V)^\beta}\,(U^\gamma-V)+U(1-U^\alpha),
  \end{align*}
  Fix $\tau$ at which
  $\overline{U}$ is differentiable and choose $x_\tau\in A^+(\tau)$. If
  $x_\tau\in\Omega$, then $\nabla U(\tau,x_\tau)=0$ and $\Delta
  U(\tau,x_\tau)\le 0$. If $x_\tau\in\partial\Omega$, let $\psi\in
  C^2(\overline{\Omega})$ satisfy
  $\p_n\psi=1$ on $\p\Omega$, and set $U_\epsilon\coloneqq U-\epsilon\psi$. Then
  $\p_n U_\epsilon=-\epsilon<0$ on $\p\Omega$, so the spatial maximum of
  $U_\epsilon(\tau,\cdot)$ is achieved at some interior point
  $x_{\tau,\epsilon}\in\Omega$, where $\nabla
  U_\epsilon(\tau,x_{\tau,\epsilon})=0$ and $\Delta
  U_\epsilon(\tau,x_{\tau,\epsilon})\le 0$. Moreover, by compactness of
  $\overline{\Omega}$, for any sequence $\epsilon_j\downarrow 0$ there is a
  subsequence (not relabeled) such that $x_{\tau,\epsilon_j}\to \bar{x}_\tau\in
  \overline{\Omega}$. Since
  \[
    U_{\epsilon_j}(\tau,x_{\tau,\epsilon_j})
    =\max_{\overline{\Omega}} U_{\epsilon_j}(\tau,\cdot)
    \ge U_{\epsilon_j}(\tau,x_\tau)
    =\overline{U}(\tau)-\epsilon_j\psi(x_\tau),
  \]
  we have $\liminf_{j\to\infty}U(\tau,x_{\tau,\epsilon_j})\ge
  \overline{U}(\tau)$, hence $U(\tau,\bar{x}_\tau)=\overline{U}(\tau)$ by
  continuity. Evaluating the equation at $(\tau,x_{\tau,\epsilon})$ and letting
  $\epsilon\to 0$, together with~\eqref{E:rectangle-V-bounds}
  and~\eqref{E:rectangle-gradV}, yields
  \begin{align*}
    U_\tau(\tau,\bar{x}_\tau)
    \le \kappa_0 \overline{U}(\tau)^m\left(\overline{U}(\tau)^\gamma-\underline{U}(\tau)^\gamma\right)
    +\beta v^* M_0(\Omega)^2\,\kappa_0\,\overline{U}(\tau)^m\left(\overline{U}(\tau)^\gamma-\underline{U}(\tau)^\gamma\right)^{\!2}
    +\overline{U}(\tau)\left(1-\overline{U}(\tau)^\alpha\right).
  \end{align*}
  Since $\bar{x}_\tau\in A^+(\tau)$, the above envelope identity gives
  $\overline{U}'(\tau)=U_\tau(\tau,\bar{x}_\tau)$, and hence the first
  inequality in~\eqref{E:rectangle-ode-ineq}. The lower inequality is obtained
  analogously from~\eqref{E:envelope-min};
  see also \cite[Section~3]{galakhov.salieva.ea:16:on}.

  Now, following~\cite[Section~3]{galakhov.salieva.ea:16:on}, we compare $\overline{U}$ and $\underline{U}$ with an
  auxiliary ODE system.
  Let $(\overline{u}(\tau),\underline{u}(\tau))$ be the unique global solution of
  \begin{equation}\label{E:rectangle-ode}
    \begin{dcases}
      \overline{u}_\tau=\kappa_0 \overline{u}^m(\overline{u}^\gamma-\underline{u}^\gamma)
      +\beta v^* M_0(\Omega)^2\,\kappa_0\,\overline{u}^m(\overline{u}^\gamma-\underline{u}^\gamma)^2
      +\overline{u}(1-\overline{u}^\alpha),
      & \overline{u}(0)=1\vee \Norm{u(0,\cdot)/u^*}_\infty,           \\
      \underline{u}_\tau=\kappa_0 \underline{u}^m(\underline{u}^\gamma-\overline{u}^\gamma)+\underline{u}(1-\underline{u}^\alpha),
      & \underline{u}(0)=1\wedge \inf_{x\in\Omega}\frac{u(0,x)}{u^*}.
    \end{dcases}
  \end{equation}
  We claim that
  \begin{equation}\label{E:new-rectangle-1}
    (\overline{u}(\tau),\underline{u}(\tau))\ \text{exists for all $\tau>0$},
    \qquad
    \underline{u}(\tau)\le 1\le \overline{u}(\tau)\ \text{for all $\tau\ge 0$},
  \end{equation}
  \begin{equation}\label{E:new-rectangle-2}
    \underline{u}(\tau)\le \underline{U}(\tau)\le \overline{U}(\tau)\le \overline{u}(\tau)
    \qquad \text{for all $\tau\in[0,\infty)$,}
  \end{equation}
  and
  \begin{equation}
    \label{E:new-rectangle-3}
    \overline{u}(\tau)\to 1
    \quad\text{and}\quad
    \underline{u}(\tau)\to 1
    \qquad \text{as $\tau\to\infty$.}
  \end{equation}

  Assuming that \eqref{E:new-rectangle-2} and \eqref{E:new-rectangle-3} hold, we have
  $\overline{U}(\tau)-\underline{U}(\tau)\to 0$ as $\tau\to\infty$, and therefore
  $\Norm{U(\tau,\cdot)-1}_\infty\to 0$ as $\tau\to\infty$.
  The statements of Theorem~\ref{T:global-stable} under condition~{\rm(iii)} then follow from
  Lemma~\ref{L:comp-non-minimal-model} and Corollary~\ref{C:linear-stability}.
  In the following, we prove \eqref{E:new-rectangle-1}, \eqref{E:new-rectangle-2}, and \eqref{E:new-rectangle-3}.

  \smallskip

  First, we prove \eqref{E:new-rectangle-1}.
  Note that $0<\underline{u}(0)\le 1\le \overline{u}(0)$.
  If $\underline{u}(\tau_0)=1$ and $\overline{u}(\tau_0)>1$, then
  $\underline{u}_\tau(\tau_0)=\kappa_0(\,1-\overline{u}(\tau_0)^\gamma\,)<0$,
  and if $\overline{u}(\tau_0)=1$ and $\underline{u}(\tau_0)<1$, then
  $\overline{u}_\tau(\tau_0)\ge \kappa_0(\,1-\underline{u}(\tau_0)^\gamma\,)>0$.
  We claim that $\underline{u}(\tau)\le 1\le \overline{u}(\tau)$ for all $\tau$
  in the existence interval of the solution.
  Indeed, set
  \[
    I\coloneqq \left\{\tau\ge 0 \mid \underline{u}(\tau)\le 1\le \overline{u}(\tau)\right\}.
  \]
  Then $I$ is nonempty (since it contains $0$) and closed.
  If $\tau\in I$ and $\underline{u}(\tau)=1<\overline{u}(\tau)$, then
  $\underline{u}_\tau(\tau)=\kappa_0\left(1-\overline{u}(\tau)^\gamma\right)<0$,
  so $\underline{u}<1$ immediately after $\tau$.
  Similarly, if $\tau\in I$ and $\overline{u}(\tau)=1>\underline{u}(\tau)$, then
  $\overline{u}_\tau(\tau)\ge \kappa_0\left(1-\underline{u}(\tau)^\gamma\right)>0$,
  so $\overline{u}>1$ immediately after $\tau$.
  Finally, if $\underline{u}(\tau)=\overline{u}(\tau)=1$, then $(\overline{u},\underline{u})\equiv (1,1)$
  by uniqueness.
  Thus $I$ is open, and therefore $I=[0,\tau_{\max})$.

  Since $\underline{u}(\tau)\le 1\le \overline{u}(\tau)$,
  $\overline{u}^\gamma-\underline{u}^\gamma\le \overline{u}^\gamma$, and since
  $\alpha\ge m+\gamma-1$ when $\beta=0$, and $\alpha\ge m+2\gamma-1$ when
  $\beta>0$, for $\overline{u}\ge 1$ we have $\overline{u}^{m+\gamma}\le
  \overline{u}^{1+\alpha}$ and $\overline{u}^{m+2\gamma}\le
  \overline{u}^{1+\alpha}$, respectively. Therefore, \eqref{E:rectangle-ode}
  and \eqref{E:kappa0-small} imply
  \[
    \overline{u}_\tau
    \le \overline{u}-\left(1-\kappa_0\left(1+\beta v^* M_0(\Omega)^2\right)\right)\overline{u}^{1+\alpha}
    \qquad \text{for all $\tau\ge 0$,}
  \]
  with $1-\kappa_0(1+\beta v^* M_0(\Omega)^2)>0$. Hence $\overline{u}$ stays
  bounded on $[0,\infty)$, and therefore the solution exists globally.
  \eqref{E:new-rectangle-1} is thus proved.

  Next, we prove \eqref{E:new-rectangle-2}.
  Compare the pair $(\overline{U},\underline{U})$ with
  $(\overline{u},\underline{u})$. Since $\overline{U}$ and $\underline{U}$ are
  locally Lipschitz, they are absolutely continuous and
  \eqref{E:rectangle-ode-ineq} holds in the integrated sense. Fix
  $T\in(0,\tau_{\max})$. On $[0,T]$, the right-hand sides
  in~\eqref{E:rectangle-ode} are Lipschitz in $(\overline{u},\underline{u})$ on
  the range of $(\overline{U},\underline{U})$ and
  $(\overline{u},\underline{u})$. Set
  \[
    w_1(\tau)\coloneqq \left(\overline{U}(\tau)-\overline{u}(\tau)\right)_+,
    \qquad
    w_2(\tau)\coloneqq \left(\underline{u}(\tau)-\underline{U}(\tau)\right)_+.
  \]
  Then $w_1$ and $w_2$ are absolutely continuous on $[0,T]$ and
  $w_1(0)=w_2(0)=0$. Using~\eqref{E:rectangle-ode-ineq}
  and~\eqref{E:rectangle-ode}, together with the fact that the right-hand side
  in the $\overline{u}$--equation is decreasing as $\underline{u}$ increases
  and the right-hand side in the $\underline{u}$--equation is decreasing as
  $\overline{u}$ increases, we obtain for a.e.\ $\tau\in(0,T)$ that
  \[
    w_1'(\tau)\le L_T\left(w_1(\tau)+w_2(\tau)\right)
    \quad\text{and}\quad
    w_2'(\tau)\le L_T\left(w_1(\tau)+w_2(\tau)\right)
  \]
  for some constant $L_T>0$. Indeed, denote by $F(\overline{u}, \underline{u})$
  and $G(\overline{u}, \underline{u})$ the right-hand sides
  in~\eqref{E:rectangle-ode}. Since $w_1 = (\overline{U}-\overline{u})_+$ and
  $w_2 = (\underline{u}-\underline{U})_+$, it holds for a.e.\ $\tau\in(0,T)$
  that
  \[
    w_1'(\tau)
    =\left(\overline{U}'(\tau)-\overline{u}'(\tau)\right)
    \mathbf{1}_{\{\overline{U}(\tau)>\overline{u}(\tau)\}}
    \quad \text{and} \quad
    w_2'(\tau)
    =\left(\underline{u}'(\tau)-\underline{U}'(\tau)\right)
    \mathbf{1}_{\{\underline{u}(\tau)>\underline{U}(\tau)\}}.
  \]
  By \eqref{E:rectangle-ode-ineq} and \eqref{E:rectangle-ode}, for a.e.\
  $\tau\in(0,T)$,
  \begin{align*}
    \overline{U}'(\tau)-\overline{u}'(\tau)
    & \le F\left(\overline{U}(\tau),\underline{U}(\tau)\right) -F\left(\overline{u}(\tau),\underline{u}(\tau)\right), \quad \text{and} \\
    \underline{u}'(\tau)-\underline{U}'(\tau)
    & \le G\left(\overline{u}(\tau),\underline{u}(\tau)\right) -G\left(\overline{U}(\tau),\underline{U}(\tau)\right).
  \end{align*}
  Decompose
  \[
    F(\overline{U},\underline{U})-F(\overline{u},\underline{u})
    =\left(F(\overline{U},\underline{U})-F(\overline{u},\underline{U})\right)
    +\left(F(\overline{u},\underline{U})-F(\overline{u},\underline{u})\right).
  \]
  The first term is bounded by $L_T\abs{\overline{U}-\overline{u}}=L_T w_1$ on
  $\{\overline{U}>\overline{u}\}$. The second term is nonpositive on
  $\{\underline{u}\le \underline{U}\}$ by the monotonicity in $\underline{u}$,
  and is bounded by $L_T\abs{\underline{u}-\underline{U}}=L_T w_2$ on
  $\{\underline{u}>\underline{U}\}$. This yields $w_1'\le L_T(w_1+w_2)$ a.e.\
  on $(0,T)$. The bound for $w_2'$ follows similarly by decomposing
  $G(\overline{u},\underline{u})-G(\overline{U},\underline{U})$ and using the
  monotonicity in $\overline{u}$. Consequently, for a.e.\ $\tau\in(0,T)$,
  \[
    w_1'(\tau)+w_2'(\tau)\le 2L_T\left(w_1(\tau)+w_2(\tau)\right).
  \]
  Set $C_T\coloneqq 2L_T$. By Gr\"onwall's inequality, $w_1\equiv w_2\equiv 0$
  on $[0,T]$. Since $T\in(0,\tau_{\max})$ is arbitrary, this
  yields~\eqref{E:new-rectangle-2}.

  \smallskip
  Finally, we prove \eqref{E:new-rectangle-3}.
  As in~\cite[Section~3]{galakhov.salieva.ea:16:on}, the identities
  \begin{align*}
    \frac{\overline{u}_\tau}{\overline{u}}
    & =\kappa_0 \overline{u}^{m-1}(\overline{u}^\gamma-\underline{u}^\gamma)
    +\beta v^* M_0(\Omega)^2\kappa_0\overline{u}^{m-1}(\overline{u}^\gamma-\underline{u}^\gamma)^2
    +(1-\overline{u}^\alpha),                                                                            \\
    \frac{\underline{u}_\tau}{\underline{u}}
    & =\kappa_0 \underline{u}^{m-1}(\underline{u}^\gamma-\overline{u}^\gamma)+(1-\underline{u}^\alpha),
  \end{align*}
  yield
  \begin{align}\label{E:rectangle-logdiff}
    \frac{d}{d\tau}\left(\ln\overline{u}-\ln\underline{u}\right)
    & \le \left(2\kappa_0+\beta v^* M_0(\Omega)^2\,\kappa_0-1\right)\left(\overline{u}^\alpha-\underline{u}^\alpha\right),
  \end{align}
  where we used that $\underline{u}\le 1\le \overline{u}$ on $[0,\infty)$,
  $\underline{u}^{m-1}\le \overline{u}^{m-1}$, and
  \[
    \overline{u}^{m-1}\left(\overline{u}^\gamma-\underline{u}^\gamma\right)
    \le \overline{u}^{m+\gamma-1}-\underline{u}^{m+\gamma-1}
    \le \overline{u}^\alpha-\underline{u}^\alpha,
  \]
  so that
  \[
    \left(\overline{u}^{m-1}+\underline{u}^{m-1}\right)\left(\overline{u}^\gamma-\underline{u}^\gamma\right)
    \le 2\left(\overline{u}^\alpha-\underline{u}^\alpha\right),
  \]
  and, when $\beta>0$ (so $\alpha\ge m+2\gamma-1$),
  \[
    \overline{u}^{m-1}\left(\overline{u}^\gamma-\underline{u}^\gamma\right)^{\!2}
    \le \overline{u}^{m-1}\left(\overline{u}^{2\gamma}-\underline{u}^{2\gamma}\right)
    \le \overline{u}^{m+2\gamma-1}-\underline{u}^{m+2\gamma-1}
    \le \overline{u}^\alpha-\underline{u}^\alpha.
  \]
  Since \eqref{E:kappa0-small} holds, \eqref{E:rectangle-logdiff} implies that
  $\ln\overline{u}-\ln\underline{u}$ is non-increasing.
  In particular, $\underline{u}$ stays uniformly positive and $\overline{u}$
  is uniformly bounded on $[0,\infty)$.
  Using again \eqref{E:rectangle-logdiff} and the mean value theorem
  as in~\cite[Section~3]{galakhov.salieva.ea:16:on}, we obtain
  $\ln\overline{u}(\tau)-\ln\underline{u}(\tau)\to 0$ as $\tau\to\infty$.
  Therefore, every limit point of
  $(\overline{u}(\tau),\underline{u}(\tau))$ is of the form $(\ell,\ell)$ with
  $\ell>0$.
  Passing to the limit in~\eqref{E:rectangle-ode} shows
  $\ell(1-\ell^\alpha)=0$, hence $\ell=1$.
  Thus, \eqref{E:new-rectangle-3} holds.
\end{proof}

\subsection{Proof of Theorem \ref{T:global-stable} under condition (iv)}


\begin{proof}[Proof of Theorem~\ref{T:global-stable} under condition (iv)]
  \smallskip

  Assume that $m\ge 1$, $\beta\ge 1$, $\gamma\ge 1$, $\alpha+1\ge m+\gamma+{\rm
  sign}(\beta)\gamma$, and $\chi_0<\chi_{a,b,\beta}^{**,4}(u^*)$. Let
  $(u(t,x),v(t,x))$ be a globally defined positive bounded classical solution
  of~\eqref{E:main-PE}. Put $u^*=(a/b)^{1/\alpha}$ and
  $v^*=(\nu/\mu)(u^*)^\gamma$. \smallskip

  Since $\chi_0<\chi_{a,b,\beta}^{**,4}(u^*)$, by~\eqref{E:chi-4-star-1} we have
  $\chi_0<\bar \chi_{a,b,\beta}$ and $\chi_0<(1+\underline{v}_{a,b})^\beta
  \chi_{a,b,\beta}^{**,3}(u^*)$. Choose $\epsilon\in(0,\underline{v}_{a,b})$
  sufficiently small such that $\chi_0<(1+\underline{v}_{a,b}-\epsilon)^\beta
  \chi_{a,b,\beta}^{**,3}(u^*)$, and set $v_0^{\mathrm{lb}}\coloneqq
  \underline{v}_{a,b}-\epsilon>0$. By Theorem~\ref{T:persistence}(2)--(3), we
  have $\liminf_{t\to\infty}\inf_{x\in\Omega} v(t,x)\ge \underline{v}_{a,b}$,
  and hence there exists $T_2>0$ such that
  \[
    v(t,x)\ge v_0^{\mathrm{lb}}
    \qquad \text{for all $t\ge T_2$ and all $x\in\Omega$.}
  \]
  After shifting the time origin by $T_2$, we may assume that the above bound
  holds for all $t\ge 0$. \smallskip

  Define the normalized variables $(U,V)$ and $\tau$ by~\eqref{E:rectangle-normalization},
  and the parameter $\kappa_0$ by~\eqref{E:kappa0}.
  Then $(U,V)$ solves~\eqref{E:rectangle-system} for all $\tau\ge 0$.
  Since $v^*V(\tau,x)=v(t,x)\ge v_0^{\mathrm{lb}}$ for $\tau\ge 0$, we have
  \[
    \frac{1}{(1+v^*V(\tau,x))^\beta}
    \le \frac{1}{(1+v_0^{\mathrm{lb}})^\beta}
    \quad\text{and}\quad
    \frac{1}{(1+v^*V(\tau,x))^{\beta+1}}
    \le \frac{1}{(1+v_0^{\mathrm{lb}})^{\beta+1}}\quad  \text{for all $\tau\ge 0$, $x\in\Omega$.}
  \]

  Let $\overline{U}(\tau)\coloneqq \sup_{x\in\Omega} U(\tau,x)$,
  $\underline{U}(\tau)\coloneqq \inf_{x\in\Omega} U(\tau,x)$, and $\widehat{\kappa}_0\coloneqq \kappa_0(1+v_0^{\mathrm{lb}})^{-\beta}$. Repeating the
  maximum principle argument leading to~\eqref{E:rectangle-ode-ineq} in the
  proof under condition~{\rm(iii)} and using the above bounds yields that for
  a.e.\ $\tau\in(0,\infty)$,
  \begin{align}
    \overline{U}'(\tau)
    & \le \widehat{\kappa}_0 \overline{U}(\tau)^m\left(\overline{U}(\tau)^\gamma-\underline{U}(\tau)^\gamma\right)
    +\frac{\beta v^* M_0(\Omega)^2}{1+v_0^{\mathrm{lb}}}\,\widehat{\kappa}_0\,\overline{U}(\tau)^m\left(\overline{U}(\tau)^\gamma-\underline{U}(\tau)^\gamma\right)^{\!2}
    +\overline{U}(\tau)\left(1-\overline{U}(\tau)^\alpha\right),\nonumber                                            \\
    \underline{U}'(\tau)
    & \ge \widehat{\kappa}_0 \underline{U}(\tau)^m\left(\underline{U}(\tau)^\gamma-\overline{U}(\tau)^\gamma\right)
    +\underline{U}(\tau)\left(1-\underline{U}(\tau)^\alpha\right).
    \label{E:rectangle-ode-ineq-vi}
  \end{align}

  Let $(\overline{u}(\tau),\underline{u}(\tau))$ solve the ODE system obtained
  from~\eqref{E:rectangle-ode} by replacing $\kappa_0$ by $\widehat{\kappa}_0$
  and replacing $\beta v^* M_0(\Omega)^2$ by $\frac{\beta v^*
  M_0(\Omega)^2}{1+v_0^{\mathrm{lb}}}$, with initial data
  $\overline{u}(0)=\overline{U}(0)$ and $\underline{u}(0)=\underline{U}(0)$.
  Then the ODE comparison argument in the proof under condition~{\rm(iii)}
  yields
  \[
    \underline{u}(\tau)\le \underline{U}(\tau)\le \overline{U}(\tau)\le \overline{u}(\tau)
    \qquad \text{for all $\tau\ge 0$.}
  \]
  Moreover, our choice of $\epsilon$ implies $\widehat{\kappa}_0<\left(2+\beta
  v^* M_0(\Omega)^2\right)^{-1}$. The contraction argument
  in~\cite[Section~3]{galakhov.salieva.ea:16:on} applies to
  $(\overline{u},\underline{u})$ with the modified constants
  $\widehat\kappa_0$ and
  $\frac{\beta v^* M_0(\Omega)^2}{1+v_0^{\mathrm{lb}}}$
  in place of $\kappa_0$ and $\beta v^* M_0(\Omega)^2$, respectively.
  Indeed, the key mechanism is unchanged: the logarithmic difference
  $\ell(\tau)\coloneqq\log\overline{u}(\tau)-\log\underline{u}(\tau)$
  satisfies a differential inequality of the form
  $\ell'(\tau)\le -c\,(\overline{u}(\tau)-\underline{u}(\tau))$
  for some $c>0$, and replacing $\kappa_0$ by the smaller
  $\widehat\kappa_0$ only reduces the destabilizing chemotaxis
  contribution while preserving the sign of the contraction rate~$c$.
  Therefore, $\overline{u}(\tau)\to 1$ and
  $\underline{u}(\tau)\to 1$ as $\tau\to\infty$. Consequently,
  \[
    \Norm{U(\tau,\cdot)-1}_\infty\to 0
    \quad\text{and}\quad
    \Norm{V(\tau,\cdot)-1}_\infty\to 0
    \qquad \text{as $\tau\to\infty$,}
  \]
  and hence $\Norm{u(t,\cdot)-u^*}_\infty+\Norm{v(t,\cdot)-v^*}_\infty\to 0$ as
  $t\to\infty$. This together with Lemma~\ref{L:comp-non-minimal-model} and
  Corollary~\ref{C:linear-stability} implies that
  \eqref{E:exponential-decay-eq1} holds. This completes the proof of
  condition~{\rm(iv)}.
\end{proof}

\section{Global stability of positive constant solutions in the minimal model
and proof of Theorem~\ref{T:stability-minimal-model}}
\label{S:stability-minimal-model}

In this section, we study the stability of positive constant solutions
of~\eqref{E:main-PE} with $a=b=0$ and prove
Theorem~\ref{T:stability-minimal-model}.

\subsection{Proof of Theorem~\ref{T:stability-minimal-model} under condition (i)}


\begin{proof}[Proof of Theorem \ref{T:stability-minimal-model} under condition
  (i)]
  Assume that $m = 1$, $\beta \ge 1$, $a = b = 0$, and $0 < \chi_0 <
  \chi_\beta^{**,1}(u^*)$.  Let $(u(t,x),v(t,x))$ be a globally defined bounded
  classical solution of~\eqref{E:main-PE} with $\int_\Omega u(0,x)dx=|\Omega|
  u^*$.

  First, since $0<\chi_0<\chi_\beta^{**,1}(u^*)$, by Theorem
  \ref{T:persistence}(4) and Lemma \ref{L:minimal-eventual-bounds}, for any
  $0<\epsilon\ll 1$, we may assume that
  \begin{equation}\label{E:new-minimal}
    v(t,x)\ge \underline{v}_0(u^*)-\epsilon
    \quad \text{and} \quad
    u(t,x)\le \overline{u}_0(u^*)+\epsilon
    \qquad \forall\, t\ge 0,\,\, x\in\Omega.
  \end{equation}
  By $0<\chi_0<\chi_{\beta}^{**,1}$, we can choose $0<\epsilon\ll 1$ such
  that
  \begin{equation}
    \label{E:new-minimal-eq0}
    \frac{\lambda_* }{2\big(\overline{u}_0(u^*)+\epsilon\big)^2}
    -  \frac{ \chi_0^2 \nu^2 C_{\gamma}(u^*,\epsilon)}{8\mu(1+\underline{v}_0(u^*)-\epsilon)^{2\beta}}>0,
  \end{equation}
  where
  \begin{equation*}
    C_{\gamma}(u^*,\epsilon) \coloneqq
    \begin{cases}
      (u^*)^{2\gamma-2},                                              & 0<\gamma<1,  \\[6pt]
      \gamma^2  \big(\overline{u}_0(u^*)+\epsilon\big)^{2(\gamma-1)}, & \gamma\ge 1.
    \end{cases}
  \end{equation*}
  In the following, we fix $0<\epsilon\ll 1$ such
  that~\eqref{E:new-minimal-eq0} holds.

  Next, as in the proof of Theorem~\ref{T:global-stable}, we introduce the
  entropy functional
  \begin{equation}\label{E:F-def-minimal}
    F(t) \coloneqq \int_\Omega \Big( u(t,x) - u^* - u^*\ln\frac{u(t,x)}{u^*}\Big).
  \end{equation}
  Differentiating $F$ along the flow and integrating by parts, we obtain
  \begin{align}\label{E:global-stab-minimal-1}
    F'(t)
    = \int_\Omega \frac{u(t,x)-u^*}{u(t,x)} u_t(t,x)
    = -u^*\int_\Omega \frac{|\nabla u|^2}{u^2}+\chi_0 u^*\int_\Omega \frac{\nabla u\cdot\nabla v}{u(1+v)^\beta}.
  \end{align}
  Applying Young's inequality and \eqref{E:new-minimal} yield
  \begin{align}\label{E:global-stab-minimal-2}
    F'(t)
    & \le - \frac{u^*}{2} \int_\Omega \frac{|\nabla u|^2}{u^2}
    + \frac{u^*\chi_0^2}{2(1+\underline{v}_0(u^*)-\epsilon)^{2\beta}}\int_\Omega |\nabla v|^2.
  \end{align}

  Now, by \eqref{E:new-global-stab-m4},
  \begin{align}\label{E:global-stab-minimal-3}
    \int_\Omega  |\nabla v|^2
    \le \frac{\nu^2}{4\mu}\int_\Omega \left(u^\gamma-(u^*)^\gamma\right)^2.
  \end{align}
  Since $\int_\Omega (u(t,x)-u^*)\,dx=0$ for all $t\ge 0$, the Neumann
  Poincar\'e inequality and \eqref{E:new-minimal} imply that
  \begin{align}\label{E:global-stab-minimal-4}
    \int_\Omega \frac{|\nabla u|^2}{u^2}
    \ge \frac{\lambda_*}{\big(\overline{u}_0(u^*)+\epsilon\big)^2}\int_\Omega (u-u^*)^2,
  \end{align}
  where $\lambda_* > 0$ is the first nonzero Neumann eigenvalue of $-\Delta$ on
  $\Omega$. Note that, for $0 < \gamma \le 1$ we have $0 \le
  \frac{u^\gamma-(u^*)^\gamma}{u-u^*} \le (u^*)^{\gamma-1}$ for $u \neq u^*$,
  and hence
  \begin{align*}
    \left(u^\gamma-(u^*)^\gamma\right)^2\le (u^*)^{2\gamma-2}(u-u^*)^2,\qquad\text{for all }u\ge 0.
  \end{align*}
  On the other hand, for $\gamma> 1$, by the Mean Value Theorem, we have
  \[
    \left|u^\gamma-(u^*)^\gamma\right|
    \le \gamma \big(\overline{u}_0(u^*)+\epsilon\big)^{\gamma-1}  \left|u-u^*\right|
    \qquad \text{for all $u\ge 0$}.
  \]
  Therefore, for all $t\ge 0$, we have
  \begin{equation}\label{E:Cgammau*}
    \int_\Omega \left(u^\gamma-(u^*)^\gamma\right)^2\le C_{\gamma}(u^*,\epsilon)\int_\Omega (u-u^*)^2.
  \end{equation}
  Combining~\eqref{E:global-stab-minimal-2}, \eqref{E:global-stab-minimal-3},
  \eqref{E:global-stab-minimal-4}, and \eqref{E:Cgammau*}, we obtain that for
  all $t\ge 0$,
  \begin{align*}
    F'(t)
    & \le -\Big[\frac{\lambda_* u^*}{2\big(\overline{u}_0(u^*)+\epsilon\big)^2}
    -  \frac{\nu^2 C_{\gamma}(u^*,\epsilon) u^* \chi_0^2 }{8\mu(1+\underline{v}_0(u^*)-\epsilon)^{2\beta}}\Big] \int_\Omega (u-u^*)^2.
  \end{align*}
  Defining $K_0 \coloneqq \frac{\lambda_* }{2\big(\overline{u}_0(u^*)+\epsilon\big)^2}$
  and $K_1 \coloneqq
  \frac{\chi_0^2 \nu^2
  C_{\gamma}(u^*,\epsilon)}{8\mu(1+\underline{v}_0(u^*)-\epsilon)^{2\beta}}$, we
  obtain
  \begin{equation}\label{E:global-stab-minimal-6}
    F'(t)\le -(K_0 - K_1)u^*\int_\Omega (u-u^*)^2,\qquad \forall\, t>0.
  \end{equation}
  By \eqref{E:new-minimal-eq0}, $C\coloneq (K_0 - K_1)u^*> 0$. Since $F(t)\ge 0$
  for all $t\ge 0$, integrating~\eqref{E:global-stab-minimal-6} over $[0,t]$
  yields
  \[
    C \int_{0}^t\int_\Omega (u(s,x)-u^*)^2\,dx\,ds
    \le F(0)-F(t)\le F(0)\quad\forall\, t\ge 0.
  \]
  Hence,
  \begin{equation}\label{E:global-stab-minimal-7}
    \int_{0}^\infty\int_\Omega (u(s,x)-u^*)^2\,dx\,ds < \infty.
  \end{equation}

  Finally, by the similar arguments in the proof of
  Theorem~\ref{T:global-stable} under the condition (i), the finiteness
  of~\eqref{E:global-stab-minimal-7} implies that $\int_\Omega
  (u(t,x)-u^*)^2\,dx\to 0$ as $t \to \infty$. The conclusion of
  Theorem~\ref{T:stability-minimal-model} under condition~{\rm(i)} then follows
  from Lemma~\ref{L:comp-minimal-model} and Corollary~\ref{C:linear-stability}.
\end{proof}

\subsection{Proof of Theorem~\ref{T:stability-minimal-model} under condition (ii)}


\begin{proof}[Proof of Theorem \ref{T:stability-minimal-model} under condition
  (ii)]
  Assume that $a=b=0$, $m=1$, $\gamma=1$, $\mu,\nu>0$, $\beta\ge 1$, and
  $0<\chi_0<\chi_\beta^{**,2}(u^*)$. We follow the $uv$-energy strategy of Ahn,
  Kang, and Lee~\cite{ahn.kang.ea:19:eventual}, adapted to~\eqref{E:main-PE}.

  First, let $w\coloneqq v-v^*$. Since $\gamma=1$, the elliptic equation gives
  \begin{equation}\label{E:ahn-minimal-elliptic}
    \nu(u-u^*)=\mu w-\Delta w.
  \end{equation}
  Integrating the elliptic equation over $\Omega$ and using $\int_\Omega
  u(t,x)\,dx=|\Omega|u^*$, we obtain $\int_\Omega w(t,x)\,dx = 0$ for all $t\ge
  0$. Differentiating~\eqref{E:ahn-minimal-elliptic} in time, multiplying by
  $w$, and integrating by parts yields
  \[
    \int_\Omega u_t w
    =\frac{\mu}{2\nu}\frac{d}{dt}\int_\Omega w^2
    +\frac{1}{2\nu}\frac{d}{dt}\int_\Omega |\nabla w|^2.
  \]
  On the other hand, multiplying the $u$--equation by $w$, integrating over
  $\Omega$, and using Neumann boundary conditions, we obtain
  \begin{align}\label{E:ahn-minimal-v-identity}
    \int_\Omega u_t w
    =-\int_\Omega \nabla u\cdot\nabla w
    +\chi_0\int_\Omega \frac{u}{(1+v)^\beta}|\nabla w|^2.
  \end{align}
  Since $\nu(u-u^*)=\mu w-\Delta w$,
  \[
    -\int_\Omega \nabla u\cdot\nabla w
    =-\int_\Omega \nabla (u-u^*)\cdot\nabla w
    =\int_\Omega (u-u^*)\Delta w
    =-\frac{\mu}{\nu}\int_\Omega |\nabla w|^2
    -\frac{1}{\nu}\int_\Omega |\Delta w|^2.
  \]
  Combining the previous identities and multiplying by $\nu$, we obtain
  \[
    \frac{1}{2}\frac{d}{dt}\int_\Omega \left(\mu w^2+|\nabla w|^2\right)
    +\int_\Omega |\Delta w|^2
    =-\mu\int_\Omega |\nabla w|^2
    +\nu\chi_0\int_\Omega \frac{u}{(1+v)^\beta}|\nabla w|^2.
  \]

  Next, as in the proof of (1), for any $0<\epsilon\ll 1$, we may assume
  that~\eqref{E:new-minimal} holds. Moreover,
  since $0<\chi_0<\chi_{\beta}^{**,2}$, we can fix $0<\epsilon\ll 1$ such that
  \begin{equation}\label{E:new-minimal-eq3}
    \mu-\frac{\nu\chi_0 \big(\overline{u}_0(u^*)+\epsilon\big)}{(1+\underline{v}_0(u^*)-\epsilon)^\beta}>0.
  \end{equation}
  Then
  \[
    \nu\chi_0\int_\Omega \frac{u}{(1+v)^\beta}|\nabla w|^2
    \le
    \frac{\nu\chi_0 \big(\overline{u}_0(u^*)+\epsilon\big)}{(1+\underline{v}_0(u^*)-\epsilon)^\beta} \int_\Omega |\nabla w|^2.
  \]
  Therefore, for all $t>0$,
  \begin{align}\label{E:ahn-minimal-v-diss}
    \frac{1}{2}\frac{d}{dt}\int_\Omega \left(\mu w^2+|\nabla w|^2\right)
    +\Big(\mu-\frac{\nu\chi_0 \big(\overline{u}_0(u^*)+\epsilon\big)}{(1+\underline{v}_0(u^*)-\epsilon)^\beta}\Big)\int_\Omega |\nabla w|^2
    \le 0.
  \end{align}
  Since $\int_\Omega w=0$, Neumann Poincar\'e's inequality yields $\int_\Omega
  w^2\le \frac{1}{\lambda_*}\int_\Omega |\nabla w|^2$, hence
  \[
    \int_\Omega \left(\mu w^2+|\nabla w|^2\right)
    \le \left(1+\frac{\mu}{\lambda_*}\right)\int_\Omega |\nabla w|^2.
  \]
  This together with \eqref{E:new-minimal-eq3} and \eqref{E:ahn-minimal-v-diss}
  implies that
  \begin{equation}\label{E:ahn-minimal-v-exp}
    \int_\Omega \big(\mu w^2+|\nabla w|^2\big)
    \le C_1 e^{-\delta_v t}
    \quad\text{for all $t\ge 0$ with }
    \delta_v\coloneqq
    \frac{2\lambda_*}{\lambda_*+\mu}
    \Big(\mu-\frac{\nu\chi_0 \big(\overline{u}_0(u^*)+\epsilon\big)}{(1+\underline{v}_0(u^*)-\epsilon)^\beta}\Big)>0.
  \end{equation}

  Now, put $y(t)\coloneqq \int_\Omega (u(t,x)-u^*)^2\,dx$. Multiplying the
  $u$--equation by $(u-u^*)$, integrating by parts, and using Young's
  inequality, we get
  \begin{align*}
    \frac{1}{2}y'(t)
    & \le -\frac{1}{2}\int_\Omega |\nabla u|^2
    +\frac{\chi_0^2 \big(\overline{u}_0(u^*)+\epsilon\big)}{2(1+\underline{v}_0(u^*)-\epsilon)^\beta}\int_\Omega |\nabla v|^2, \quad \text{for all $t \ge 0$.}
  \end{align*}
  Since $\int_\Omega (u-u^*)=0$, Neumann Poincar\'e's inequality implies
  $\int_\Omega |\nabla u|^2\ge \lambda_* y(t)$. Therefore,
  \[
    y'(t)+\lambda_* y(t)
    \le \frac{\chi_0^2 \big(\overline{u}_0(u^*)+\epsilon\big)}{2(1+\underline{v}_0(u^*)-\epsilon)^\beta}\int_\Omega |\nabla v|^2
    =   \frac{\chi_0^2 \big(\overline{u}_0(u^*)+\epsilon\big)}{2(1+\underline{v}_0(u^*)-\epsilon)^\beta}\int_\Omega |\nabla w|^2,
  \]
  for $t\ge 0$. By~\eqref{E:ahn-minimal-v-exp}, there exists $C_2>0$ such that
  $\int_\Omega |\nabla w|^2\le C_2e^{-\delta_v t}$ for $t\ge 0$. Hence
  \[
    y'(t)+\lambda_*y(t)\le \frac{\chi_0^2 \big(\overline{u}_0(u^*)+\epsilon\big)}{2(1+\underline{v}_0(u^*)-\epsilon)^\beta} C_2e^{-\delta_v t},
  \]
  and a standard ODE comparison yields that for some $C_3>0$,
  \begin{equation}\label{E:ahn-minimal-u-L2-exp}
    \int_\Omega (u(t,x)-u^*)^2\,dx \le C_3 e^{-\delta_u t},
    \qquad t\ge 0,
  \end{equation}
  with $\delta_u\coloneqq\min\{\lambda_*,\delta_v\}>0$. This implies that
  $\int_\Omega (u(t,x)-u^*)^2\,dx\to 0$ as $t \to \infty$. The conclusion of
  Theorem~\ref{T:stability-minimal-model} under condition~{\rm(ii)} then
  follows from Lemma~\ref{L:comp-minimal-model} and
  Corollary~\ref{C:linear-stability}.
\end{proof}

\appendix

\section{Appendices}

\subsection{Basic semigroup theory}\label{S:Semigroup}

In this appendix, we present some basic semigroup theory, which are used in
the proofs of the main results. Let $X$ be a Banach space, $A$ be a sectorial
operator on $X$, and $X^\alpha$ be the fractional power space associated with
$A$ (see \cite[Definitions 1.3.1 and 1.4.1]{henry:81:geometric} for the
definitions of sectorial operators and fractional power spaces). Let
$f:\mathbb{R}\times U \to X$, where $U$ is a neighborhood in $X^\alpha$ for
some $0<\alpha<1$. Consider
\begin{equation}\label{E:prelim-eq1}
  \frac{dx}{dt}+Ax = f(t,x).
\end{equation}

\begin{lemma}\label{L:prelim-lm1}
  Suppose that $x_0\in \mathcal{D}(A)\cap U$ is an equilibrium solution
  of~\eqref{E:prelim-eq1}. Suppose also that
  \[
    f(t,x_0+z)=f(t,x_0)+Bz+g(t,z),
  \]
  where $B$ is a bounded linear operator from $X^\alpha \to X$ and
  $\|g(t,z)\|_X=o\left(\|z\|_{X^\alpha}\right)$ as $\|z\|_{X^\alpha}\to 0$
  uniformly in $t\in\mathbb{R}$, and $f(t,x)$ is locally H\"older continuous in
  $t$, and locally Lipschitz continuous in $x$, on $\mathbb{R}\times U$. If the
  spectrum of $A-B$ lies in $\{{\rm Re}\lambda >\rho\}$ for some $\rho>0$, then
  there are $\epsilon, C>0$ such that for any $x_1\in X^\alpha$ with
  $\|x_1-x_0\|_{X^\alpha}\le \epsilon$ and $t_0\in\mathbb{R}$, the solution
  $x(t;t_0,x_1)$ of~\eqref{E:prelim-eq1} with initial condition $x(t_0;t_0,x_1)=x_1$
  exists for all $t>t_0$, and
  \begin{equation}\label{E:prelim-eq2}
    \|x(t;t_0,x_1)-x_0\|_{X^\alpha}\le C e^{-\rho(t-t_0)}\|x_1-x_0\|_{X^\alpha}.
  \end{equation}
\end{lemma}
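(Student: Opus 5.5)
The plan is the standard principle of linearized stability for the semilinear equation \eqref{E:prelim-eq1}, written in a setting where the linear part generates an exponentially decaying analytic semigroup.

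\textbf{Reduction.} Set $z(t)\coloneqq x(t;t_0,x_1)-x_0$. Since $x_0$ is an equilibrium, $Ax_0=f(t,x_0)$ for all $t$, and subtracting gives
\[
  \frac{dz}{dt}+(A-B)z=g(t,z),\qquad z(t_0)=x_1-x_0 .
\]
\textbf{Linear estimates.} First I show that $L\coloneqq A-B$ is sectorial with $X^\alpha$ as its fractional power space. The reason is that $B:X^\alpha\to X$ is bounded with $\alpha<1$, so $B$ is a relatively bounded perturbation of lower order (Henry, Cor.~1.4.5). The hypothesis $\sigma(L)\subset\{\mathrm{Re}\lambda>\rho\}$ is open, so by compactness of the relevant sector boundary I can pick $\rho'>\rho$ with $\sigma(L)\subset\{\mathrm{Re}\lambda>\rho'\}$. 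Henry's Theorem~1.4.3 then gives
\[
  \|e^{-Lt}\|_{X^\alpha\to X^\alpha}\le Ce^{-\rho' t},\qquad
  \|e^{-Lt}\|_{X\to X^\alpha}\le Ct^{-\alpha}e^{-\rho' t}\quad (t>0).
\]

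\textbf{Bootstrap with the exponential weight.} Existence of a mild solution on a maximal interval follows from local Lipschitz continuity of $f$ in $x$ and local H\"older continuity in $t$ (Henry, Thm.~3.3.3). The solution continues as long as $\|z\|_{X^\alpha}$ stays in a fixed ball inside $U-x_0$. On that ball, the $o(\|z\|_{X^\alpha})$ hypothesis, uniform in $t$, lets me choose $r>0$ with
\[
  \|g(t,z)\|_X\le\eta\|z\|_{X^\alpha}\quad\text{for }\|z\|_{X^\alpha}\le r,
\]
where $\eta$ is small and fixed later. The variation of constants formula gives
\[
  \|z(t)\|_{X^\alpha}\le Ce^{-\rho'(t-t_0)}\|z(t_0)\|_{X^\alpha}+C\eta\int_{t_0}^t (t-s)^{-\alpha}e^{-\rho'(t-s)}\|z(s)\|_{X^\alpha}\,ds .
\]
I multiply by $e^{\rho(t-t_0)}$ and put $\phi(t)\coloneqq\sup_{t_0\le s\le t}e^{\rho(s-t_0)}\|z(s)\|_{X^\alpha}$. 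Since $\rho'>\rho$, the weighted convolution integral is bounded by $\int_0^\infty \tau^{-\alpha}e^{-(\rho'-\rho)\tau}\,d\tau=\Gamma(1-\alpha)(\rho'-\rho)^{\alpha-1}<\infty$. This yields
\[
  \phi(t)\le C\|x_1-x_0\|_{X^\alpha}+C\eta\,\Gamma(1-\alpha)(\rho'-\rho)^{\alpha-1}\phi(t).
\]
Now I choose $\eta$ so that the coefficient of $\phi(t)$ is at most $1/2$. Then $\phi(t)\le 2C\|x_1-x_0\|_{X^\alpha}$ for as long as $\|z\|_{X^\alpha}\le r$.

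\textbf{Closing the argument.} Take $\epsilon$ with $2C\epsilon<r$. A continuity argument then shows that the solution never leaves the ball, so it exists for all $t>t_0$, and the bound on $\phi$ is exactly \eqref{E:prelim-eq2}. All constants are independent of $t_0$ because the smallness of $g$ is uniform in $t$.

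\textbf{Main obstacle.} The delicate point is the linear step: confirming that $A-B$ is sectorial with the same $X^\alpha$, and that the spectral bound gives the decay rate $e^{-\rho' t}$ with $\rho'>\rho$, including the $t^{-\alpha}$ smoothing estimate. The gap $\rho'>\rho$ is needed because the Gamma-type integral diverges if $\rho'=\rho$. If strict improvement of $\rho$ is unavailable, I would settle for the decay rate $\rho-\delta$ for any $\delta>0$, which is all that the application in Theorem~\ref{T:linear-stability-thm} requires.
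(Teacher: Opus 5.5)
Your argument is correct and is essentially the standard proof of Henry's Theorem~5.1.1, which is all the paper does here (it simply cites that theorem). That proof uses the same ingredients as yours: sectoriality of $A-B$ with the same fractional power space $X^\alpha$ (Henry, Cor.~1.4.5 together with Thm.~1.4.8), a spectral gap $\rho'>\rho$, and the exponentially weighted bootstrap with the $\Gamma(1-\alpha)$ integral. The hedge in your last paragraph is unnecessary: for sectorial $L=A-B$ the set $\sigma(L)\cap\{\mathrm{Re}\lambda\le\rho+1\}$ is compact, so such a $\rho'>\rho$ always exists and you obtain exactly the rate $\rho$ in \eqref{E:prelim-eq2}.
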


\begin{proof}
  It follows from Theorem 5.1.1 on p.~98 of~\cite{henry:81:geometric}.
\end{proof}

\medskip

In the rest, we let $1 \le p < \infty$ and $A_p = -\Delta +\mu I$ with domain
$\mathcal{D}(A_p)$ given by
\begin{align*}
  \mathcal{D}(A_p)=\big\{u\in W^{2,p}(\Omega)\,:\, \text{$\tfrac{\partial u}{\partial n}=0$ on $\partial\Omega$} \big\}.
\end{align*}
The operator $A_p$ generates an analytic semigroup, denoted by $T_p(t) =
e^{-A_p t}$, on $L^p(\Omega)$. Let $X_p^\sigma$ be the fractional power space
associated with $A$, i.e., $X_p^\sigma \coloneqq \mathcal{D}(A_p^\sigma)$ for
$\sigma>0$. Let $A_\infty = -\Delta + \mu I$ whose domain
$\mathcal{D}(A_\infty)$ is given by
\begin{align*}
  \mathcal{D}(A_\infty)=\big\{u\in C^2(\overline{\Omega})\,:\, \text{$\tfrac{\partial u}{\partial n}=0$ on $\partial\Omega$} \big\}.
\end{align*}
The operator $A_\infty$ generates an analytic semigroup, denoted by
$T_\infty(t) = e^{-A_\infty t}$, on $C(\overline{\Omega})$. The reader is
referred to~\cite{henry:81:geometric} for further details on analytic
semigroups and fractional power spaces.

Throughout the rest of the paper, we suppress the subscript $p$ in $A_p$ and
$T_p(t)$ $(1 \le p \le \infty$) when context allows. It should be clear from the
context whether $e^{-At}$ represents the analytic semigroup in $L^p(\Omega)$
generated by $A_p$ or the analytic semigroup in $C(\overline{\Omega})$ generated
by $A_\infty$.

\begin{lemma}[{\cite[Theorem 1.4.3]{henry:81:geometric} or~\cite[Theorem
  6.13]{pazy:83:semigroups}}]\label{L:FracSemigroup} For all $\sigma \ge 0$, $p
  \in [1, \infty]$, and $\delta \in (0, \mu)$, there is a constant $C_{\sigma,
  p, \delta} > 0$ such that $\Norm{{A_p^\sigma e^{-A_pt}} u}_p \le
  C_{\sigma,p,\delta} t^{-\sigma} e^{-\delta t} \Norm{u}_p$ for all $t>0$. For
  each $\sigma \in (0, 1]$, there is a constant $C_\sigma > 0$ such that for any
  $u \in X^\sigma_p$, $\Norm{\left(e^{-A_p t}-I\right)u}_p \le C_\sigma t^\sigma
  \Norm{A^\sigma u}_p$ for all $t>0$.
\end{lemma}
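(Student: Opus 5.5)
The statement is Lemma~\ref{L:FracSemigroup}, a standard pair of estimates for the analytic semigroup generated by $A_p=-\Delta+\mu I$. I would derive both parts from the abstract theory of sectorial operators, after checking that $A_p$ fits that theory.

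\textbf{Step 1: positivity of the spectrum.} The Neumann Laplacian $-\Delta$ is sectorial on $L^p(\Omega)$ for $1<p<\infty$, and its spectrum is contained in $[0,\infty)$. Adding $\mu>0$ shifts the spectrum into $[\mu,\infty)$. Hence for every $\delta\in(0,\mu)$ we have $\operatorname{Re}\sigma(A_p)>\delta$. Then $A_p-\delta$ is again sectorial with spectrum in the open right half-plane, so fractional powers $(A_p-\delta)^\sigma$ are defined. Moreover, $\|e^{-(A_p-\delta)t}\|\le M$ for all $t\ge0$.

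\textbf{Step 2: the first estimate.} I would apply Henry, Theorem~1.4.3, to $A_p-\delta'$ with some $\delta'\in(\delta,\mu)$. This gives
\[
\Norm{(A_p-\delta')^\sigma e^{-(A_p-\delta')t}}\le C t^{-\sigma}\quad\text{for } t>0.
\]
Multiplying by $e^{-\delta' t}$ and comparing $A_p^\sigma$ with $(A_p-\delta')^\sigma$ converts this into the stated bound with $e^{-\delta t}$. The comparison works because $A_p^\sigma(A_p-\delta')^{-\sigma}$ is bounded, either by the functional calculus or via the Balakrishnan integral representation. Passing from $\delta'$ to $\delta$ gives slack that absorbs any polynomial factors. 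Alternatively, I would simply invoke Henry's result directly with $\operatorname{Re}\sigma(A_p)>\delta$, since it already includes the exponential factor.

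\textbf{Step 3: the second estimate.} For $u\in X^\sigma_p$, write
\[
(e^{-A_pt}-I)u=-\int_0^t A_p^{1-\sigma}e^{-A_ps}A_p^{\sigma}u\,ds .
\]
By Step 2 with exponent $1-\sigma$, the integrand has norm at most $Cs^{\sigma-1}\Norm{A^\sigma u}_p$. Integrating gives $C_\sigma t^\sigma\Norm{A^\sigma u}_p$. This identity holds first for $u\in\mathcal D(A_p)$ and extends to $u\in X_p^\sigma$ by density and closedness.

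\textbf{Main obstacle: the endpoints $p=1$ and $p=\infty$.} Here sectoriality is not immediate from $L^p$ elliptic estimates. For $p=\infty$ I would work in $C(\overline\Omega)$ with $A_\infty$ and use Stewart's generation theorem, which gives analyticity of the Neumann heat semigroup on $C(\overline\Omega)$. For $p=1$ I would use duality with the $C(\overline\Omega)$ case, or Gaussian bounds on the heat kernel, to obtain analyticity on $L^1$. Once sectoriality is established, Steps 2--3 apply verbatim, as in Pazy, Theorem~6.13.
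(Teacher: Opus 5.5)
Your proposal is correct and takes the same route as the paper, which gives no argument beyond citing Henry's Theorem~1.4.3 (or Pazy's Theorem~6.13). Your Steps~2--3 are the proof behind that theorem once $A_p$ is known to be sectorial with $\mathrm{Re}\,\sigma(A_p)\ge\mu>\delta$; in particular, the detour through $\delta'$ is unnecessary, since Henry's statement already contains the factor $e^{-\delta t}$. Your remarks on $p=1$ and $p=\infty$ supply the sectoriality hypothesis that the paper takes for granted at the endpoints.
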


\begin{lemma}[{\cite[Theorem 1.6.1]{henry:81:geometric}}]\label{L:Embedding}
  Suppose that $\sigma \in [0,1]$ and $1 \le p < \infty$. Then the following
  Sobolev-type embeddings hold:
  \begin{subequations}\label{E:Embedding}
    \begin{eqnarray}\label{E:Embedding-1}
      X_p^\sigma \hookrightarrow W^{k,q}(\Omega)            & \text{when} & k-N/q<2\sigma-N/p, \: q\ge p,\: k\in \{0,1\}, \\
      X_p^\sigma\hookrightarrow C^\theta(\overline{\Omega}) & \text{when} & 0 \le \theta <2\sigma - N/p. \label{E:Embedding-2}
    \end{eqnarray}
  \end{subequations}
\end{lemma}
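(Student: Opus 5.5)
This is a cited embedding result for the fractional power spaces $X_p^\sigma=\mathcal D(A_p^\sigma)$ of $A_p=-\Delta+\mu I$ with Neumann boundary conditions. The plan is to identify $X_p^\sigma$ with a Bessel-potential or Besov space of smoothness $2\sigma$, and then invoke the classical Sobolev and Morrey embeddings.

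\textbf{Step 1 (identifying $X_p^\sigma$).} For $1<p<\infty$, $A_p$ is sectorial with bounded imaginary powers, since it is a uniformly elliptic operator with smooth coefficients on a smooth bounded domain. Seeley's characterization of domains of fractional powers then gives
$X_p^\sigma=[L^p(\Omega),\mathcal D(A_p)]_\sigma$, the complex interpolation space. Using $\mathcal D(A_p)\hookrightarrow W^{2,p}(\Omega)$ (elliptic $L^p$ regularity) and boundedness of the Neumann trace, we get
\[
X_p^\sigma\hookrightarrow [L^p(\Omega),W^{2,p}(\Omega)]_\sigma=H^{2\sigma,p}(\Omega).
\]
The Neumann condition only affects which subspace we land in, not the continuity of the inclusion, so no compatibility analysis is needed for an embedding.

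If one wants to avoid bounded imaginary powers, Henry's route works instead. The moment inequality $\Norm{A^\sigma u}_p\le C\Norm{Au}_p^\sigma\Norm{u}_p^{1-\sigma}$ combined with the Gagliardo--Nirenberg inequality gives the embeddings directly, at the cost of strict inequalities in the exponents. That is harmless here, because the statement already assumes strict inequalities.

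\textbf{Step 2 (Sobolev and Morrey).} For \eqref{E:Embedding-1}, use $H^{2\sigma,p}(\Omega)\hookrightarrow W^{k,q}(\Omega)$ whenever $2\sigma-N/p>k-N/q$ with $q\ge p$, which is the standard Sobolev embedding on extension domains. For \eqref{E:Embedding-2}, use $H^{2\sigma,p}(\Omega)\hookrightarrow C^\theta(\overline\Omega)$ whenever $\theta<2\sigma-N/p$; this is Morrey's embedding, applied through the Besov scale $H^{s,p}\hookrightarrow B^{s-\epsilon}_{p,p}\hookrightarrow C^{s-\epsilon-N/p}$.

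\textbf{Step 3 (the case $p=1$).} Here $A_1$ still generates an analytic semigroup. Since the inequalities are strict, we can sacrifice $\epsilon$ of smoothness and use real interpolation (Besov $B^{2\sigma-\epsilon}_{1,\infty}$) instead of complex interpolation.

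\textbf{Main obstacle.} The main obstacle is Step 1: the identification of $\mathcal D(A_p^\sigma)$ with an interpolation space. This is exactly the content of \cite[Theorem~1.6.1]{henry:81:geometric}, which we would cite rather than reprove. The Gagliardo--Nirenberg/moment-inequality route avoids this obstacle, so it is the one we would take first.
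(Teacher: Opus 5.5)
Your ``main obstacle'' paragraph misreads the citation, and the route you say you would take first does not work as written. The paper gives no argument of its own. It cites \cite[Theorem~1.6.1]{henry:81:geometric}, and that theorem is \emph{not} an identification of $\mathcal D(A_p^\sigma)$ with an interpolation space. It is exactly the embedding statement, for any sectorial $A$ in $L^p(\Omega)$ with $\mathcal D(A)\hookrightarrow W^{m,p}(\Omega)$ (here $m=2$).

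Your fallback combines two inequalities. The moment inequality gives $\Norm{A_p^\sigma u}_p\le C\Norm{A_pu}_p^\sigma\Norm{u}_p^{1-\sigma}$, and Gagliardo--Nirenberg gives $\Norm{u}_{W^{k,q}}\le C\Norm{u}_{W^{2,p}}^{\vartheta}\Norm{u}_p^{1-\vartheta}\le C'\Norm{A_pu}_p^{\vartheta}\Norm{u}_p^{1-\vartheta}$. These are two \emph{upper} bounds in terms of $\Norm{A_pu}_p$ and $\Norm{u}_p$, so they cannot be chained into $\Norm{u}_{W^{k,q}}\le C\Norm{A_p^\sigma u}_p$. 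In interpolation language, the moment inequality says $X_p^\sigma$ is of class $J_\sigma$ for the couple $(L^p,\mathcal D(A_p))$. An embedding \emph{out of} $X_p^\sigma$ needs the opposite, class-$K_\sigma$, information: $X_p^\sigma\hookrightarrow(L^p,\mathcal D(A_p))_{\sigma,\infty}$, which follows from the two estimates of Lemma~\ref{L:FracSemigroup}.

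The missing step, which is the mechanism behind Henry's theorem, goes as follows. For $u\in X_p^\sigma$ set $y=A_p^\sigma u$ and write $u=\Gamma(\sigma)^{-1}\int_0^\infty t^{\sigma-1}e^{-tA_p}y\,dt$. Since $\Norm{e^{-tA_p}y}_{W^{2,p}}\le C\Norm{A_pe^{-tA_p}y}_p\le Ct^{-1}e^{-\delta t}\Norm{y}_p$, Gagliardo--Nirenberg applied to $e^{-tA_p}y$ gives $\Norm{e^{-tA_p}y}_{W^{k,q}}\le Ct^{-\vartheta}e^{-\delta t}\Norm{y}_p$. This is integrable against $t^{\sigma-1}$ precisely when $\vartheta<\sigma$, and that is where the strict inequalities in \eqref{E:Embedding-1}--\eqref{E:Embedding-2} come from.

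Your Step~1--2 route, by contrast, is correct for $1<p<\infty$, but it is a different and heavier argument. It rests on Seeley's bounded-imaginary-powers theorem, not on Henry, to get $X_p^\sigma=[L^p,\mathcal D(A_p)]_\sigma\hookrightarrow H^{2\sigma,p}(\Omega)$, and then on Sobolev/Morrey embeddings for Bessel-potential spaces. What it buys is a sharp identification of $X_p^\sigma$, which is not needed here because every exponent condition is strict. The semigroup argument needs only sectoriality and $\mathcal D(A_p)\hookrightarrow W^{2,p}(\Omega)$.

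For $p=1$, your Step~3 omits the essential input. For the closed $L^1$-realization of $-\Delta+\mu$ one does \emph{not} have $\mathcal D(A_1)\hookrightarrow W^{2,1}(\Omega)$, since there is no $L^1$ elliptic regularity. You must therefore supply a substitute embedding of $\mathcal D(A_1)$ into a space of smoothness $2-\varepsilon$ before real interpolation gives anything. Henry avoids this issue by taking $\mathcal D(A)\subset W^{m,p}(\Omega)$ as a hypothesis.
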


\begin{lemma}\label{L:T-Nabla}
  Let $\lambda_1$ be the first nonzero eigenvalue of $-\Delta$ in $\Omega$ under
  Neumann boundary conditions. There exists some constant $C > 0$ which depends
  only on $\Omega$, such that for all $p \in (1, \infty)$ and $t > 0$, it holds
  that
  \begin{equation}\label{E:T-Nabla}
    \Norm{e^{-tA_p}\nabla \cdot \phi}_p
    \le C \big(1+t^{-1/2}\big) e^{-(\lambda_1+\mu) t} \Norm{\phi}_p
  \end{equation}
  where $\phi \in \left(C_0^\infty({\Omega})\right)^N$. Consequently, for all
  $t>0$, the operator $e^{-tA_p}\nabla\cdot$ possesses a uniquely determined
  extension to an operator from $L^p(\Omega)$ into $L^p(\Omega)$, with norm
  controlled according to~\eqref{E:T-Nabla}.
\end{lemma}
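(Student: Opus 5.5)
We argue by duality. The plan is to establish \eqref{E:T-Nabla} first for smooth compactly supported $\phi$, by testing against $\psi\in L^{p'}(\Omega)$, $\frac1p+\frac1{p'}=1$, and then obtain the extension by density. For $\phi\in (C_0^\infty(\Omega))^N$ the boundary term vanishes, and $e^{-tA_p}$ is self-adjoint in the $L^2$ pairing. Hence
\[
  \int_\Omega \big(e^{-tA_p}\nabla\cdot\phi\big)\psi
  = \int_\Omega (\nabla\cdot\phi)\, e^{-tA_{p'}}\psi
  = -\int_\Omega \phi\cdot \nabla e^{-tA_{p'}}\psi .
\]
By H\"older's inequality, this reduces the claim to the gradient estimate
\[
  \Norm{\nabla e^{-tA_{p'}}\psi}_{p'}\le C\big(1+t^{-1/2}\big)e^{-(\lambda_1+\mu)t}\Norm{\psi}_{p'} .
\]
We then take the supremum over $\Norm{\psi}_{p'}\le 1$.

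To prove the gradient estimate, we write $e^{-tA}=e^{-\mu t}e^{t\Delta}$. The key observation is that $\nabla$ annihilates constants, so $\nabla e^{t\Delta}\psi=\nabla e^{t\Delta}(\psi-\bar\psi)$, where $\bar\psi$ is the mean of $\psi$; note also that $\Norm{\psi-\bar\psi}_{p'}\le 2\Norm{\psi}_{p'}$. We split time into two ranges.

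\emph{Small times, $t\le 1$.} We use the standard smoothing estimate $\Norm{\nabla e^{t\Delta}f}_{q}\le C t^{-1/2}\Norm{f}_q$. It follows from Lemma~\ref{L:FracSemigroup} with $\sigma=\frac12$, combined with the equivalence $\Norm{A^{1/2}f}_q\sim\Norm{f}_{W^{1,q}}$ on the Neumann domain.

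\emph{Large times, $t>1$.} We write
\[
  \nabla e^{t\Delta}(\psi-\bar\psi)=\nabla e^{\frac12\Delta}\, e^{(t-\frac12)\Delta}(\psi-\bar\psi).
\]
We then invoke the $L^q$ spectral-gap decay on mean-zero functions, $\Norm{e^{s\Delta}g}_q\le Ce^{-\lambda_1 s}\Norm{g}_q$ for $\int_\Omega g=0$. This holds because the restriction of $-\Delta$ to the mean-zero subspace has spectrum in $[\lambda_1,\infty)$, and for the Neumann heat semigroup the spectrum is independent of $q$.

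Combining the two ranges and multiplying by $e^{-\mu t}$ yields the bound with constant $C(1+t^{-1/2})$ and rate $\lambda_1+\mu$.

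The main obstacle is uniformity of the constant $C$ in $p\in(1,\infty)$, as the statement requires. Both the Riesz-transform-type bound $\Norm{A^{1/2}f}\sim\Norm{f}_{W^{1,q}}$ and the exact rate $e^{-\lambda_1 s}$ in $L^q$ typically carry $q$-dependent constants. For the exponential rate, we would first try to interpolate between $L^2$, where the rate $e^{-\lambda_1 s}$ is exact with constant $1$, and $L^1/L^\infty$ bounds with constant at most $2$, via Riesz--Thorin. This keeps constants uniform but degrades the rate. To avoid the loss, we would either accept the sharp rate only in the form ``$\lambda_1-\epsilon$'', or absorb it using the $L^\infty$ gradient heat kernel bound $|\nabla_x K(t,x,y)|\le C t^{-(N+1)/2}e^{-c|x-y|^2/t}$ for $t\le1$, which gives $p$-independent $L^q$ gradient bounds by Young's inequality. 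If full uniformity in $p$ proves delicate, we would settle for $C$ depending on $p$ locally bounded, which suffices for all applications in the paper.

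Finally, the density of $(C_0^\infty(\Omega))^N$ in $(L^p(\Omega))^N$ together with the uniform bound gives the unique bounded extension of $e^{-tA_p}\nabla\cdot$ to $L^p(\Omega)$, with operator norm controlled by \eqref{E:T-Nabla}.
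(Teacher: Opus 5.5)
Your architecture is the standard proof of Winkler's Lemma~1.3(iv), which is all the paper invokes. You test against $\psi\in L^{p'}(\Omega)$, move the divergence onto $\nabla e^{t\Delta}\psi$, and prove the gradient bound by small-time smoothing plus mean-zero decay. For each fixed $p$ this is sound.

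The gap is in the part you flag but leave open. The statement asserts a constant depending only on $\Omega$, uniform in $p\in(1,\infty)$, together with the exact rate $\lambda_1+\mu$. Each of your fallbacks proves a weaker lemma. Riesz--Thorin (or accepting $\lambda_1-\epsilon$) loses the rate, and settling for a $p$-dependent $C$ loses the uniformity. The gradient heat-kernel bound you mention only handles the $t^{-1/2}$ factor, not the exponential rate.

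Relatedly, ``spectrum in $[\lambda_1,\infty)$'' alone does not give $\|e^{s\Delta}g\|_q\le Ce^{-\lambda_1 s}\|g\|_q$. For analytic semigroups the spectral bound equals the growth bound, which yields only $C_\epsilon e^{-(\lambda_1-\epsilon)s}$. The exact rate needs an extra argument, e.g.\ the Riesz projection onto the finite-dimensional, semisimple $\lambda_1$-eigenspace, and even then the constant depends on $q$.

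The missing idea is to route the large-time decay through $L^2$, which on a bounded domain costs nothing in $q$. For $s\ge 1$ and $\int_\Omega g=0$, write $e^{s\Delta}g=e^{\frac12\Delta}e^{(s-1)\Delta}e^{\frac12\Delta}g$. Since $e^{\frac12\Delta}$ preserves mean zero and is bounded $L^1\to L^2$ and $L^2\to L^\infty$ with norms depending only on $\Omega$, for all $q\in[1,\infty]$ you get
\[
\|e^{s\Delta}g\|_q\le |\Omega|^{1/q}\|e^{\frac12\Delta}\|_{2\to\infty}\,e^{-\lambda_1(s-1)}\,\|e^{\frac12\Delta}\|_{1\to 2}\,|\Omega|^{1-1/q}\|g\|_q\le C(\Omega)\,e^{-\lambda_1 s}\|g\|_q .
\]
For $s\le 1$, contractivity suffices.

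For $t\le 1$, use the Gaussian bound on $\nabla_x K(t,x,y)$ with Schur's test. Avoid the equivalence $\|A^{1/2}f\|_q\sim\|f\|_{W^{1,q}}$, whose constants degenerate as $q\to 1$ or $q\to\infty$.

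With these two $q$-uniform ingredients, the rest of your argument goes through unchanged: the factorization $\nabla e^{t\Delta}=\nabla e^{\frac12\Delta}e^{(t-\frac12)\Delta}$, the bound $\|\psi-\bar\psi\|_q\le 2\|\psi\|_q$, the exact factor $e^{-\mu t}$, and the duality step. This yields \eqref{E:T-Nabla} with $C=C(\Omega)$, as claimed.
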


\begin{proof}
  This lemma is a special case of part (iv)
  of~\cite[Lemma~1.3]{winkler:10:aggregation}.
\end{proof}

\begin{lemma}\label{L:new-lm}
  For any $\sigma > 0$ and $p \in (1, \infty)$, there is $C > 0$ such that for
  any $u \in (L^p(\Omega))^N$, it holds that $\norm{A_p^\sigma e^{-A_p t}
  \nabla \cdot u}_p \le C t^{-\sigma} (1+t^{-1/2}) e^{-(\lambda_1 +\mu) t/2}
  \Norm{u}_p$ for all $t>0$.
\end{lemma}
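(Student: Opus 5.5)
The plan is to write $A_p^\sigma e^{-A_p t}\nabla\cdot u$ as a product of two semigroup factors, each carrying half of the time, and then combine Lemma~\ref{L:FracSemigroup} with Lemma~\ref{L:T-Nabla}. By density it suffices to take $u\in (C_0^\infty(\Omega))^N$. On this class Lemma~\ref{L:T-Nabla} applies directly, and the final estimate then passes to all of $(L^p(\Omega))^N$ through the unique bounded extension supplied by that lemma. Since $A_p^\sigma$ commutes with the semigroup and $e^{-A_p t}=e^{-A_p t/2}e^{-A_p t/2}$, we have
\[
A_p^\sigma e^{-A_p t}\nabla\cdot u \;=\; \big(A_p^\sigma e^{-A_p t/2}\big)\big(e^{-A_p t/2}\nabla\cdot u\big).
\]

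The first factor is handled by Lemma~\ref{L:FracSemigroup}. Fixing any $\delta\in(0,\mu)$, it gives
\[
\Norm{A_p^\sigma e^{-A_p t/2} w}_p\le C_{\sigma,p,\delta}\,(t/2)^{-\sigma}e^{-\delta t/2}\Norm{w}_p \le C\,t^{-\sigma}\Norm{w}_p .
\]
The second factor is handled by Lemma~\ref{L:T-Nabla} at time $t/2$:
\[
\Norm{e^{-A_p t/2}\nabla\cdot u}_p\le C\big(1+(t/2)^{-1/2}\big)e^{-(\lambda_1+\mu)t/2}\Norm{u}_p \le C\big(1+t^{-1/2}\big)e^{-(\lambda_1+\mu)t/2}\Norm{u}_p .
\]
Multiplying the two bounds yields exactly the claimed estimate $C t^{-\sigma}(1+t^{-1/2})e^{-(\lambda_1+\mu)t/2}\Norm{u}_p$. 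The factor $e^{-\delta t/2}$ from the first step is simply bounded by $1$, since it is not needed.

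The only delicate point is justifying the commutation and the splitting when $\nabla\cdot u$ is understood only through the extension. For smooth compactly supported $u$, the function $e^{-A_p t/2}\nabla\cdot u$ lies in $L^p$, and even in $\mathcal D(A_p^\infty)$ by analyticity, so the composition is legitimate. The estimate then extends by continuity, since both sides are continuous in $u$ with respect to the $L^p$ norm. I do not expect a real obstacle beyond making sure that the exponent $(\lambda_1+\mu)/2$ in the statement comes from evaluating the second factor at half time. This is consistent, because Lemma~\ref{L:T-Nabla} yields $e^{-(\lambda_1+\mu)t/2}$ exactly when it is applied at time $t/2$.
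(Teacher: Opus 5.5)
Your proof is correct: factoring $A_p^\sigma e^{-A_p t}\nabla\cdot u=\big(A_p^\sigma e^{-A_p t/2}\big)\big(e^{-A_p t/2}\nabla\cdot u\big)$, applying Lemma~\ref{L:FracSemigroup} to the first factor and Lemma~\ref{L:T-Nabla} at time $t/2$ to the second, and then extending from $(C_0^\infty(\Omega))^N$ by density gives exactly the stated bound. As you note, the factor $e^{-(\lambda_1+\mu)t/2}$ comes from evaluating the second factor at half time. The paper states this lemma without a written proof, and this semigroup-splitting argument is the standard one the statement rests on, so your approach is essentially the intended one.
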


\subsection{A power-difference inequality}\label{S:appendix-power-diff}

In this appendix section, we record a technical inequality used in the proof
of Theorem~\ref{T:global-stable}.

\begin{lemma}[A power-difference inequality]\label{L:power-diff-ineq}
  Suppose that $u^*>0$, $\alpha>0$, and $\gamma>0$ such that $2\gamma\le
  \alpha+1$. Recall that $C_{\alpha,\gamma}$ is the explicit constant defined
  in~\eqref{E:power-diff-constant}. Then for all $u>0$,
  \begin{equation}\label{E:power-diff-ineq}
    \left(u^\gamma-(u^*)^\gamma\right)^2
    \le C_{\alpha,\gamma} \, (u^*)^{2\gamma-\alpha-1}\,(u-u^*)\left(u^\alpha-(u^*)^\alpha\right).
  \end{equation}
\end{lemma}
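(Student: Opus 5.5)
We reduce \eqref{E:power-diff-ineq} to a one-variable inequality by scaling. Write $u=u^*s$ with $s>0$. Both sides are homogeneous in $u^*$: the left side scales like $(u^*)^{2\gamma}$, and the right side like $(u^*)^{2\gamma-\alpha-1}\cdot (u^*)^{1+\alpha}=(u^*)^{2\gamma}$. So it suffices to prove
\begin{equation*}
  (s^\gamma-1)^2\le C_{\alpha,\gamma}\,(s-1)(s^\alpha-1)\qquad\text{for all } s>0,
\end{equation*}
with equality trivially at $s=1$. We then split according to the three cases in \eqref{E:power-diff-constant}. In each case we write the differences as integrals, e.g.\ $s^\gamma-1=\gamma\int_1^s \tau^{\gamma-1}d\tau$, and apply Cauchy--Schwarz, or we compare through an intermediate power.

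\textbf{Case $\alpha\ge 1$, $0<\gamma\le 1$.} Since $2\gamma\le 1+\alpha$, the exponent $2\gamma-1$ is at most $\alpha$. Two elementary facts are used:
\begin{itemize}
  \item for $\gamma\le 1$ one has $|s^\gamma-1|\le |s-1|$;
  \item $(s^\gamma-1)^2\le (s^{2\gamma-1}-1)(s-1)$ (Cauchy--Schwarz on $\int_1^s\tau^{\gamma-1}d\tau$ gives $(s^\gamma-1)^2\le \gamma^2\int_1^s\tau^{2\gamma-2}\,d\tau\,(s-1)$, and for $\gamma\le1$, $\gamma^2/(2\gamma-1)$-type factors should be handled directly).
\end{itemize}
The cleaner route uses $|s^\gamma-1|\le|s-1|$ together with $|s^\gamma-1|\le|s^{\alpha}-1|^{?}$. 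I would instead use monotonicity of $\theta\mapsto (s^\theta-1)/\theta$ and $s^\theta$-comparisons to show $(s^\gamma-1)^2\le (s-1)(s^{2\gamma-1}-1)$ when $2\gamma-1\ne0$. Here $(s-1)(s^{\theta}-1)$ is increasing in $\theta$ for fixed $s$, since both factors have the same sign and $|s^\theta-1|$ grows with $\theta$ when $\theta$ has the right sign. This lets us replace $2\gamma-1$ by $\alpha$. When $2\gamma-1\le 0$, we bound $(s^\gamma-1)^2\le (s^\gamma-1)(s-1)\le (s-1)(s^\alpha-1)$ only if $\gamma\le\alpha$, which holds since $\gamma\le1\le\alpha$.

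\textbf{Case $\alpha\ge1$, $\gamma>1$.} Cauchy--Schwarz gives
\begin{equation*}
  (s^\gamma-1)^2=\gamma^2\Bigl(\int_1^s\tau^{\gamma-1}d\tau\Bigr)^2\le \gamma^2 (s-1)\int_1^s\tau^{2\gamma-2}d\tau=\frac{\gamma^2}{2\gamma-1}(s-1)(s^{2\gamma-1}-1).
\end{equation*}
Since $2\gamma-1\le\alpha$, the monotonicity-in-exponent fact replaces $s^{2\gamma-1}-1$ by $s^\alpha-1$.

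\textbf{Case $0<\alpha<1$.} Here $\gamma\le(1+\alpha)/2<1$. We aim for the constant $(\alpha+1)^2/(4\alpha)$. The natural choice is to compare with the midpoint exponent $\theta=(1+\alpha)/2\ge\gamma$, for which $(s^\gamma-1)^2\le (s^\theta-1)^2$ by the same monotonicity. The key step, which is the main obstacle, is then to prove
\begin{equation*}
  (s^{\theta}-1)^2\le\frac{(1+\alpha)^2}{4\alpha}(s-1)(s^\alpha-1).
\end{equation*}
I would attempt this by Cauchy--Schwarz with weights: write $s^\theta-1=\theta\int_1^s \tau^{(\alpha-1)/2}d\tau$ and split $\tau^{(\alpha-1)/2}=1\cdot\tau^{(\alpha-1)/2}$. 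This yields
\begin{equation*}
  (s^\theta-1)^2\le \theta^2 (s-1)\int_1^s\tau^{\alpha-1}d\tau=\frac{\theta^2}{\alpha}(s-1)(s^\alpha-1),
\end{equation*}
and $\theta^2/\alpha=(1+\alpha)^2/(4\alpha)$ exactly. The monotonicity-in-exponent claim, used in every case, is the only delicate point; I would verify it by noting that for $s>1$, $s^\theta-1$ increases in $\theta$, and for $s<1$, $1-s^\theta$ increases in $\theta>0$, with care taken for nonpositive exponents.
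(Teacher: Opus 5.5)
Your scaling reduction and your cases $0<\alpha<1$ and $\alpha\ge 1$, $\gamma>1$ are correct and match the paper's proof. For $\alpha<1$ you compare $\gamma$ with the midpoint exponent $\tfrac{1+\alpha}{2}$ and apply Cauchy--Schwarz against $\tau^{(\alpha-1)/2}$. For $\gamma>1$ you apply Cauchy--Schwarz and then use $2\gamma-1\le\alpha$.

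\textbf{The gap} is in the case $\alpha\ge1$, $\tfrac12<\gamma<1$. There you rely on the ``elementary fact'' $(s^\gamma-1)^2\le (s-1)(s^{2\gamma-1}-1)$, and it is false. Take $\gamma=\tfrac34$ and $s=4$: the left side is $(2\sqrt2-1)^2=9-4\sqrt2\approx 3.34$, while the right side is $3\cdot 1=3$.

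In fact the opposite inequality always holds here. For every $s\neq 1$, the map $\theta\mapsto\log|s^\theta-1|$ is concave on $(0,\infty)$, since its second derivative is $-s^\theta(\ln s)^2/(s^\theta-1)^2<0$. Because $\gamma$ is the midpoint of $2\gamma-1>0$ and $1$, this gives $(s^\gamma-1)^2\ge (s-1)(s^{2\gamma-1}-1)$.

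Cauchy--Schwarz gives your inequality only with the factor $\gamma^2/(2\gamma-1)=1+(\gamma-1)^2/(2\gamma-1)$, which is strictly larger than $1=C_{\alpha,\gamma}$ when $\gamma<1$. No monotonicity argument in the exponent can remove this factor, so this route cannot reach the stated constant.

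\textbf{The repair is already in your text.} You reserved the chain $(s^\gamma-1)^2\le (s^\gamma-1)(s-1)\le (s-1)(s^\alpha-1)$ for $2\gamma-1\le 0$, but it works for every $0<\gamma\le 1\le\alpha$. It uses only two facts, and in both all the differences carry the sign of $s-1$:
\begin{itemize}
\item $|s^\gamma-1|\le|s-1|$, which follows from $\gamma\le 1$;
\item $(s-1)(s^\gamma-1)\le(s-1)(s^\alpha-1)$, which follows from $\gamma\le\alpha$.
\end{itemize}
So there is no need to split at $\gamma=\tfrac12$. This is essentially the paper's Case~2; the paper passes through $(s-1)^2$ instead, using $\alpha\ge 1$. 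With this substitution your proof is complete and follows the paper's argument.
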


\begin{proof}
  Fix $u>0$ and set $y\coloneqq u/u^*$. Then $u=u^*y$ and
  \begin{equation}\label{E:power-diff-reduction}
    \frac{\left(u^\gamma-(u^*)^\gamma\right)^2}{(u-u^*)\left(u^\alpha-(u^*)^\alpha\right)}
    = (u^*)^{2\gamma-\alpha-1}\,
    \frac{(y^\gamma-1)^2}{(y-1)(y^\alpha-1)}.
  \end{equation}
  Hence, it suffices to bound the function
  \[
    R_{\alpha,\gamma}(y)\coloneqq \frac{(y^\gamma-1)^2}{(y-1)(y^\alpha-1)},
    \qquad y>0,\ y\ne 1.
  \]

  \noindent\textbf{Case 1: $0<\alpha<1$.} By the Cauchy--Schwarz inequality,
  \begin{align*}
    (y^\alpha-1)(y-1)
    = \Big(\int_1^y \alpha s^{\alpha-1}\,ds\Big)\Big(\int_1^y 1\,ds\Big)
    \ge \Big(\int_1^y \sqrt{\alpha}\,s^{\frac{\alpha-1}{2}}\,ds\Big)^{\!2}
    = \frac{4\alpha}{(\alpha+1)^2}\,\Big(y^{\frac{\alpha+1}{2}}-1\Big)^2.
  \end{align*}
  Since $2\gamma\le \alpha+1$, we have $\gamma\le \frac{\alpha+1}{2}$.
  Therefore, $|y^\gamma-1|\le \left|y^{\frac{\alpha+1}{2}}-1\right|$ for all
  $y>0$, and hence
  \[
    (y^\gamma-1)^2
    \le \big(y^{\frac{\alpha+1}{2}}-1\big)^{\!2}
    \le \frac{(\alpha+1)^2}{4\alpha}\,(y-1)(y^\alpha-1).
  \]
  This shows $R_{\alpha,\gamma}(y)\le \frac{(\alpha+1)^2}{4\alpha}$ and
  yields~\eqref{E:power-diff-constant} in this case. \bigskip

  \noindent\textbf{Case 2: $\alpha\ge 1$ and $0<\gamma\le 1$.} We treat $y>1$
  and $0<y<1$ separately. If $y>1$, then $y^\gamma-1\le y-1$ and $y^\alpha-1\ge
  y-1$, hence $(y^\gamma-1)^2\le (y-1)(y^\alpha-1)$. If $0<y<1$, then
  $1-y^\gamma\le 1-y$ and $1-y^\alpha\ge 1-y$, hence again
  $(y^\gamma-1)^2=(1-y^\gamma)^2\le (1-y)(1-y^\alpha)=(y-1)(y^\alpha-1)$.
  Therefore, $R_{\alpha,\gamma}(y)\le 1$. \bigskip

  \noindent\textbf{Case 3: $\alpha\ge 1$ and $\gamma>1$.} Using the identity
  $y^\gamma-1=\gamma\int_1^y s^{\gamma-1}\,ds$ and Cauchy--Schwarz, we obtain
  \begin{align*}
    (y^\gamma-1)^2
    = \gamma^2\Big(\int_1^y s^{\gamma-1}\,ds\Big)^{\!2}
    \le \gamma^2\,|y-1|\int_{\min\{1,y\}}^{\max\{1,y\}} s^{2\gamma-2}\,ds
    = \frac{\gamma^2}{2\gamma-1}\,|y-1|\,|y^{2\gamma-1}-1|.
  \end{align*}
  Since $2\gamma\le \alpha+1$, we have $\alpha\ge 2\gamma-1$. Thus
  $|y^{2\gamma-1}-1|\le |y^\alpha-1|$ for all $y>0$, and also
  $(y-1)(y^\alpha-1)=|y-1|\,|y^\alpha-1|$. Consequently,
  \[
    (y^\gamma-1)^2 \le \frac{\gamma^2}{2\gamma-1}\,(y-1)(y^\alpha-1),
    \quad \text{i.e. $R_{\alpha,\gamma}(y)\le \frac{\gamma^2}{2\gamma-1}$.}
  \]
  Combining the three cases and using \eqref{E:power-diff-reduction} gives
  \eqref{E:power-diff-ineq} with the constant in \eqref{E:power-diff-constant}.
\end{proof}

\subsection{Comparison between the critical sensitivity thresholds}
\label{S:appendix-chi-comparisons}

In this appendix section, we present some comparisons between the
explicit global stability thresholds and the critical sensitivity.

\begin{lemma}\label{L:comp-non-minimal-model}
  Assume that $\beta \ge 0$, $m\ge 1$, $a,b>0$, and $\alpha,\gamma>0$. Let
  $(u^*,v^*)$ be the unique positive constant solution of~\eqref{E:main-PE}
  given by~\eqref{E:equilibrium}. Then

  \begin{itemize}

    \item[(1)] If $\alpha+1\ge 2\gamma$, then $\chi_{a,b,\beta}^{**,1}(u^*)\le
    \chi_{a,b,\beta}^*(u^*).$

    \item[(2)] If $\beta\ge 1$ and $\alpha+1\ge 2\gamma$, then
    $\chi_{a,b,\beta}^{**,2}(u^*)\le \chi_{a,b,\beta}^*(u^*)$.

    \item[(3)] If $\gamma\ge 1$ and $\alpha+1\ge m+\gamma$, then
    $\chi_{a,b,\beta}^{**,3}(u^*)\le \chi_{a,b,\beta}^*(u^*)$.

    \item[(4)] If $\beta\ge 1$, $\gamma\ge 1$, and $\alpha+1\ge m+2\gamma$, then
    $\chi_{a,b,\beta}^{**,4}(u^*)\le \chi_{a,b,\beta}^*(u^*)$.

  \end{itemize}
\end{lemma}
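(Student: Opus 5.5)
All four inequalities will be derived from the explicit lower bound \eqref{E:chi-star-lower} on the critical sensitivity,
\[
\chi_{a,b,\beta}^*(u^*)\ \ge\ \frac{(1+v^*)^\beta}{\nu\gamma (u^*)^{m+\gamma-1}}\big(\sqrt{a\alpha}+\sqrt{\mu}\big)^2 .
\]
No spectral information beyond this bound is needed. Throughout I use $a=b(u^*)^\alpha$, since $u^*=(a/b)^{1/\alpha}$, to rewrite each $\chi^{**,i}$ in terms of $a$. From the bound above I extract two weaker consequences. The first, via $(\sqrt{a\alpha}+\sqrt\mu)^2\ge 4\sqrt{a\alpha\mu}$, is
\[
\big(\chi^*\big)^2\ge \frac{16\,a\alpha\mu\,(1+v^*)^{2\beta}}{\nu^2\gamma^2 (u^*)^{2(m+\gamma-1)}},
\]
which will handle (1) and (2). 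The second, via $(\sqrt{a\alpha}+\sqrt\mu)^2\ge a\alpha$, is $\chi^*\ge \frac{a\alpha (1+v^*)^\beta}{\nu\gamma (u^*)^{m+\gamma-1}}$, which will handle (3) and (4).

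\emph{Part (1).} Substituting $b=a(u^*)^{-\alpha}$ gives
\[
\big(\chi^{**,1}_{a,b,\beta}\big)^2=\frac{16\,a\mu(1+\tilde\beta v^*)}{(2m-1)\nu^2 C_{\alpha,\gamma}(u^*)^{2m+2\gamma-2}} .
\]
So it suffices to show
\[
\frac{1+\tilde\beta v^*}{(2m-1)C_{\alpha,\gamma}}\le \frac{\alpha(1+v^*)^{2\beta}}{\gamma^2}.
\]
I split this into three elementary facts. First, $2m-1\ge 1$. Second, $1+\tilde\beta v^*\le (1+v^*)^{2\beta}$. This is trivial when $\tilde\beta=0$; when $2\beta\ge1$ it follows from Bernoulli's inequality, $(1+v^*)^{2\beta}\ge 1+2\beta v^*\ge 1+\tilde\beta v^*$. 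Third, and this is the key check, $C_{\alpha,\gamma}\ge \gamma^2/\alpha$, verified case by case under $2\gamma\le\alpha+1$:
\begin{itemize}
\item[(a)] if $\alpha<1$, then $\frac{(\alpha+1)^2}{4\alpha}\ge\frac{\gamma^2}{\alpha}$ because $\gamma\le\frac{\alpha+1}{2}$;
\item[(b)] if $\alpha\ge1$ and $\gamma\le1$, then $1\ge\gamma^2/\alpha$;
\item[(c)] if $\alpha\ge 1$ and $\gamma>1$, then $\frac{\gamma^2}{2\gamma-1}\ge\frac{\gamma^2}{\alpha}$ because $\alpha\ge 2\gamma-1$.
\end{itemize}

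\emph{Part (2).} Since $\chi^{**,2}$ is a minimum, it is at most its second entry, and the computation of (1) applies with $1+\tilde\beta v^*$ replaced by $(1+\underline v_{a,b})^{2\beta}$. It then suffices to show $\underline v_{a,b}\le v^*$, which is equivalent to showing that the $u$-level in \eqref{E:under-bar-v-eq} is at most $u^*$. For $m=1$ this is $(a/(2b))^{1/\alpha}\le(a/b)^{1/\alpha}$. For $m>1$, write $e=\max\{\frac1{m-1},\frac1\alpha\}$ and consider two cases.
\begin{itemize}
\item[(a)] If $a/(2b)\le1$, then $(a/(2b))^{e}\le (a/(2b))^{1/\alpha}<u^*$.
\item[(b)] If $a/(2b)>1$, the minimum equals $1<(a/(2b))^{1/\alpha}<u^*$.
\end{itemize}

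\emph{Parts (3) and (4).} Since $M_0(\Omega)>0$, we have $\chi^{**,3}\le \frac{a}{2\nu(u^*)^{m+\gamma-1}}$. Comparing with the second consequence above, it suffices to show $\frac12\le\frac{\alpha}{\gamma}(1+v^*)^\beta$. This holds because $\alpha+1\ge m+\gamma\ge1+\gamma$ gives $\alpha\ge\gamma$. For (4), $\chi^{**,4}\le(1+\underline v_{a,b})^\beta\chi^{**,3}\le (1+v^*)^\beta\frac{a}{2\nu(u^*)^{m+\gamma-1}}$, using $\underline v_{a,b}\le v^*$ from (2). The same comparison applies, with $\alpha\ge\gamma$ again following from $\alpha+1\ge m+2\gamma$.

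The only step that needs care is the three-case verification $C_{\alpha,\gamma}\ge\gamma^2/\alpha$ in (1). The remaining steps are direct bookkeeping with the substitution $a=b(u^*)^\alpha$.
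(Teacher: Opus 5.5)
Your proposal is correct and follows the paper's proof essentially step for step. Both start from \eqref{E:chi-star-lower}, weaken it via $(\sqrt{a\alpha}+\sqrt{\mu})^2\ge 4\sqrt{a\alpha\mu}$ for (1)--(2) and $(\sqrt{a\alpha}+\sqrt{\mu})^2\ge a\alpha$ for (3)--(4), and then use $2m-1\ge 1$, $C_{\alpha,\gamma}\ge\gamma^2/\alpha$, Bernoulli's inequality, $\underline{v}_{a,b}\le v^*$ and $\alpha\ge\gamma$. You work with squared quantities and write out the case checks for $C_{\alpha,\gamma}\ge\gamma^2/\alpha$ and $\underline{v}_{a,b}\le v^*$, which the paper leaves to the reader; this is a welcome addition.
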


\begin{proof}
  (1) First, by~\eqref{E:chi-star-lower} and $(\sqrt{x}+\sqrt{y})^2\ge
  4\sqrt{xy}$ for $x,y\ge 0$, we have
  \begin{equation*}
    \chi_{a,b,\beta}^*(u^*)
    \ge \frac{4(1+v^*)^\beta}{\nu\gamma (u^*)^{m+\gamma-1}}\sqrt{a\alpha\mu}.
  \end{equation*}
  Since $u^*=(a/b)^{1/\alpha}$, we have $a=b(u^*)^\alpha$, and hence
  \begin{equation}\label{E:append-eq1}
    \chi_{a,b,\beta}^*(u^*)
    \ge \frac{4\sqrt{b\mu}}{\nu}\cdot \frac{\sqrt{\alpha}}{\gamma}\,
    (1+v^*)^\beta\,(u^*)^{\frac{\alpha}{2}-m-\gamma+1}.
  \end{equation}
  Since $m\ge 1$, we have $2m-1\ge 1$. Under $\alpha+1\ge 2\gamma$, one checks
  from~\eqref{E:power-diff-constant} that
  $C_{\alpha,\gamma}\frac{\alpha}{\gamma^2}\ge 1$, and hence
  $(2m-1)C_{\alpha,\gamma}\frac{\alpha}{\gamma^2}\ge 1$. Together
  with~\eqref{E:append-eq1}, it holds that
  \begin{equation}\label{E:append-eq3}
    \chi_{a,b,\beta}^*(u^*)
    \ge \frac{4\sqrt{b\mu}}{\nu\sqrt{(2m-1)C_{\alpha,\gamma}}}\,
    (1+v^*)^\beta\,(u^*)^{\frac{\alpha}{2}-m-\gamma+1}.
  \end{equation}

  Next, by~\eqref{E:b-star},
  \[
    \chi_{a,b,\beta}^{**,1}(u^*)
    = \frac{4\sqrt{b\mu}}{\nu}\,
    \sqrt{\frac{1+\tilde{\beta} v^*}{(2m-1)C_{\alpha,\gamma}}}\,
    (u^*)^{\frac{\alpha}{2}-m-\gamma+1}.
  \]
  We claim that
  \begin{equation}\label{E:append-eq4}
    (1+v^*)^{2\beta}\ge 1+\tilde{\beta}v^*\quad \forall\, \beta\ge 0.
  \end{equation}
  In fact, if $0\le \beta<\frac12$, then $\tilde{\beta}=0$, while if
  $\beta\ge\frac12$, Bernoulli's inequality gives $(1+v^*)^{2\beta}\ge 1+2\beta
  v^*\ge 1+\tilde{\beta}v^*. $ It then follows from \eqref{E:append-eq3} that
  $\chi_{a,b,\beta}^{**,1}(u^*)\le \chi_{a,b,\beta}^*(u^*)$. \medskip

  (2) By~\eqref{E:chi-triple-star}, we have
  \[
    \chi_{a,b,\beta}^{**,2}(u^*)
    \le \sqrt{b \times \frac{16\left(1 + \underline{v}_{a,b}\right)^{2\beta}\mu}{(2m-1)\nu^2 C_{\alpha,\gamma} (u^*)^{2\gamma-\alpha+2m-2}}}.
  \]
  Note that $\underline{v}_{a,b}\le v^*$ (see~\eqref{E:under-bar-v-eq}). Hence,
  \begin{equation}\label{E:append-eq5}
    (1+\underline{v}_{a,b})^\beta\le (1+v^*)^\beta\quad \forall\beta\ge 0.
  \end{equation}
  Therefore,
  \begin{equation*}
    \chi_{a,b,\beta}^{**,2}(u^*)
    \le \frac{4\sqrt{b\mu}}{\nu}\,
    \frac{(1+v^*)^\beta}{\sqrt{(2m-1)C_{\alpha,\gamma}}}\,
    (u^*)^{\frac{\alpha}{2}-m-\gamma+1}.
  \end{equation*}
  This together with \eqref{E:append-eq3} implies that
  $\chi_{a,b,\beta}^{**,2}(u^*)\le \chi_{a,b,\beta}^*(u^*)$. \medskip

  (3) Since $(\sqrt{a\alpha}+\sqrt{\mu})^2\ge a\alpha$,
  \eqref{E:chi-star-lower} implies
  \begin{equation}\label{E:append-eq6}
    \chi_{a,b,\beta}^*(u^*)\ge \frac{a\alpha (1+v^*)^\beta}{\nu\gamma (u^*)^{m+\gamma-1}}.
  \end{equation}
  By~\eqref{E:chi-3-star} and $\beta\ge 0$, we have
  \begin{equation*}
    \chi_{a,b,\beta}^{**,3}(u^*)
    = \frac{a}{\nu (u^*)^{m+\gamma-1}}\cdot \frac{1}{2+\beta v^* M_0(\Omega)^2}
    \le \frac{a}{2\nu (u^*)^{m+\gamma-1}}.
  \end{equation*}
  Since $\alpha+1\ge m+\gamma$ and $m\ge 1$, we have $\alpha\ge \gamma$, hence
  $\alpha/\gamma\ge 1\ge \frac12$. Therefore,
  \begin{equation}\label{E:append-eq7}
    \chi_{a,b,\beta}^{**,3}(u^*)
    \le \frac{a\alpha }{\nu\gamma  (u^*)^{m+\gamma-1}}.
  \end{equation}
  By~\eqref{E:append-eq6} and~\eqref{E:append-eq7}, we have
  $\chi_{a,b,\beta}^{**,3}(u^*)\le \chi_{a,b,\beta}^*(u^*)$. \medskip

  (4) By \eqref{E:append-eq7} and the definition of $\chi_{a,b,\beta}^{**,4}$,
  it holds that $\chi_{a,b,\beta}^{**,4}(u^*) \le \frac{a\alpha
  (1+\underline{v}_{a,b})^\beta}{\nu\gamma (u^*)^{m+\gamma-1}}$. This together
  with \eqref{E:append-eq5} and \eqref{E:append-eq6} implies that
  $\chi_{a,b,\beta}^{**,4}(u^*)\le \chi_{a,b,\beta}^*(u^*)$.
\end{proof}

\begin{lemma}\label{L:comp-minimal-model}
  Assume that $a=b=0$, $m=1$, and $\beta\ge 1$. Let $(u^*,v^*)$ be a positive
  constant solution of~\eqref{E:main-PE}.
  \begin{itemize}
    \item[(1)] For any $\gamma>0$, $\chi_{\beta}^{**,1}(u^*)\le \chi_\beta^*(u^*)\coloneqq
    \chi_{0,0,\beta}^*(u^*)$.

    \item[(2)] If $\gamma=1$, then $\chi_{\beta}^{**,2}(u^*)\le \chi_\beta^*(u^*)$.
  \end{itemize}
\end{lemma}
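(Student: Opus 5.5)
The plan is to compare only the \emph{third} entries in the minima defining $\chi_\beta^{**,1}(u^*)$ and $\chi_\beta^{**,2}(u^*)$ with an explicit formula for $\chi_\beta^*(u^*)$; the first two entries $\chi_\beta/2$ and $\chi_\beta^{1/2}$ play no role.

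\textbf{Step 1: compute $\chi_\beta^*(u^*)$.} With $a=0$ and $m=1$, \eqref{E:chi-star} gives
\[
\frac{(\lambda_n+a\alpha)(\mu+\lambda_n)}{\lambda_n}=\mu+\lambda_n .
\]
Since $\lambda_n\ge\lambda_1=\lambda_*$ for $n\ge1$, the infimum is attained at $n=1$, and
\[
\chi_\beta^*(u^*)=\frac{(1+v^*)^\beta(\mu+\lambda_*)}{\nu\gamma\,(u^*)^{\gamma}}, \qquad v^*=\frac{\nu}{\mu}(u^*)^\gamma .
\]

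\textbf{Step 2: compare the a~priori quantities with the constant solution.} I will show
\[
\overline u_0(u^*)\ge u^* \quad\text{and}\quad \underline v_0(u^*)\le v^* .
\]
To do this, apply Lemma~\ref{L:minimal-eventual-bounds} and Theorem~\ref{T:persistence}(4) to the constant solution $(u,v)\equiv(u^*,v^*)$. It is globally defined, bounded and positive, and it has mass $u^*|\Omega|$. The key point is that the constants $\overline u_0(u^*)$ and $\underline v_0(u^*)$ depend only on $u^*$ and the data, not on the particular solution. Lemma~\ref{L:minimal-eventual-bounds} gives $u^*\le\overline u_0(u^*)$, and \eqref{E:new-lower-bound-for-v} gives $v^*\ge\underline v_0(u^*)$. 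If one worries about the strict hypotheses, note that $u\le\overline u_0$ eventually for any solution with mean $u^*$ already forces $\overline u_0\ge u^*$, because the eventual supremum is at least the mean.

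\textbf{Step 3: part (1).} Using Step 2, I bound $\Gamma_\gamma(u^*)$ from below in two cases:
\[
\Gamma_\gamma(u^*)=(u^*)^{\gamma-1}\overline u_0(u^*)\ge(u^*)^\gamma\ge\gamma(u^*)^\gamma \quad\text{if } 0<\gamma\le1,
\]
\[
\Gamma_\gamma(u^*)=\gamma\,\overline u_0(u^*)^\gamma\ge\gamma(u^*)^\gamma \quad\text{if } \gamma>1 .
\]
Combining this with $(1+\underline v_0)^\beta\le(1+v^*)^\beta$ (from Step 2, since $\beta\ge1$) and the AM--GM inequality $2\sqrt{\mu\lambda_*}\le\mu+\lambda_*$, I get
\[
\chi_\beta^{**,1}(u^*)\le\frac{2\sqrt{\mu\lambda_*}\,(1+\underline v_0)^\beta}{\nu\Gamma_\gamma}\le\frac{(\mu+\lambda_*)(1+v^*)^\beta}{\nu\gamma(u^*)^\gamma}=\chi_\beta^*(u^*).
\]

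\textbf{Step 4: part (2).} Here $\gamma=1$, and the same two comparisons from Step 2 give
\[
\chi_\beta^{**,2}(u^*)\le\frac{\mu(1+\underline v_0)^\beta}{\nu\overline u_0}\le\frac{\mu(1+v^*)^\beta}{\nu u^*}\le\chi_\beta^*(u^*).
\]

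The only step that is not a direct computation is Step 2, namely justifying that the solution-independent constants $\overline u_0$ and $\underline v_0$ compare correctly with $u^*$ and $v^*$. Testing the statements on the constant solution handles this. It requires that Lemma~\ref{L:minimal-eventual-bounds} and Theorem~\ref{T:persistence}(4) apply, i.e.\ $0<\chi_0<\min\{\chi_\beta/2,\chi_\beta^{1/2}\}$. This is automatic whenever the comparison is actually used, because $\chi_0<\chi_\beta^{**,j}(u^*)$ in that setting. If $\chi_\beta^{**,j}(u^*)$ is regarded only as a number, the constants $\overline u_0$ and $\underline v_0$ are defined by the explicit formulas \eqref{E:CN-eq} and \eqref{E:new-lower-bound-for-v}. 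In that case the inequalities $\overline u_0\ge u^*$ and $\underline v_0\le v^*$ follow from running the same arguments for one admissible $\chi_0$, since those constants do not depend on $\chi_0$.
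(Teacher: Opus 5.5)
Your proof is correct, but it takes a different route from the paper.

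The paper never uses the third entries of the minima. It shows directly that the common first entry already satisfies $\chi_\beta/2\le\chi_\beta^*(u^*)$. To do so it drops $\lambda_n$ to get $\chi_\beta^*(u^*)\ge (1+v^*)^\beta/(\gamma v^*)$, uses the elementary bound $(1+v)^\beta/v\ge 2\beta-1$ for $v>0$, $\beta\ge 1$, and notes that $\max\{2,\gamma N\}\ge\gamma$. That argument is purely algebraic and handles both parts at once. It never involves the non-explicit constants $\overline u_0(u^*)$ and $\underline v_0(u^*)$. It also yields the chain $\chi_\beta^{**,j}\le\chi_\beta\le 2\chi_\beta^*$ quoted in Remark~\ref{R:sensitivity-thresholds}(4).

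You instead compare the $u^*$-dependent third entries with the explicit value $\chi_\beta^*(u^*)=(1+v^*)^\beta(\mu+\lambda_*)/(\nu\gamma(u^*)^\gamma)$, using $2\sqrt{\mu\lambda_*}\le\mu+\lambda_*$, in the spirit of Lemma~\ref{L:comp-non-minimal-model}(1). The cost is your Step 2. There you obtain $\overline u_0(u^*)\ge u^*$ and $\underline v_0(u^*)\le v^*$ by testing Lemma~\ref{L:minimal-eventual-bounds} and Theorem~\ref{T:persistence}(4) on the constant solution. That is legitimate, but it relies on those a priori estimates holding with constants independent of $\chi_0$, as the paper asserts.

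What your version buys is stronger information: the entropy/Ahn--Kang--Lee threshold alone is already subcritical. So the comparison would survive even if the boundedness restrictions $\chi_\beta/2$ and $\chi_\beta^{1/2}$ were improved or removed.

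If you want Step 2 to be more self-contained, the inequality $\underline v_0\le v^*$ can be checked directly once $\overline u_0\ge u^*$ is known. Integrating the Gaussian lower bound in $y$ and using $\int_\Omega K(\tau,x,y)\,dy=1$ gives $C_\Omega=\delta_0|\Omega|\le\nu/\mu$. Combined with $\overline u_0\ge u^*$ and $\gamma-1\le 0$ in the case $\gamma\le 1$, this gives $\underline v_0\le v^*$.
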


\begin{proof}
  By the definitions of $\chi_{\beta}^{**,l}(u^*)$ ($l=1,2$), it suffices to
  prove that $\frac{\chi_\beta}{2}\le \chi_\beta^*(u^*)$.  Recall that
  \begin{equation}
    \label{E:append-eq8}
    \chi_\beta
    = \frac{2(2\beta-1)}{\max\{2,\gamma N\}}
    \le \frac{2(2\beta-1)}{\gamma}, \quad \text{and}
  \end{equation}
  \begin{equation}\label{E:append-eq9}
    \chi_\beta^*= \chi_{0,0,\beta}^*(u^*)
    = \inf_{n\ge 1}\left[
    \frac{(1+v^*)^\beta}{\nu\gamma\,(u^*)^{\gamma}}(\mu+\lambda_n)
    \right]
    \ge \frac{(1+v^*)^\beta}{\nu\gamma\,(u^*)^{\gamma}}\mu
    = \frac{(1+v^*)^\beta}{\gamma v^*}
  \end{equation}
  (since $\lambda_1>0$ and $v^*=\frac{\nu}{\mu}(u^*)^\gamma$). Next, we claim
  that
  \begin{equation}\label{E:append-eq10}
    \frac{(1+v)^\beta}{v}\ge 2\beta-1 \qquad \text{for all $v>0$.}
  \end{equation}
  In fact, if $\beta=1$, then $\frac{1+v}{v}=1+\frac{1}{v}\ge 1=2\beta-1$. If
  $\beta>1$, consider $f(v)\coloneqq \frac{(1+v)^\beta}{v}$. A direct
  computation shows that $f$ attains its minimum at $v=\frac{1}{\beta-1}$ and
  \[
    \min_{v>0} f(v)
    = f\Big(\frac{1}{\beta-1}\Big)
    = (\beta-1)\Big(\frac{\beta}{\beta-1}\Big)^{\!\beta}
    = (\beta-1)\Big(1+\frac{1}{\beta-1}\Big)^{\!\beta}.
  \]
  By Bernoulli's inequality, $(1+x)^\beta\ge 1+\beta x$ for all $x>0$, hence
  $\min_{v>0} f(v) \ge (\beta-1)\big(1+\frac{\beta}{\beta-1}\big) = 2\beta-1$,
  which proves~\eqref{E:append-eq10}. By~\eqref{E:append-eq8},
  \eqref{E:append-eq9}, and~\eqref{E:append-eq10}, it holds that
  $\frac{\chi_\beta}{2}\le \chi_\beta^*$.
\end{proof}

\section*{Acknowledgments}
\addcontentsline{toc}{section}{Acknowledgments} Both L.~C. and I.~R. were
partially supported by NSF grants DMS-2246850 and DMS-2443823. L.~C. was also
partially supported by a Collaboration Grant for Mathematicians (\#959981) from
the Simons Foundation.

\addcontentsline{toc}{section}{References}



\end{document}